\newcommand{\opnorm}{\@ifstar\@opnorms\@opnorm}
\newcommand{\@opnorms}[1]{%
	\left|\mkern-1.5mu\left|\mkern-1.5mu\left|
	#1
	\right|\mkern-1.5mu\right|\mkern-1.5mu\right|
}
\newcommand{\@opnorm}[2][]{%
	\mathopen{#1|\mkern-1.5mu#1|\mkern-1.5mu#1|}
	#2
	\mathclose{#1|\mkern-1.5mu#1|\mkern-1.5mu#1|}
}
\font\tencmmib=cmmib10 \skewchar\tencmmib '60
\def\lessim{\ \lower4pt\hbox{$
		\buildrel{\displaystyle <}\over\sim$}\ }
\def\gessim{\ \lower4pt\hbox{$\buildrel{\displaystyle >}
		\over\sim$}\ }
\def\n{\noindent}
\def\eps{{\varepsilon}}
\def\n{\noindent}
\newcommand{\la}{\langle}
\newcommand{\ra}{\rangle}
\newcommand{\e}{\mathbb{E}}
\newcommand{\p}{\mathbb{P}}
\newcommand{\gp}{\mathrm{GP}}
\newcommand{\bd}{\underline{ \delta}}
\newtheorem{theorem}{\bf Theorem}
\newtheorem{lemma}[theorem]{\bf Lemma}
\newtheorem{remark}[theorem]{\bf Remark}
\newtheorem{proposition}[theorem]{\bf Proposition}
\newenvironment{Proof of lemma}{\noindent{\bf Proof of Lemma}}{\hfill$\Box$\newline}
\newenvironment{Proof of theorem}{\noindent{\it Proof of Theorem}}{\hfill\scriptsize{$\Box$}\newline}
\newenvironment{Proof of theorems}{\noindent{\bf Proof of Theorems}}{\hfill$\Box$\newline}
\newenvironment{Proof of proposition}{\noindent{\bf Proof of Proposition}}{\hfill$\Box$\newline}
\newenvironment{Proof of propositions}{\noindent{\bf Proof of Propositions}}{\hfill$\Box$\newline}
\newenvironment{Proof of exercise}{\noindent{\it Proof of Exercise:}}{\hfill$\Box$}
\renewcommand{\r}{\mathbb{R}}
\def\gl{\lambda}
\numberwithin{equation}{section}
\numberwithin{theorem}{section}
\begin{document}

	\title{On $\ell_p$-Gaussian-Grothendieck problem}
	
	\author{Wei-Kuo Chen\thanks{University of Minnesota. Email: wkchen@umn.edu.  Partly supported by NSF grant DMS-17-52184} \and  Arnab Sen\thanks{University of Minnesota. Email: arnab@umn.edu}}
	
	\maketitle
	
	\begin{abstract}	
		For $p\geq 1$ and $(g_{ij})_{1\leq i,j\leq n}$ being a matrix of i.i.d. standard Gaussian entries, we study the $n$-limit of the $\ell_p$-Gaussian-Grothendieck problem defined as
		$$
		\max\Bigl\{\sum_{i,j=1}^n g_{ij}x_ix_j: x\in \mathbb{R}^n,\sum_{i=1}^n |x_i|^p=1\Bigr\}.
		$$
		The case $p=2$  corresponds to the top eigenvalue of the Gaussian Orthogonal Ensemble; when $p=\infty$, the maximum value is essentially the ground state energy of the Sherrington-Kirkpatrick mean-field spin glass model and its limit can be expressed by the famous Parisi formula. In the present work, we focus on the cases $1\leq p<2$ and $2<p<\infty.$ For the former, we compute the limit of the $\ell_p$-Gaussian-Grothendieck problem and investigate the structure of the set of all near optimizers along with stability estimates. In the latter case, we show that this problem admits a Parisi-type variational representation and the corresponding optimizer is weakly delocalized in the sense that its entries vanish uniformly in a polynomial order. 
	\end{abstract}
	
	{
  \hypersetup{linkcolor=black}
  \tableofcontents
}
	

	\section{Introduction and main results}
	
	For $1 \le p \le \infty$,  the $\ell_p$-Grothendieck problem (see \cite{KN12}) seeks to study the following optimization problem
	\begin{equation}\label{eq:Gp} 
	\max_{x\in \mathbb{R}^n:\|x\|_p \le1}\la A x,x\ra,
	\end{equation} 
	where $A$ is an $n \times n$ input matrix. 
	Here, for $1 \le p<\infty$,  the standard $\ell_p$-norm of $x \in \r^n$ is defined as
	$
	\|x\|_p:=(\sum_{i\in [n]} |x_i|^p)^{1/p}.
	$
	Additionally, the $\ell_\infty$-norm of $x$ is given by $ \|x\|_\infty = \max_{i \in [n]}  |x_i|$.

	For $p=2$,  the spectral theory tells us that  the optimal value of \eqref{eq:Gp}  is just the maximum eigenvalue of the symmetric matrix $(A+ A^T)/2$. For $p=\infty$, under a mild assumption that the diagonal entries of $A$ vanish,   the optimal value of \eqref{eq:Gp}  remains unchanged if  the quadratic form is maximized over the discrete hypercube $\{-1, 1\}^n$ instead (see, e.g., \cite[Lemma 3.5]{montanari2019optimization}). This optimization problem is known to be NP-hard \cite{MS65}. The optimal value can be approximated within a $O(\log n)$ factor in polynomial time \cite{CW04,KS03,M01,NRT99}. On the other hand,  it is computationally hard to approximate the optimal value within a $O( (\log n)^\gamma)$ factor for every $\gamma \in (0, 1/6)$ \cite{KS11}.
	
	The special cases of the $\ell_p$-Grothendieck problem appear in clustering algorithms.  The cases $p=2$  and $p=\infty$ are related  to the spectral partitioning \cite{fiedler73} and  the correlation clustering \cite{CW04}, respectively.  For $2< p < \infty$,   the $\ell_p$-Grothendieck problem  can be viewed as an interpolation between these two clustering criteria. 
	In this case, approximating \eqref{eq:Gp}  within a factor of $\xi_p^2 - \eps$ is NP-hard for any $\eps >0$, while there exists a polynomial time algorithm that approximates \eqref{eq:Gp} within a factor of $\xi_p^2$ \cite{GRSW12,KNS10}, where $\xi_p  = ( \e |z|^p )^{1/p}$ is the $p$-th norm of a standard Gaussian random variable $z$.
	
	The case $p=1$ is also believed to be computationally hard (see the discussion in \cite{KN12} and references therein).  Let us mention that if we optimize over a simplex instead, then \eqref{eq:Gp} includes, as a special case, finding the cardinality of the maximum independent set of a graph \cite{MS65}, which is a well-known NP-hard problem. The case $1< p<2$ seems to remain unexplored in the literature. 
	
	While finding an efficient algorithm to compute the optimal value in \eqref{eq:Gp}  is generally difficult except when $p=2$, it is natural to study the $\ell_p$-Grothendieck problem for random input matrices first. This leads to the following optimization problem that takes a random Gaussian matrix as input, which we will  refer to as the $\ell_p$-Gaussian-Grothendieck problem or simply the $\ell_p$-Grothendieck problem, 
	\begin{align} \label{eq:GGp} 
	\max_{x\in \mathbb{R}^n:\|x\|_p \le 1}\la G_nx,x\ra = \max_{x\in \mathbb{R}^n:\|x\|_p=1}\la G_nx,x\ra.
	\end{align}
	Here $G_n$ is an $n\times n$ matrix with entries $g_{ij}$ being i.i.d.\ standard Gaussians for all $i,j\in [n]$. 
	
	Another motivation for investigating the optimization problem \eqref{eq:GGp} arises from the study of spin glass models, which, roughly speaking, are disordered spin systems invented in the 70's in order to understand some unusual behaviors of certain alloys, such as CuMn. The quadratic form, $\la G_nx,x\ra$ in \eqref{eq:GGp}, reflecting the pairwise interactions across sites, can be viewed as (the negative of) the Hamiltonian or energy of the Sherrington-Kirkpatrick (SK) mean-field spin glass model on the unit $\ell_p$-ball. In the terminology of statistical physics, the maximum value of $\la G_nx,x\ra$  over all possible spin configurations $x \in \mathbb{R}^n$ with $\|x\|_p=1$ is called the ground state energy of the model. When $p=\infty$, this quantity is essentially the ground state energy of the classical SK model \cite{SK72} with Ising spins. In the case $p=2$,  the spin configurations lie on the standard $\ell_2$-sphere and this model is called the spherical SK model. For physicists' treatments of mean-field spin glass models, we refer the readers to check \cite{MPV87}. See also the books \cite{Pan13,Tal111,Tal112} for the mathematical progress in the past decade.
	
	In the present work, we aim to study the $\ell_p$-Gaussian-Grothendieck problems in two major directions, $(i)$ their limits as $n$ tends to infinity and $(ii)$ the structure of the set of $(1-\varepsilon)$-optimizers, $\mathcal{M}_\eps = \mathcal{M}_\eps (p, n)$, defined as the collection of all $x\in \mathbb{R}^n$ satisfying  $ \|x\|_p = 1$ and  
	\[ \la G_nx,x\ra  \ge (1-\eps) \max_{y\in \mathbb{R}^n:\|y\|_p = 1}\la G_ny,y \ra.\]
	We  divide our main results into two parts $1\leq p<2$ and $p>2$,  as they exhibit significantly different behaviors. They are presented in the following two subsections. Before we turn to their statements, we introduce some notations that will be used throughout this paper.
	
	\medskip
	
	{\noindent \bf General Notation.}
	\rm  For $1\leq p\leq \infty,$ let $1\leq p^* \leq \infty$ be the H\"older conjugate of $p$, i.e., $1/p+1/p^*=1.$ For $x\in \mathbb{R}^n,$ we define the normalized $\ell_p$-norm of $x$ as 
	$$
	\opnorm{x}_p=\Bigl(\frac{1}{n}\sum_{i\in [n]}|x_i|^p\Bigr)^{1/p}\,\,\text{ for }  1\leq p<\infty\,\, \ \ \mbox{and}\,\, \ \ \opnorm{x}_\infty=\max_{i\in[n]}|x_i|.
	$$
	In addition, we use $\|x\|_0$ to denote the number of nonzero coordinates in $x.$
	Let  $\bar G_n=(G_n+G_n^T)/\sqrt{2}$ be the symmetrized Gaussian matrix, distributed as the Gaussian Orthogonal Ensemble (GOE). The entries of $\bar G_n$ are denoted by $\bar g_{ij}.$ For $1\leq p<\infty,$ we denote by $\xi_p  = ( \e |z|^p )^{1/p}$, the $p$-th norm of a standard Gaussian random variable $z$.
	
	Let $M_n(\mathbb{R})$ be the space of real-valued matrices of size $n\times n.$ For any $A=(a_{ij})_{i,j\in [n]}\in M_n(\mathbb{R}),$ denote its $p$-to-$q$ operator norm of $A$ by $\|A\|_{p\to q}=\sup_{\|x\|_p=1}\|Ax\|_q$ for $1\leq p,q\leq \infty$. In the case $p=q=2,$ we simply denote $\|A\|_{2\to 2}$ by $\|A\|_2.$ The Frobenius norm of $A$ is defined as $\|A\|_F=\sum_{i,j\in [n]}|a_{ij}|^2.$  We use $e_1,\ldots,e_n$ to denote the standard basis of $\mathbb{R}^n.$ 
	


	\subsection{Main result: $1\leq p < 2$}
	
	We split our results into two cases, $p=1$ and $1<p<2$, according to their different types of scalings, $\sqrt{\log n}$ and $n^{1/p^*}.$ We start with the easy case of $p=1$.
	
	\begin{theorem} \label{prop:p=1} Let $p=1$. 
		The following statements hold:
		\begin{itemize}
			\item[$(i)$] Almost surely, 
			\begin{equation} \label{eq:Gnp_limit_p=1}
			\lim_{n \to \infty}  \frac{1}{ \sqrt{\log n} } \max_{\|x\|_1=1}\la G_nx,x\ra =  \sqrt{2}.
			\end{equation}
			\item[$(ii)$] Let $I=\mbox{argmax}_{i\in [n]}g_{ii}$. For any $\eta>0$ and $n \ge 1$, 
			\begin{equation} \label{eq:lb_p=1}
			\p \big ( \la G_ne_I,e_I\ra \ge \sqrt{ 2(1-\eta) \log n} \big )  \ge 1 - e^{- n^\eta}.
			\end{equation}
			\item[$(iii)$] There exist constants $c_0,c, C>0$ such that for any $0<\delta\leq c_0$ and for all $n \ge 1$, 
			\begin{equation} \label{eq:stability_p=1}
			\p \Big (  \max_{\|x\|_1=1, \|x\|_\infty \le 1 - \delta }\la G_nx,x\ra \le \Bigl(1- c \delta +\sqrt{\frac{\log 2}{\log n}}\Bigr) \sqrt{2 \log n} \Big) \ge 1 - Cn^{-c\delta}.
			\end{equation}
		\end{itemize}
	\end{theorem}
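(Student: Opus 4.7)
The three parts are ordered by difficulty and chain together: (ii) provides a lower bound achieved by the standard basis vector $e_I$, (iii) shows this lower bound is essentially tight by forbidding the maximizer from straying too far from $\pm e_i$, and (i) is the leading-order asymptotic obtained by combining the two with extreme-value estimates.

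For (ii), observe that $\la G_n e_I, e_I\ra = g_{II} = \max_i g_{ii}$, so the claim is a tail bound on the maximum of $n$ i.i.d.\ standard Gaussians. Using the Gaussian lower tail $\p(g > t) \gtrsim t^{-1} e^{-t^2/2}$ together with independence, $\p(\max_i g_{ii} < t) \le (1 - c n^{-(1-\eta)}/\sqrt{\log n})^n \le \exp(-c n^\eta/\sqrt{\log n})$ for $t = \sqrt{2(1-\eta)\log n}$; absorbing the logarithmic factor by a negligible shift of $\eta$ yields the stated $e^{-n^\eta}$ bound. The lower-bound half of (i) then follows by Borel--Cantelli along $\eta_k = 1/k$.

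The crux is (iii). My plan is to split $\la G_n x, x\ra = D_n(x) + R_n(x)$ with diagonal part $D_n(x) = \sum_i g_{ii} x_i^2$ and off-diagonal part $R_n(x) = \sum_{i \ne j} g_{ij} x_i x_j$. The diagonal is immediate: $D_n(x) \le (\max_i g_{ii}) \|x\|_2^2 \le (\max_i g_{ii})(1-\delta)$, using $\|x\|_2^2 \le \|x\|_1 \|x\|_\infty \le 1-\delta$, which combined with $\max_i g_{ii} \le \sqrt{2\log n} + O((\log n)^{-1/2})$ (high probability) gives the leading $(1 - c\delta)\sqrt{2\log n}$. For the off-diagonal, I would locate the ``spike'' $I = \arg\max_i |x_i|$ and write $x = \alpha \sigma e_I + y$ with $\alpha = \|x\|_\infty \le 1-\delta$, $\sigma = \mathrm{sign}(x_I)$, $y_I = 0$, $\|y\|_1 = 1-\alpha$, so that $R_n(x) = \alpha\sigma \sum_{j \ne I}(g_{Ij} + g_{jI}) y_j + R_n(y)$. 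The cross term is controlled by $\alpha \|y\|_1 \max_j |g_{Ij} + g_{jI}|$ combined with a union bound over the $n$ choices of $I$ and two signs, while the residual $R_n(y)$ with $\|y\|_1 = 1 - \alpha$ is handled either iteratively or by a direct chaining argument exploiting $\|y\|_2^2 \le \|y\|_1\|y\|_\infty$. The prefactors $\alpha(1-\alpha)$ and $(1-\alpha)^2$, together with the log-size of the union bound, should collapse into the constant remainder $\sqrt{2 \log 2}$.

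The upper-bound half of (i) then follows by choosing $\delta = \delta_n = (\log n)^{-1/4}$ and splitting the feasible set by $\|x\|_\infty$: on $\{\|x\|_\infty \ge 1 - \delta_n\}$ a short perturbation analysis around $\pm e_k$, with $k = \arg\max_i|x_i|$, yields $(1+o(1))\sqrt{2\log n}$, while on the complement (iii) gives the same asymptotic; a Borel--Cantelli step converts the high-probability bounds into almost-sure convergence. The principal obstacle lies in (iii): obtaining a uniform control of the cross term and the recursive residual $R_n(y)$ fitting into the single constant $\sqrt{2\log 2}$ on the right-hand side. The naive estimate $|R_n(y)| \le \|y\|_1^2 \max_{i \ne j}|g_{ij}| \lesssim \sqrt{\log n}$ is too weak, so a more delicate argument exploiting the rank-one structure of $R_n$ and the smallness of $\|y\|_2^2$ will be required to achieve the stated constant exactly.
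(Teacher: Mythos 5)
Your treatments of part (ii) and the lower half of (i) are essentially correct, and your use of a Gaussian lower-tail estimate in (ii) is in fact the right ingredient: the paper's displayed inequality requires $\p(z>t)\ge e^{-t^2/2}$, which does not hold, so the logarithmic correction you mention is the honest version. For the upper half of (i), however, the paper does not route through (iii): it writes $\la G_n x,x\ra = 2^{-1/2}\la\bar G_n x, x\ra$, bounds $\la\bar G_n x, x\ra\le\max\bigl(\sqrt{2}\max_i g_{ii},\ \max_{i<j}|\bar g_{ij}|\bigr)\,\|x\|_1^2$ directly, and invokes the almost-sure limit of i.i.d.\ Gaussian maxima; this avoids the perturbation analysis near $\pm e_k$ you sketch and removes any dependence of (i) on (iii).

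The genuine gap is in (iii), and you acknowledge it but do not close it. Concretely, with $x=\alpha\sigma e_I+y$, $\alpha=\|x\|_\infty$, $\|y\|_1=1-\alpha$, $\|y\|_\infty\le\alpha$, the estimates you have in hand give, with high probability and up to $1+o(1)$ factors, diagonal $\le\alpha\sqrt{2\log n}$, cross term $\le 2\alpha(1-\alpha)\sqrt{2\log n}$, and residual $\le\sqrt{2}(1-\alpha)^2\sqrt{2\log n}$; at $\alpha=1-\delta$ the sum is $(1+\delta+O(\delta^2))\sqrt{2\log n}$, which moves in the wrong direction, and recursing on $y$ has no contracting parameter since $y/\|y\|_1$ can again have $\ell_\infty$-norm close to $1$. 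The paper avoids the spike decomposition entirely: it bounds $\la\bar G_n x,x\ra\le\max_i|(\bar G_n x)_i|$ and observes that under $\|x\|_1=1$, $\|x\|_\infty\le 1-\delta$ one has $|(\bar G_n x)_i|\le(1-\delta)X_{i,(1)}+\delta X_{i,(2)}$, where $X_{i,(1)}$ and $X_{i,(2)}$ are the largest and second-largest absolute entries of row $i$ of $\bar G_n$. The probabilistic input missing from your plan is that $\max_i X_{i,(2)}\le K\sqrt{2\log n}$ with high probability for a fixed constant $K<\sqrt{2}$ (two entries in a single row both exceeding $t$ costs two Gaussian tails, and the resulting exponent survives the union bound over $i$ once $2K^2>3$), while $\max_i X_{i,(1)}\le 2(1+O(\delta))\sqrt{\log(2n)}$ by a Paouris--Valettas-type concentration bound. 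Replacing a $\delta$-weight on the row maximum by the strictly smaller second-order statistic is precisely what produces the $(1-c\delta)$ factor, and this idea does not appear to be recoverable from the ``rank-one structure'' heuristic you gesture at.
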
 
	Items $(i)$ and $(ii)$ together imply that $e_I$ is a near optimizer of the $\ell_1$-Grothendieck problem. Furthermore, from $(iii)$, if we take $\eps$ satisfying $0<\eps\leq 1/4$ and $\lim_{n\to\infty}\eps \sqrt{\log n}=\infty$, then for all large $n$, with high probability, the set  $\mathcal{M}_\eps$ is a subset of the disjoint union of  $\ell_1$-balls of radius $\delta = O(\eps)$ centered around the $2n$ coordinate vectors $\pm e_1, \pm e_2, \ldots, \pm e_n$. Consequently, each near optimizer is localized with a single entry carrying at least $1- \delta$ of the weight.  
	

	Next we consider the case $1< p < 2$, which is more interesting.  Define $v_i \in \r^n$ as 
	\begin{align}
	\label{extra:eq8}
	v_i(j) = \mathrm{sgn} (  \bar g_{ij}) \Bigl( \frac{  | \bar g_{ij}|^{p^*} }{ \| \bar G_n e_i \|_{p^*}^{p^*}} \Bigr)^{1/p},\,\,\forall j\in [n],
	\end{align} 
	where $\mathrm{sgn}(a)$ if $1$ if $a\geq 0$ and $-1$ if $a<0.$
	Note that $\| v_i\|_p = 1=\|e_i\|_p.$ 
	Let 
	\begin{align}\label{add:eq---1}
	\mathcal{O}  = \Big\{ \pm \frac{e_i+  v_i}{ \| e_i+  v_i \|_p} : 1 \le i \le n  \Big \}  .
	\end{align} 
	Note that ignoring the signs of the  the vectors in $\mathcal{O}$, it can be seen that they are asymptotically orthogonal to each other. The following theorems  establish the limit of the $\ell_p$-Grothendieck problem and show that any near optimizer lies close to the set $\mathcal{O}$.
	
	\begin{theorem}\label{eq:thm_p<2}
		Let $1 < p< 2$. 
		\begin{itemize}
			\item[$(i)$] Almost surely, 
			\begin{equation} \label{eq:Gnp_limit_p<2}
			\lim_{n \to \infty}  \frac{1}{ n^{  1/p^*}} \max_{\|x\|_p=1}\la G_nx,x\ra =  2^{\tfrac{1}{2} - \tfrac{2}{p}} \xi_{p^*}.  
			\end{equation}
			\item[$(ii)$] Each $x \in \mathcal{O}$ is a near optimizer in the sense that for any $d>0,$ there exists a constant $C>0$ such that with probability at least $1-Cn^{-d},$
			\begin{equation} \label{eq:Gnp_limit_approx_opt}
			\min_{x \in \mathcal{O} }\la G_nx,x\ra \geq  2^{\tfrac{1}{2} - \tfrac{2}{p}} \xi_{p^*} n^{ \frac{1}{p^*}} -C\bigl(n^{-\frac{1}{p} + \frac{1}{2} + \frac{1}{p^*}} ( \log n )^{\frac{p^*}{4}}+\sqrt{\log n}\bigr).
			\end{equation}
		\end{itemize}
	\end{theorem}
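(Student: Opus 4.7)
The plan is to prove part (ii) by a direct computation exploiting Hölder duality, which yields the lower bound in part (i), and then to establish the matching upper bound in (i) via a peak-plus-bulk decomposition.

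For part (ii), fix $i \in [n]$ and set $x = (e_i + v_i)/\|e_i + v_i\|_p$. Using the identity $\langle G_n x, x\rangle = \frac{1}{\sqrt 2}\langle \bar G_n x, x\rangle$, I would expand
\[
\langle \bar G_n (e_i + v_i), e_i + v_i\rangle = \bar g_{ii} + 2\langle \bar G_n e_i, v_i\rangle + \langle \bar G_n v_i, v_i\rangle.
\]
The construction of $v_i$ in \eqref{extra:eq8} is precisely the Hölder-dual maximizer of $w \mapsto \langle \bar G_n e_i, w\rangle$ subject to $\|w\|_p = 1$, so $\langle \bar G_n e_i, v_i\rangle = \|\bar G_n e_i\|_{p^*}$ \emph{exactly}; this is the main term. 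I would then establish the following uniform bounds over $i \in [n]$, each with probability at least $1 - Cn^{-d}$: (a) $\|\bar G_n e_i\|_{p^*}^{p^*} = \sum_j |\bar g_{ij}|^{p^*}$ concentrates around $n\xi_{p^*}^{p^*}$ via a sub-Weibull Bernstein inequality for $|Z|^{p^*}$; (b) $\max_i |\bar g_{ii}| = O(\sqrt{\log n})$; (c) $\|\bar G_n\|_2 \leq C\sqrt n$ (GOE edge); and (d) $\|v_i\|_2^2 = O(n^{1-2/p})$ by expanding $v_i(j)^2 = |\bar g_{ij}|^{2p^*/p}/\|\bar G_n e_i\|_{p^*}^{2p^*/p}$ and concentrating the numerator. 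Combining (c)–(d) yields $|\langle \bar G_n v_i, v_i\rangle| \leq \|\bar G_n\|_2\|v_i\|_2^2 = O(n^{3/2-2/p})$, which matches the claimed exponent $-1/p + 1/2 + 1/p^* = 3/2 - 2/p$, while the uniform sub-Weibull corrections produce the $(\log n)^{p^*/4}$ factor. Since $|v_i(i)|$ is small, a direct expansion gives $\|e_i + v_i\|_p = 2^{1/p}(1+o(1))$. Assembling,
\[
\langle G_n x, x\rangle \geq \frac{2\|\bar G_n e_i\|_{p^*} - O(\sqrt{\log n}) - O(n^{3/2-2/p}(\log n)^{p^*/4})}{\sqrt 2 \cdot 2^{2/p}(1+o(1))} \geq 2^{\tfrac{1}{2} - \tfrac{2}{p}}\xi_{p^*} n^{1/p^*} - \mathrm{error},
\]
establishing \eqref{eq:Gnp_limit_approx_opt}.

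For the upper bound in (i), I split any unit-$\ell_p$ competitor $x$ as $x = \mathrm{sgn}(x_i)\, m\, e_i + x'$ where $m = \|x\|_\infty$ is attained at coordinate $i$, and apply Hölder on the cross term:
\[
\langle \bar G_n x, x\rangle \leq m^2|\bar g_{ii}| + 2m(1-m^p)^{1/p}\|\bar G_n e_i\|_{p^*} + \langle \bar G_n x', x'\rangle.
\]
Using the uniform upper bound on $\max_i \|\bar G_n e_i\|_{p^*}$ and maximizing $m \mapsto 2m(1-m^p)^{1/p}$ on $[0,1]$ (unique optimum at $m^* = 2^{-1/p}$ with value $2^{1-2/p}$) gives the main contribution $2^{1-2/p}\xi_{p^*} n^{1/p^*}$. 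The delicate point is controlling the residual: the naive spectral estimate $\|\bar G_n\|_2\|x'\|_2^2$ is too crude since $\sqrt n \gg n^{1/p^*}$ for $p < 2$. Instead one exploits that near-extremality of the Hölder cross term forces $x'$ to be close to the dual direction $(1-m^p)^{1/p} v_i$, for which the residual is the $O(n^{3/2-2/p})$ quantity already bounded in part (ii). Dividing by $\sqrt 2$ matches the lower bound from (ii) and proves \eqref{eq:Gnp_limit_p<2}.

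The main obstacles are: (1) the sharp uniform concentration of the sub-Weibull sums $\sum_j |\bar g_{ij}|^{p^*}$ and $\sum_j |\bar g_{ij}|^{2p^*/p}$ across $i \in [n]$, since for $p^* > 2$ the summand $|Z|^{p^*}$ has tail exponent $2/p^* < 1$, so Bernstein-type bounds acquire logarithmic corrections with exponents tied to $p^*$, producing the $(\log n)^{p^*/4}$ factor in \eqref{eq:Gnp_limit_approx_opt}; and (2) making the residual-bound step in the upper bound of (i) rigorous by ruling out trade-offs between the cross and residual terms that could otherwise inflate the total beyond $2^{1-2/p}\xi_{p^*} n^{1/p^*}$—this requires leveraging the Hölder equality case at the extremal $x'$ rather than relying on a spectral bound alone.
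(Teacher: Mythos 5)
Your proof of part (ii) (which also serves as the lower bound for (i)) is a correct and valid alternative to the paper's argument, and if anything slightly sharper. The paper controls $\langle v_i, G_n v_i\rangle$ indirectly: it first shows $\max_i\|v_i\|_\infty\leq\varepsilon$ with $\varepsilon\asymp(\log n)^{p^*/(2p)}n^{-1/p}$, then bounds $\sup\{|\langle x,G_n x\rangle|:\|x\|_p\leq 1,\|x\|_\infty\leq\varepsilon\}=O(\varepsilon^{1-p/2}n^{1/p^*})$ via Chevet's inequality; it is precisely this route that manufactures the $(\log n)^{p^*/4}$ correction. You instead use the deterministic spectral bound $|\langle\bar G_n v_i,v_i\rangle|\leq\|\bar G_n\|_2\|v_i\|_2^2$ together with concentration of $\|v_i\|_2^2$ at its leading order $\Theta(n^{1-2/p})$, which gives $O(n^{3/2-2/p})$ with a bounded constant and no logarithmic factor; this is stronger than what the theorem requires. (Your claim that the $(\log n)^{p^*/4}$ arises from sub-Weibull corrections is off---in your route the error is already order $n^{3/2-2/p}$ from the leading term of $\|v_i\|_2^2$, not from its fluctuations---but this is harmless.) The concentration of $\min_i\|\bar G_n e_i\|_{p^*}$ is handled in the paper by observing $\|\bar G_n e_i\|_{p^*}$ is $2$-subgaussian as a Lipschitz image of a Gaussian vector, which is a bit cleaner than a sub-Weibull Bernstein bound for $\sum_j|\bar g_{ij}|^{p^*}$, but both work.

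The genuine gap is in your upper bound for (i). You decompose $x=m\,\mathrm{sgn}(x_i)e_i+x'$ where $m=\|x\|_\infty$ is the single largest coordinate, and then you must control the residual $\langle\bar G_n x',x'\rangle$. This residual is itself an $\ell_p$-Grothendieck quadratic form at scale $\|x'\|_p=(1-m^p)^{1/p}$, so a priori it can be as large as $(1-m^p)^{2/p}\max_{\|y\|_p=1}\langle\bar G_n y,y\rangle\asymp(1-m^p)^{2/p}\cdot 2^{1-2/p}\xi_{p^*}n^{1/p^*}$, which is of the \emph{same order} as the main term you already extracted. Your proposed fix---that ``near-extremality of the Hölder cross term forces $x'$ to be close to $(1-m^p)^{1/p}v_i$''---only applies in the regime where the cross term is near-maximal, and you have not ruled out the complementary regime where the cross term is small but the residual is large. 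Concretely, if you add your Hölder bound on the cross term to the naive $\ell_p$-Grothendieck bound on the residual, at $m^p=1/2$ you obtain
\begin{align*}
\langle\bar G_n x,x\rangle\lesssim 2^{1-2/p}\xi_{p^*}n^{1/p^*}+2^{1-4/p}\xi_{p^*}n^{1/p^*}=\bigl(1+2^{-2/p}\bigr)2^{1-2/p}\xi_{p^*}n^{1/p^*},
\end{align*}
which overshoots the claimed constant by a factor strictly greater than one; and a recursive application of the same bound to $x'$ does not contract to the right constant either. The paper avoids this by cutting at an $\ell_\infty$-threshold $\varepsilon=n^{-2/(pp^*)}$ and lumping \emph{all} large coordinates into a sparse vector $x^\dagger$, so the bulk $x^o$ has $\|x^o\|_2\leq\varepsilon^{1-p/2}$ and $\langle G_n x^o,x^o\rangle$ is subleading by Chevet, while $\langle G_n x^\dagger,x^\dagger\rangle$ is subleading because $x^\dagger$ is $\varepsilon^{-p}$-sparse, and only the cross term $\langle\bar G_n x^o,x^\dagger\rangle$ survives (Propositions~\ref{lem:decomp} and \ref{lem:max_over_T}). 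Peeling off a single coordinate does not reproduce this structure, because the residual $x'$ can still contain several coordinates of size $\Theta(1)$; you need an argument that simultaneously kills all spikes. To repair your proof you would have to either adopt the paper's peak-plus-bulk decomposition or find a genuinely different mechanism to show that the residual after removing the spike is subleading, and the latter is not present in your write-up.
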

	
	\begin{remark}\rm
		The order of the error term on the right-hand side of \eqref{eq:Gnp_limit_approx_opt} depends on the value of $p.$ When $1<p<4/3$, it is of order $\sqrt{\log n}$ and when $4/3\leq p<2$, it is of order $n^{-1/p+1/2+1/p^*}(\log n)^{p^*/4}$.
	\end{remark}

	\begin{theorem}	[Stability]\label{extra:thm1}
		$\mathcal{O}$ is essentially the set of all optimizers. More precisely, let $0<\kappa < {1}/{3p}$ and $d>0.$ There exist positive constants $C_1,C_2,C_3$ such that for any $n\geq 1,$  if $n^{-\kappa}\leq \delta\leq 1$,  the event 
		\begin{equation}\label{stability}
		\max_{\mathrm{dist}(x, \mathcal{O}) \ge \delta, \|x\|_p = 1}\la G_nx,x\ra \le 2^{\tfrac{1}{2} - \tfrac{2}{p}} \bigl(1-C_1\delta^6\bigr)\xi_{p^*}n^{  1/p^*} + C_2n^{p/(2p^*)}  \sqrt{\log n}
		\end{equation}
		occurs with probability at least $1-C_3n^{-d}$, where $\mathrm{dist}(x, \mathcal{O})  = \min_{ y \in \mathcal{O} } \| x -y \|_p$.
	\end{theorem}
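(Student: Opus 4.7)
The plan is to reduce to the symmetric matrix via $\langle G_n x,x\rangle=\tfrac{1}{\sqrt 2}\langle\bar G_n x,x\rangle$ and then quantitatively track the deficits in the Hölder/AM-GM chain that underlies Theorem~\ref{eq:thm_p<2}(i). The first step is to establish, by Gaussian concentration and a union bound, the following good events with probability at least $1-C_3 n^{-d}$: $\max_i|\bar g_{ii}|\lesssim\sqrt{\log n}$, $\|\bar G_n\|_2\lesssim\sqrt n$, $\max_i\bigl|\|\bar G_n e_i\|_{p^*}-\xi_{p^*}n^{1/p^*}\bigr|=o(n^{1/p^*})$, and $\max_i|\langle\bar G_n v_i,v_i\rangle|=o(n^{1/p^*})$ (the last exploits $\|v_i\|_2^2\asymp n^{1-2/p}$ for $1<p<2$). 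On this event, for any $x$ with $\|x\|_p=1$, let $i^*=\mathrm{argmax}_i|x_i|$, $\alpha=x_{i^*}$ (which may be assumed $\ge 0$ by symmetry), $w=x-\alpha e_{i^*}$, $\beta=\|w\|_p$, so that $\alpha^p+\beta^p=1$ and $\|w\|_\infty\le\alpha$. Then
\[
\langle\bar G_n x,x\rangle=\alpha^2\bar g_{i^*i^*}+2\alpha\langle\bar G_n e_{i^*},w\rangle+\langle\bar G_n w,w\rangle.
\]

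The first term is $O(\sqrt{\log n})$. For the second, since $v_{i^*}$ is by \eqref{extra:eq8} the Hölder extremizer of $\bar G_n e_{i^*}/\|\bar G_n e_{i^*}\|_{p^*}$, uniform convexity of $\ell_p$ for $1<p<2$ gives a quantitative Hölder bound $\langle\bar G_n e_{i^*},w\rangle\le\beta\|\bar G_n e_{i^*}\|_{p^*}(1-c_p\eta^{p^*})$, where $\eta:=\|w/\beta-v_{i^*}\|_p$. For the third term, applying Theorem~\ref{eq:thm_p<2}(i) to the $(n-1)$-dimensional subproblem on $\{j:j\ne i^*\}$ gives $\langle\bar G_n w,w\rangle\le\beta^2\cdot 2^{1-2/p}\xi_{p^*}n^{1/p^*}(1+o(1))$. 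Combining with $\|\bar G_n e_{i^*}\|_{p^*}=(1+o(1))\xi_{p^*}n^{1/p^*}$ yields
\[
\langle\bar G_n x,x\rangle\le\bigl[2\alpha\beta(1-c_p\eta^{p^*})+2^{1-2/p}\beta^2\bigr]\xi_{p^*}n^{1/p^*}+o(n^{1/p^*}).
\]
The distance $\delta=\mathrm{dist}(x,\mathcal{O})$ is then related to the triple $(|\alpha-2^{-1/p}|,|\beta-2^{-1/p}|,\eta)$ by taking $z=(e_{i^*}+v_{i^*})/2^{1/p}\in\mathcal{O}$ and using the triangle inequality: $\delta^p\le|\alpha-2^{-1/p}|^p+(\beta\eta+|\beta-2^{-1/p}|)^p$, so $\delta\lesssim\max\{|\alpha-2^{-1/p}|,|\beta-2^{-1/p}|,\eta\}$. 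A deficit in either of the first two feeds the quadratic AM-GM inequality $2\alpha\beta\le 2^{1-2/p}(1-c|\alpha-2^{-1/p}|^2)$; a deficit in $\eta$ feeds the Hölder deficit above.

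The hard part will be the cubic remainder: the iterated bound $\langle\bar G_n w,w\rangle\le\beta^2\cdot 2^{1-2/p}\xi_{p^*}n^{1/p^*}$ is loose at the true optimum $\alpha=\beta=2^{-1/p}$, $\eta=0$ (there $w\propto v_{i^*}$ and $\langle\bar G_n w,w\rangle=o(n^{1/p^*})$, rather than the plugged-in $2^{1-4/p}\xi_{p^*}n^{1/p^*}$). The resolution is a dichotomy on $\eta$. When $\eta$ is small, decompose $w=\beta v_{i^*}+r$ with $\|r\|_p\le\beta\eta$ and expand the cubic term as $\beta^2\langle\bar G_n v_{i^*},v_{i^*}\rangle+2\beta\langle\bar G_n v_{i^*},r\rangle+\langle\bar G_n r,r\rangle=o(\beta^2 n^{1/p^*})+O(\eta\beta^2 n^{1/p^*})$ using the good events; only the Hölder/AM-GM deficit then controls the objective. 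When $\eta$ is bounded away from $0$, the iterated bound is nearly tight only if $w/\beta$ is close to some $(e_j+v_j)/2^{1/p}$ with $j\ne i^*$, which forces $\|w\|_\infty\approx\beta\cdot 2^{-1/p}$; combined with $\alpha\ge\|w\|_\infty$ this gives $\alpha^p\ge\beta^p/2$, hence $\beta\le(2/3)^{1/p}<1$, so the effective $\beta^2$ factor pushes the right-hand side strictly below $2^{1-2/p}\xi_{p^*}n^{1/p^*}$. Chaining the quadratic AM-GM deficit, the Hölder deficit, and the polynomial loss from this case-switching yields a combined deficit of order $C_1\delta^6\cdot 2^{1-2/p}\xi_{p^*}n^{1/p^*}$; the exponent $6$ is a convenient bookkeeping of several $\delta^c$ losses and is not optimized. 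A final union bound over the $n$ choices of $i^*$ and the two signs of $\alpha$ converts the pointwise stability into the claimed uniform one.
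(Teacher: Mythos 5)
Your approach is genuinely different from the paper's: you isolate the single largest coordinate $\alpha = x_{i^*}$ and put all remaining coordinates into $w$, while the paper reuses the thresholded decomposition $x = x^o + x^\dagger$ at level $\eps_n = n^{-2/(pp^*)}$ from the proof of Theorem~\ref{eq:thm_p<2} and partitions the bad set into three pieces $S_1 \cup S_2 \cup S_3$ by which structural constraint fails, feeding them into the already-established Propositions~\ref{lem:decomp} and~\ref{lem:max_over_T}. As written, though, your sketch has two genuine gaps, and they are precisely what the paper's multi-tiered decomposition is designed to avoid.

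In the ``$\eta$ small'' branch, the error from replacing $w$ by $\beta v_{i^*}+r$ is $O(\eta\beta^2 n^{1/p^*})$, i.e.\ \emph{linear} in $\eta$, whereas the Hölder-stability gain from Lemma~\ref{lem:holder_stability} is only \emph{quadratic} (the lemma gives $\tfrac{p^2}{16p^*}\|u-v\|_p^2$, not $\eta^{p^*}$). For small $\eta$ the error dominates the gain and the resulting bound drifts \emph{above} $2^{1-2/p}\xi_{p^*}n^{1/p^*}$, so no deficit is produced. The paper resolves exactly this tension by using \emph{mismatched thresholds}: in $S_3$, the condition $\|x^\dagger/\|x^\dagger\|_p \mp e_i\|_p\le\bd^3$ is cubic while $\|x^o/\|x^o\|_p \mp v_i\|_p\ge\bd$ is linear, so the $\bd^3$ replacement error is beaten by the $\bd^2$ Hölder deficit; a single threshold $\eta$ cannot do this.

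In the ``$\eta$ bounded away from $0$'' branch, the structural claim that near-tightness of the $(n-1)$-dimensional bound forces $w/\beta$ close to some $(e_j+v_j)/2^{1/p}$ is exactly Theorem~\ref{extra:thm1} on the complement of $\{i^*\}$; without an explicit induction on $n$ (with the attendant degradation of the $\delta^6$ exponent) this is circular. And even granting $\beta^p\le 2/3$, summing the two main contributions gives $2^{1-2/p}\bigl[(1-c\eta_0^2)+(2/3)^{2/p}\bigr]\xi_{p^*}n^{1/p^*}$, and the Hölder constant $c=\tfrac{p^2}{16p^*}$ is far too small for $c\eta_0^2\ge(2/3)^{2/p}$ at any $p\in(1,2)$, so the bound still exceeds the target. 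The paper sidesteps all of this with Lemma~\ref{lem:deficient_l_2_norm}: if $x^\dagger/\|x^\dagger\|_p$ is $\delta$-far from every $\pm e_i$, then its $\ell_2$-norm is $\le 1 - c\delta^{3p}$, which directly shrinks the $r(L^\#)$ factor in Proposition~\ref{lem:max_over_T} without any recursion. The underlying difficulty with your decomposition is that $w$ can carry a $\Theta(1)$ spike, so $\langle \bar G_n w, w\rangle$ cannot be dismissed as lower order and cannot be sharply estimated without invoking the theorem being proved.
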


	As a consequence of the above two theorems, it can be deduced from \eqref{stability} that if $$
	0<\eps  \le \tfrac{1}{4}\,\,\mbox{and}\,\,\lim_{n\to\infty}\frac{\eps}{ n^{ -\frac{1}{p^*}(1-\frac{p}{2})}\sqrt{\log n}}=\infty,$$ then with high probability,   $\mathcal{M}_\eps$ is a subset of the disjoint union of  $\ell_p$-balls of radius $\delta = O(\eps^{1/6})$ centered around the $2n$ vectors in $\mathcal{O}$. The dependence of $\delta$ on $\eps$ here is  sub-optimal. Note that from the law of large number,
	$$\frac{e_i+  v_i}{ \|e_i+  v_i \|_p}  \approx 2^{-1/p} (e_i +w_i), $$ where $$w_i(j)  = n^{-1/p} \xi_{p^*}^{-p^*/p}\mathrm{sgn}(\bar g_{ij}) |   \bar g_{ij} |^{p^*/p}. $$ Therefore, each near optimizer has a single spike that has magnitude of order $1$ while the rest of entries are $O(\delta)$. Hence,  in the regime $1<p<2$,  the near optimizers possess both localized and delocalized components.

	\subsection{Main result: $2\leq p\leq  \infty$}
	Consider the normalized Grothendieck problem
	\begin{align}\label{gp}
	\gp_{n,p}:=\max_{x\in \mathbb{R}^n:\|x\|_p=1}\frac{\la G_nx,x\ra}{n^{3/2-2/p}}.
	\end{align}
	As we have mentioned before, $p=2$ is the easy case, since $\gp_{n,2}=2^{-1/2}n^{-1/2}\max_{\|x\|_2=1}\la \bar G_nx,x\ra$ and the optimal value is the largest eigenvalue of the GOE, which converges to $2$ as $n\to\infty.$ The case $p=\infty$ corresponds to the ground state energy of the SK on the product space $[-1,1]^n$ and it is known that $\lim_{n\to\infty}\gp_{n,p}$ exists and can be expressed as the famous Parisi formula, see \cite{CP18}.

	Our main result focuses on the interpolating case $2<p<\infty.$ First, we study the convergence of $\gp_{n,p}$. Note that the optimization problem can be rewritten using the normalized norm as
	$$\gp_{n,p}=\max_{x\in \mathbb{R}^n:\opnorm{x}_p=1}\frac{\la G_nx,x\ra}{n^{3/2}}.$$
	In order to study the above optimization problem, it is natural to remove the $\ell_p$-constraint over the maximization set and compensate it by adding a $\ell_p$-norm potential,  namely, for $t>0$, we define
	\begin{align}\label{potential}
	L_{n,p}(t)&=\max_{x\in \mathbb{R}^n}\Bigl(\frac{\la G_nx,x\ra}{n^{3/2}}-t\opnorm{x}_p^p\Bigr).
	\end{align}
	We also introduce the following restricted version of the above Hamiltonian in \eqref{potential}  where the self-overlap $\|x\|^2$  is constrained to be a fixed value, 
	\begin{align}\label{Lnput}
	L_{n,p,u}(t)&=\max_{x\in \mathbb{R}^n:\opnorm{x}_2^2=u}\Bigl(\frac{\la G_nx,x\ra}{n^{3/2}}-t\opnorm{x}_p^p\Bigr),\ \,\,u>0.
	\end{align}
	The following theorem establishes the existence of the deterministic limits of $L_{n,p,u}(t)$ and $L_{n,p}(t)$.
	\begin{theorem}\label{thm-1}
		Let $2<p<\infty$. Almost surely,  the following limits exist,
		\begin{align}
		\begin{split}\label{thm-1:eq1}
		L_{p,u}(t)&:=\lim_{n\to\infty}L_{n,p,u}(t)=\lim_{n\to\infty}\e L_{n,p,u}(t),\,\,\text{ for all } u,t>0, 
		\end{split}\\
		\begin{split}\label{thm-1:eq2}
		L_p(t)&:=\lim_{n\to\infty}L_{n,p}(t)=\sup_{u>0}L_{p,u}(t),\,\,\text{ for all } t>0.
		\end{split}
		\end{align}
	\end{theorem}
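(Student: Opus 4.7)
The strategy is standard for such spin-glass type limits: first establish concentration of $L_{n,p,u}(t)$ about its mean, then convergence of the mean via a Slepian super-additivity argument, and finally a compactness step to interchange $\sup_u$ with $\lim_n$ in \eqref{thm-1:eq2}. For the concentration step, I would view $L_{n,p,u}(t)$ as a function of the $n^2$ i.i.d.\ standard Gaussians $(g_{ij})$. The gradient of $\langle G_n x, x\rangle/n^{3/2}$ in $(g_{ij})$ is $xx^T/n^{3/2}$, whose Frobenius norm is $\|x\|_2^2/n^{3/2} = u/\sqrt n$ on the constraint $\opnorm{x}_2^2 = u$. Hence $L_{n,p,u}(t)$ is a supremum of $(u/\sqrt n)$-Lipschitz functions of $(g_{ij})$ and therefore itself $(u/\sqrt n)$-Lipschitz, and Gaussian concentration yields
\begin{equation*}
\p\bigl(|L_{n,p,u}(t) - \e L_{n,p,u}(t)| \ge s\bigr) \le 2 e^{-ns^2/(2u^2)},
\end{equation*}
so that a Borel--Cantelli argument with, say, $s = n^{-1/3}$ gives $L_{n,p,u}(t) - \e L_{n,p,u}(t) \to 0$ almost surely.

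The heart of the argument is showing that $n\,\e L_{n,p,u}(t)$ is superadditive in $n$. Given $n = n_1 + n_2$, I would decompose $x = (x^1, x^2) \in \r^{n_1} \oplus \r^{n_2}$ and restrict to $R = \{\|x^k\|_2^2 = n_k u, \ k = 1, 2\}$. On $R$, both the single-matrix process $H(x) = \langle G_n x, x\rangle / \sqrt n$ and the independent block process $H_1(x^1) + H_2(x^2) = \langle G_{n_1}^{(1)} x^1, x^1\rangle/\sqrt{n_1} + \langle G_{n_2}^{(2)} x^2, x^2\rangle/\sqrt{n_2}$ have the common variance $nu^2$, while Cauchy--Schwarz gives
\begin{equation*}
\e[H(x) H(y)] \;=\; \frac{(\langle x^1, y^1\rangle + \langle x^2, y^2\rangle)^2}{n} \;\le\; \frac{\langle x^1, y^1\rangle^2}{n_1} + \frac{\langle x^2, y^2\rangle^2}{n_2} \;=\; \e\bigl[(H_1+H_2)(x)(H_1+H_2)(y)\bigr].
\end{equation*}
Slepian's inequality (smaller covariance with equal variance gives larger expected maximum, and the common deterministic drift $-t\|x^1\|_p^p - t\|x^2\|_p^p$ preserves the comparison) then yields $\e \sup_R [H(x) - t\|x\|_p^p] \ge n_1 \e L_{n_1,p,u}(t) + n_2 \e L_{n_2,p,u}(t)$. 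Since $R \subset \{\|x\|_2^2 = nu\}$, the left-hand side is at most $n\,\e L_{n,p,u}(t)$, which is the desired superadditivity. Combined with the crude upper bound $\e L_{n,p,u}(t) \le Cu$ (which follows from $\e\|\bar G_n\|_{2\to 2} = O(\sqrt n)$), Fekete's lemma produces the finite limit $L_{p,u}(t) = \sup_n \e L_{n,p,u}(t)$, and together with the first step this gives \eqref{thm-1:eq1}.

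For \eqref{thm-1:eq2}, the scaling identity $L_{n,p,u}(t) = u\,L_{n,p,1}(tu^{p/2-1})$ reduces matters to the one-parameter family $s \mapsto L_{n,p,1}(s)$, which is convex (being a supremum of affine functions of $s$) and uniformly bounded by the previous steps, hence uniformly Lipschitz on compact subintervals of $(0, \infty)$. This upgrades pointwise convergence to uniform convergence of $L_{n,p,u}(t) \to L_{p,u}(t)$ on compact subsets of $(0, \infty)$. Next, using $\opnorm{x}_p^p \ge \opnorm{x}_2^p$ for $p \ge 2$ (the power-mean inequality) together with $\langle G_n x, x\rangle/n^{3/2} \le C\opnorm{x}_2^2$, any near-optimizer of $L_{n,p}(t)$ satisfies $\opnorm{x}_2^2 \le U = U(t,p)$ on an event of probability tending to one, so $L_{n,p}(t) = \sup_{u \in [0,U]}L_{n,p,u}(t) \to \sup_{u \in [0,U]}L_{p,u}(t) = \sup_{u > 0}L_{p,u}(t)$ almost surely. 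I expect the principal obstacle to lie in this last step: the interchange of $\sup_u$ and $\lim_n$ demands uniform-in-$n$ equicontinuity of $u \mapsto L_{n,p,u}(t)$, which the scaling identity and the a priori bound on $\opnorm{x}_2$ make tractable but which is where the bulk of the technical care is concentrated.
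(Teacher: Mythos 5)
Your proposal is correct and follows the same overall strategy as the paper (concentration, superadditivity, compactness), but the superadditivity step is carried out by a genuinely different, more direct route. The paper establishes superadditivity of $n\,\e L_{n,p,u}(t)$ via Guerra--Toninelli interpolation at positive temperature $\beta$: it introduces a free energy $F_{n_1,n_2,\beta}(s)$ over a product of spheres, computes $\partial_s F$ by Gaussian integration by parts to get an expression involving $nR_n(z^1,z^2)^2 - n_1R_{n_1}(x^1,x^2)^2 - n_2R_{n_2}(y^1,y^2)^2$, applies Jensen's inequality in overlap form, and only then sends $\beta\to\infty$. You instead compare the two Gaussian processes directly at $\beta=\infty$ using a Sudakov--Fernique/Slepian comparison, and your covariance inequality $\langle x,y\rangle^2/n \le \langle x^1,y^1\rangle^2/n_1 + \langle x^2,y^2\rangle^2/n_2$ is the same Cauchy--Schwarz/Jensen fact, just applied pointwise rather than inside a Gibbs average. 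What the interpolation route buys is that the comparison with the deterministic drift $-t\|x\|_p^p$ is automatic (the drift rides along in the Hamiltonian); your route is cleaner but requires the version of Sudakov--Fernique valid for processes with equal (nonzero) deterministic means, which is true but not what every textbook states --- it is easiest to justify by re-running exactly the smoothed interpolation $F_\lambda(x) = \lambda^{-1}\log\sum_t e^{\lambda(x_t+f(t))}$, which circles back to the paper's method. So the two arguments are nearly the same at their core; yours simply skips the positive-temperature intermediary.

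One small gap: \eqref{thm-1:eq1} asserts the almost sure convergence simultaneously for \emph{all} $u,t>0$, not just for each fixed pair. Your Borel--Cantelli step only handles a fixed $(u,t)$. The paper resolves this via Lemma~\ref{extra:lem3}, proving that with overwhelming probability $L_{n,p,u}(t)$ is Lipschitz in $(u,t)$ on compact rectangles, and then intersecting over a countable dense set. You raise the analogous equicontinuity issue for \eqref{thm-1:eq2} (via the scaling identity $L_{n,p,u}(t)=u\,L_{n,p,1}(tu^{p/2-1})$ and convexity in $t$), but the same care is needed to get the simultaneous statement in \eqref{thm-1:eq1}. Your a~priori confinement of near-optimizers of $L_{n,p}(t)$ to a fixed ball $\{\opnorm{x}_2^2\le U\}$ matches the paper's Lemma~\ref{extra:lem4} and correctly reduces the sup over $u>0$ to a compact interval, which is indeed where the technical weight sits.
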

	
	Our next result shows the existence of the limit of  $\gp_{n,p}$ and its connection to $L_p(t)$.
	
	\begin{theorem}\label{thm2}
		Let $2<p<\infty.$ Almost surely,
		\begin{align}\label{thm2:eq2}
		\gp_p:=\lim_{n\to\infty}\gp_{n,p}=\frac{p}{2}\Bigl(\frac{p}{2}-1\Bigr)^{2/p-1}t^{2/p}L_p(t)^{1-2/p},\,\,\text{ for all } t>0.
		\end{align}
		In addition,  for $\delta=p/2-1,$ there exist $C,C'>0$ such that with probability at least $1-C'e^{-n/C'},$ if $x_*\in \mathbb{R}^n$ with $\|x_*\|_p=1$ is an optimizer of \eqref{gp}, 
		\begin{align}\label{thm2:eq3}
		\|x_*\|_\infty\leq Cn^{\frac{1}{p+\delta}-\frac{1}{p}}.
		\end{align}
	\end{theorem}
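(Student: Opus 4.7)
For the limit formula (Part 1), the key observation is a deterministic Legendre-type identity. For each realization of $G_n$ and each $t>0$, writing $x = ry$ with $\opnorm{y}_p = 1$ in the definition of $L_{n,p}(t)$ and taking the inner maximum over $y$ gives
\[
L_{n,p}(t) \;=\; \max_{r\ge 0}\bigl(r^2\gp_{n,p} - tr^p\bigr).
\]
Since $p>2$ and $\gp_{n,p}>0$ almost surely, this supremum is attained at the unique $r_*>0$ with $r_*^{p-2} = 2\gp_{n,p}/(tp)$, yielding a closed-form expression of $L_{n,p}(t)$ in terms of $\gp_{n,p}$. Inverting this relation and invoking Theorem~\ref{thm-1} (the a.s.\ convergence $L_{n,p}(t)\to L_p(t)$) establishes the a.s.\ convergence of $\gp_{n,p}$, and a short algebraic manipulation matches the stated form \eqref{thm2:eq2}; the intrinsic $t$-independence of the right-hand side is a built-in consistency check.

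For the delocalization (Part 2), I would attack $x_*$ via its first-order (KKT) conditions. Differentiating the Lagrangian $\la G_n x,x\ra - \lambda\|x\|_p^p$ produces, at the optimizer,
\[
2(G_n^{\mathrm{sym}}x_*)_i \;=\; \lambda p|x_*(i)|^{p-1}\mathrm{sgn}(x_*(i)),\qquad i=1,\dots,n,
\]
with $G_n^{\mathrm{sym}} = (G_n+G_n^T)/2$. Testing against $x_*$ identifies $\lambda = (2/p)\gp_{n,p}n^{3/2-2/p}$; taking absolute values coordinatewise, then the maximum over $i$, and then applying H\"older with the constraint $\|x_*\|_p=1$ yields
\[
\|x_*\|_\infty^{p-1} \;=\; \frac{\|G_n^{\mathrm{sym}}x_*\|_\infty}{\gp_{n,p}\,n^{3/2-2/p}} \;\le\; \frac{\max_i\|G_n^{\mathrm{sym}}e_i\|_{p^*}}{\gp_{n,p}\,n^{3/2-2/p}}.
\]

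Each row norm $\|G_n^{\mathrm{sym}}e_i\|_{p^*}$ is a Lipschitz function of the independent Gaussian entries of $G_n^{\mathrm{sym}}$ (with $\ell_2$-Lipschitz constant $\lesssim n^{1/p^*-1/2}$); by Borell-Tsirelson-Sudakov concentration, it stays within a constant factor of its mean $\Theta(n^{1/p^*})$ with probability $\ge 1-e^{-cn}$, since $p>2$ gives $p^*<2$ (so $(n^{1/p^*}/n^{1/p^*-1/2})^2 = n$). A union bound over $i\in[n]$ preserves this exponential rate. Similarly, $\gp_{n,p}$ is Lipschitz in $G_n$ (Frobenius norm) with constant $O(n^{-1/2})$ after absorbing the normalization, so Gaussian concentration around its positive mean $\gp_p$ forces $\gp_{n,p}\ge c>0$ with probability $\ge 1-e^{-cn}$. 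Combining these, with probability $\ge 1-C'e^{-n/C'}$,
\[
\|x_*\|_\infty \;\le\; C\,n^{(1/p^*-3/2+2/p)/(p-1)} \;=\; C\,n^{(2-p)/(2p(p-1))},
\]
and since $(2-p)/(2p(p-1)) \le (2-p)/(p(3p-2)) = 1/(p+\delta)-1/p$ for $p>2$ and $\delta = p/2-1$, this implies \eqref{thm2:eq3}. The main technical care lies in the probability bookkeeping: arranging the row-norm union bound together with the lower-tail estimate on $\gp_{n,p}$ so that neither degrades the exponential rate $e^{-n/C'}$.
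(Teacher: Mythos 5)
Your proof is correct and follows a genuinely different route from the paper in both parts. For the limit formula, you derive the elementary deterministic identity $L_{n,p}(t) = \sup_{r\geq 0}\bigl(r^2\gp_{n,p} - tr^p\bigr)$ via the polar change of variable $x=ry$, solve the one-dimensional optimization in closed form (valid whenever $\gp_{n,p}>0$, which holds with probability at least $1 - Ce^{-n/C}$ for large $n$, hence eventually a.s.\ by Borel--Cantelli), and invert to get $\gp_{n,p}$ as an explicit continuous function of $L_{n,p}(t)$. This sidesteps the paper's Lemma~\ref{add:lem5}, the ODE $L_{n,p}(t) = -(p/2-1)tL_{n,p}'(t)$, the differentiability analysis of $L_p$ (Lemma~\ref{add:lem6}) and Griffiths' lemma, so your Part~1 is cleaner. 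For the delocalization, the stationarity condition is the same in both arguments, but the paper multiplies it by $|x_{0,i}|^{1+\delta}\mathrm{sgn}(x_{0,i})$ with $\delta=p/2-1$, sums, and applies Cauchy--Schwarz plus Jensen to bound $\opnorm{x_0}_{p+\delta}^{p+\delta}$, whereas you take $\ell_\infty$ of the coordinatewise identity and bound $\|G_n^{\mathrm{sym}}x_*\|_\infty\leq\max_i\|G_n^{\mathrm{sym}}e_i\|_{p^*}$ by H\"older in the row norm, concentrating at $\Theta(n^{1/p^*})$ with exponentially small failure probability. Your resulting exponent $(2-p)/(2p(p-1))$ is strictly smaller (more negative) than the paper's $1/(p+\delta)-1/p=(2-p)/(p(3p-2))$ for all $p>2$ since $3p-2 > 2(p-1)$, so the H\"older-on-rows route actually buys a sharper bound, of which \eqref{thm2:eq3} is a weakening. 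Two small precision points worth fixing in the write-up: $\gp_{n,p}$ concentrates around $\e\gp_{n,p}$, which tends to $\gp_p>0$, rather than around $\gp_p$ itself; and $\gp_{n,p}>0$ for every fixed $n$ only with high (not full) probability, which is all the Legendre inversion needs when combined with the a.s.\ convergence from Theorem~\ref{thm-1}.
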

	
	Let us point out that the right-hand side of \eqref{thm2:eq2} is independent of $t>0.$ The inequality \eqref{thm2:eq3}  implies that any optimizer of the $\ell_p$-Grothendieck problem is weakly delocalized  for $2<p<\infty$  and its $\ell_p$-mass is not concentrated on any set of coordinates with vanishing proportion.  This resembles, at least qualitatively,  the delocalization behavior in the $\ell_2$-Grothendieck problem, where the supremum norm of the optimizer (the largest eigenvector of the GOE) is  $O((\log n)^{1/2} n^{-1/2})$ with high probability. 
	
	Our next result establishes the continuity of the limiting value $\gp_p$ in $p\in [2,\infty].$

	\begin{theorem}\label{thm3}
		$\gp_p$ is continuous in $p\in [2,\infty].$
	\end{theorem}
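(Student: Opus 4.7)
I plan to split the continuity into interior continuity on $(2, \infty)$, handled by the variational formula \eqref{thm2:eq2}, and separate treatments at the two endpoints $p = 2$ and $p = \infty$. A useful preliminary is monotonicity: since $\opnorm{x}_p$ is non-decreasing in $p$ by Jensen's inequality on the normalized counting measure, one has $\gp_{n,p} = \max_{x \ne 0} \la G_n x, x\ra/(n^{3/2}\opnorm{x}_p^2)$ non-increasing in $p$ on the event $\gp_{n,p} > 0$ (which holds almost surely for $p \ge 2$ and all $n \ge 1$). Passing to the limit, $p \mapsto \gp_p$ is non-increasing on $[2, \infty]$, so continuity reduces to ruling out jump discontinuities.

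For interior continuity at $p_0 \in (2, \infty)$, I would fix any $t_0 > 0$ and invoke \eqref{thm2:eq2}, which factors $\gp_p = C(p, t_0) L_p(t_0)^{1-2/p}$ with $C(p, t_0) := \tfrac{p}{2}(\tfrac{p}{2}-1)^{2/p-1} t_0^{2/p}$ smooth in $p \in (2, \infty)$. Hence it suffices to show $p \mapsto L_p(t_0)$ is continuous at $p_0$. I would first verify that $p \mapsto \e L_{n,p}(t_0)$ is continuous for each $n$, using joint continuity of the penalty $\opnorm{x}_p^p$ in $(p, x)$ together with a uniform-in-$p$ localization of the maximizer in a bounded ball (the $-t_0 \opnorm{x}_p^p$ term dominates for large $\|x\|$ when $p$ is in a compact subinterval of $(2, \infty)$). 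Then to upgrade pointwise to uniform convergence on compact subintervals, I would establish equicontinuity of $p \mapsto \e L_{n,p}(t_0)$ in $p$ uniformly in $n$ via the envelope theorem, giving $\tfrac{d}{dp}\e L_{n,p}(t_0) = -t_0 \e[n^{-1}\sum_i |x_{p,i}^*|^p \log|x_{p,i}^*|]$ at the maximizer $x_p^*$. The first-order conditions for $L_{n,p}(t_0)$ yield $\opnorm{x_p^*}_p^p = 2 L_{n,p}(t_0)/(t_0(p-2))$, which is bounded uniformly in $n$; combined with a delocalization bound on $\|x_p^*\|_\infty$ analogous to \eqref{thm2:eq3}, this gives a uniform bound on the derivative. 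Borell--TIS concentration transfers equicontinuity from $\e L_{n,p}(t_0)$ to $L_{n,p}(t_0)$, yielding continuity of $L_p(t_0)$ and hence of $\gp_p$ on $(2, \infty)$.

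For the endpoint $p = 2$: monotonicity gives $\gp_p \le \gp_2 = 2$. For the matching lower bound, I would use the rescaled top eigenvector $v$ of $\bar G_n$ (with $\|v\|_2 = 1$) as a test vector. By GOE delocalization, the entries of $v$ satisfy a law of large numbers after rescaling, giving $\opnorm{\sqrt{n}\, v}_p \to \xi_p$ as $n \to \infty$. Evaluating $\gp_{n,p}$ on this test vector yields $\gp_p \ge 2/\xi_p^2$, and since $\xi_p \to \xi_2 = 1$ as $p \to 2^+$, the gap closes. For the endpoint $p = \infty$: monotonicity gives $\gp_p \ge \gp_\infty$. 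For the upper bound, rescale the $\ell_p$-optimizer $x_p^*$ to unit $\ell_\infty$-norm, obtaining $\gp_{n,\infty} \ge \gp_{n,p}/\opnorm{x_p^*}_\infty^2$, and combine with \eqref{thm2:eq3} (translated to the $\opnorm{\cdot}_p = 1$ normalization) to conclude $\opnorm{x_p^*}_\infty \to 1$ as $p \to \infty$, so $\gp_p \to \gp_\infty$.

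The main obstacle is the uniform-in-$n$ equicontinuity step: controlling $n^{-1}\sum_i |x_{p,i}^*|^p \log|x_{p,i}^*|$ when the coordinates of $x_p^*$ may be unbounded requires a robust $\ell_\infty$-delocalization estimate for the $L_{n,p}$-optimizer that survives the logarithmic factor. This is the technical core of the proof and is the source of the polynomial-rate delocalization of the type stated in \eqref{thm2:eq3}.
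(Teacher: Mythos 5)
Your high-level structure (monotonicity plus three cases) matches the paper, and your $p=2$ endpoint argument is essentially the paper's: both use the $\ell_2$-normalized top eigenvector of the GOE, invoke the law of large numbers for its $\ell_p$-norm, and close the gap because $\xi_p\to\xi_2=1$. However, for the interior case and the $p=\infty$ endpoint your routes diverge from the paper's, and the latter has a genuine gap.

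For $p\in(2,\infty)$ you propose an envelope-theorem argument for $p\mapsto L_p(t_0)$, whereas the paper works directly with $\gp_{n,p}$: it shows (Lemma \ref{add:lem-1}, via Lagrange multipliers and H\"older) that any optimizer $x_0$ with $\opnorm{x_0}_p=1$ satisfies $\opnorm{x_0}_{p+\delta}^{p+\delta}\le\gp_2/\gp_\infty$, and then uses H\"older interpolation $\opnorm{x}_{p_2}^{p_2}\le\opnorm{x}_{p_1}^{(1-\theta)p_1}\opnorm{x}_{p_3}^{\theta p_3}$ together with monotonicity to get an explicit Lipschitz-type bound $0\le\gp_{p_1}-\gp_{p_2}\le\bigl((\gp_2/\gp_\infty)^{2(p_2-p_1)/(p_2(p_3-p_1))}-1\bigr)\gp_{p_2}$. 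Your route can probably be made to work (the danger from the $\log|x_{p,i}^*|$ factor in $\frac{d}{dp}\e L_{n,p}(t_0)$ is controlled exactly by the same $\opnorm{x_p^*}_{p+\delta}^{p+\delta}$ bound, via $|a|^p\log|a|\le|a|^{p+\delta}/\delta$ for $|a|\ge1$), but it trades the paper's elementary two-line interpolation for an envelope theorem on a non-convex maximization in $p$ plus a passage to the $n\to\infty$ limit, and it still depends on the same a priori bound from Lemma \ref{add:lem-1}, so it is not actually more self-contained.

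The $p=\infty$ endpoint is where your argument fails. You want $\opnorm{x_p^*}_\infty\to1$, citing \eqref{thm2:eq3}; but translated to the $\opnorm{\cdot}_p=1$ normalization, \eqref{thm2:eq3} reads $\opnorm{x_p^*}_\infty\le Cn^{1/(3p/2-1)}$, which \emph{diverges} with $n$ for every fixed finite $p$. So after taking $n\to\infty$ to form $\gp_p$, the inequality $\gp_{n,\infty}\ge\gp_{n,p}/\opnorm{x_p^*}_\infty^2$ gives $\gp_\infty\ge0$, which is vacuous. The paper sidesteps the optimizer entirely: for any $x$ with $\opnorm{x}_p=1$, it splits $x=x^\eps+\tilde x^\eps$ by thresholding at $1+\eps$ and shows by H\"older/Markov that $\opnorm{x^\eps}_2^2\le(1+\eps)^{-(p-2)}\to0$ as $p\to\infty$, so the large-coordinate blocks contribute negligibly to $\la G_nx,x\ra/n^{3/2}$; the remaining part has all entries below $1+\eps$, so rescaling gives $\gp_p\le(1+\eps)^2\gp_\infty$. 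This is a statement about the \emph{constraint set}, not the optimizer, and it is the missing ingredient in your proposal.
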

	
	\begin{remark}\rm
		From Theorems \ref{eq:thm_p<2} and \ref{thm2}, the $\ell_p$-Grothendieck problem scales as $n^{1/p^*}$ for $1<p<2$ and as $n^{3/2-2/p}$ for $2\leq p\leq \infty$.
		These scaling exponents match at $p=2$. However,  while the limiting value of the $\ell_p$-Grothendieck problem is continuous in $p$ in the intervals $(1,2)$ (as seen from \eqref{eq:Gnp_limit_p<2}) and $[2,\infty],$ it  has a jump discontinuity at $p=2$. Indeed, 
		\begin{align*}
		\lim_{p\to 2^-}\lim_{n \to \infty}  \frac{1}{ n^{  1/p^*}} \max_{\|x\|_p=1}\la G_nx,x\ra&=\lim_{p\to 2^-}2^{1/2-2/p}\xi_{p^*}=\frac{1}{\sqrt{2}}\neq \sqrt{2}=\gp_2.
		\end{align*}
	\end{remark}

	Our final result provides a description of the limiting value $\gp_p$ by showing that $L_{p,u}(t)$ and $L_p(t)$ admit Parisi-type variational formulas. In physicists' literature, the original Parisi formula was proposed by Parisi \cite{Parisi79,Parisi801,Parisi802}, which gives a variational representation for the limiting free energy of the classical SK model \cite{SK72} at positive temperature. It minimizes a functional,  which involves the solution to a second-order parabolic PDE (known as the Parisi PDE) and a linear term,  with respect to the so-called functional order parameters. This formula was first rigorously verified by Talagrand \cite{Tal06} and later generalized to some variants of the SK model, see \cite{ArguinChatterjee13,Chen13Spherical,JKS18inhomogeneousPotts,JustinKo19,Ko20multispherical,Pan05,Pan14,Pan15Multiplespecies,Pan18vector,Pan18Potts}. 
	Furthermore, the Parisi-type formulas were also established for the ground state energies, corresponding to zero temperature,  in a number of related models, see \cite{AZ20,CP18,ChenSen17,JKS18inhomogeneousPotts,JS17}. 
	
	To state our result, let $u>0$ and $\mathcal{N}_u^d$ be the collection of all  positive measures $\gamma$ on $[0,u]$ such that $\gamma$ is atomic with finitely many atoms.
	For $(\lambda,\gamma)\in \mathbb{R}\times\mathcal{N}_u^d$ and $t > 0$, set 
	$$
	f_{\lambda}^t(x)=\sup_{r\in \mathbb{R}}\bigl(rx+\lambda r^2-t|r|^p\bigr),\,\,x\in \mathbb{R}
	$$ 
	and denote, with a slight abuse of notation,
	$\gamma(s)=\gamma([0,s]).$
	Let $\Psi_{\lambda,\gamma}^{t,u}$ be the solution to the following Parisi PDE
	\begin{align}\label{pde}
	\partial_s\Psi_{\lambda,\gamma}^{t,u}(s,x)&=-\bigl(\partial_{xx}\Psi_{\lambda,\gamma}^{t,u}(s,x)+\gamma(s)\bigl(\partial_x\Psi_{\lambda,\gamma}^{t,u}(s,x)\bigr)^2\bigr),\,\,(s,x)\in [0,u)\times \mathbb{R}
	\end{align}
	with boundary condition
	$
	\Psi_{\lambda,\gamma}^{t,u}(u,x)=f_\lambda^t(x).
	$
	
	The fact that $\gamma$ is atomic with finitely many atoms enables us to solve this Parisi PDE explicitly  by using the Hopf-Cole transformation in an iterative fashion. 
	 Towards this end, let us express any  $\gamma\in \mathcal{N}_u^d$ as 
	\begin{align}\label{gamma}
	\gamma(s)=\sum_{l=0}^{k-1} m_l1_{[q_l,q_{l+1})}(s)+m_k1_{[q_{k},u]}(s)
	\end{align}
	for some
	\begin{align*}
	q_0&=0 < q_1 <  \cdots <q_k < q_{k+1}=u,\quad 0 \le m_0 < m_1 < \cdots <  m_{k-1} < m_{k}<\infty.
	\end{align*}
 Then  the solution to \eqref{pde} can be written iteratively as 
	\begin{align}\label{hopfcole}
	\Psi_{\lambda,\gamma}^{t,u}(s,x)&=\frac{1}{m_l}\log \e \exp m_l\Psi_{\lambda,\gamma}^{t,u}(q_{l+1},x+\sqrt{2(q_{l+1}-s)}z),\,\,\forall (s,x)\in [q_l,q_{l+1})\times\mathbb{R},
	\end{align}
	for $l = k, k-1, \ldots, 0$, where $z$ is standard Gaussian. Here, when $m_0=0$, \eqref{hopfcole} should be understood as
	\begin{align*}
	\Psi_{\lambda,\gamma}^{t,u}(s,x)=\e \Psi_{\lambda,\gamma}^{t,u}(q_1,x+\sqrt{2(q_{1}-s)}z),\,\,(s,x)\in [0,q_1)\times \mathbb{R}.
	\end{align*}
	Note that \eqref{hopfcole} is well-defined since there exist some constants $C>0$ and $a\in (0,1)$ such that $f_\lambda^t(x)\leq C(1+|x|^{1+a})  $ for all $x\in \mathbb{R}$ and this property is preserved for $\Psi_{\lambda,\gamma}^{t,u}$ by induction, see Lemma \ref{add:lem2}, which also describes the 
 regularity properties of $\Psi_{\lambda,\gamma}^{t,u}$.
	For any $(\lambda,\gamma)\in\mathbb{R}\times \mathcal{N}_u^d,$ set
	\begin{align}\label{parisifunctional}
	\mathcal{P}_{t,u}(\lambda,\gamma)&=\Psi_{\lambda,\gamma}^{t,u}(0,0)-\lambda u-\int_0^u s\gamma(s)ds.
	\end{align}
	For notational clarity, we ignore the dependence of  $\mathcal{P}_{t,u}$  on $p$.  Our result states that

	\begin{theorem}\label{thm0}
		Let $2<p<\infty.$ For any $u>0$ and $t>0,$ we have that
		\begin{align}
		\begin{split}\label{thm0:eq1}
		L_{p,u}(t)&=\inf_{\lambda\in \mathbb{R},\gamma\in \mathcal{N}_u^d}\mathcal{P}_{t,u}(\lambda,\gamma),
		\end{split}\\
		\begin{split}\label{thm0:eq2}
		L_{p}(t)&=\sup_{u>0}\inf_{\lambda\in \mathbb{R},\gamma\in \mathcal{N}_u^d}\mathcal{P}_{t,u}(\lambda,\gamma).
		\end{split}
		\end{align}
	\end{theorem}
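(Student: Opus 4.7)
The plan is as follows. Equation \eqref{thm0:eq2} follows immediately from \eqref{thm0:eq1} once combined with \eqref{thm-1:eq2}, so I focus on proving the variational formula \eqref{thm0:eq1} for $L_{p,u}(t)$, which I split into a matching upper and lower bound.

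For the upper bound, I would introduce the positive-temperature free energy on the sphere $\opnorm{x}_2^2=u$,
$$F_{n,p,u}(t,\beta) = \frac{1}{n\beta}\log \int_{\opnorm{x}_2^2=u} \exp\Bigl(\beta n^{1/2}\Bigl(\frac{\la G_n x, x\ra}{n^{3/2}} - t\opnorm{x}_p^p\Bigr)\Bigr)\,d\sigma_{n,u}(x),$$
so that, by Laplace's principle, $\lim_{\beta\to\infty}\lim_{n\to\infty}F_{n,p,u}(t,\beta) = L_{p,u}(t)$ (the cost coming from the sphere volume is only $O(n^{-1}\log n)$). The sphere constraint is then exchanged for a Lagrange multiplier $\lambda$: the Hamiltonian becomes $\la G_n x, x\ra/n^{3/2} - t\opnorm{x}_p^p + \lambda\opnorm{x}_2^2$ on all of $\mathbb{R}^n$, compensated by $-\lambda u$, and the coercivity of $-t|r|^p$ for $p>2$ ensures finite partition functions for every $\lambda\in\mathbb{R}$. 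A Guerra-Talagrand RSB interpolation against a Gaussian cascade parametrized by $\gamma\in\mathcal{N}_u^d$ yields, at finite $\beta$, a bound of the shape $F_{n,p,u}(t,\beta)\leq \beta^{-1}\Psi^\beta_{\lambda,\gamma}(0,0) - \lambda u - \int_0^u s\gamma(s)\,ds + o_n(1)$, where $\Psi^\beta$ is the finite-temperature Parisi PDE solution with single-site boundary $\beta^{-1}\log\int e^{\beta(rx+\lambda r^2-t|r|^p)}\,dr$. Sending $\beta\to\infty$ inside the Hopf-Cole iteration \eqref{hopfcole} converts log-moments into suprema and drives the boundary to $f_\lambda^t(x)$, producing the claimed upper bound $L_{p,u}(t)\leq \mathcal{P}_{t,u}(\lambda,\gamma)$.

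For the lower bound, I would follow the Aizenman-Sims-Starr cavity program together with Panchenko's ultrametricity/synchronization, in the zero-temperature form developed in \cite{CP18,ChenSen17,AZ20}. The ASS increment $F_{n+1}-F_n$ is expressed in terms of cavity fields; ultrametricity constrains the limiting Gibbs measure to a Ruelle probability cascade, which yields the matching Parisi lower bound at each $\beta$. One then rescales $\gamma$ by $\beta$ and takes $\beta\to\infty$, using the monotonicity in $\beta$ of the Parisi functional to pass the limit inside the infimum. A density argument, using the continuity of $\gamma\mapsto \Psi^{t,u}_{\lambda,\gamma}(0,0)$ supplied by the regularity of Lemma \ref{add:lem2}, justifies restricting the infimum to the atomic class $\mathcal{N}_u^d$ rather than general measures on $[0,u]$.

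The main obstacle is the non-compactness of the single-spin space: the classical Parisi apparatus is built for spins in a compact set (Ising, spherical, or a fixed bounded interval), whereas here the spins range over $\mathbb{R}$, controlled only through the coercive potential $-t|r|^p$. I expect the technical bulk to consist of establishing uniform-in-$n$ tail estimates for the single-site marginals of the Gibbs measure (so that Gaussian integration by parts, exchange of limits, and the Parisi PDE well-posedness all go through in the Hopf-Cole form \eqref{hopfcole}), verifying that the Panchenko cavity/synchronization framework extends to this non-compact, unbounded-spin setting, and controlling the $\beta\to\infty$ limit uniformly over $\gamma\in\mathcal{N}_u^d$ and $\lambda\in\mathbb{R}$. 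The hypothesis $p>2$ is crucial here, since the super-quadratic tail of $-t|r|^p$ dominates $\lambda r^2$ and renders $f_\lambda^t$ a well-defined finite function with the mild growth $f_\lambda^t(x)\leq C(1+|x|^{1+a})$ needed for every step above.
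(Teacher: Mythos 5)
Your reduction of \eqref{thm0:eq2} to \eqref{thm0:eq1} is the same as the paper's, and your upper-bound plan is broadly aligned with the paper's: the paper bounds $L_{p,u}(t)$ by the $n\to\infty$, $\beta\to\infty$ limit of a free energy on the band $\opnorm{x}_2^2\in[u-\beta^{-2},u+\beta^{-2}]$ (Proposition \ref{extra:prop1}), applies Guerra's RSB bound \eqref{guerra} at finite temperature with boundary condition $f_\lambda^{\beta,t}$, and then compares the finite-$\beta$ Hopf--Cole solution to $\Psi_{\lambda,\gamma}^{t-\delta,u}$ to pass to $\beta\to\infty$; your ``exchange the sphere constraint for a Lagrange multiplier'' is morally the same thing as the $\lambda$ already built into $\mathcal{P}_{\beta,t,u}$ in \eqref{parisieq2}. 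This part is fine, modulo routine care in the $\beta\to\infty$, $\delta\downarrow 0$ limits of the Hopf--Cole iteration.

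The lower-bound plan has a genuine gap. You propose to re-derive the Parisi lower bound by running the Aizenman--Sims--Starr cavity scheme and Panchenko's ultrametricity/synchronization directly for spins ranging over all of $\mathbb{R}$, and you correctly flag non-compactness of the single-spin space as the obstacle — but you then propose to ``verify that the framework extends'' without indicating how, and that extension is exactly the hard, open-ended part: the existing Parisi lower-bound machinery assumes a bounded (or compact) spin space, and its uniform Gibbs-measure estimates, Ghirlanda--Guerra identities and cavity computations do not transfer automatically to a coercive-but-unbounded potential. The paper avoids this entirely by a truncation route: it observes $L_{n,p,u,\varepsilon}(t)\geq F_{n,t,u,M,\varepsilon}(\beta)$ with the additional box constraint $x\in[-M,M]^n$, invokes Panchenko's known Parisi formula for bounded spins \eqref{Pf:freeenergy}, and then spends all the technical effort on the $\beta\to\infty$ and $M\to\infty$ limits of the Parisi functional itself (Proposition \ref{add:prop1}, Lemma \ref{lem7}), using the stochastic optimal control representation, existence and a-priori bounds on the minimizers $(\lambda_{M,\beta},\alpha_{M,\beta})$, and a uniform $L^2$ bound on the optimal control process that exploits $p>2$ (since the boundary condition $f_\lambda^t$ is not Lipschitz and pointwise control of the control process is unavailable). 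Your appeal to ``monotonicity in $\beta$ of the Parisi functional'' to exchange the $\beta$-limit with the infimum is also not substantiated and is not what the paper does; the paper instead uses convexity, vague compactness of $\nu_{M,\beta}$, and the representation \eqref{rep}--\eqref{rep2} to lower-semicontinuously pass to the limit. So while your upper bound is essentially on track, the lower bound as written does not constitute a proof — the key step (either extending the unbounded-spin Parisi lower bound or, as the paper does, analyzing the $M\to\infty$ limit of the truncated Parisi functional) is missing.
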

	

	Currently, we know very little about  the set $\mathcal{M}_\eps$ for $2< p < \infty$. In contrast to  the $1\le p<2$  case, we expect that the  landscape of the $\ell_p$-Grothendieck problem has a more complex geometric structure, which does not admit a simple description. In fact, as mentioned before, the $\ell_\infty$-Grothendieck problem is essentially the ground state energy of the classical SK model and  for a fixed but small $\eps>0$,
	\cite{CHL18} established, with overwhelming probability, the existence of exponentially many vectors in  $\mathcal{M}_\eps$ that are nearly orthogonal to each other, see also \cite{chatterjee2009,DEZ15,Eldan20}. This fact is also true for $p=2$, which is an easy consequence of the fact that the eigenmatrix for the GOE is Haar-distributed. Therefore, it is natural to believe that in the interpolating case $2 < p < \infty$,  the same phenomenon continues to hold. See Section \ref{sec13} for further discussion.

	\section{Overview of the paper}
	
	Our approach is based on two major ingredients: (i) the Chevet inequality for bounding the supremum of a Gaussian bilinear form and (ii) some tools arising from the study of mean-field spin glasses. To facilitate our arguments, we provide some sketches of the proofs to our main results in two cases $1< p<2$ and $2<p<\infty$ followed by a description on the organization of the rest of the paper.

	\subsection{Proof sketch for $1<p<2$}
	
We begin by sketching the proof of the upper bound in Theorem~\ref{eq:thm_p<2}.  The key idea  is to decompose every $\ell_p$-unit vector $x\in \mathbb{R}^n$ as $x=x^o+x^\dagger,$ where for a given truncation level $\varepsilon>0$,
	$x^o$ is the delocalized part, $x_i^o=x_i1\bigl(|x_i|\leq \varepsilon\bigr)$ for $i\in [n]$, and $x^\dagger$ is the localized part, $x_i^\dagger=x_i1\bigl(|x_i|>\varepsilon\bigr)$ for $i\in[n].$ In the proof, we take $\eps = \eps_n \to 0$ at an appropriate rate. From this decomposition, an application of the bilinearity of the inner product and  the triangle inequality readily yields that 
	\begin{equation}\label{eq:inner_prod_decomp}
	\Big |  \max_{\|x\|_p=1 }\la G_nx,x\ra  -  \sqrt{2}   \max_{\|x\|_p=1}\la \bar G_n x^o, x^\dagger \ra \Big |
	\le   \max_{\|x\|_p=1} | \la G_n x^o, x^o \ra | + \max_{\|x\|_p=1}  | \la G_n x^\dagger, x^\dagger \ra|.
	\end{equation}
	To control the diagonal terms on the right-hand side, we apply the Chevet inequality, which states that for arbitrary bounded sets $S,T\subset \mathbb{R}^n,$ $\e\sup_{x\in T,y\in S}\la y,G_nx\ra$ can be controlled by the Gaussian width (a measure of ``complexity'' of a set) and the $\ell_2$-radii of the index sets $S$ and $T$. See Theorem \ref{chevet} below for the precise statement. It turns out that under the assumption $p<2,$  the set of delocalized vectors in the unit $\ell_p$-ball has a vanishing $\ell_2$-radius, which makes the first term on the right side of \eqref{eq:inner_prod_decomp} smaller order than  $n^{1/p^*}$. On the other hand, each localized vector  in the unit $\ell_p$-ball  has only a few (at most $\eps^{-p}$) non-zero entries, which guarantees that the set $\{ x^\dagger : \|x\|_p=1\}$ has a low Gaussian width. As a result, the second term on the right side of \eqref{eq:inner_prod_decomp} is of small order as well. With these observations,  we can approximate
	\[   n^{-1/p^*} \max_{\|x\|_p=1 }\la G_nx,x\ra  \approx  n^{-1/p^*}   \sqrt{2}   \max_{\|x\|_p=1}\la \bar G_n x^o, x^\dagger \ra. \]
	We now normalize $x^o$ and $x^\dagger$ by their respective $\ell_p$-norms and use the duality relation $\sup_{ \| y \|_p =1 }  \la y, u\ra = \| u\|_{p^*}$ to  obtain the following upper bound
	\begin{align}\label{sketch:eq1}
	\e \max_{\|x\|_p=1}\la x^o, \bar G_n x^\dagger\ra &\le  \e \max_{\|x\|_p=1} \Bigl(\|x^\dagger\|_p \|x^o\|_p \Bigl \| \bar G_n \frac{x^\dagger}{\|x^\dagger\|_p}\Bigr\|_{p^*} \Bigr) \le 2^{-2/p}   \e \max_{\|x\|_p=1}\Bigl \| \bar G_n \frac{x^\dagger}{\|x^\dagger\|_p}\Bigr\|_{p^*}
	\end{align}
	where in the last inequality, we used the fact  that the product $\|x^\dagger\|_p \|x^o\|_p$ is maximized  if $\|x^\dagger\|_p =  \|x^o\|_p = 2^{-1/p}$ as $\|x^o\|_p^p+\|x^\dagger\|_p^p=\|x\|_p^p= 1$. Since each $x^\dagger$ is supported on at most $\varepsilon^{-p}$ many coordinates, the last expectation in \eqref{sketch:eq1} can be  bounded above by 
	$\e \max_{y \in F} \| \bar G_n y  \|_{p^*},$ where $F$ is defined as   $F = \{ y \in \r^n: \| y\|_p=1, \| y\|_0 \le  \eps^{-p} \}$. We can exploit the low cardinality of the set $F$ to argue by using concentration and union bound that 
	\[ \e \max_{y \in F} \| \bar G_n y  \|_{p^*} \le (1+o(1)) \max_{y \in F} \e  \| \bar G_n y  \|_{p^*}\]
	For any vector $y$, we have $ n^{-1/p^*} \e  \| \bar G_n y  \|_{p^*}  \approx   n^{-1/p^*} \|y\|_2 \e  \| g  \|_{p^*} \approx \|y\|_2   \xi_{p^*}  $ for $g$ a standard Gaussian vector, which implies that
	\begin{equation}\label{sketch:eq2}
	n^{-1/p^*} \e \max_{y \in F} \| \bar G_n y  \|_{p^*} \le (1+ o(1))  \xi_{p^*}  \max_{y \in F} \| y\|_2 = (1+ o(1))  \xi_{p^*},
	\end{equation}
	yielding the upper bound (for the expectation) in  Theorem~\ref{eq:thm_p<2}. The above argument also indicates how we can achieve a matching lower bound. A near maximizer $x$ should 
	satisfy 
	$\|x^\dagger\|_p =  \|x^o\|_p = 2^{-1/p}$. Moreover, in \eqref{sketch:eq2}, to attain the maximum $\ell_2$-norm,   the optimal $y = x^\dagger / \|x^\dagger\|_p \in F$ should be one of the coordinate vectors  $e_i$ (up to a sign). Working backwards,  the optimal choice of $x^o$ is now dictated by the relation $ \la x^o/ \| x^o\|_p,  \bar G_n e_i \ra =  \| \bar G_n e_i \|_{p^*}$, which is achieved by  taking $x^o/ \| x^o\|_p  = v_i$, as given in \eqref{extra:eq8}. Finally, we note that these  choices  for $x^\dagger$ and $x^o$ are valid since $e_i$ is localized and $v_i$ is delocalized for any $i$. It leads to the set of  near maximizers $\mathcal{O}$, as defined in \eqref{add:eq---1}.

	%
	
	The stability bound in Theorem~\ref{extra:thm1} follows a similar path. However, it is more technically involved as we need to keep track of error incurred in every step of the above argument. Also,  we use an appropriate  stability bound (in $\ell_p$ distance) for H\"older's inequality (stated in Lemma~\ref{lem:holder_stability}). 
	
	\subsection{Proof sketch for $p>2$} \label{sec2.2}
	
	We continue to sketch the proofs of Theorems \ref{thm-1},  \ref{thm2}, and \ref{thm0}.
	First of all,  consider the problem of maximizing a collection of random variables, $(H_n(x))_{x\in S}$, indexed by a measurable subset $S\subset \mathbb{R}^n$. In physics, $H_n(x)$ is called the Hamiltonian or the energy corresponding to the spin configuration $x\in S$ and the quantity $\max_{x\in S}H_n(x)$ is called the ground state energy.  To compute this  maximum, it is often more convenient to consider the free energy associated to $(H_n(x))_{x\in S}$,
	$$
	F_n(\beta)=\frac{1}{\beta n}\log \int_S e^{\beta H_n(x)}\mu_n(dx)
	$$
	for an appropriately chosen probability measure $\mu_n$ supported on $S$, where $\beta>0$ is usually called the (inverse) temperature. The key fact here is that in many important models, e.g., the classical SK model and its variants, the free energies
	and the corresponding Gibbs measures exhibit several nice physical and mathematical properties, which allow one to establish the existences of the limiting free energies as $n\to\infty $ and furthermore derive certain types of variational formulas for these limits, see \cite{Pan13,Tal111,Tal112}. Once the limiting free energy is obtained, the limit of the maximum of $H_n$ can then be recovered by switching the $\beta$ and $n$ limits as
	$$\lim_{\beta\to\infty}\lim_{n\to\infty}F_n(\beta)=\lim_{n\to\infty}\lim_{\beta\to\infty} F_n(\beta) =\lim_{n\to\infty}\max_{x\in S}\frac{H_n(x)}{n}.$$	
	Following this idea, we establish Theorem \ref{thm-1} by introducing a free energy  (see \eqref{interpolation}) associated to $L_{n,p,u}(t)$ and adapt the Guerra-Toninelli interpolation \cite{GT02} to show that this free energy is superadditive in $n$, which implies the existence of the limiting free energy as $n\to\infty.$ After this, we send the temperature parameter to infinity to obtain the existence of the limit of $L_{n,p,u}(t)$ in \eqref{thm-1:eq1}. The convergence of $L_{n, p}(t) = \sup_{u> 0} L_{n,p,u}(t)$, claimed in \eqref{thm-1:eq2}, now follows by switching the supremum and the $n$-limit, which can be justified by  a standard compactness argument. 
	
	The proof of Theorem \ref{thm2} is  based on the observation that the $\ell_p$-Grothendieck problem and $L_{n,p}(t)$ are the Legendre duals of each other (see the proof of Lemma \ref{add:lem5}), i.e., at the point of differentiability of $L_{n,p}(t),$
	\begin{align*}
	\max_{\opnorm{x}_p^p=-L_{n,p}'(t)}\frac{\la G_nx,x\ra}{n^{3/2}}=L_{n,p}(t)-tL_{n,p}'(t).
	\end{align*}
	From this, if additionally we have $L_{n,p}'(t)<0$,  the $\ell_p$-Grothendieck problem can be written, by a change of variable, as
    \begin{align}\label{add:eq00}
	\gp_{n,p}=	\frac{1}{\bigl(-L_{n,p}'(t)\bigr)^{2/p}}  \left ( \max_{\opnorm{x}_p^p=-L_{n,p}'(t)}\frac{\la G_nx,x\ra}{n^{3/2}} \right ) =\frac{L_{n,p}(t)-tL_{n,p}'(t)}{\bigl(-L_{n,p}'(t)\bigr)^{2/p}}.
	\end{align}
	To further simplify this equation, another key observation is that at any point $t$ of differentiability of  $L_{n,p}$, 
	\begin{align}\label{add:eq000}
	L_{n,p}(t)=-\bigl(p/2-1\bigr)tL_{n,p}'(t),
	\end{align}  
	The above identity can be  verified via a straight-forward computation. Consequently, if $L_{n,p}$ is differentiable at $t$ and $L_{n,p}'(t)<0$, plugging \eqref{add:eq000} into \eqref{add:eq00} leads to
	\begin{align}\label{add:eq01}
		\gp_{n,p}=\frac{p}{2}\Bigl(\frac{p}{2}-1\Bigr)^{2/p-1}t^{2/p}L_{n,p}(t)^{1-2/p}.
	\end{align}
	To send $n\to\infty,$ it is crucial to realize that $L_{n,p}$ and $L_p$ are convex on $(0,\infty)$, which readily imply that,  with probability one, for almost everywhere $t\in(0,\infty)$, $L_p'(t)=\lim_{n\to\infty}L_{n,p}'(t)$. In addition, since $L_p(t)$ is strictly positive in $t$, the equation \eqref{add:eq000} ensures that whenever $L_{n,p}$ is differentiable at some $t$, we have $L_{n,p}'(t)<0$ as long as $n$ is large enough. These allow us to send $n\to\infty$ in \eqref{add:eq01} to obtain \eqref{thm2:eq2}  almost everywhere for $t\in (0,\infty).$ Using the fact that $L_p(t)$ is continuous in $t$ validates \eqref{thm2:eq2} on the whole space $(0,\infty).$ The proof for the delocalization of the optimizer in the $\ell_p$-Grothendieck problem in \eqref{thm2:eq3} is established by using the idea of Lagrange multiplier corresponding to the maximizer $x_*$, which upon combining with the H\"older inequality,   leads to the bound
	\begin{align*}
	\|{x_*}\|_{p+\delta}^{p+\delta}&\leq \frac{n^{1-(p+\delta)/p}}{\sqrt{2}\gp_{n,p}}\frac{\|\bar G_n\|_2}{n^{1/2}}\,\,\ \text{for}\,\,\  \delta = p/2-1.
	\end{align*}
	The assertion \eqref{thm2:eq3} then follows from the trivial bound $\|x_*\|_\infty\leq 	\|{x_*}\|_{p+\delta}$. 
	
	
	For Theorem \ref{thm0}, to approximate $L_{n,p,u,\varepsilon}(t)$, we consider the Hamiltonian $H_{n,p,t}(x)=n^{-1/2} \la G_nx,x\ra -t\|x\|_p^p$ and  introduce two related  free energies, $F_{n,t,u,\varepsilon}(\beta)$ and $F_{n,t,u,M,\varepsilon}(\beta)$. The former integrates $H_{n,p,t}$ over a spherical constraint $\|x\|^2\in [u-\varepsilon,u+\varepsilon]$ and the latter puts an additional constraint $x\in [-M,M]^n$ - see the beginning of Section \ref{sec10}. The free energy $F_{n,t,u,\varepsilon}(\beta)$ can be bounded from above by the so-called Guerra Replica Symmetry Breaking bound \cite{guerra2003broken}, which in turn implies that $L_{p,u}(t)$ is bounded above by the desired Parisi-type formula (the right-hand side of  \eqref{thm0:eq1}) after sending the temperature parameter to infinity, see Section \ref{12.7}.
	
	Deriving a matching lower bound is more difficult. First of all, it is easy to see that $L_{n,p,u,\varepsilon}(t)\geq F_{n,t,u,M,\varepsilon}(\beta)$, where the right-hand side, after a  change of variable  $x \to x/M$, is  the free energy of the SK model on the product space $[-1,1]^n$ and its limit can be expressed as a Parisi-type formula \cite{Pan05}, similar to \eqref{thm0:eq1}, as stated in  \eqref{Pf:freeenergy}. In order to obtain the matching lower bound, we need to study the $\beta$-limit and then the $M$-limit in this formula. In view of the Parisi PDE term in \eqref{Pf:freeenergy}, it is a Hamilton-Jacobi-Bellman equation induced by a linear diffusion control problem and is known to admit a stochastic optimal control representation \cite{AC15,BovierKlimovsky09,JT16}, which is described in  Proposition \ref{property1} below. Using this expression, we can successfully handle the $\beta$-limit  by adapting and refining the approaches in \cite{AC171,JS17}. In contrast, the argument for the $M$-limit is more intricate mainly due to the fact that the boundary condition of the Parisi PDE is not Lipschitz and as a consequence, the control of the $M$-limit in the Parisi PDE solution can not be done directly from the stochastic optimal control representation as the control processes therein lack uniform pointwise controls, see Remarks~\ref{rmk2} and \ref{rmk4}. Fortunately,  this technical issue can resolved by showing that the control processes are indeed uniformly square integrable relying on the assumption $p>2.$
	
	\subsection{Organization} 
	
	The rest of paper consists of two major parts. The first is devoted to proving the results for $1\leq p<2.$ In Section \ref{sec3}, we introduce the Chevet inequality and explain how one can obtain the correct scalings for our $\ell_p$-Grothendieck problem from this inequality. Additionally, we list some standard results on the Gaussian concentration and  maximal inequalities. These contents will be repeatedly used in the proofs of Theorems \ref{prop:p=1} and \ref{eq:thm_p<2} in Sections \ref{sec4} and \ref{sec5}, respectively. 
	
	The second part establishes our main results for $2<p<\infty$ based the methodologies generally used in statistical physics and in mean-field spin glasses. The proofs of Theorems \ref{thm-1}, \ref{thm2}, and \ref{thm3} are presented in Sections \ref{sec7}, \ref{sec8}, and \ref{sec9}, respectively. The more intricate part is the proof of Theorem \ref{thm0} that will be handled in Section \ref{sec12}. To this end, Sections \ref{sec10} and \ref{sec11} prepare some fundamental results. More precisely, Section \ref{sec10} derives upper and lower inequalities for $L_{p,u}(t)$ in terms of the free energies $F_{n,t,u,\varepsilon}(\beta)$ and $F_{n,t,u,M,\varepsilon}(\beta)$. In Section \ref{sec11}, we show that the $\beta$-limit of the Parisi formula associated to $\lim_{\varepsilon\downarrow 0}\lim_{n\to\infty}F_{n,t,u,M,\varepsilon}(\beta)$ can also be expressed as a Parisi-type formula followed by some a priori bounds for the corresponding minimizers in terms of the parameter $M.$
	
	Some open questions of interest are mentioned in Section \ref{sec13}. Finally, Appendix~\ref{appA} provides a new result on the stability of the H\"older inequality that is used in the proof of Theorem~\ref{extra:thm1}. In addition, analytical properties of the Parisi PDE \eqref{pde} and its stochastic optimal control representation are studied in Appendix~\ref{AppB}.
	
\section{Gaussian Bounds}\label{sec3}

\subsection{Chevet's inequality and operator norms} 

For a nonempty subset $S$ of $\r^n$, define  its Gaussian width by
$ w(S)  =\e \sup_{x \in S}  \langle g, x \rangle$ 
and  radius  by
$r(S) =  \sup_{ x \in  S} \| x \|_2.$ The Chevet inequality states that

\begin{theorem}[Chevet's inequality] \label{chevet}
	Let $S$ and $T$ be nonempty bounded subsets of $\r^n$. We have
	\begin{align}\label{eq:chevet}
	\max\bigl( w(S) r(T), w(T) r(S)\bigr) &\le  \e  \sup_{x \in T, y \in S}   \langle y, G_n x \rangle  \le w(S) r(T) + w(T) r(S). 
	\end{align}
\end{theorem}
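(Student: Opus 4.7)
The plan is to prove the two bounds in \eqref{eq:chevet} separately using Gaussian comparison techniques.

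For the lower bound, I would fix $y_0 \in S$ and restrict the supremum. The vector $G_n^T y_0$ is a centered Gaussian with covariance $\|y_0\|_2^2 I_n$, hence equidistributed with $\|y_0\|_2 \tilde g$ for a standard Gaussian $\tilde g \in \r^n$. Therefore
\[
\e \sup_{x \in T,\, y \in S}\la y, G_n x\ra \ge \e \sup_{x \in T}\la y_0, G_n x\ra = \|y_0\|_2\, w(T).
\]
Taking supremum over $y_0 \in S$ yields the lower bound $r(S) w(T)$. The bound $r(T) w(S)$ follows symmetrically by fixing $x_0 \in T$ and using that $G_n x_0 \sim \|x_0\|_2 \tilde h$.

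For the upper bound, the natural tool is Sudakov--Fernique. I would introduce the auxiliary Gaussian process
\[
Y_{x,y} := \|y\|_2\, \la g, x\ra + \|x\|_2\, \la h, y\ra,
\]
with $g, h$ independent standard Gaussian vectors in $\r^n$ independent of $G_n$, and compare it to $X_{x,y} := \la y, G_n x\ra$. A direct computation gives the covariances $\e X_{x,y} X_{x',y'} = \la x, x'\ra \la y, y'\ra$ and $\e Y_{x,y} Y_{x',y'} = \|y\|_2 \|y'\|_2 \la x, x'\ra + \|x\|_2 \|x'\|_2 \la y, y'\ra$. A short algebraic manipulation then yields the identity
\[
\e(Y_{x,y} - Y_{x',y'})^2 - \e(X_{x,y} - X_{x',y'})^2 = \bigl(\|x\|_2\|y\|_2 - \|x'\|_2\|y'\|_2\bigr)^2 + 2\bigl(\|x\|_2\|x'\|_2 - \la x, x'\ra\bigr)\bigl(\|y\|_2\|y'\|_2 - \la y, y'\ra\bigr),
\]
which is non-negative by Cauchy--Schwarz. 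The Sudakov--Fernique inequality then gives $\e \sup_{T \times S} X_{x,y} \le \e \sup_{T \times S} Y_{x,y}$.

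To conclude, I would bound $\e \sup Y$ by subadditivity of the supremum,
\[
\e \sup_{x \in T, y \in S} Y_{x,y} \le \e \sup_{x \in T,\, y \in S} \|y\|_2 \la g, x\ra + \e \sup_{x \in T,\, y \in S} \|x\|_2 \la h, y\ra.
\]
When $T, S$ are symmetric (the case used in the applications of this paper, where the sets are subsets of $\ell_p$-unit balls), $\sup_x \la g, x\ra \ge 0$ almost surely, so the first term equals $r(S)\, w(T)$ and the second equals $r(T)\, w(S)$, producing the desired bound $w(S) r(T) + w(T) r(S)$. The main obstacle would be extending this decoupling cleanly to non-symmetric $S, T$, where $\sup_x \la g, x\ra$ can be negative and the crude separation overshoots; this is addressed by a more delicate covariance bookkeeping in classical references (e.g.\ Ledoux--Talagrand), but is not required for the present applications.
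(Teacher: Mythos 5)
Your strategy---fixing one index for the lower bound, and Sudakov--Fernique with the auxiliary process $Y_{x,y} = \|y\|_2\la g,x\ra + \|x\|_2\la h,y\ra$ for the upper bound---is exactly the approach the paper's remark cites (Vershynin's HDP, Theorem 8.7.1 and Exercise 8.7.4); the paper itself does not write out a proof. Your lower bound is complete: the final step ``take supremum over $y_0$'' is legitimate because $w(T)\ge 0$ for any nonempty bounded $T$ (fix any $x_0\in T$ inside the expectation). Your covariance identity for the increments of $Y$ versus $X$ is algebraically correct, and non-negativity follows from Cauchy--Schwarz as you say, so Sudakov--Fernique yields $\e\sup X \le \e\sup Y$.

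The gap you flag at the decoupling step is real, not cosmetic. Writing $Z := \sup_{x\in T}\la g,x\ra$ and $r_-(S):=\inf_{y\in S}\|y\|_2$, when $T$ is not symmetric $Z$ can be strictly negative with positive probability, in which case $\sup_{y\in S}\|y\|_2\,Z = r_-(S)Z > r(S)Z$; consequently
\[
\e\sup_{x\in T,\,y\in S}\|y\|_2\la g,x\ra \;=\; r(S)\,\e Z^{+} - r_-(S)\,\e Z^{-} \;\ge\; r(S)\,w(T),
\]
so the crude subadditive split \emph{overshoots} the target $r(S)w(T)$ rather than undershooting it. Note also that switching to Vershynin's asymmetric hint process $\|s\|_2\la g,t\ra + r(T)\la h,s\ra$ (which also satisfies the increment comparison) does not repair this step: the identical overshoot occurs there. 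You are right, however, that every invocation of Chevet in this paper uses $S$ and $T$ symmetric about the origin (subsets of $\ell_p$-unit balls, or sparse / sup-norm-truncated slices thereof), so $\sup_{x\in T}\la g,x\ra = \sup_{x\in T}|\la g,x\ra|\ge 0$ almost surely, your decoupling becomes an equality, and the bound $w(S)r(T)+w(T)r(S)$ follows cleanly. For the general statement the more careful argument in the cited references is indeed needed, as you correctly defer.
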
 

\begin{remark}\rm
	The upper bound of \eqref{chevet} goes back to the work of Chevet \cite{chevet78} (see also \cite{gordon85} for  sharper constant). 
	It can be derived using Sudakov-Fernique’s inequality (see
	Theorem 8.7.1 and Exercise 8.7.4 in \cite{HDP}). The lower bound is an easy exercise. 
\end{remark}

Chevet’s inequality helps us find the correct scaling of the $p$-to-$q$ operator norms of Gaussian matrices and perhaps surprisingly, it can be used to even pin down the limiting constants for some choices of $p$ and $q$.
In Section~\ref{sec:SGP}, we use this knowledge to identify the correct scaling for our $\ell_p$-Grothendieck problem, which  
may not obvious be at the first glance.  To elaborate how Chevet's inequality provides bounds for the operator norms,  for $p,q\in [1,\infty]$, write
\[ \| G_n \|_{p \to q} = \max_{ x\in B_p^n} \| G_n x  \|_q=\sup_{ x \in B_p^n, y \in B_{q^*}^n}  \langle G_nx, y \rangle,\]
where $B_r^n := \{ x \in \r^n: \| x \|_r   \le 1 \} $ denotes the unit $\ell_r$-ball in $\r^n$. Note that 
\begin{equation}\label{eq:id_norm}
r(B_r^n)  = \left \{ \begin{array}{cl}  1, & \mbox{if $1 \le r <2$},\\
n^{\tfrac{1}{r^*} - \tfrac{1}{2}}, & \mbox{if $2\leq r \leq \infty$},
\end{array} \right.
=n^{(\frac{1}{r^*}-\frac{1}{2})_+}=n^{(\frac{1}{2}-\frac{1}{r})_+}, 
\end{equation}
and from \cite[Example 3.5.4]{embrechts2013} and \cite[Proposition 3]{biau15},
\begin{equation}\label{eq:Gaussian_p_norm}
w(B_r^n)=\e \| g\|_{r^*}=
\left\{
\begin{array}{ll}
\sqrt{2\log n}(1+o(1)),&\mbox{if $r=1$},\\
\xi_{r^*} n^{1/r^*} + O(n^{1/r^* -1}),&\mbox{if $1< r<\infty,$}\\
\xi_1n,&\mbox{if $r=\infty$},
\end{array}\right.
\end{equation}
where $g$ is an $n$-dimensional standard Gaussian vector. For $1\leq a,b\leq \infty,$ set $\mathfrak{p}(a,b)=(1/a-1/2)_++1/b.$
By plugging $S = B_{q^*}^n$ and $T =  B_p^n$ into \eqref{eq:chevet}, a direct computation using \eqref{eq:id_norm} and \eqref{eq:Gaussian_p_norm} readily yields 

\begin{proposition}\label{prop1}
	The following two-sided bounds are valid.
	\begin{itemize}
		\item[$(i)$] For $p=1$ and $q=\infty,$
		$$
		(1+o(1))\sqrt{2\log n}\leq \e \| G_n \|_{p \to q}\leq 2(1+o(1))\sqrt{2\log n}.
		$$
		\item[$(ii)$] For $1<p\leq \infty$ and $1\leq q<\infty$, if $\mathfrak{p}(q,p^*) =\mathfrak{p}(p^*,q), $
		\begin{align*}
		(1+ o(1)) \max \big(  \xi_{p^*} ,   \xi_{q} \big )n^{ \mathfrak{p}(p^*,q)  }&\leq \e \| G_n \|_{p \to q}  \le(1+ o(1)) \big(  \xi_{p^*} +   \xi_{q}   \big)n^{\mathfrak{p}(p^*,q)   }.
		\end{align*} 
	\end{itemize}
\end{proposition}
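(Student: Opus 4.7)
The plan is to reduce everything to Chevet's inequality. First I would rewrite
\[
\|G_n\|_{p\to q}=\sup_{x\in B_p^n,\,y\in B_{q^*}^n}\langle y,G_n x\rangle
\]
and apply Theorem~\ref{chevet} with $T=B_p^n$ and $S=B_{q^*}^n$. This turns the whole question into arithmetic involving the four quantities $w(B_p^n)$, $r(B_p^n)$, $w(B_{q^*}^n)$, $r(B_{q^*}^n)$, each of which is already computed in \eqref{eq:id_norm} and \eqref{eq:Gaussian_p_norm}.

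Next I would simply substitute. In the regime of part $(ii)$, namely $1<p\le\infty$ and $1\le q<\infty$, the widths read $w(B_p^n)=\xi_{p^*}n^{1/p^*}(1+o(1))$ and $w(B_{q^*}^n)=\xi_q n^{1/q}(1+o(1))$, while the radii are $r(B_p^n)=n^{(1/p^*-1/2)_+}$ and $r(B_{q^*}^n)=n^{(1/q-1/2)_+}$. The two cross-products appearing in \eqref{eq:chevet} therefore become
\[
w(B_{q^*}^n)\,r(B_p^n)=\xi_q\,n^{\mathfrak{p}(p^*,q)}(1+o(1)),\qquad w(B_p^n)\,r(B_{q^*}^n)=\xi_{p^*}\,n^{\mathfrak{p}(q,p^*)}(1+o(1)).
\]

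The hypothesis $\mathfrak{p}(q,p^*)=\mathfrak{p}(p^*,q)$ is exactly what forces these two pieces to share the common polynomial scale $n^{\mathfrak{p}(p^*,q)}$. Consequently the $\max$ on the lower side of Chevet produces the prefactor $\max(\xi_{p^*},\xi_q)$ and the sum on the upper side produces $\xi_{p^*}+\xi_q$, which is exactly part $(ii)$. For part $(i)$, setting $p=1$ and $q=\infty$ collapses $S$ and $T$ both to $B_1^n$ with $r(B_1^n)=1$ and $w(B_1^n)=\sqrt{2\log n}(1+o(1))$, and \eqref{eq:chevet} immediately returns the announced constants $1$ and $2$.

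Since the argument is essentially deterministic bookkeeping downstream of the two cited ingredients, I do not expect any conceptual obstacle. The only point worth a second look is the boundary behaviour at $p=\infty$ and $q=1$, where \eqref{eq:Gaussian_p_norm} reports the explicit value $\xi_1 n$ rather than the generic form $\xi_{r^*}n^{1/r^*}$; a one-line sanity check confirms that these coincide at the endpoint, so $(ii)$ extends uniformly to these boundary values without requiring a separate case.
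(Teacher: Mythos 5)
Your argument is exactly the paper's: plug $S=B_{q^*}^n$, $T=B_p^n$ into Chevet's inequality \eqref{eq:chevet} and substitute the width/radius formulas \eqref{eq:id_norm} and \eqref{eq:Gaussian_p_norm}, with the hypothesis $\mathfrak{p}(q,p^*)=\mathfrak{p}(p^*,q)$ ensuring the two cross-terms share the same polynomial scale. The paper states this as a one-line "direct computation readily yields"; your write-up correctly fills in that bookkeeping, including the endpoint sanity check at $p=\infty$ and $q=1$.
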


In the complement of the above cases of $(p,q)$, one of the terms $w(B_p^n)r(B_{q^*}^n)$ and $w(B_{q^*}^n)r(B_p^n)$ strictly dominates the other as $n$ tends to infinity and as a result, we can locate the limiting constants of the scaled $\e \| G_n\|_{p\to q}$.

\begin{proposition}\label{prop:operator_norm}
	The following limits are valid.
	\begin{itemize}	
		\item[$(i)$] For $p=1$ and $1\leq q<\infty,$ 
		$
		\lim_{n\to\infty}{n^{-1/q}}\e \| G_n \|_{p \to q} =\xi_q.$
		\item[$(ii)$] For $1<p\leq \infty$ and $q=\infty,$
		$
		\lim_{n\to\infty}n^{-1/p^*}\e \| G_n \|_{p \to q}=\xi_{p^*}.
		$
		\item[$(iii)$] For $1<p\leq \infty$ and $1\leq q<\infty,$ if $\mathfrak{p}(q,p^*)\ne \mathfrak{p}(p^*,q)$, then
		\begin{equation*}
		\lim_{n\to\infty}	n^{ - \max({\mathfrak{p}(q,p^*)   }  ,{\mathfrak{p}(p^*,q)   }  )}   \e \| G_n \|_{p \to q}=  \left\{
		\begin{array}{ll}
		\xi_{p^*},  &  \mbox{if $\mathfrak{p}(q,p^*)     >  \mathfrak{p}(p^*,q)$}  ,\\
		\xi_q, &   \mbox{if $\mathfrak{p}(q,p^*)   <  \mathfrak{p}(p^*,q)$} .
		\end{array}\right.
		\end{equation*}
	\end{itemize}
	
\end{proposition}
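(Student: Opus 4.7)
The plan is to apply Chevet's inequality (Theorem \ref{chevet}) with $S = B_{q^*}^n$ and $T = B_p^n$ exactly as in the derivation of Proposition \ref{prop1}, producing the sandwich
$$
\max\bigl(w(B_p^n) r(B_{q^*}^n),\, w(B_{q^*}^n) r(B_p^n)\bigr) \le \e\|G_n\|_{p\to q} \le w(B_p^n)r(B_{q^*}^n) + w(B_{q^*}^n)r(B_p^n),
$$
and then to observe that in each of the three cases one of the two terms on the right is of strictly smaller order than the other, so after dividing by the appropriate power of $n$ the upper and lower bounds of the sandwich collapse to the same constant.

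For case (iii), the formulas \eqref{eq:id_norm} and \eqref{eq:Gaussian_p_norm} give $w(B_p^n) r(B_{q^*}^n) = \xi_{p^*}\, n^{\mathfrak{p}(q, p^*)}(1+o(1))$ and $w(B_{q^*}^n) r(B_p^n) = \xi_q\, n^{\mathfrak{p}(p^*, q)}(1+o(1))$, with a minor adjustment at $q = 1$ where the factor $\xi_q n^{1/q}$ must be read as $\xi_1 n$. Under the hypothesis $\mathfrak{p}(q, p^*)\ne\mathfrak{p}(p^*, q)$, the larger exponent picks out the dominating term and its prefactor $\xi_{p^*}$ or $\xi_q$ is then the claimed limit after dividing by $n^{\max(\mathfrak{p}(q,p^*),\mathfrak{p}(p^*,q))}$. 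For case (ii), $q = \infty$ forces $w(B_{q^*}^n) = \sqrt{2\log n}(1+o(1))$ and $r(B_{q^*}^n) = 1$, which gives dominance of $w(B_p^n) r(B_{q^*}^n) \sim \xi_{p^*} n^{1/p^*}$ over $w(B_{q^*}^n) r(B_p^n) = O(\sqrt{\log n}\cdot n^{(1/p^*-1/2)_+})$, since $1/p^* > (1/p^* - 1/2)_+$ for every $p > 1$. Case (i) is symmetric: $p = 1$ forces $w(B_p^n) = \sqrt{2\log n}(1+o(1))$ and $r(B_p^n) = 1$, so the dominant term becomes $w(B_{q^*}^n) r(B_p^n) \sim \xi_q n^{1/q}$ (respectively $\xi_1 n$ when $q=1$), strictly larger than $O(\sqrt{\log n}\cdot n^{(1/q-1/2)_+})$.

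The only point requiring attention is bookkeeping at the boundary values $p, q \in \{1, \infty\}$, where the form of the asymptotics in \eqref{eq:Gaussian_p_norm} changes (a logarithmic factor when the dual index equals $\infty$, an additional factor of $n$ when the dual index equals $1$), so that one must verify case by case that the dominating Chevet term is the one whose prefactor matches the statement. Beyond this routine case analysis, no further ideas are required.
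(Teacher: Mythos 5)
Your proposal is correct and follows exactly the route the paper intends: Proposition~\ref{prop:operator_norm} is stated without a separate proof precisely because it drops out of the same Chevet computation used for Proposition~\ref{prop1}, together with the observation (made in the text immediately preceding it) that in the complementary regimes one of $w(B_p^n)r(B_{q^*}^n)$ and $w(B_{q^*}^n)r(B_p^n)$ strictly dominates, so the two sides of \eqref{eq:chevet} collapse to the same constant after rescaling. Your case-by-case bookkeeping at $p,q\in\{1,\infty\}$ and the identification of the dominant term and its prefactor all check out.
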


\begin{remark}\rm \label{rmk1}
	Note that items $(i)$ and $(ii)$ in Proposition \ref{prop:operator_norm} are equivalent due to the duality relation $\|G_n\|_{p, q}  = \|G_n^T\|_{q^*, p^*}$. 
	Moreover, if $1<p<\infty$ and $q=p^*$, then $\mathfrak{p}(p^*,q)=\mathfrak{p}(q,p^*)$, which is equal to $1/p^*$ if $1<p<2$ and $3/2-2/p$ if $2\leq p<\infty.$ Therefore, by Proposition \ref{prop1}, we have that
	\begin{align}\label{extra:eq9}
	\begin{array}{cl}
	c\sqrt{\log n}\leq \e\|G_n\|_{p\to p^*}\leq C\sqrt{\log n},&\mbox{if $p=1$},\\
	cn^{\frac{1}{p^*}}\leq  \e \| G_n \|_{p \to p^*} \leq Cn^{\frac{1}{p^*}},&\mbox{if $1< p<2$},\\
	cn^{\frac{3}{2}-\frac{2}{p}}\leq  \e \| G_n \|_{p \to p^*}  \leq Cn^{\frac{3}{2}-\frac{2}{p}},&\mbox{if $2\leq p< \infty,$}
	\end{array}
	\end{align}
	where $c,C>0$ are universal constants depending only on $p.$
\end{remark}

\subsection{Scalings of the Grothendieck problem}\label{sec:SGP}

The scalings of the $\ell_p$-Grothendieck problem for $1\leq p<\infty$ are obtained in the following proposition.

\begin{proposition}
	We have that
	\begin{align*}
	\begin{array}{cl}
	c\sqrt{\log n}\leq   \e \max_{\|x\|_p=1}\la G_nx,x\ra\leq C\sqrt{\log n},&\mbox{if  \ $p=1$},\\
	cn^{\frac{1}{p^*}}\leq   \e \max_{\|x\|_p=1}\la G_nx,x\ra\leq Cn^{\frac{1}{p^*}},&\mbox{if  \ $1< p<2$},\\
	cn^{\frac{3}{2}-\frac{2}{p}}\leq  \e \max_{\|x\|_p=1}\la G_nx,x\ra  \leq Cn^{\frac{3}{2}-\frac{2}{p}},&\mbox{if  \ $2\leq p<\infty,$}
	\end{array}
	\end{align*}
	where $c, C>0$ are universal constants depending only on $p.$
\end{proposition}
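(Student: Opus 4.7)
The upper bound holds uniformly across the three regimes by H\"older's inequality: for any $x\in\r^n$ with $\|x\|_p=1$,
\[
\la G_nx,x\ra \le \|x\|_p\,\|G_nx\|_{p^*} \le \|G_n\|_{p\to p^*},
\]
so taking expectations and invoking the upper half of \eqref{extra:eq9} delivers the stated upper bounds.

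For the matching lower bounds I would exhibit a concrete (possibly random) test vector $x_\ast$ with $\|x_\ast\|_p=1$ in each regime. When $p=1$, take $x_\ast=e_I$ with $I=\mbox{argmax}_i g_{ii}$: then $\la G_n e_I,e_I\ra=\max_i g_{ii}$, whose mean equals $(1+o(1))\sqrt{2\log n}$ by the classical Gaussian extreme-value theory. When $2\le p<\infty$, take $x_\ast=n^{-1/p}\sigma^\ast$, where $\sigma^\ast\in\{\pm 1\}^n$ maximizes $\la G_n\sigma,\sigma\ra$, which reduces the problem to the well-known $\Theta(n^{3/2})$ lower bound on the Sherrington--Kirkpatrick ground state energy; this in turn can be obtained via a polarization identity on the discrete cube combined with Chevet's inequality applied to $S=T=\{\pm 1\}^n$, using $w(\{\pm 1\}^n)\asymp n$ and $r(\{\pm 1\}^n)=\sqrt n$.

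The intermediate regime $1<p<2$ is the subtlest case. Here I would take $x_\ast=(e_1+v_1)/\|e_1+v_1\|_p$ with $v_1$ defined as in \eqref{extra:eq8}, which is engineered so that $\la \bar G_n e_1,v_1\ra=\|\bar G_ne_1\|_{p^*}$. Expanding
\[
\la \bar G_n x_\ast,x_\ast\ra = \frac{\la \bar G_ne_1,e_1\ra + 2\|\bar G_ne_1\|_{p^*} + \la \bar G_nv_1,v_1\ra}{\|e_1+v_1\|_p^2},
\]
the cross term dominates: a $p^*$-th moment computation together with Gaussian concentration for the Lipschitz functional $v\mapsto\|v\|_{p^*}$ yields $\e\|\bar G_ne_1\|_{p^*}\gtrsim n^{1/p^*}$; the diagonal corrections are controlled by $|\bar g_{11}|=O(1)$ and by the Cauchy--Schwarz estimate $|\la\bar G_n v_1,v_1\ra|\le\|\bar G_n\|_2\|v_1\|_2^2=O(n^{3/2-2/p})$, both of smaller order than $n^{1/p^*}$ since $p<2$; and $\|e_1+v_1\|_p\to 2^{1/p}$ with high probability. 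Using $\la G_nx,x\ra=\la\bar G_nx,x\ra/\sqrt 2$, this gives $\e\la G_n x_\ast,x_\ast\ra\gtrsim n^{1/p^*}$.

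The main obstacle is precisely this $1<p<2$ case: the naive choice of a top eigenvector of $\bar G_n$ rescaled to the $\ell_p$-sphere produces only the weaker scaling $n^{3/2-2/p}$, so one is forced to employ a randomized test vector pairing a localized spike $e_i$ with the delocalized response $v_i$ to the $i$-th column of $\bar G_n$. This construction already anticipates the precise near-optimizer structure established later in Theorem~\ref{eq:thm_p<2}.
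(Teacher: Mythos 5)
Your upper bound and the $p=1$ lower bound coincide exactly with the paper's. Where you diverge is in the two remaining lower bounds, and both of your routes are valid but materially different from the paper's. For $2\le p<\infty$, the paper tests with $x=w/\|w\|_p$ where $w$ is the $\ell_2$-normalized top eigenvector of $\bar G_n$: since $w\stackrel{d}{=}g/\|g\|_2$, a law-of-large-numbers argument gives $\|w\|_p^{-2}\lambda_1(\bar G_n)\asymp n^{3/2-2/p}$ directly. You instead rescale from the cube, reducing to the SK ground state. That works, but your justification ``polarization identity $+$ Chevet on $\{\pm1\}^n$'' is telegraphic: Chevet lower-bounds the bilinear supremum $\e\sup_{\sigma,\tau}\la\tau,G_n\sigma\ra$ by $w(\{\pm1\}^n)r(\{\pm1\}^n)\asymp n^{3/2}$, and to pass from that bilinear quantity to the diagonal quantity $\e\sup_\sigma\la\sigma,G_n\sigma\ra$ one needs, beyond polarization, (a) the sign symmetry $G_n\stackrel{d}{=}-G_n$ to handle the subtracted term, (b) the Montanari--Sen observation that $\max_{\rho\in[-1,1]^n}\la A\rho,\rho\ra=\max_{\sigma\in\{\pm1\}^n}\la A\sigma,\sigma\ra$ when $A$ has zero diagonal, and (c) an $O(\sqrt n)$ correction for the diagonal of $G_n$. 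None of this breaks, but the paper's GOE-eigenvector route is noticeably cleaner at the scalings level. A still more elementary alternative, if you want to avoid both routes, is the greedy sequential choice $\sigma_k=\mathrm{sgn}(\sum_{j<k}\bar g_{jk}\sigma_j)$, which lower-bounds the SK ground state by $\sum_k\sqrt{k}\asymp n^{3/2}$ with no polarization at all.

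For $1<p<2$ your test vector $(e_1+v_1)/\|e_1+v_1\|_p$ is precisely the near-optimizer that the paper constructs later for the sharp constant in Theorem~\ref{eq:thm_p<2}(ii), so you are importing that machinery prematurely to prove a weaker scaling claim. The paper's proof of this proposition uses a lighter device: split $G_n$ into $(n/2)\times(n/2)$ blocks, fix $v=(2^{-1/p},0,\dots,0)^T$ in the lower half so that $(B_{12}+B_{21}^T)v$ is an explicit standard Gaussian vector $\tilde g$ in $\r^{n/2}$, pick $u^*$ dual to $\tilde g$ in $\ell_p$, and exploit the independence of $u^*$ from $B_{11}$ to make the problematic quadratic term $\la B_{11}u^*,u^*\ra$ vanish in expectation by conditioning. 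That independence trick is the key economy: it avoids the expansion-plus-error-estimates you carry out with $v_1$, $\|\bar G_n\|_2$, and $\|v_1\|_2^2$, and it also sidesteps any delicate interchange of expectation with the random normalization $\|e_1+v_1\|_p$ (which in your sketch is only controlled ``with high probability''). Your route is correct and has the virtue of foreshadowing the structure of near-optimizers, but for the modest purpose of establishing the order of magnitude, the paper's conditioning argument is more economical.
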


\begin{proof}
	First, by H\"older's inequality,
	\[ \sup_{\| x\|_p =1 }\langle G_nx, x\rangle  \le \sup_{\| x\|_p =1 }  \| G_n x\|_{p^*} \| x\|_{p}  =  \sup_{\| x\|_p =1 }  \| G_n x\|_{p^*}  = \|G_n\|_{p \to p^*}. \]
	From Remark \ref{rmk1}, the Grothendieck problem shares the same upper bounds as those in \eqref{extra:eq9}. The lower bound requires a bit of work. We argue in three cases:
	
	\smallskip
	
	\noindent{\bf Case $2\leq p<\infty$.} Let $w$ be the $\ell_2$-normalized eigenvector of  the GOE, $\bar G_n $, associated with the top eigenvalue $\lambda_1(\bar G_n)$. It is well-known that $w$ equals $g /\|g\|_2$ in distribution for an $n$-dimensional standard Gaussian vector $g$, leading to  $n^{1/2-1/p}\| w\|_p   \to \xi_p/\xi_2$ almost surely and in $L^2$. Also, $\lambda_1(\bar G_n) /\sqrt{n}\to 2$ almost surely and  in $L^2$.
	Therefore, we can take $x=w/\|w\|_p$ to obtain that 
	\[ \e  \sup_{\| x\|_p =1 }\langle G_n x, x\rangle  = \frac{1}{\sqrt{2}}  \e \sup_{\| x\|_p =1 }\langle \bar G_n x, x\rangle \ge   \frac{1}{\sqrt{2}}  \e \| w\|_p^{-2} \lambda_1( \bar G_n ) \ge c n^{ \tfrac{3}{2} - \tfrac{2}{p} },\]
	for some constant $c>0$. 
	
	\smallskip
	
	{\noindent \bf Case $1<p<2$.} Without loss of generality, assume that $n$ is even.  We readily bound that
	\[ \e \sup_{\| x\|_p =1 }\langle G_n x, x\rangle   
	\ge \sup_{\| v\|_p = 2^{-1/p} }  \e  \sup_{\| u\|_p =  2^{-1/p} }\Big\langle  G_n  \begin{pmatrix} u  \\ v  \end{pmatrix}, \begin{pmatrix} u  \\ v  \end{pmatrix}\Big\rangle ,   \]
	where $u,v\in \mathbb{R}^{n/2}.$
	Divide $G_n$ into four smaller blocks of sizes $(n/2)\times(n/2),$
	\[ G_n  = \begin{bmatrix}  B_{11} &   B_{12} \\  B_{21} & B_{22} \end{bmatrix}.\]
	Take $v=(2^{-1/p},0,\ldots,0)^T$. Clearly $\e \langle B_{22} v, v\rangle = 0$ and thus,
	\begin{align*}
	\e \sup_{\| u\|_p =  2^{-1/p} }\Big\langle G_n  \begin{pmatrix} u  \\ v  \end{pmatrix}, \begin{pmatrix} u  \\ v  \end{pmatrix}\Big\rangle&=  \e\Bigl[ \sup_{\| u\|_p =  2^{-1/p} } \bigl(\langle B_{11} u, u\rangle +  \langle  (B_{12}+B_{21}^T) v, u\rangle\bigr)\Bigr] \\
	&=  \e\Bigl[ \sup_{\| u\|_p =  2^{-1/p} } \bigl(\langle B_{11} u, u\rangle +  2^{1/2-1/p}\langle  \tilde g, u\rangle\bigr)\Bigr], 
	\end{align*} 
	where $\tilde g$ is a standard Gaussian vector in $\r^{n/2}$.
	Given $\tilde g$, we can choose $u^*  = u^* (\tilde g) $  with $\| u^*\|_p =  2^{-1/p}$  such that   $ \langle  \tilde g, u^*\rangle =  2^{-1/p} \| \tilde g \|_{p^*}$, which implies that there exists some $c'>0$ such that
	\begin{align*}
	\e\Bigl[ \sup_{\| u\|_p =  2^{-1/p} } \bigl(\langle B_{11} u, u\rangle +  2^{1/2-1/p}\langle  \tilde g, u\rangle\bigr)\Bigr] &\ge  \e\bigl[ \langle B_{11} u^*, u^*\rangle + 2^{1/2-1/p} \| \tilde g \|_{p^*}\bigr]\\
	& =  2^{1/2-1/p}\e  \| \tilde g \|_{p^*} \ge c' n^{1/p*},
	\end{align*} 
	where the equality above follows from the fact that $\e[ \langle B_{11} u^*, u^*\rangle | u^*] = 0$ since $u^*$ is independent of $B_{11}$ whereas  the last inequality uses \eqref{eq:Gaussian_p_norm}. Hence, we obtain that $\e\sup_{\| x\|_p =1 }\langle G_nx, x\rangle  \ge c' n^{1/p*}.$ 
	
	\smallskip
	
	{\noindent \bf Case $p=1$.} A matching lower bound can  be easily obtained by optimizing only over the coordinate vectors $e_i$ yielding  $\e \max_{\|x\|_1=1}\la G_nx,x\ra \ge \e \max_{i \in [n]} g_{ii} = \sqrt{2\log n}(1+o(1)).$
\end{proof}

\subsection{Concentration and maximal inequalities}

We collect some Gaussian bounds that will be of great use throughout the rest of this paper. We start with the well-known concentration inequality for the Lipschitz function of a standard Gaussian vector states (see, e.g., \cite[Theorems~5.5 and 5.6]{BLM13}).

\begin{lemma}[Gaussian concentration] \label{lem:Gconc}
	Let $g$ be a standard Gaussian vector in $\r^n$. Let $F: \r^n \to \r$ be a $L$-Lipschitz function, i.e., $|F(x) - F(y)| \le L \| x - y\|_2$ for all $x, y \in \r^n$. Then 
	\begin{equation}\label{eq:subG}
	\e e^{ s( F(g) - \e F(g)  )} \le e^{\frac{s^2 L^2}{2}},\,\,\forall s\in \mathbb{R}
	\end{equation}
	and
	\begin{equation}\label{eq:Gconc}
	\p(| F(g) - \e F(g) | \ge s )  \le 2 e^{ -\frac{s^2}{2L^2}},\,\,\forall s>0.
	\end{equation}
\end{lemma}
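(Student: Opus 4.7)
The plan is to prove \eqref{eq:subG} first and then deduce \eqref{eq:Gconc} from it by a standard Chernoff argument.

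For \eqref{eq:subG}, I would first prove it assuming $F$ is smooth with $\|\nabla F\|_2 \le L$ pointwise; the general Lipschitz case will follow by mollification $F_\varepsilon = F * \varphi_\varepsilon$ (still $L$-Lipschitz, with the same pointwise gradient bound) and dominated convergence in the moment generating function. For smooth $F$, the cleanest route---and the one that delivers the sharp constant $L^2/2$---is Herbst's argument driven by the Gaussian logarithmic Sobolev inequality
$$
\mathrm{Ent}_{\gamma_n}(h^2) \le 2 \int \|\nabla h\|_2^2 \, d\gamma_n.
$$
Set $G = F(g) - \e F(g)$ (still $L$-Lipschitz with mean zero), fix $s > 0$, and apply log-Sobolev to $h = e^{sG/2}$. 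Since $\|\nabla h\|_2^2 \le (s^2 L^2/4)\, h^2$, the inequality becomes, in terms of $\phi(s) = \e e^{sG}$,
$$
s\phi'(s) - \phi(s)\log \phi(s) \le \frac{s^2 L^2}{2}\, \phi(s).
$$
Dividing through by $s^2 \phi(s)$ identifies the left-hand side as $\frac{d}{ds}\bigl(s^{-1}\log \phi(s)\bigr)$, and integrating from $0$ to $s$---using $s^{-1}\log \phi(s) \to \phi'(0)/\phi(0) = \e G = 0$ as $s \downarrow 0$---yields $\log \phi(s) \le s^2 L^2/2$, which is \eqref{eq:subG} for $s > 0$. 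The case $s < 0$ follows by applying the same argument to $-F$ (still $L$-Lipschitz), and the case $s = 0$ is trivial.

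For \eqref{eq:Gconc}, Markov's inequality combined with \eqref{eq:subG} gives, for every $s > 0$ and $t > 0$,
$$
\p(F(g) - \e F(g) \ge t) \le e^{-st}\, \e e^{s(F(g) - \e F(g))} \le e^{-st + s^2 L^2/2},
$$
and optimizing in $s$ (taking $s = t/L^2$) produces the bound $e^{-t^2/(2L^2)}$. Running the same argument on $-F$ controls the lower tail, and the two-sided estimate \eqref{eq:Gconc} follows from a union bound that contributes the factor of $2$.

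The only mildly delicate point is the Lipschitz-to-smooth reduction in \eqref{eq:subG}: a Lipschitz function is only differentiable almost everywhere (Rademacher), so one really needs the mollification step in order to legitimately apply log-Sobolev to a smooth function whose gradient is uniformly bounded by $L$ pointwise. Once that is in place, the Herbst ODE step and the Chernoff deduction are completely routine, so the heart of the argument is entirely the log-Sobolev computation.
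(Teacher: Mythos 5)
Your argument is correct. One contextual point: the paper does not actually prove this lemma — it cites it as a standard fact (Theorems 5.5 and 5.6 of Boucheron--Lugosi--Massart) — and the Herbst-argument-via-Gaussian-log-Sobolev route you lay out is precisely the proof given in that reference, so you have in effect reproduced the standard derivation rather than taken a different path.

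A few small checks confirm the details are in order: with $h = e^{sG/2}$ the pointwise bound $\|\nabla h\|_2^2 \le (s^2 L^2/4)h^2$ is exactly what log-Sobolev needs; the identity $\mathrm{Ent}(h^2) = s\phi'(s) - \phi(s)\log\phi(s)$ holds by direct computation; $\frac{d}{ds}\bigl(s^{-1}\log\phi(s)\bigr)$ is indeed the divided form of the left side; and $\lim_{s\downarrow 0}s^{-1}\log\phi(s) = \e G = 0$ closes the integration. The mollification step is needed and handled correctly — convolution preserves the Lipschitz constant, the mollified function is smooth with $\|\nabla F_\varepsilon\|_2 \le L$ everywhere, and the MGF passes to the limit by dominated convergence (the uniform sub-Gaussian bound on the $F_\varepsilon(g)$ provides the dominating function). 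The Chernoff and union-bound deduction of \eqref{eq:Gconc} from \eqref{eq:subG} is routine and correctly optimized at $s = t/L^2$. No gaps.
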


\begin{remark}\rm Let $1 \leq p \leq2$. From Lemma \ref{lem:Gconc}, for any nonempty $S, T \subseteq B_p^n$, and any $s>0$, 
	\begin{align}
	\begin{split}\label{concen1}
	\p\Bigl(\Bigl| \sup_{ x \in S} \la x, G_n x \ra - \e \sup_{ x \in S} \la x, G_n x \ra \Bigr | \ge s \Bigr)  &\le 2 e^{ -s^2/2},
	\end{split}\\
	\begin{split}\label{concen2}
	\p\Bigl(\Bigl| \sup_{ x \in S, y \in T} \la x, G_n y \ra - \e \sup_{ x \in S, y \in T} \la x, G_n y \ra \Bigr | \ge s \Bigr)  &\le 2 e^{ -s^2/2}.  
	\end{split}
	\end{align}  
	To see this, note that the functions $G_n\mapsto\la x, G_n x \ra$ and $G_n \mapsto  \sup_{ x \in S, y \in T} \la x, G_n y \ra $ are Lipschitz continuous with respect to $\|\cdot\|_F$, the Frobenius norm. The former case has the Lipschitz constant, $\sup_{x\in S}\|x\|_2$, whereas  the latter case has the Lipschitz constant $  \sup_{ x \in S, y \in T } \| x\|_2 \| y\|_2 $. Both of these Lipschitz constants are bounded above by $1$ by the assumption that $p\leq 2$. 
\end{remark}

\begin{remark}\rm
	Let $2<p<\infty.$ It can be shown that $\gp_{n,p}$ and $L_{n,p,u}(t)$ are $n^{-1/2}$ and $un^{-1/2}$-Lipschitz with respect to $\|\cdot\|_F$, respectively. From Lemma \ref{lem:Gconc},
	\begin{align}
	\begin{split}\label{extra:eq18}
	\p\bigl(\bigl|\gp_{n,p}-\e \gp_{n,p}\bigr|\geq s\bigr)&\leq 2e^{-\frac{ns^2}{2}},\,\,\forall s>0,
	\end{split}\\
	\begin{split}\label{extra:eq12}
	\p\bigl(\bigl|L_{n,p,u}(t)-\e L_{n,p,u}(t)\bigr|\geq s\bigr)&\leq 2e^{-\frac{ns^2}{2u^2}},\,\,\forall s>0.
	\end{split}
	\end{align}
\end{remark}

\begin{remark}\rm We also mention the following concentration  bound for the $\ell_\infty$-norm of a standard Gaussian vector $g$ in $\r^n$ (see \cite[Corollary~4.10]{paouris2017} and \cite{boucheron2012,schechtman2004}), which is an improvement upon the one obtained from the preceding Gaussian concentration inequality.  There exist absolute constants $c, C>0$ such that for all $n \ge 1$ and for all $\eps \in (0, 1)$, 
	\begin{equation}\label{eq:Gaussian_maxima_upper_tail}
	\p(\big | \| g \|_\infty  - \e \| g \|_\infty \big | \ge \eps  \e \| g \|_\infty  ) \le C n^{-c\eps}. 
	\end{equation}
\end{remark}

Recall that a mean zero random variable $X$ is called $\sigma^2$-subgaussian  if $ \e e^{ t X} \le e^{t^2 \sigma^2/2}$ holds for all $t \in \r$.
The following result gives a crude bound of the maxima of a collection of subgaussian random variables (see \cite[Lemma~5.1]{van14}). 
\begin{lemma}[Maximal inequality] \label{lem:maxi_subG}
	Let $X_1, X_2, \ldots, X_m$ be a collection (not necessarily independent) of centered $\sigma^2$-subgaussian random variables. Then 
	\[ \e \max_{1\leq i\leq m} X_i \le \sqrt{ 2 \sigma^2 \log m}.\]
\end{lemma}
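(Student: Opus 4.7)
The plan is the standard Chernoff/Jensen argument: exponentiate to turn a maximum into a sum, exploit the subgaussian moment generating function bound, then optimize the free parameter. This is a classical one-line derivation, so I will sketch it rather than elaborate.

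First, fix $t>0$. By Jensen's inequality applied to the convex function $x \mapsto e^{tx}$, followed by the trivial bound $\max_i e^{tX_i} \le \sum_i e^{tX_i}$, I would obtain
$$
\exp\bigl(t\, \e \max_{1 \le i \le m} X_i\bigr) \le \e \exp\bigl(t \max_{1 \le i \le m} X_i\bigr) = \e \max_{1 \le i \le m} e^{tX_i} \le \sum_{i=1}^m \e e^{tX_i}.
$$
Next, I would invoke the subgaussian hypothesis $\e e^{tX_i} \le e^{t^2\sigma^2/2}$ term by term (note this uses only $t \in \r$, so no sign issue arises), giving $\sum_{i=1}^m \e e^{tX_i} \le m\, e^{t^2\sigma^2/2}$. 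Taking logarithms and dividing by $t>0$ yields the family of upper bounds
$$
\e \max_{1 \le i \le m} X_i \le \frac{\log m}{t} + \frac{t\sigma^2}{2}, \qquad t > 0.
$$

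Finally, I would minimize the right-hand side over $t>0$. The two terms are balanced at $t_* = \sqrt{2\log m / \sigma^2}$, and substituting this value produces the claimed bound $\sqrt{2\sigma^2 \log m}$. There is no genuine obstacle in the argument; the only point worth flagging is that no independence assumption is needed, since the union bound $\max_i e^{tX_i} \le \sum_i e^{tX_i}$ is pointwise. The statement is tight up to constants, as seen by taking $X_1,\ldots,X_m$ to be i.i.d.\ centered Gaussians with variance $\sigma^2$.
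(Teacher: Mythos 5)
Your proof is correct and is the standard Chernoff/Jensen argument for the subgaussian maximal inequality. The paper itself does not prove this lemma but simply cites \cite[Lemma~5.1]{van14}, which uses exactly this exponentiate-union-bound-optimize route, so your argument agrees with the referenced proof.
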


\section{Proof of Theorem~\ref{prop:p=1}}\label{sec4}
In this section, we provide a proof of Theorem~\ref{prop:p=1}. We begin with part $(i)$.

\begin{proof}[\bf Proof of $(i)$.] In view of the identity $\la G_nx,x\ra =2^{-1/2} \la \bar G_nx,x\ra$, it suffices to prove that 
\[  \frac{1}{\sqrt{\log n}} \max_{\|x\|_1=1}\la \bar G_nx,x\ra \to 2, \quad \text{almost surely}.\]
Note that the diagonal entries of the symmetric matrix $\bar G_n$ are $\bar g_{ii}=\sqrt{2} g_{ii}$ whereas  the off-diagonal entries are equal to $\bar g_{ij} = (g_{ij} + g_{ji})/\sqrt{2}.$ 
We bound
\begin{align*}
\la \bar G_nx,x\ra  = \sqrt 2 \sum_{i} g_{ii} x_i^2 + 2 \sum_{i < j} \bar g_{ij} x_i x_j &\le \max \big( \sqrt 2 \max_{i} g_{ii},\ \max_{i < j} | \bar g_{ij}|  \big)  \Big( \sum_i x_i^2 + 2 \sum_{i < j}  |x_i| |x_j| \Big)\\
&= \max \big ( \sqrt 2 \max_{i} g_{ii},\ \max_{i < j} |\bar g_{ij}|  \big)  \| x\|_1^2, 
\end{align*}
which leads to that
\begin{equation}\label{eq:p=1:upper_bound}
\frac{1}{ \sqrt{\log n} } \max_{\|x\|_1=1}\la \bar G_nx,x\ra \le \frac{\max \big ( \sqrt 2 \max_{i} g_{ii},\ \max_{i < j} |\bar g_{ij}|  \big)}{\sqrt{\log n} }.  \end{equation}
It is a well-known fact (see, e.g., \cite[Example 3.5.4]{embrechts2013}) that if $z_1, z_2, \ldots $ is a sequence of  i.i.d.\ $ N(0,1)$, then $ (2 \log n)^{-1/2} \max_{i \in [n]} z_i  \to 1$ almost surely. Consequently, the following limits exist almost surely. 
\[  \lim_n \frac{\sqrt 2 \max_{i} g_{ii}}{ 2 \sqrt{ \log n} }   = 1 \,\,
\text{ and }  \,\,
\lim_{n\to\infty}   \frac{ \max_{i < j} |\bar g_{ij}| }{ 2 \sqrt{ \log n} }  =  \lim_{n\to\infty} \max \Bigl( \frac{ \max_{i < j} \bar g_{ij} }{ \sqrt{2\log { n \choose 2}}},  \frac{ \max_{i < j} (- \bar g_{ij}) }{ \sqrt{2\log { n \choose 2}}} \Bigr)  =1. \]
Plugging the above limits in \eqref{eq:p=1:upper_bound}, we have the upper bound
\[ \limsup_{n\to\infty} \frac{1}{ \sqrt{\log n} } \max_{\|x\|_1=1}\la \bar G_nx,x\ra \le 2. \]
For the lower bound, we optimize over just  the coordinate vectors $e_i, i \in [n]$ to obtain
\[ \liminf_{n\to\infty}\frac{1}{ \sqrt{\log n} } \max_{\|x\|_1=1}\la \bar G_nx,x\ra \ge  \liminf_{n\to\infty}\frac{1}{ \sqrt{\log n} } \max_i \sqrt{2} g_{ii}  = 2.\]
This completes the proof of \eqref{eq:Gnp_limit_p=1}.
\end{proof}

\begin{proof}[\bf Proof of $(ii)$.] We realize that $\max_{\| x\|_1= 1} \la \bar G_nx,x\ra  \ge \max_{i \in [n]}  \sqrt{2} g_{ii}$ and  then proceed to prove \eqref{eq:lb_p=1} as 
\begin{align*}
\p \big (  \max_{i \in [n]}  g_{ii} \le   \sqrt{ 2(1-\eta) \log n} \big ) &= \p \big ( g_{11} \le   \sqrt{ 2(1-\eta) \log n} \big )^n\\
&\le  \big ( 1 - e^{-(1-\eta) \log n}  \big )^n=  \big ( 1 - n^{-(1-\eta)}  \big )^n \le e^{- n^\eta},
\end{align*}
where the first inequality above follows from the Gaussian tail bound $\p(z > t) \le e^{-t^2/2}$ for all $t>0$ and the last inequality uses the bound $1 - t \le e^{-t}$ for all $t \in \r$.
\end{proof}

\begin{proof}[\bf Proof of $(iii)$] 
	For $x$ with $\|x\|_1=1$ and $\|x\|_\infty \le 1 - \delta$, we have 
	\[ | (\bar G_n x)_i | \leq \sum_{j\in [n]} |\bar g_{ij}||x_j|\leq (1-\delta)X_{i, (1)} + \delta X_{i, (2)}, \quad i \in [n], \]
	where $X_{i, (1)} $ and $X_{i, (2)} $ denote the largest and the second largest values among the absolute values of the entries of the $i$-th row of $\bar G_n$. Then 
	\begin{align} \label{eq:p=1stability}
	\max_{\|x\|_1=1, \|x\|_\infty \le 1 - \delta }\la \bar G_nx,x\ra  
	\le \max_{i \in [n]}  \big ( (1-\delta)X_{i, (1)} + \delta X_{i, (2)} \big)\leq (1-\delta)\max_{i \in [n]} X_{i, (1)}+\delta\max_{i\in [n]}X_{i, (2)} .
	\end{align}
	Therefore, we need to bound the maxima, $\max_{i \in [n]} X_{i, (1)}$ and $\max_{i \in [n]} X_{i, (2)}.$ Note that
	\[ \max_{i \in [n]} X_{i, (1)} = \max \big ( \sqrt 2 \max_{i} |g_{ii}|,\ \max_{i < j} |\bar g_{ij}|  \big). \]
	Here, from Lemma \ref{lem:maxi_subG},
	\[ \e \max_{i} |g_{ii}| =\e \max\bigl(g_{11},-g_{11},\ldots, g_{nn},-g_{nn}\bigr)\leq  \sqrt{2\log 2n}.   \]
	Therefore, \eqref{eq:Gaussian_maxima_upper_tail} implies that for $\delta \in (0, 1),$ 
	\[  \p( \max_{i} |g_{ii}| \ge (1+ \delta/22) \sqrt{ 2 \log (2n)} ) \le C' n^{-c'\delta},\]
	where $c', C'$ are positive universal constants. We  reapply the above tail bound to the maximum of ${n \choose 2}$ i.i.d.\ standard Gaussian random variables $( \bar g_{ij})_{1 \le i < j \le n}$ and then use the union 
	bound to deduce that 
	\begin{align*}
	\p \big ( \max_{i \in [n]} X_{i, (1)}  \ge 2(1+ \delta/22) \sqrt{ \log (2n)} \big) \le C''n^{-c''\delta}
	\end{align*}
	for some positive constants $c''$ and  $C''$. 
	Let $K = {10 \sqrt{2} }/{11}\in (\sqrt{3/2}, \sqrt{2})$. From the usual Gaussian  tail bound, denote  
	\[ \theta_n: = \p( \sqrt{2} |z| \ge K \sqrt{ 2 \log n} ) =O(n^{-K^2/2}), \quad  \phi_n :=\p( |z| \ge K \sqrt{ 2 \log n} ) =O( n^{-K^2}). \]
	Note that $X_{i,(2)}\geq K\sqrt{2\log n}$ implies that at least two of $(\bar g_{ij})_{1\leq j\leq n}$ are not less than $K\sqrt{2\log n}.$ By using the union bound,
	\begin{align*} 
	\p( X_{i, (2) } \ge K \sqrt{ 2 \log n} ) &\le \p(\sqrt{2}|g_{ii}|\geq K\sqrt{2\log n}\,\,\mbox{and}\,\,|\bar g_{ij}|\geq K\sqrt{2\log n}\,\,\mbox{for some $j\neq i$})\\
	&+ \p(|\bar g_{ij}|,|\bar g_{ij'}|\geq K\sqrt{2\log n}\,\,\mbox{for some distinct $j,j'\neq i$})\\
	&\leq (n -1) \theta_n \phi_n  + {n-1 \choose 2} \phi_n^2 = O( n^{- ( 3K^2/2  -1)}) + O( n^{- ( 2K^2  -2)}),
	\end{align*}
	which, by noting that our choice of $K$ implies $3K^2/2-2>2K^2-3$ and using the union bound again, implies that 
	\begin{equation*}
	\p( \max_{i \in [n]} X_{i, (2) } \ge K \sqrt{ 2 \log n} )  = O( n^{- ( 3K^2/2  -2)}) + O( n^{- ( 2K^2  -3)}) = O( n^{- ( 2K^2  -3)}).
	\end{equation*}
	Note that the exponent $ 2K^2  -3$ is positive. 
	Therefore, for all $0<\delta\leq \min( (2K^2-3)/c'',1),$ on the event where both the bounds  $\max_{i \in [n]} X_{i, (1)}  \le 2(1+ \delta/22) \sqrt{ \log (2n)}$ and $ \max_{i \in [n]} X_{i, (2) } \le K \sqrt{ 2 \log n}$ are satisfied, which occurs with probability at least $1 - O( n^{- ( 2K^2  -3)}) -  O( n^{-c''\delta}) = 1 - O( n^{-c''\delta}),$ 
	we have, by a direction of computation, 
	\begin{align*}
	(1- \delta) \max_{i \in [n]}  X_{i, (1)} + \delta  \max_{i \in [n]} X_{i, (2)}
	&\leq 2(1+ \delta/22) \sqrt{ \log (2n)}(1-\delta)+K \delta \sqrt{ 2 \log n}\\
	&= \sqrt{2}\big(  1   - \delta/22   \big) \sqrt{2 \log (2n)} \leq \sqrt{2}\Big(  1   - \delta/22   +\sqrt{\log 2/\log n}\Big)\sqrt{2\log n},
	\end{align*}
	where the last inequality used $\sqrt{a+b}\leq \sqrt{a}+\sqrt{b}$ for all $a,b\geq 0.$
	Finally, this inequality and \eqref{eq:p=1stability}, coupled with the fact that $\la G_nx,x\ra = 2^{-1/2} \la \bar G_nx,x\ra$, yield \eqref{eq:stability_p=1}. 
\end{proof}

\section{Proof of Theorem \ref{eq:thm_p<2}}\label{sec5}
In order to prove Theorem~\ref{eq:thm_p<2}, we first begin with some preparation in the following subsection. 
\subsection{Preliminary bounds}

For a given truncation level $\eps>0$, set
\[D = B_p^n \cap \{ x \in \r^n:  \| x\|_\infty \le \eps \} \ \ \ \ \ \text{ and } \ \ \ \ \ L =B_p^n \cap \{ x \in \r^n:  |x_i|\neq 0 \Rightarrow  |x_i|  > \eps \text{ for each } i \}. \]
For $x \in \r^n$, we can uniquely decompose $x = x^o + x^\dagger$ with $x^o \in D$ and $x^\dagger \in L$ by setting
\begin{equation}\label{eq:decompose1}
x^o_i  = x_i 1( |x_i| \le \eps) \ \  \text{ and } \ \  x^\dagger_i  = x_i 1( |x_i| > \eps) \ \ \text{ for each } i. 
\end{equation}
Note that $x^\dagger$ carries the localized part of $x$, whereas $x^o$ is the remaining delocalized part of $x$.
With these notations in hand, we are ready to state two crucial  propositions.

\begin{proposition}\label{lem:decomp}
	Let $1<p<2$. There exists a constant $C>0$ such that the following statements are valid. 
	\begin{itemize}
		\item[$(i)$] For any $\varepsilon>0$ and $n\geq 1,$
		\begin{equation} \label{eq:cross_term_dominate}
		\e\sup_{ S \subseteq B_p^n} \Big |  \max_{x \in S }\la G_nx,x\ra  -  \sqrt{2}   \max_{x \in S}\la \bar G_n x^o, x^\dagger \ra \Big |  \le C \bigl( \eps^{1 - p/2} n^{1/p*} + \eps^{-p/p^*} \sqrt{\log n} \bigr).
		\end{equation}
		\item[$(ii)$] For any $\eps>0,$ $c>0,$ and $n\geq 1,$ with probability at least $1-2n^{-c}$,
		\begin{equation} \label{eq:cross_term_dominate2}
		\sup_{ S \subseteq B_p^n} \Big |  \max_{x \in S }\la G_nx,x\ra  -  \sqrt{2}   \max_{x \in S}\la \bar G_n x^o, x^\dagger \ra \Big |  \le C\bigl( \eps^{1 - p/2} n^{1/p*} + \eps^{-p/p^*} \sqrt{\log n}\bigr) +2\sqrt{2c\log n}.
		\end{equation}
	\end{itemize}
\end{proposition}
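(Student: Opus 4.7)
The plan is to combine a bilinear decomposition with two applications of Chevet's inequality, and then add Gaussian concentration to upgrade the resulting mean bound to a high-probability statement. Using $G_n+G_n^T=\sqrt{2}\,\bar G_n$ and expanding $\la G_n x, x\ra$ according to $x=x^o+x^\dagger$ yields the algebraic identity
\[ \la G_n x, x\ra - \sqrt{2}\,\la \bar G_n x^o, x^\dagger\ra = \la G_n x^o, x^o\ra + \la G_n x^\dagger, x^\dagger\ra. \]
Coupled with the elementary bound $|\max_S f-\max_S g|\le\max_S|f-g|$ and the fact that $x\in B_p^n$ forces $x^o\in D$ and $x^\dagger\in L$, this reduces both (i) and (ii) to controlling the two suprema $\sup_{y\in D}|\la G_n y,y\ra|$ and $\sup_{y\in L}|\la G_n y,y\ra|$, uniformly over $S\subseteq B_p^n$.

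For the delocalized piece, the interpolation $\|y\|_2^2\le\|y\|_\infty^{2-p}\|y\|_p^p\le\eps^{2-p}$ produces $r(D)\le\eps^{1-p/2}$, while \eqref{eq:Gaussian_p_norm} gives $w(D)\le w(B_p^n)\le C n^{1/p^*}$. Applying Theorem~\ref{chevet} with $S=T=D$ (together with the symmetry $-G_n\stackrel{d}{=}G_n$, which lets us replace the outer $|\cdot|$ by a one-sided supremum at the cost of a universal constant) yields the first term $C\eps^{1-p/2}n^{1/p^*}$ in both bounds.

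The localized piece is the heart of the argument. The defining condition $|y_i|>\eps$ on nonzero coordinates combined with $\|y\|_p\le 1$ forces $\|y\|_0\le k:=\lfloor\eps^{-p}\rfloor$, and $p<2$ gives $r(L)\le 1$ directly. The Gaussian width is where the sparsity is exploited:
\[ w(L)=\e\max_{|S|\le k}\|g_S\|_{p^*}\le k^{1/p^*}\,\e\|g\|_\infty\le C\eps^{-p/p^*}\sqrt{\log n}, \]
where the first inequality uses $\|g_S\|_{p^*}\le|S|^{1/p^*}\|g_S\|_\infty$, valid because $p^*\ge 2$, and the last uses the standard bound $\e\|g\|_\infty\le C\sqrt{\log n}$. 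A second application of Chevet then furnishes $\e\sup_{y\in L}|\la G_n y,y\ra|\le C\eps^{-p/p^*}\sqrt{\log n}$, completing (i).

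For (ii), the maps $G_n\mapsto\sup_{y,z\in D}|\la z,G_n y\ra|$ and $G_n\mapsto\sup_{y,z\in L}|\la z,G_n y\ra|$ are Lipschitz in the Frobenius norm with constants bounded by $r(D)^2\le\eps^{2-p}$ and $r(L)^2\le 1$; in the relevant regime $\eps\le 1$ both are at most one. Gaussian concentration \eqref{concen2} applied at deviation $\sqrt{2c\log n}$ to each, followed by a union bound and an adjustment of $c$ to absorb universal constants, produces the additive term $2\sqrt{2c\log n}$ with probability at least $1-2n^{-c}$. The main technical point is really the Gaussian-width estimate for $L$: a naive sub-Gaussian maximal inequality over the $\binom{n}{k}$ many supports yields the inferior rate $\eps^{-p/2}\sqrt{\log n}$, and the gain to the claimed $\eps^{-p/p^*}\sqrt{\log n}$ stems from the observation that the supremum of $\|g_S\|_{p^*}$ over $|S|\le k$ is controlled by the top $k$ order statistics of $|g|$, each bounded uniformly by $\|g\|_\infty$.
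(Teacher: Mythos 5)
Your proof is correct and takes essentially the same route as the paper: the same algebraic identity $\la G_nx,x\ra - \sqrt{2}\la\bar G_n x^o,x^\dagger\ra = \la G_n x^o,x^o\ra + \la G_n x^\dagger,x^\dagger\ra$, the same reduction to $\sup_{y\in D}|\la G_n y,y\ra|$ and $\sup_{y\in L}|\la G_n y,y\ra|$, two applications of Chevet with the same estimates $r(D)\le\eps^{1-p/2}$, $w(D)\le Cn^{1/p^*}$, $r(L)\le 1$, $w(L)\le Ck^{1/p^*}\sqrt{\log n}$, and concentration of Lipschitz functionals of Gaussian matrices for part (ii). The only cosmetic differences are that you use $-G_n\stackrel{d}{=}G_n$ to dispose of the absolute value where the paper instead uses the symmetry $D=-D$, $L=-L$ to bound the diagonal quadratic form by the bilinear supremum (which avoids the extra universal constant), and your parenthetical ``valid because $p^*\ge 2$'' for $\|g_S\|_{p^*}\le|S|^{1/p^*}\|g_S\|_\infty$ is not actually needed — that inequality holds for all $p^*\ge 1$.
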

%
Since the $\ell_p$-Grothedieck problem scales as $n^{1/p^*}$  for $1<p<2$,  the above proposition says that the main contribution of $\ell_p$-Grothendieck problem on an arbitrary subset $S$ of $B_p^n$  comes from the cross-term $\sqrt{2}   \max_{x \in S}\la \bar G_n x^o, x^\dagger \ra$ as long as $\varepsilon=\varepsilon_n$ satisfies that $\limsup_{n\to\infty}\varepsilon=0$ and  $\liminf_{n\to\infty}\varepsilon n^{1/p}/(\log n)^{p^*/(2p)}=\infty$.  The next result then provides an upper bound on this cross-term in terms of the maximum number of localized coordinates of the vectors in $S$. For $x \in \r^n$, recall that $\|x\|_0$ denotes the number of nonzero entries of $x$.

\begin{proposition} \label{lem:max_over_T} 
	There exists a constant $C$ such that
	for any $1 \le s \le n$ and any subset $L^\# \subseteq \{ x: \|x\|_p \le 1,  \|x\|_0 \le s\} $, 
	\[  \e \sup_{ x \in L^\#, y \in B_p^n} \la  y,  \bar G_n x \ra  \le r(L^\#)  \e\| g\|_{p^*}  + C \bigl(n^{p/(2p^*)}  \sqrt{\log n}   + s^{1/p^*} \sqrt{\log n}  \bigr),  \]
	where $g$ is an $n$-dimensional standard Gaussian vector.
\end{proposition}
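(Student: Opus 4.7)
The plan is to reduce the bilinear supremum to an $\ell_{p^*}$-norm supremum via duality, then exploit the sparsity $\|x\|_0 \le s$ through a support decomposition combined with Chevet's inequality and Gaussian concentration.

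Writing $\sup_{y \in B_p^n}\la y, \bar G_n x\ra = \|\bar G_n x\|_{p^*}$, the quantity of interest becomes $\e \sup_{x \in L^\#}\|\bar G_n x\|_{p^*}$. Cover $L^\#$ by the family $T_A := \{x \in L^\#:\mathrm{supp}(x) \subseteq A\}$ as $A$ ranges over subsets of $[n]$ with $|A| \le s$, and put $M_A := \sup_{x \in T_A}\|\bar G_n x\|_{p^*}$, so $\sup_{x \in L^\#}\|\bar G_n x\|_{p^*} = \max_A M_A$.

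For fixed $A$, split $\bar G_n$ into the $|A| \times |A|$ GOE block $(\bar G_n)_{A,A}$ and the $|A^c| \times |A|$ i.i.d.\ Gaussian block $(\bar G_n)_{A^c,A}$ (these are independent). For $x \in T_A$, the triangle inequality yields $\|\bar G_n x\|_{p^*} \le \|(\bar G_n)_{A,A}\, x_A\|_{p^*} + \|(\bar G_n)_{A^c,A}\, x_A\|_{p^*}$. The first summand is at most the $p$-to-$p^*$ operator norm of the $|A|\times|A|$ GOE, whose expectation is $O(s^{1/p^*})$ by Remark~\ref{rmk1}. For the second summand, I apply Chevet's inequality (Theorem~\ref{chevet}) to the i.i.d.\ matrix $(\bar G_n)_{A^c,A}$ with $T = T_A \subset \r^A$ and $S = B_p^{A^c}$; using $r(T_A) \le r(L^\#)$, $w(T_A) \le \e\|g^A\|_{p^*} = O(s^{1/p^*})$, $r(B_p^{A^c}) = 1$ (valid since $p \le 2$), and $w(B_p^{A^c}) \le \e\|g\|_{p^*}$, one obtains
\[
\e M_A \le r(L^\#)\,\e\|g\|_{p^*} + C s^{1/p^*}.
\]

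To pass from fixed $A$ to $\e \max_A M_A$, note that the map $\bar G_n \mapsto M_A$ is $O(r(L^\#))$-Lipschitz in the Frobenius norm, so $M_A - \e M_A$ is $O(1)$-subgaussian by Gaussian concentration (Lemma~\ref{lem:Gconc}). The maximal inequality (Lemma~\ref{lem:maxi_subG}) across the collection of supports, whose cardinality is at most $(en/s)^s$, then adds a fluctuation of order $\sqrt{s\log(en/s)}$. The main obstacle is absorbing this term into the advertised form $C(n^{p/(2p^*)} + s^{1/p^*})\sqrt{\log n}$: the bound is immediate when $s \le n^{p-1}$, since then $\sqrt{s\log n} \le n^{p/(2p^*)}\sqrt{\log n}$, but in the opposite regime one must refine either the pointwise Chevet estimate (via a finer decomposition of $x \in T_A$ that isolates its dominant entries so the effective Lipschitz radius shrinks below $r(L^\#)$) or the chaining over $T_A$ using its sparsity-controlled $\ell_2$-metric entropy. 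I anticipate that balancing these two ingredients — the support-union cost and the effective Chevet radius — is the delicate technical crux of the proof.
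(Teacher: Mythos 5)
Your pointwise estimate $\e M_A \le r(L^\#)\,\e\|g\|_{p^*} + Cs^{1/p^*}$ for fixed $A$ is correct, and the block decomposition of $\bar G_n$ into $(\bar G_n)_{A,A}$ and $(\bar G_n)_{A^c,A}$ is a clean way to dodge the spurious $\sqrt{2}$ factor that a direct Chevet bound on $\bar G_n$ would incur (cf.\ the remark following the proposition). But the union bound over supports is the wrong axis of decomposition, and the gap you flag at the end is genuine, not merely a technical detail to be smoothed over. With $\log\bigl|\{A : |A| \le s\}\bigr| \asymp s\log(n/s)$, the fluctuation $\sqrt{s\log n}$ dominates $n^{p/(2p^*)}\sqrt{\log n}$ as soon as $s > n^{p/p^*} = n^{p-1}$, and since $1/p^* < 1/2$ for $1<p<2$, the term $s^{1/p^*}\sqrt{\log n}$ can never absorb $\sqrt{s\log n}$ either. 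This regime is not exotic: in the paper's application of this proposition (the upper bound of Theorem~\ref{eq:thm_p<2}), one takes $s = \eps^{-p} = n^{2/p^*} > n^{p-1}$, and there your fluctuation $\sqrt{s\log n} = n^{1/p^*}\sqrt{\log n}$ is of the \emph{same order} as the main term $\xi_{p^*} n^{1/p^*}$ — the bound would fail to pin down the constant at all.

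Your closing suggestion of a ``finer decomposition of $x \in T_A$ that isolates its dominant entries'' is exactly the right idea, but left unexecuted it is the entire proof. What the paper does is decompose each $x \in L^\#$ by \emph{magnitude}, not support: write $x = x^o + x^\dagger$ where $x^\dagger_i = x_i 1(|x_i| > \eps)$ with $\eps = n^{-1/p^*}$. The key point is that $\|x^\dagger\|_0 \le \eps^{-p} = n^{p/p^*}$ \emph{regardless of $s$}, so the union/net cost is decoupled from the sparsity parameter: an $n^{-1}$-net of $V = L^\# \cap \{\|x\|_0 \le \eps^{-p}\}$ has log-cardinality $O(n^{p/p^*}\log n)$, yielding exactly the $O(n^{p/(2p^*)}\sqrt{\log n})$ fluctuation. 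The leading term $r(L^\#)\,\e\|g\|_{p^*}$ is extracted from $\sup_{x\in V}\e\|\bar G_n x\|_{p^*}$ via the GOE identity $\bar G_n x \stackrel{d}{=} \|x\|_2\, g + zx$ (with $z$ an independent $N(0,1)$), costing only an $O(1)$ error. Meanwhile the small-entry part $x^o$ lives in $U = \{\|x\|_p\le1,\ \|x\|_\infty\le\eps,\ \|x\|_0\le s\}$, which has $\ell_2$-radius $\le \eps^{1-p/2}$; a \emph{single} Chevet application (no union bound) on $U \times B_p^n$ then gives $O(\eps^{1-p/2}n^{1/p^*} + s^{1/p^*}\sqrt{\log n}) = O(n^{p/(2p^*)} + s^{1/p^*}\sqrt{\log n})$, which is where the $s^{1/p^*}$ term of the proposition actually originates. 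So the right ``balance'' is not between support-union cost and effective Chevet radius within a fixed $T_A$, but between a magnitude threshold $\eps$ that simultaneously caps the net entropy (via $\eps^{-p}$) and the Chevet radius (via $\eps^{1-p/2}$).
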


\begin{remark}\rm 
	The bound in  Proposition~\ref{lem:max_over_T} will mainly be used in the proof of \eqref{eq:Gnp_limit_p<2} in Theorem \ref{eq:thm_p<2}. The key feature here is that the term $r(L^\#)  \e\| g\|_{p^*}$ is asymptotically sharp up to an additive error of smaller order for some properly chosen $s = o(n)$ and $L^\#$, yielding the correct limit of the $\ell_p$-Grothendieck problem. If we replace $\bar G_n$ with $G_n$, then we could also directly apply Chevet's inequality to obtain 
	\[  \e \sup_{ x \in L^\#, y \in B_p^n} \la  y,  G_n x \ra =\e \sup_{ x \in L^\#, y \in B_p^n} \la  y,  G_n^T x \ra\le r(L^\#) \e\| g\|_{p^*}   + C  s^{1/p^*} \sqrt{\log n},  \]
	which implies that, by recalling $\bar G_n=(G_n+G_n^T)/\sqrt{2}$,
		\[  \e \sup_{ x \in L^\#, y \in B_p^n} \la  y,  \bar G_n x \ra \le \sqrt{2} r(L^\#) \e\| g\|_{p^*}   + C  s^{1/p^*} \sqrt{\log n}.  \]
		However, this extra prefactor $\sqrt{2}$ in front of  $r(L^\#) \e\| g\|_{p^*}$ makes the bound sub-optimal.  
		\end{remark}

We establish the above two propositions in  the rest of this subsection.  The next  lemma bounds the Gaussian width of sparse sets.

\begin{lemma}\label{lem:Gaussian_width_sparse}
	Fix $p>1$ and $s \le n$. Let $W = \{ x\in \r^n:  \| x\|_p \le 1, \| x \|_0 \le s\}$. Then 
	\[w(W) \le \sqrt{2} s^{1/p^*} \sqrt{\log n}. \]
\end{lemma}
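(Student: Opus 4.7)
The plan is to reduce the Gaussian width of $W$ to the expected maximum of restricted $p^*$-norms of a standard Gaussian vector $g$ via duality, and then control this expectation using Lemma~\ref{lem:maxi_subG}.

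First, I would decompose $W$ by support: $W = \bigcup_{I\subseteq [n],\,|I|\leq s}\{x\in\mathbb{R}^n:\|x\|_p\leq 1,\ \mathrm{supp}(x)\subseteq I\}$ and apply H\"older's inequality (duality between $\ell_p$ and $\ell_{p^*}$): for each index set $I\subseteq [n]$,
\[
\sup_{\|x\|_p\leq 1,\ \mathrm{supp}(x)\subseteq I}\langle g,x\rangle=\|g|_I\|_{p^*},
\]
where $g|_I$ denotes the restriction of $g$ to coordinates in $I$. Taking expectations and maximizing over $I$ reduces the problem to a discrete maximum:
\[
w(W)=\mathbb{E}\max_{|I|\leq s}\|g|_I\|_{p^*}.
\]

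Second, I would use the elementary inequality $\|y\|_{p^*}\leq |I|^{1/p^*}\|y\|_\infty$ for $y\in\mathbb{R}^I$ (since $p^*\geq 1$). Applied with $y=g|_I$ and $|I|\leq s$, this gives $\|g|_I\|_{p^*}\leq s^{1/p^*}\|g\|_\infty$ uniformly in $I$, whence
\[
w(W)\leq s^{1/p^*}\,\mathbb{E}\|g\|_\infty.
\]

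Third, I would bound $\mathbb{E}\|g\|_\infty$ by applying Lemma~\ref{lem:maxi_subG} to the $2n$ centered $1$-subgaussian random variables $\pm g_1,\ldots,\pm g_n$: since $\|g\|_\infty=\max_{i\in [n],\,\sigma\in\{\pm 1\}}\sigma g_i$, we obtain $\mathbb{E}\|g\|_\infty\leq \sqrt{2\log(2n)}$, which matches the claimed bound $\sqrt{2}\,s^{1/p^*}\sqrt{\log n}$ up to absorbing the lower-order $\log 2$ term (or by invoking the sharper asymptotic $\mathbb{E}\|g\|_\infty\leq \sqrt{2\log n}(1+o(1))$ for large $n$).

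The conceptual content is entirely in the first step: once one observes that the $\ell_p$ constraint and the sparsity constraint decouple via the union-over-supports representation, everything else is routine. No serious obstacle is expected; any refinement (e.g., a Jensen-based bound $(\sum_{i=1}^s \mathbb{E}|g|_{(i)}^{p^*})^{1/p^*}$ or a subgaussian concentration union bound over the $\binom{n}{s}$ supports) would only affect lower-order terms and is unnecessary.
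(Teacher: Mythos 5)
Your argument is essentially identical to the paper's: fix a support $I$, use $\ell_p$--$\ell_{p^*}$ duality to get $\sup_{\|x\|_p\le 1,\,\mathrm{supp}(x)\subseteq I}\langle g,x\rangle=\|g|_I\|_{p^*}$, bound that by $s^{1/p^*}\|g\|_\infty$, and finish with the maximal inequality for subgaussians. (You are actually a bit more careful than the paper about the $\sqrt{2\log(2n)}$ versus $\sqrt{2\log n}$ discrepancy coming from the $2n$ signed variables; the paper silently writes $\e\max_i|g_i|\le\sqrt{2\log n}$, which is fine asymptotically but not literally what Lemma~\ref{lem:maxi_subG} gives.)
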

\begin{proof}
	Observe that  for any subset $I$ of $\{1, 2, \ldots, n\}$, 
	\[ \max_{ \| x\|_p \le 1,\mathrm{supp}(x) = I} \la g, x \ra  = \Bigl(\sum_{i \in I} |g_i|^{p^*} \Bigr)^{1/p^*}.\]
	Therefore,  by Lemma \ref{lem:maxi_subG}, we have
	\begin{align*}
	w(W) &= \e  \sup_{ x \in W} \la g, x \ra    \le  \e \max_{  |I |  \le  s  } \Big (  \sum_{i \in I} |g_i|^{p^*} \Big )  ^{1/p^*}\\
	&\le s^{1/p^*} \e \max_i |g_i|    \le  s^{1/p^*} \sqrt{2 \log n}.
	\end{align*}
\end{proof}
\begin{proof}[\bf Proof of Proposition~\ref{lem:decomp}]
	Let $\eps>0$ be fixed. Write, according to \eqref{eq:decompose1},
	\[ \la G_nx,x\ra   =   \sqrt{2} \la \bar G_n x^o, x^\dagger \ra + \la G_n x^o, x^o \ra + \la G_n x^\dagger, x^\dagger \ra,\]
	and then, use the triangle inequality to bound
	\begin{align}\label{eq:decomp1}
	\sup_{ S \subseteq B_p^n} \Big |  \max_{x \in S }\la G_nx,x\ra  -  \sqrt{2}   \max_{x \in S}\la \bar G_n x^o, x^\dagger \ra \Big |\nonumber
	&\le   \max_{\|x\|_p=1} | \la G_n x^o, x^o \ra | + \max_{\|x\|_p=1}  | \la G_n x^\dagger, x^\dagger \ra|\nonumber\\
	&\leq \max_{x\in D} | \la G_n x, x\ra | + \max_{x\in L}  | \la G_n x, x \ra|.
	\end{align}
	Note that as $D$ and $L$ are symmetric, i.e., $D=-D$ and $L=-L$, we have 
	\begin{align*}
	\max_{x\in D} | \la G_n x, x\ra | \leq \max_{x,y\in D}\la G_nx,y\ra,\quad
	\max_{x\in L} | \la G_n x, x\ra | \leq \max_{x,y\in L}\la G_nx,y\ra.
	\end{align*}
	Applying Chevet's inequality with $S=T=D$, we have 
	$ \e \max_{x,y \in D} \la G_n x, y \ra \le 2 w(D) r(D).$
	We can bound the Guassian width of $D$ by that of $B_p^n$ to obtain 
	$w(D) \le w(B_p^n) = \e \| g \|_{p^*} \le C n^{1/p^*},$
	using \eqref{eq:Gaussian_p_norm}. On the other hand, for $x \in D$, we have
	$\sum_{i} |x_i|^2 \le \eps^{2 - p} \sum_{i} |x_i|^p \le \eps^{2 - p}$. 
	Hence, 
	$ r(D) \le \eps^{1 - p/2}$
	and  therefore, 
	\begin{equation}\label{eq:Sbound}
	\e \max_{x \in D} |\la G_n x, x \ra|\le 2C \eps^{1 - p/2} n^{1/p*}.
	\end{equation}
	Similarly, we have  $\e \max_{x,y \in L} \la G_n x, y \ra \le 2 w(L) r(L).$
	Obviously $r(L)=1$.
	Note that if $x\in L$, then $\| x \|_0 \le \eps^{-p}$.
	Hence, by Lemma~\ref{lem:Gaussian_width_sparse}, we have 
	$w(L) \le \sqrt{2}  \eps^{-p/p^*} \sqrt{\log n}$ and this yields
	\begin{equation}\label{eq:Tbound}
	\e \max_{x\in L}  |\la G_n x, x \ra| \le 2 \sqrt{2} \eps^{-p/p*} \sqrt{\log n}.
	\end{equation}
	Now \eqref{eq:cross_term_dominate} follows from \eqref{eq:decomp1}, \eqref{eq:Sbound}, and \eqref{eq:Tbound} after taking $C$ sufficiently large.
	To prove \eqref{eq:cross_term_dominate2}, note that $G_n\mapsto\max_{x\in D}  |\la G_n x, x \ra|$ and $G_n\mapsto\max_{x\in L}  |\la G_n x, x \ra|$ are $1$-Lipschitz with respect to the Frobenius norm. For any $c>0$, using \eqref{eq:Gconc} with $s=\sqrt{2c\log n}$, \eqref{eq:Sbound}, and \eqref{eq:Tbound} yields that  with probability at least $1-2n^{-c}$,
	\begin{align*}
	\max_{x\in D}  |\la G_n x, x \ra|+  \max_{x\in L}  |\la G_n x, x \ra|
	&\le 2C \eps^{1 - p/2} n^{1/p*} \sqrt{\log n}+2 \sqrt{2} \eps^{-p/p*}\sqrt{\log n}+2\sqrt{2c\log n}.
	\end{align*}
	Consequently,  it follows from \eqref{eq:decomp1} that \eqref{eq:cross_term_dominate2} is valid with probability at least $1-2n^{-c}.$
	
\end{proof}


\begin{proof}[\bf Proof of Proposition \ref{lem:max_over_T}]
	 For any $x \in L^\#$, write $x=x^o+x^\dagger$ as in \eqref{eq:decompose1} by setting $\eps = n^{-1/p^*}$. Then $x^o$ satisfies that $\|x^o\|_p\leq 1,\|x^o\|_\infty\leq \eps,$ and $\|x^o\|_0\leq s.$ On the other hand, since any non-zero entry of $x^\dagger$ satisfies $|x_i^\dagger|\geq \eps$, it follows that $\|x^\dagger\|_0\eps^p\leq \|x\|_p^p\leq 1,$ which implies that $\|x^\dagger\|_0\leq \eps^{-p}.$ So, we have 
	\[ L^\# \subseteq U+V := \{ x + x': x \in U, x' \in V\},\]
	where 
	\[ U=  \{ x: \| x \|_p \le 1,    \|x \|_\infty \le \eps,  \| x \| _0 \le s \},  \ \ \ V =  L^\# \cap \{ x : \| x \|_0 \le \eps^{-p} \}. \]
	Then by triangle inequality, 
	\[  \e \sup_{ x \in L^\#, y \in B_p^n} \la  y,  \bar G_n x \ra \le \e \sup_{ x \in U, y \in B_p^n} \la  y,  \bar G_n x \ra + \e \sup_{ x \in V, y \in B_p^n} \la  y,  \bar G_n x \ra =  (I) + (II).\]
	
	\n {\bf Bounding (I):}  Since $G_n\stackrel{d}{=}G_n^T$, we have $ (I) \le \sqrt{2}  \e \sup_{ x \in U, y \in B_p^n}  \la  y,   G_n x \ra. $
	By Chevet's inequality,
	\[ \e \sup_{ x \in U, y \in B_p^n}  \la  y,   G_n x \ra \le r ( U) w(B_p^n) +  r ( B_p^n) w(U).\]
	Arguing similarly as we did in the proof of Proposition \ref{lem:decomp}, $r (U)  \le \eps^{1- p/2}$ and $ r ( B_p^n) = 1$. Moreover, we have $w(B_p^n) = \e \| g \|_{p^*} \le C_1 n^{1/p^*}$ and $w(U) \le \sqrt{2} s^{1/p^*} \sqrt{\log n}$
	(by Lemma~\ref{lem:Gaussian_width_sparse}). Therefore, we obtain
	\begin{equation*}
	(I) \le  \sqrt{2}C_1 \eps^{1-p/2} n^{1/p^*} + 2 s^{1/p^*} \sqrt{\log n} = \sqrt{2}C_1  n^{p/(2p^*)} + 2 s^{1/p^*} \sqrt{\log n}.
	\end{equation*}
	
	\n {\bf Bounding (II):}  Note that $(II) =\e  \sup_{x \in V}  \|  \bar G_n x \|_{p^*}.$
	Let $\delta=n^{-1}$. Since each $x \in V $ is  supported by  at most $\eps^{-p}$ coordinates, we can choose a $\delta$-net $N_\delta$ of $V$ with respect to $\| \cdot \|_p$-norm such that the cardinality of  $N_\delta$ is at most $O(  ( n \delta^{-1} )^{\eps^{-p}})$. We now  bound 
	\begin{align*}
	\e  \sup_{x \in V}  \|  \bar G_n x \|_{p^*} &\le \e  \max_{x \in N_\delta}  \|  \bar G_n x \|_{p^*} + \e \sup_{ x , x' \in B_p^n : \| x - x' \|_p \le \delta}  \|  \bar G_n (x-x') \|_{p^*} \\
	&\le \e  \max_{x \in N_\delta}  \|  \bar G_n x \|_{p^*} +  \delta \e   \| \bar G_n  \|_{p \to p^*} \le \e  \max_{x \in N_\delta}  \|  \bar G_n x \|_{p^*} + O(1),
	\end{align*}
	where in the last step, we used the bound $ \e   \| \bar G_n  \|_{p \to p^*} \le \sqrt{2} \e   \| G_n  \|_{p \to p^*} \le C_2 n^{1/p^*}$ from Remark \ref{rmk1}.
	
	Next, for each $x \in B_p^n$, the function $G_n \mapsto  \| \bar G_n x\|_{p^*}  =   \sup_{ y \in B_p^n } \langle y, \bar G_n  x \rangle $ is Lipschitz continuous with respect to the Frobenius norm  with Lipschitz constant $ \sqrt{ 2} \sup_{ y \in B_p^n } \| y\|_2 \| x\|_2 \le \sqrt{2} $. Hence, \eqref{eq:subG} yields 
	\[ \e  e^{ t ( \| \bar G_n x\|_{p^*} -\e \| \bar G_n x\|_{p^*} )    }  \le   e^{t^2},\,\,\forall x\in B_p^n,\,\,t\in \r. \]
	In other words,  the random variable $\|\bar G_n x\|_{p^*} -\e \| \bar G_n x\|_{p^*}$ is $2$-subgaussian. Therefore,  by Lemma~\ref{lem:maxi_subG}, 
	\begin{equation*}
	\e \max_{ x \in N_\delta} ( \| \bar G_n x\|_{p^*} -\e \| \bar G_n x\|_{p^*} )  \le \sqrt{ 4 \log |N_\delta|} = O( \eps^{-p/2} \sqrt { \log n} ) = O( n^{p/(2p^*)} \sqrt { \log n} ).
	\end{equation*}
	From the above estimates, we deduce that
	\begin{equation} \label{eq:fluc_V_bd}
	\e  \sup_{x \in V}  \|  \bar G_n x \|_{p^*} \le  \sup_{x \in V}  \e \| \bar G_n x\|_{p^*} +   O( n^{p/(2p^*)} \sqrt { \log n} ).
	\end{equation} 
	It remains to estimate $ \e \| \bar G_n x\|_{p^*}$. For any $x \in \r^n$,  
	we have  $ \bar G_n x \stackrel{d}{=} \|x\|_2 g +   z x$,
	where $g$, as always,  is the standard normal vector on $\r^n$ and $z$ is an independent $N(0,1)$ random variable.
	Therefore,  for any $x \in B_p^n$, 
	\begin{align*}
	\Big \vert \e  \|  \bar G_n x \|_{p^*}  -  \e \| g \|_{p^*} \| x\|_2 \Big \vert  \le \| x \|_{p^*} \e |z| \le 1.
	\end{align*} 
	Consequently,
	\begin{align} \label{eq:norm_bound_over_V}
	\sup_{x \in V}  \e \| \bar G_n x\|_{p^*}  &\le r(V) \e \| g\|_{p^*}  +     1 \le r(L^\#) \e \| g\|_{p^*}  +     1.
	\end{align}
	Combining \eqref{eq:fluc_V_bd} and \eqref{eq:norm_bound_over_V}, we obtain  that
	\[ (II) \le  r(L^\#) \e \| g\|_{p^*} +O( n^{p/(2p^*)} \sqrt { \log n} ), \]
	completing the proof of the proposition.
\end{proof}

\subsection{Proof of \eqref{eq:Gnp_limit_approx_opt}:  the lower bound} 

For convenience, throughout this proof, for three sequences of random variables $(X_n)_{n\geq 1}$, $(Y_n)_{n\geq 1}$, and $(Z_n)_{n\geq 1}$, we denote 
\begin{align}\label{extra:notation}
X_n=Y_n+O_\p(Z_n)
\end{align}  if for any $d>0$, there exists some $C>0$ such that $\p(X_n\leq Y_n+CZ_n)=1-O(n^{-d})$ for all $n\geq 1.$


The proof of  \eqref{eq:Gnp_limit_approx_opt} is argued as follows. Expanding
\[  \la v_i + e_i, G_n ( v_i+ e_i) \ra  = \la v_i, G_n v_i \ra+ \la e_i, G_n e_i \ra + \sqrt{2}  \la v_i, \bar G_n e_i \ra,  \]
we have
\begin{equation} \label{eq:O1}
\min_i \la v_i + e_i, G_n ( v_i+ e_i) \ra \ge \min_i \sqrt{2}  \la v_i, \bar G_n e_i \ra  -  \max_i |\la v_i, G_n v_i \ra| -  \max_i |\la e_i, G_n e_i \ra|.
\end{equation}
We will control each term on the right-hand side. First, we handle the middle one as follows. 
Since $\| \bar G_n e_i \|_{p^*} - \e \| \bar G_n e_i \|_{p^*} $ is $2$-subgaussian,
\begin{align}\label{eq:add1}
\p\bigl(  \max_i  \big | \| \bar G_n e_i \|_{p^*}  - \e  \| \bar G_n e_i \|_{p^*} \big|  > s \bigr) \le 2n e^{-s^2/4}.
\end{align} 
Write, by Jensen's inequality  and \eqref{eq:Gaussian_p_norm},
\begin{equation} \label{eq:O4}
\e  \| \bar G_n e_i \|_{p^*}  =   \e  \| \bar G_n e_1 \|_{p^*} \ge \e \| g\|_{p^*}=\xi_{p^*}n^{1/p^*}\bigl(1-a_n),
\end{equation}
 for some $a_n$  satisfying $0< a_n=O(n^{-1})$ and $g$ a standard Gaussian vector. Plugging \eqref{eq:O4} into \eqref{eq:add1} yields
\begin{align*}
\p\bigl(\min_i\| \bar G_n e_i \|_{p^*}\geq \xi_{p^*}n^{1/p^*}(1-a_n)+s\bigr)\leq 2ne^{-s^2/4}.
\end{align*}
Using the well-known  lower bound of Gaussian tail $\p(|z|\geq t)\geq 2t (t^2+1)^{-1}e^{-t^2/2}$ for $t>0,$ we have 
\begin{align*}
\p\bigl(\max_{i\leq j}|\bar G_n(i,j)|\leq s\bigr)&=\p(|z|\leq s)^{n(n-1)/2}\p(\sqrt{2}|z|\leq s)^n \geq \p(\sqrt{2}|z|\leq s)^{n^2}\geq \Bigl(1-\frac{2\sqrt{2}s}{s^2+2}e^{-s^2/4}\Bigr)^{n^2}.
\end{align*}
Combining the above bounds  and recalling \eqref{extra:eq8}, we have that for any $s>0$ and sufficiently large $n$, 
\begin{align*}
\p\Bigl(\max_i \| v_i \|_\infty \le \frac{2s^{p^*/p}}{\xi_{p^*}^{p^*/p}n^{1/p}}\Bigr)
&\geq \p\Bigl(\max_i \| v_i \|_\infty \le \frac{s^{p^*/p}}{\bigl(\xi_{p^*}n^{1/p_*}(1-a_n)+s\bigr)^{p^*/p}}\Bigr)\\
&\geq \Bigl(1-\frac{2\sqrt{2}s}{s^2+2}e^{-s^2/4}\Bigr)^{n^2}-2ne^{-s^2/4} \geq \Bigl(1-\frac{2\sqrt{2}}{s}e^{-s^2/4}\Bigr)^{n^2}-2ne^{-s^2/4}.
\end{align*}
As a result, for any $d>0$, if we let $s=\xi_{p^*}\sqrt{8(1+d)\log n}$, then for $\eps: =  2(8(1+d)\log n)^{p^*/(2p)}  n^{ - 1/p},$ 
\begin{align}
\begin{split}\notag
\p\bigl(\max_i \| v_i \|_\infty \le\varepsilon\bigr)&\geq \Bigl(1-\frac{2\sqrt{2}}{n^{2(1+d)\xi_{p^*}^2}\xi_{p^*}\sqrt{8(1+d)\log n}}\Bigr)^{n^2}-\frac{2}{n^{(1+d)\xi_{p^*}^2-1}}  \end{split}
\\
\begin{split}\label{extra:eq3}
&\geq 1-\frac{2\sqrt{2}}{n^{2(1+d)\xi_{p^*}^2-2}\xi_{p^*}\sqrt{8(1+d)\log n}}-\frac{2}{n^{2(1+d)\xi_{p^*}^2-1}}
\geq 1-\frac{C}{n^d}
\end{split}
\end{align}
for some universal constant $C>0$, where the second inequality used the fact that for $m\geq 1$, $(1-x)^m\geq 1-mx$ for any $0\leq x\leq 1$ and the third inequality is valid because $\xi_{p^*}\geq 1.$ 
Now, on the event $\max_i \| v_i \|_\infty \le\varepsilon$,
we can bound
$ \max_i | \la v_i, G_n v_i \ra|  \le  \sup_{ \| x\|_p \le 1, \| x\|_\infty \le \eps  } |\la x, G_n x \ra|. $
As argued in the proof of Proposition~\ref{lem:decomp}, 
\begin{align*} \e \sup_{ \| x\|_p \le 1, \| x\|_\infty \le \eps  } |\la x, G_n x  \ra|  &= O(  \eps^{1- \frac{p}{2}} n^{\frac{1}{p^*}})  \\
&=  O((\log n)^{(1-\frac{p}{2})\frac{p^*}{2p}}n^{-(1-\frac{p}{2})\frac{1}{p}+\frac{1}{p^*}})=O( n^{-\frac{1}{p} + \frac{1}{2} + \frac{1}{p^*}} ( \log n )^{p^*/4}),
\end{align*}
where, in handling the logarithmic term to validate the third equality, we used $(1-p/2)/p\leq 1/2.$
Consequently, from \eqref{extra:eq3} and the Gaussian concentration \eqref{concen1}, 
\begin{equation}\label{eq:O2}
\max_i | \la v_i, G_n v_i \ra|=O_\p(n^{-\frac{1}{p} + \frac{1}{2} + \frac{1}{p^*}} ( \log n )^{p^*/4}+\sqrt{\log n}).
\end{equation}

To control the third term of \eqref{eq:O1}, note that $ \max_i |\la e_i, G_n e_i \ra| \le \max_i |g_{ii}| $ and $\e \max_i|g_{ii}|\leq \sqrt{2\log n}$. Since $\max_i|g_{ii}|$ is $1$-Lipschitz with respect to the $\|\cdot\|_2$-norm, it follows that
\begin{equation} \label{eq:O3}
\max_i |\la e_i, G_n e_i \ra| =O_\p(\sqrt{\log n}).
\end{equation}
Finally, the first term of \eqref{eq:O1} can be handled as follows. Note that $\la v_i, \bar G_n e_i \ra = \| \bar G_n e_i \|_{p^*}$. Write
\[ \min_i \| \bar G_n e_i \|_{p^*}  \ge \min_i  \e \| \bar G_n e_i \|_{p^*}   -  \max_i  \big | \| \bar G_n e_i \|_{p^*}  - \e  \| \bar G_n e_i \|_{p^*} \big|.\]
Here, from \eqref{eq:add1}, 
$$\max_i  \bigl| \| \bar G_n e_i \|_{p^*}  - \e  \| \bar G_n e_i \|_{p^*} \big|  = O_\p(\sqrt{\log n}),$$ which together with \eqref{eq:O4} implies that
\begin{align} \label{eq:O31}
\min_i \| \bar G_n e_i \|_{p^*} =\xi_{p^*}n^{1/p_*}-O_\p(\sqrt{\log n}).
\end{align}
Finally, combining \eqref{eq:O31}, \eqref{eq:O1}, \eqref{eq:O2}, and \eqref{eq:O3}, we have
\begin{align} \label{eq:O5}
\min_i \la v_i + e_i, G_n ( v_i+ e_i) \ra &= \sqrt{2} \xi_{p^*} n^{1/p^*} - O_\p(n^{-\frac{1}{p} + \frac{1}{2} + \frac{1}{p^*}} ( \log n )^{p^*/4}+\sqrt{\log n}).
\end{align}
From the definition of $v_i$ and the fact that $||1+x|^p-1-|x|^p|\leq 2^{p}p|x|$, 
\begin{equation} \label{eq:o_i_pnorm:1}
\max_i  \bigl| \| v_i+ e_i \|_p^p  -2 \bigr| = \max_i \bigl|  |1+ v_i(i)|^p  - 1  -  |v_i(i)|^p \bigr|  \leq 2^pp\max_{i}|v_i(i)|. 
\end{equation}
Hence, from  \eqref{extra:eq3},
\begin{align*}
\max_i  \bigl| \| v_i+ e_i \|_p^p  -2 \bigr|=O_\p((\log n)^{p^*/(2p)}  n^{ - 1/p}).
\end{align*}
The desired lower bound \eqref{eq:Gnp_limit_approx_opt} follows from \eqref{eq:O5} after dividing both sides by  $\| v_i+ e_i \|_p^2$. This completes our proof.

\subsection{Proof of \eqref{eq:Gnp_limit_p<2}: the upper bound}

By the virtue of  \eqref{eq:Gnp_limit_approx_opt}, it remains to establish the upper bound. Let $S$ be a nonempty subset of $B_p^n$ and $\eps  =n^{ - 2/(pp^*)}$. For this $\varepsilon$ and any $x\in S,$ write according to \eqref{eq:decompose1}, $x=x^o+x^\dagger.$ 
 Applying  \eqref{eq:cross_term_dominate} yields
\begin{align*}
\e \max_{x \in S }\la G_nx,x\ra  &\le   \sqrt{2}  \e \max_{x \in S}\la  x^o, \bar G_n x^\dagger \ra  +O( n^{2/p^{*2}} \sqrt{\log n}).
\end{align*}
Since $x^o$ and $x^\dagger$ have disjoint supports, $\|x^o\|_p^p + \| x^\dagger\|_p^p =\| x\|_p^p \le 1$. After normalizing $x^o$ and $x^\dagger$ by their $\ell_p$-norms, we write
\begin{align*}
\e \max_{x \in S}\la x^o,  \bar G_n x^\dagger \ra  &=    \e \max_{x \in S } \|x^o \|_p \| x^\dagger\|_p  \left \la  \frac{ x^o}{\|x^o \|_p},  \bar G_n\frac{x^\dagger}{\| x^\dagger\|_p} \right  \ra \\
&\le 2^{ -2/p} \e  \max_{x \in S } \left \la  \frac{ x^o}{\|x^o \|_p},  \bar G_n \frac{x^\dagger}{\| x^\dagger\|_p} \right  \ra \le  2^{ -2/p} \e  \max_{x \in S, y \in B_p^n } \left \la y,   \bar G_n \tfrac{x^\dagger}{\| x^\dagger\|_p} \right  \ra,
\end{align*}
where we used the fact that  the product $ \|x^o \|_p \| x^\dagger\|_p $  is maximized if $\|x^o \|_p^p = \|x^\dagger\|_p^p = 1/2$.
Denote  $L^\#=\{x^\dagger/\|x^\dagger\|_p:x\in S\}$. Since $\|x^\dagger\|_0\leq \eps^{-p}$ for all $x\in S$, Proposition~\ref{lem:max_over_T} yields
\begin{align*}
\e  \max_{x \in S, y \in B_p^n } \left \la y,   \bar G_n \tfrac{x^\dagger}{\| x^\dagger\|_p} \right  \ra&\leq r(L^\#)  \e \| g\|_{p^*} + O( (n^{p/(2p^*)}  + n^{2/p^{*2}}  ) \sqrt{\log n})\\
&= r (L^\#) \e \| g\|_{p^*} + O (n^{p/(2p^*)}  \sqrt{\log n}),
\end{align*}
the last equality being a consequence of the fact $pp^* \ge 4$.
Combining the above estimates, we arrive at
\begin{equation}\label{eq:upperbound_Gnp_expectation}
\e \max_{x \in S }\la G_nx,x\ra \leq 2^{1/2} 2^{ -2/p}  r(L^\#) \e \| g\|_{p^*}  +  O (n^{p/(2p^*)}  \sqrt{\log n}).
\end{equation}
Since $\|x^\dagger\|_2 \le \|x^\dagger\|_p$, we have
$r(L^\#) \le 1.$
Consequently, using \eqref{eq:Gaussian_p_norm} in \eqref{eq:upperbound_Gnp_expectation}, we obtain that 
\begin{align*}
\e \max_{\| x\|_p=1 }\la G_nx,x\ra 
&\le 2^{1/2-2/p} \xi_{p^*} n^{1/p^*}  +  O (n^{p/(2p^*)}  \sqrt{\log n}).  
\end{align*}
Now the Gaussian concentration \eqref{concen1} ensures that  
\begin{equation}\label{eq:upperbound_Gnp}
\max_{\| x\|_p=1 }\la G_nx,x\ra = 2^{1/2-2/p} \xi_{p^*} n^{1/p^*}  +  O_\p (n^{p/(2p^*)}  \sqrt{\log n}),
\end{equation}
where the notion $O_\p$ is defined in \eqref{extra:notation}.
Finally, the upper bound \eqref{eq:upperbound_Gnp}, combined with the lower bound \eqref{eq:Gnp_limit_approx_opt} and the fact that $p<2$, implies \eqref{eq:Gnp_limit_p<2}.

\section{Proof of Theorem \ref{extra:thm1}: the stability bound}

For each $i\in [n]$, define three vectors, 
\begin{align*}
o_i  = \frac{v_i + e_i }{\|v_i + e_i \|_p}, \ \ \ o_i^o = \frac{v_i  - v_i(i) e_i}{\|v_i + e_i \|_p},\ \ \ o_i^\dagger =  \frac{(1+v_i(i))e_i}{\|v_i + e_i \|_p}.
\end{align*}
Obviously $o_i^\dagger/\| o_i^\dagger\|_p  = e_i$. Note that a direct computation gives
\begin{align*}
\| o_i^\dagger\|_p &= \frac{| 1+ v_i(i) | }{\|v_i + e_i \|_p} ,\quad
\| o_i^o\|_p = \frac{ (1  -  |v_i(i) |^p)^{1/p} }{\|v_i + e_i \|_p}, \ \ \  \text{ and } 
\end{align*}
\begin{align*}
\big \| \frac{o_i^o}{\| o^o_i \|_p} - v_i \big  \|_p &=\big  \| \frac{v_i  - v_i(i) e_i}{\| v_i  - v_i(i) e_i \|_p}  - v_i \big  \|_p =\bigl((1-(1-|v_i(i)|^p)^{1/p})^p+|v_i(i)|^p\bigr)^{1/p}\\
&\leq 1-(1-|v_i(i)|^p)^{1/p}+|v_i(i)|\leq 2|v_i(i)|.
\end{align*}
The above results and \eqref{eq:o_i_pnorm:1} imply that if $2^pp\max_i\|v_i\|_\infty\leq 1,$ then there exists a constant $K\geq 1$ such that 
\begin{align}\label{extra:eq4}
\max\Bigl(\max_i | \| o_i^o\|_p - 2^{-1/p} | ,\max_i | \| o_i^\dagger \|_p - 2^{-1/p} |,\max_i\big \| \frac{o_i^o}{\| o^o_i \|_p} - v_i \big  \|_p\Bigr)&\leq K\max_i\|v_i\|_\infty.
\end{align}

Next, in view of the proof of \eqref{eq:Gnp_limit_p<2}, we continue to work with the same $\eps  =n^{ - 2/(pp^*)}$.  Fix $0<\kappa < 1/(3p)$ and let  $n^{-\kappa}\leq \delta \leq 1$. Let $M\geq 1,$ set 
\begin{equation}\label{def:delta1}
\bd = \frac{1}{100M}\min\bigl(\delta,2^{-\frac{1}{2p}}\bigr).
\end{equation} 
Define $A_n=\bigl\{\max_i\|v_i\|_\infty\leq \min(\eps, \bd^3/K)\bigr\}.$
Since $\kappa<1/(3p)$ and $p^*>2,$ using \eqref{extra:eq3} yields that for any $d>0,$ there exists a universal constant $C>0$ such that 
\begin{align} \label{extra:eq7}
\p(A_n)\geq 1-Cn^{-d}.
\end{align}
On the event $A_n,$ we readily see that $2^pp\max_i\|v_i\|_\infty\leq 8\max_i\|v_i\|_\infty \leq 1$ and from \eqref{extra:eq4}, 
\begin{align}
\begin{split}\label{extra:lem1:eq1}
\max_i | \| o_i^o\|_p - 2^{-1/p} |&\leq \bd^3,
\end{split}\\
\begin{split}\label{extra:lem1:eq2}
\max_i | \| o_i^\dagger \|_p - 2^{-1/p} |&\leq\bd^3,
\end{split}\\
\begin{split}\label{extra:lem1:eq3}
\max_i \big \| \frac{o_i^o}{\| o^o_i \|_p} - v_i \big  \|_p &\leq \bd^3.
\end{split}
\end{align}
Let $S =\{ x  :\|x\|_p=1, \mathrm{dist}(x, \mathcal{O}) \ge \delta \}$. Define $S_1 = \bigl\{ x :\|x\|_p=1,\, | \|x^\dagger \|_p - 2^{-1/p}| \ge \bd^3 \bigr\},$ and 
\begin{align*}
S_2& = \bigl\{ x: \|x\|_p=1,\,\,| \|x^\dagger \|_p - 2^{-1/p}| \le \bd^3,  \  \min\bigl(\big \| \frac{x^\dagger}{\|x^\dagger \|_p}  - e_i \big \|_p,\big \| \frac{x^\dagger}{\|x^\dagger \|_p}  + e_i \big \|_p\bigr) \ge  \bd^3 \text{ for all } i\bigr\}.
\end{align*}
Also, let $S_3$ be the collection of all $x \in S$  satisfying that $| \|x^\dagger \|_p - 2^{-1/p}| \le \bd^3$ and there exists some $i$ such that either 
\begin{align}\label{extra:eq5}
\left ( \Big \| \frac{x^\dagger}{\|x^\dagger\|_p} - e_i  \Big \|_p \le   \bd^3, \ \Big  \| \frac{x^o}{\|x^o\|_p}  -v_i \Big \|_p  \ge  \bd \right)  \  \text{ or }  \   \left ( \Big \| \frac{x^\dagger}{\|x^\dagger\|_p} + e_i  \Big \|_p \le   \bd^3,\ \Big  \| \frac{x^o}{\|x^o\|_p}  + v_i \Big \|_p  \ge \bd \right).
\end{align}
\begin{lemma}\label{extra:lem1}
	For any $M\geq 1,$ on the event $A_n$, we have that $S \subseteq S_1 \cup S_2 \cup S_3.$
\end{lemma}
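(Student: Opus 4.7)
The plan is to carry out a case analysis on how an element $x\in S$ fails the defining conditions of $S_1$ and $S_2$. If $|\|x^\dagger\|_p-2^{-1/p}|\ge \bd^3$, then $x\in S_1$ and there is nothing to do. Otherwise $|\|x^\dagger\|_p-2^{-1/p}|< \bd^3$; if in addition $\min(\|x^\dagger/\|x^\dagger\|_p-e_i\|_p,\|x^\dagger/\|x^\dagger\|_p+e_i\|_p)\ge \bd^3$ for every $i$, then $x\in S_2$. The only case left is when some index $i$ makes one of $\|x^\dagger/\|x^\dagger\|_p\mp e_i\|_p$ at most $\bd^3$, and the substance of the lemma is to show that for such $i$ the corresponding inequality on the delocalized part in \eqref{extra:eq5}, namely $\|x^o/\|x^o\|_p\mp v_i\|_p\ge \bd$, holds automatically, placing $x$ in $S_3$.

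I will argue the latter by contradiction. By symmetry take the minus-sign case: $\|x^\dagger/\|x^\dagger\|_p-e_i\|_p\le \bd^3$, and suppose for contradiction that $\|x^o/\|x^o\|_p-v_i\|_p<\bd$. I aim to bound $\|x-o_i\|_p$ strictly below $\delta$, contradicting $x\in S$. Since $\|x^o\|_p^p+\|x^\dagger\|_p^p=\|x\|_p^p=1$, a one-variable Taylor expansion of $t\mapsto(1-t^p)^{1/p}$ around $t=2^{-1/p}$ transfers the bound $|\|x^\dagger\|_p-2^{-1/p}|\le\bd^3$ into a bound of the same order on $|\|x^o\|_p-2^{-1/p}|$. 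Writing $x-o_i=(x^o-o_i^o)+(x^\dagger-o_i^\dagger)$ and applying the elementary ``factor out the norm'' inequality
\begin{align*}
\|u-v\|_p\le \|u\|_p\,\Big\|\tfrac{u}{\|u\|_p}-\tfrac{v}{\|v\|_p}\Big\|_p+\big|\|u\|_p-\|v\|_p\big|
\end{align*}
to each piece, I then invoke \eqref{extra:lem1:eq1}--\eqref{extra:lem1:eq3} (valid on $A_n$) to replace $\|o_i^o\|_p$ and $\|o_i^\dagger\|_p$ by $2^{-1/p}$, $o_i^\dagger/\|o_i^\dagger\|_p$ by $e_i$ exactly, and $o_i^o/\|o_i^o\|_p$ by $v_i$, each up to $\bd^3$. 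Combined with the triangle inequality and the standing hypothesis $\|x^o/\|x^o\|_p-v_i\|_p<\bd$, this produces $\|x-o_i\|_p\le C\bd$ for an absolute constant $C$.

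Finally, the choice $\bd=\min(\delta,2^{-1/(2p)})/(100M)$ with $M\ge 1$ furnishes the needed slack. As long as the bookkeeping is done carefully, a constant of order $C\le 10$ emerges from the triangle-inequality cascade above, so $C\bd<\delta$ contradicts $\mathrm{dist}(x,\mathcal{O})\ge\delta$. I foresee no real obstacle beyond tracking these prefactors; the factor $100M$ in the definition of $\bd$ is inserted precisely so that the compounded constant from the normalization steps can be absorbed, and the auxiliary cutoff $2^{-1/(2p)}$ merely ensures $\bd$ stays small enough for the Taylor expansion near $2^{-1/p}$ to be valid uniformly in $\delta\in[n^{-\kappa},1]$.
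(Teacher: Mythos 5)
Your proposal is correct and follows essentially the same argument as the paper's proof, merely phrased contrapositively: where the paper takes $x\in S\setminus(S_1\cup S_2)$ and directly bounds $\|x^o/\|x^o\|_p - v_i\|_p$ from below by $\delta - 21\bd^3 \geq \bd$ using $\|x - o_i\|_p\geq\delta$, you assume $\|x^o/\|x^o\|_p - v_i\|_p<\bd$ and drive $\|x-o_i\|_p$ below $\delta$ via the same decomposition $x-o_i=(x^o-o_i^o)+(x^\dagger-o_i^\dagger)$, the same normalization inequality, and the estimates \eqref{extra:lem1:eq1}--\eqref{extra:lem1:eq3}. One small bookkeeping note: tracking the cascade carefully gives a compounded constant nearer $22$ than $10$ (the $\bd^3$ terms contribute roughly $21\bd^3$ and the leading term contributes $\bd$), but the factor $100M$ in the definition of $\bd$ absorbs this without issue.
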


\begin{proof}
	Recall that, on $A_n$,  the inequalities \eqref{extra:lem1:eq1}, \eqref{extra:lem1:eq2}, and \eqref{extra:lem1:eq3} are valid. Take $x \in S \setminus (S_1 \cup S_2)$.  Then 
	\begin{align}\label{eq:o_i0}
	| \|x^\dagger \|_p - 2^{-1/p}| \le \bd^3, \ \ \text{ and }
	\end{align}
	\begin{align}\label{eq:o_i3}
	\mbox{either}\ \ \ \Bigl\| \frac{x^\dagger}{\|x^\dagger \|_p}  - e_i \Bigr\|_p \le \bd^3\ \ \ \mbox{or} \ \ \  \Bigl\| \frac{x^\dagger}{\|x^\dagger \|_p}  + e_i \Bigr \|_p \le \bd^3 \ \ \mbox{for some $i$}.
	\end{align}
	Assume, for definiteness, that  the former condition of \eqref{eq:o_i3} is valid for some $i$. To show that  $x\in S_3,$ we verify that $\| {x^o}/{\|x^o\|_p}  -v_i \big \|_p  \ge \bd$ as follows.
	First, note that from \eqref{extra:lem1:eq2} and \eqref{eq:o_i0},  
	\begin{align*}
	\big | \| x^\dagger \|_p   - \| o_i^\dagger \|_p \big |  \le 2\bd^3.
	\end{align*}
	We claim that $\| x^\dagger - o_i^\dagger\|_p \leq  3\bd^3$.  If $\| x^\dagger - o_i^\dagger\|_p > 3\bd^3$, we use the fact that  $\|x^\dagger\|_p\leq 1$ and the triangle inequality to deduce 
	\begin{align*}
	\Bigl\| \frac{x^\dagger}{\|x^\dagger \|_p}  - e_i \Bigr \|_p  &=  \Bigl\| \frac{x^\dagger}{\|x^\dagger\|_p}  -  \frac{o_i^\dagger}{\|o_i\|_p} \Bigr\|_p \ge  \frac{ \| x^\dagger - o_i^\dagger \|_p}{ \|x^\dagger\|_p}  - \| o_i^\dagger\|_p \Big | \frac{1}{\|x^\dagger\|_p}  - \frac{1}{\|o_i^\dagger\|_p} \Big| \\
	&=   \frac{1}{\|x^\dagger\|_p}\bigl(\| x^\dagger - o_i^\dagger \|_p  -| \|x^\dagger\|_p  - \|o_i^\dagger\|_p| \bigr)>3 \bd^3 - 2\bd^3 = \bd^3,
	\end{align*}  
	which contradicts our assumption that  $  \| {x^\dagger}/{\|x^\dagger \|_p}  - e_i \big \|_p \le \bd^3 $.  This validates our claim.
	Now since $x\in S$, we have that $\| x - o_i \|_p \ge \delta$, which, coupled  with  the fact $\| x^\dagger - o_i^\dagger\|_p \leq  3\bd^3$,  leads to
	\begin{align}\label{add:Eq-2}
	\| x^o - o_i^o\|_p&\geq \|x-o_i\|_p-\|x^\dagger-o_i^\dagger\|_p\geq \delta-3\bd^3.
	\end{align}
	On the other hand, it follows from \eqref{eq:o_i0}  that
	\begin{align}\label{add:Eq-1}
	| \|x^o\|_p-2^{-1/p} | \le 16 \bd^3.
	\end{align} 
	Indeed, a straightforward computation using  \eqref{eq:o_i0}, \eqref{def:delta1}, and  the fact $1=\|x\|_p^p=\|x^o\|_p^p+\|x^\dagger\|_p^p$ shows  $  \|x^o\|_p \ge 4^{-1} $. Now by applying the mean value theorem on the function $t \mapsto t^p$ twice and using \eqref{eq:o_i0}, the assertion \eqref{add:Eq-1} follows since
	\[    4^{-1}  \bigl| \|x^o\|_p-2^{-1/p} \bigr|\leq \bigl | \|x^o\|^p _p-  2^{-1} \bigr | =\bigl | \|x^\dagger\|^p _p-  2^{-1} \bigr |  \leq 2  \bigl| \|x^\dagger\|_p-2^{-1/p} \bigr|\leq 2\bd^3.\,\, \]
%
	Now from  \eqref{extra:lem1:eq1},  \eqref{add:Eq-2}, \eqref{add:Eq-1}, and the fact that $\|x^o\|_p\leq 1$, we obtain that 
		\begin{align} \label{eq:add_a1}
	\Big\| \frac{x^o}{\|x^o \|_p}  -  \frac{o_i^o}{\|o_i^o \|_p} \Big \|_p  &\ge   \frac{1}{\|x^o\|_p}\bigl(\| x^o - o_i^o \|_p -| \|x^o\|_p  - \|o_i^o\|_p| \bigr) \nonumber \\
	&\geq \frac{1}{\|x^o\|_p}\bigl(\| x^o - o_i^o \|_p -| \|x^o\|_p-2^{-1/p}|  -|2^{-1/p}- \|o_i^o\|_p| \bigr) \ge \delta- 20\bd^3.
	\end{align} 
	Moreover, another application of  triangle inequality on \eqref{extra:lem1:eq3} and \eqref{eq:add_a1} and the fact that $\delta\geq 100 \bd$ yield 
	\[ \Bigl\| \frac{x^o}{\|x^o \|_p}  - v_i \Big \|_p \ge \delta- 21 \bd^3 \geq  \bd^3.\]
	In the case that the second condition of \eqref{eq:o_i3} holds, we can similarly show, by using \eqref{extra:lem1:eq2} and \eqref{extra:lem1:eq3}, that $\| {x^o}/{\|x^o \|_p}  + v_i \big \|_p  \ge \bd.$ Therefore,  $x \in S_3$ and the assertion is established.  
\end{proof}

\begin{lemma}\label{extra:lem2}
    For any $d>0,$ there exist constants $M\geq 1$ and $C,C'>0$ such for all sufficiently large $n,$ with probability at least $1-8n^{-d}$, we have that whenever $n^{-\kappa}<\delta\leq 1$,
	\begin{align}\label{extra:lem2:eq1}
	\max_{x \in S_1\cup S_2\cup S_3}\la G_nx,x\ra&\leq 2^{1/2-2/p} \big( 1 - C\bd^6\big) \xi_{p^*} n^{1/p^*}  +   C'n^{p/{(2p^*)}}  \sqrt{\log n}.
	\end{align}
\end{lemma}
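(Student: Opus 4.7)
The plan is to bound $\max_{x\in S_j}\la G_n x,x\ra$ separately on each region $S_1,S_2,S_3$ and then take a union bound. Applying Proposition~\ref{lem:decomp}$(ii)$ with the same truncation $\eps=n^{-2/(pp^*)}$ as in the proof of \eqref{eq:Gnp_limit_p<2}, we have with probability at least $1-O(n^{-d})$ that
\begin{equation*}
\max_{x\in S_j}\la G_n x,x\ra \;\le\; \sqrt{2}\max_{x\in S_j}\la \bar G_n x^o, x^\dagger\ra + O\bigl(n^{p/(2p^*)}\sqrt{\log n}\bigr)
\end{equation*}
for each $j\in\{1,2,3\}$; here both error terms arising from Proposition~\ref{lem:decomp}$(ii)$ are dominated by $n^{p/(2p^*)}\sqrt{\log n}$ since $(p-2)^2\ge 0$. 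Hence it suffices to extract a multiplicative factor $(1-C\bd^6)$ from the bound $\sqrt{2}\cdot 2^{-2/p}\xi_{p^*}n^{1/p^*}$ on the cross-term. In the derivation of \eqref{eq:upperbound_Gnp_expectation}, three inequalities are saturated simultaneously: (a) the AM-GM bound $\|x^o\|_p\|x^\dagger\|_p\le 2^{-2/p}$; (b) the radius bound $\|x^\dagger/\|x^\dagger\|_p\|_2\le 1$; and (c) H\"older's inequality $\la x^o,\bar G_n x^\dagger/\|x^\dagger\|_p\ra\le \|x^o\|_p\|\bar G_n x^\dagger/\|x^\dagger\|_p\|_{p^*}$. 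Each of $S_1,S_2,S_3$ obstructs saturation of exactly one of them.

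On $S_1$, the hypothesis $|\|x^\dagger\|_p-2^{-1/p}|\ge \bd^3$, together with $\|x^o\|_p^p+\|x^\dagger\|_p^p=1$ and a Taylor expansion of $f(a)=a(1-a^p)^{1/p}$ around its maximum at $a=2^{-1/p}$ (where $f''<0$), gives $\|x^o\|_p\|x^\dagger\|_p\le 2^{-2/p}(1-C\bd^6)$. Combining this with Proposition~\ref{lem:max_over_T} applied to $L^\#=B_p^n\cap\{\|\cdot\|_0\le \eps^{-p}\}$ delivers the claim for $S_1$. On $S_2$, the hypothesis $\min_i\|x^\dagger/\|x^\dagger\|_p\pm e_i\|_p\ge \bd^3$ together with $p<2$ forces $\|x^\dagger/\|x^\dagger\|_p\|_2\le 1-C\bd^6$: on the unit $\ell_p$-sphere with $p<2$, the $\ell_2$-norm attains $1$ exactly at the coordinate vectors $\pm e_i$ and exhibits quadratic decay away from them. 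Feeding this smaller radius into Proposition~\ref{lem:max_over_T} with $L^\#=\{x^\dagger/\|x^\dagger\|_p:x\in S_2\}$ gives the $(1-C\bd^6)$ factor for $S_2$ immediately.

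On $S_3$, the vector $x^\dagger/\|x^\dagger\|_p$ lies within $\bd^3$ of some $\tau e_i$ with $\tau\in\{-1,+1\}$, while $x^o/\|x^o\|_p$ is at $\ell_p$-distance at least $\bd$ from $\tau v_i$. Replacing $x^\dagger/\|x^\dagger\|_p$ by $\tau e_i$ in the cross-term costs at most $\bd^3\|\bar G_n\|_{p\to p^*}=O(\bd^3 n^{1/p^*})$, which, once $M$ in \eqref{def:delta1} is taken sufficiently large and $\delta\ge n^{-\kappa}$ with $\kappa<1/(3p)$, is of smaller order than $\bd^6 n^{1/p^*}$. It then remains to bound $\la x^o,\bar G_n e_i\ra$. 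By the very definition of $v_i$ in \eqref{extra:eq8}, H\"older's inequality $\la y,\bar G_n e_i\ra\le \|y\|_p\|\bar G_n e_i\|_{p^*}$ is saturated exactly when $y=\pm v_i$. The quantitative H\"older stability from Appendix~\ref{appA} (Lemma~\ref{lem:holder_stability}) upgrades the non-alignment $\min_\pm\|x^o/\|x^o\|_p-(\pm v_i)\|_p\ge \bd$ into $\la x^o/\|x^o\|_p,\bar G_n e_i\ra\le (1-C\bd^6)\|\bar G_n e_i\|_{p^*}$; combined with the concentration estimate $\|\bar G_n e_i\|_{p^*}\le \xi_{p^*}n^{1/p^*}+O(\sqrt{\log n})$ (cf.~\eqref{eq:add1} and \eqref{eq:O4}) and a union bound over the $2n$ choices of $(i,\tau)$, this closes the $S_3$ estimate.

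The principal obstacle is $S_3$: unlike $S_1$ and $S_2$, no norm slackness is available, and the improvement can only come from Appendix~\ref{appA}. The specific exponent $6$ of $\bd$ in the conclusion \eqref{extra:lem2:eq1} is inherited from that stability inequality, and the two-step approximation (first replace $x^\dagger$ by $\tau\|x^\dagger\|_p e_i$, then invoke H\"older stability on $\la x^o,\bar G_n e_i\ra$) is what forces both the restriction $\kappa<1/(3p)$ in the hypothesis and the large-$M$ choice in \eqref{def:delta1}.
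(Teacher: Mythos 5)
Your decomposition into the three cases $S_1,S_2,S_3$, the use of the AM–GM/radius/H\"older saturation trichotomy, and the ingredients (Proposition~\ref{lem:decomp}$(ii)$, Proposition~\ref{lem:max_over_T}, Lemma~\ref{lem:deficient_l_2_norm}, Lemma~\ref{lem:holder_stability}) are all the same as the paper's. The $S_1$ Taylor-expansion argument and the $S_2$ radius-deficiency argument are correct. However, your handling of $S_3$ contains a genuine error.

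Lemma~\ref{lem:holder_stability} yields the improvement $\la u,w\ra\le\|w\|_{p^*}\bigl(1-\frac{p^2}{16p^*}\|u-v\|_p^2\bigr)$; with $\|x^o/\|x^o\|_p - v_i\|_p\ge\bd$ this gives a $(1-c\bd^2)$ factor, \emph{not} $(1-C\bd^6)$ as you assert. This is not a cosmetic difference: the error from replacing $x^\dagger/\|x^\dagger\|_p$ by $\tau e_i$ is $\bd^3\|\bar G_n\|_{p\to p^*}=O(\bd^3 n^{1/p^*})$, and your claim that this quantity ``is of smaller order than $\bd^6 n^{1/p^*}$'' is backwards --- since $\bd<1$ we have $\bd^3>\bd^6$, so $\bd^3 n^{1/p^*}$ is \emph{larger}. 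If the stability lemma only delivered $\bd^6$, the approximation error $\bd^3 n^{1/p^*}$ would swamp the gain and the $S_3$ estimate would not close. The correct argument (as in the paper) uses the sharper $\bd^2$ rate from Lemma~\ref{lem:holder_stability}: with $M$ chosen large, $\bd\le 1/(100M)$ is small enough that $\bd^3\| \bar G_n\|_{p\to p^*}$ is absorbed into the $\frac{p^2\bd^2}{16p^*}\max_i\|\bar G_n e_i\|_{p^*}$ gain; only afterwards, and purely for uniformity of the final statement, is the $\bd^2$ weakened to $\bd^6$. Consequently your concluding sentence --- ``the specific exponent $6$ of $\bd$ \dots is inherited from that stability inequality'' --- is also incorrect: the exponent $6$ is forced by $S_1$ (cubing $\bd^3$), and $S_3$ in fact produces the strictly better exponent $2$. (A further minor imprecision: in $S_2$, Lemma~\ref{lem:deficient_l_2_norm} gives decay $\delta^p$, not ``quadratic decay''; with $\delta=\bd^3$ and $p<2$ this is $\bd^{3p}\le\bd^6$, which is what is needed, but the quadratic phrasing is misleading.)
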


\begin{proof}
	Throughout this proof, the constants $C_1,C_2,$ etc. are independent of $\delta$ and $n.$ Let $d>0$. Below we handle each of the cases  $x \in S_i$ for $i=1,2, 3$ separately. Together they will complete the proof of the lemma.

	\smallskip
	
	{\noindent \bf Maximization over $S_1$.} Note that $|1-a^p|\geq |1-a|$ for all $a>0$. For $x \in S_1$, since $ | \|x^\dagger \|_p - 2^{-1/p}| \ge \bd^3$, we see that $$
	 | \|x^\dagger \|^p_p - 2^{-1} | \ge 2^{-1+1/p}|\|x\|_p-2^{-1/p}|\geq 2^{-1/2}|\|x\|_p-2^{-1/p}|\geq \frac{\bd^3}{2}.$$ Hence, 
	\[ \| x^o\|_p \|x^\dagger\|_p = \big( \|x^\dagger\|_p^p (1- \|x^\dagger\|_p^p)  \big)^{1/p}  \le \Big( \frac{1}{4}  -  \frac{\bd^6}{4}   \Big)^{1/p} \le 2^{-2/p} \big( 1 - \bd^6 \big). \]
	By following the argument used in the derivation  in \eqref{eq:upperbound_Gnp_expectation} and noting that $r \big( \big \{{x^\dagger/\| x^\dagger\|_p}: x \in S_1 \big \}  \big) \le 1$, we obtain
	\begin{align*}
	\e \max_{x \in S_1 }\la G_nx,x\ra \le  2^{1/2-2/p} \big( 1 - \bd^6\big) \xi_{p^*} n^{1/p^*}  +  O (n^{p/(2p^*)}  \sqrt{\log n}).  
	\end{align*}
	Note that $S_1$ is nonrandom. We can apply \eqref{concen1} with $t=\sqrt{2d\log n}$ to get that there exists some $C_1>0$ such that the following event is valid with probability at least $1-2n^{-d},$
	\begin{align*}
	\max_{x \in S_1 }\la G_nx,x\ra \le  2^{1/2-2/p} \big( 1 - \bd^6\big) \xi_{p^*} n^{1/p^*}  +  C_1n^{p/(2p^*)}  \sqrt{\log n}.
	\end{align*}
	
	{\noindent \bf Maximization over $S_2$.}
	For $x \in S_2$, $\big \| {x^\dagger}/{\|x^\dagger \|_p}  \mp e_i \big \|_p \ge  \bd^3$ for all $i$. By Lemma~\ref{lem:deficient_l_2_norm}, there exists a constant $C_2 >0$ such that 
	\[ r \big( \big \{x^\dagger/\| x^\dagger\|_p: x \in S_2 \big \}  \big) \le 1 - C_2\bd^{3p} \le 1 - C_2\bd^{6}  . \]
	On the other hand, the product $\| x^o\|_p \|x^\dagger\|_p$ can be trivially bounded above by $1/4$.
	Hence, \eqref{eq:upperbound_Gnp_expectation} yields
	\begin{align*}
	\e \max_{x \in S_2 }\la G_nx,x\ra \le  2^{1/2-2/p} \big( 1 - C_2\bd^{6}\big) \xi_{p^*} n^{1/p^*}  +  O (n^{p/(2p^*)}  \sqrt{\log n}).  
	\end{align*}
	Similar to the first case, as $S_2$ is also non-random, we can apply \eqref{concen1} with $t=\sqrt{2d\log n}$ to get  that there exists some $C_3>0$ such that the following event is valid with probability at least $1-2n^{-d},$
	\begin{align*}
	\max_{x \in S_2 }\la G_nx,x\ra \le  2^{1/2-2/p} \big( 1 - C_2\bd^{6}\big) \xi_{p^*} n^{1/p^*}  +  C_3n^{p/(2p^*)}  \sqrt{\log n}.  
	\end{align*}
	
	{\noindent \bf Maximization over $S_3$.} This case requires some extra treatments since $S_3$ is a random set.
	If \eqref{extra:eq5} is satisfied, then 
	\begin{align*}
	\max_{x \in S_3 } \left \la \frac{x^o}{\|x^o\|_p},   \bar G_n \frac{x^\dagger}{\| x^\dagger\|_p} \right  \ra &\le   \max_i \max_{ \substack{\| u \|_p  = 1, \| u - e_i\|_p  \le \bd^3, \\ \|v\|_p=1, \| v  - v_i \|_p \ge \bd} } \left \la v ,   \bar G_n u \right  \ra \\
	&\le \max_i \max_{  \|v\|_p=1, \| v  - v_i \|_p \ge \bd } \left \la v ,   \bar G_n e_i \right  \ra  + \bd^3 \| \bar G_n \|_{p \to p^*}  \\
	&\le \Bigl( 1- \frac{p^2  \bd^2 }{16 p^*}    \Bigr) \max_i   \| \bar G_n e_i \|_{p^*}  + \bd^3\| \bar G_n \|_{p \to p^*},
	\end{align*} 
	where the third inequality used Lemma~\ref{lem:holder_stability}.  Now the  fact that $\sup_{x\in B_n^p}\|x^o\|_p\|x^\dagger\|_p \leq 2^{-2/p}$ implies that
	\begin{align*}
	\max_{x \in S_3 } \bigl \la x^o,   \bar G_n x^\dagger  \bigr \ra&=\max_{x \in S_3 }\|x^o\|_p\|x^\dagger\|_p \Bigl \la \frac{x^o}{\|x^o\|_p},   \bar G_n \frac{x^\dagger}{\| x^\dagger\|_p} \Bigr  \ra  \\
	&\leq 2^{-2/p}\Bigl(\Bigl ( 1- \frac{p^2 \bd^2 }{16 p^*}    \Bigr) \max_i   \| \bar G_n e_i \|_{p^*}  + \bd^3\| \bar G_n \|_{p \to p^*}\Bigr).
	\end{align*}
	To control the right-hand side, note that $\| \bar G_n e_i \|_{p^*} - \e \| \bar G_n e_i \|_{p^*} $ is $2$-subgaussian. Lemma~\ref{lem:maxi_subG} yields that
	\[ \e  \max_i   \| \bar G_n e_i \|_{p^*}  \le \max_i \e    \| \bar G_n e_i \|_{p^*} + 2 \sqrt{ \log n} \le \xi_{p^*} n^{1/p^*} + O(\sqrt{\log n}).\]
	Moreover, $ \e \| \bar G_n \|_{p \to p^*} = O( n^{1/p^*})$ by \eqref{extra:eq9}. Using \eqref{concen1} with $t=\sqrt{2(d+1)\log n}$ and \eqref{concen2} with $t=\sqrt{2d\log n}$, we see that there exist universal positive constants $C_4$ and $C_5$ such that with probability at least $1-4n^{-d}$,
	\begin{align*}
	\max_i   \| \bar G_n e_i \|_{p^*}& \leq \xi_{p^*} n^{1/p^*}+C_4\sqrt{\log n} \ \ \ \mbox{and} \ \ \ \| \bar G_n \|_{p \to p^*}\leq C_5n^{1/p^*}.
	\end{align*}
	From these estimates, we can choose $M\geq 1$ large enough such that the following statement holds: there exist  constants $C_6,C_7>0$ such that with probability at least $1-4n^{-d},$ we have that whenever $n^{-\kappa}<\delta\leq 1$, 
	\[  \max_{x \in S_3 } \Bigl \la \frac{x^o}{\|x^o\|_p},   \bar G_n \frac{x^\dagger}{\| x^\dagger\|_p} \Bigr\ra \le 2^{-2/p} (1- C_6\bd^2) \xi_{p^*} n^{1/p^*}+C_7n^{p/{(2p^*)}}  \sqrt{\log n}.\]
	Plugging this into  \eqref{eq:cross_term_dominate2}, we arrive at that with probability at least $1-4n^{-d},$
	\begin{align*}
	\begin{split}\notag
	\max_{x \in S_3 }\la G_nx,x\ra &\le  2^{1/2-2/p} \big( 1 - C_6\bd^{2}\big) \xi_{p^*} n^{1/p^*}  +  C_7n^{2/{p^*}^2}  \sqrt{\log n}
	\end{split}\\
	\begin{split}
	&\leq 2^{1/2-2/p} \big( 1 - C_6\bd^{6}\big) \xi_{p^*} n^{1/p^*}  +  C_7n^{p/{(2p^*)}}  \sqrt{\log n},
	\end{split}
	\end{align*}
	where the second inequality used $\bd\leq 1$ and $pp^*\geq 4.$ 
	
\end{proof}

We now proceed to complete the proof of Theorem \ref{extra:thm1}. Let $d>0$. Denote by $A_n'$ the event on which \eqref{extra:lem2:eq1} is valid. By Lemmas \ref{extra:lem1} and \ref{extra:lem2}, we have that on the event $A_n\cap A_n'$, for all $n^{-\kappa}<\delta\leq 1,$
\begin{align*}
\max_{x\in S}\la G_nx,x\ra&\leq \max_{x \in S_1\cup S_2\cup S_3}\la G_nx,x\ra\leq 2^{1/2-2/p} \big( 1 - C\delta^6\big) \xi_{p^*} n^{1/p^*}  +   C'n^{p/{(2p^*)}}  \sqrt{\log n}.
\end{align*} 
Here, from \eqref{extra:eq7} and Lemma \ref{extra:lem2}, $\p(A_n\cap A_n')\geq 1-\p(A_n^c)-\p({A_n'}^c)\geq 1-(8+C)n^{-d}$. This completes our proof.

\section{Proof of Theorem \ref{thm-1}}\label{sec7}

\subsection{Proof of \eqref{thm-1:eq1}}
For $t>0$ and $x\in \mathbb{R}^n$, set
\begin{align} \label{add:eq-3}
H_{n,p,t}(x)&=\frac{\la G_nx,x\ra}{n^{1/2}}-t\|x\|_p^p.
\end{align}
Let $u,t>0$ be fixed. Note that $L_{n,p,u}(t)=n^{-1}\max_{\opnorm{x}_2^2=u}H_{n,p,t}(x).$
We claim that $n\e L_{n,p,u}(t)$ is superadditive, i.e.,
\begin{align}\label{eq:supper_additive_Lu}
n\e L_{n,p,u}(t)&\geq n_1\e L_{n_1,p,u}(t)+n_2\e L_{n_2,p,u}(t),\,\,\forall n_1,n_2\in \mathbb{N}\,\,\mbox{and}\,\,n=n_1+n_2.
\end{align}
To prove this, we use Guerra-Toninelli's interpolation \cite{GT02}. Let $n_1,n_2,n\in \mathbb{N}$ satisfy $n=n_1+n_2.$ 
For $0\leq s\leq 1,$ consider the interpolating Hamiltonian,
\begin{align*}
H_{n,s}(z)&=\sqrt{\frac{s}{n}}\la G_nz,z\ra+\sqrt{\frac{1-s}{n_1}}\la G_{n_1}^1x,x\ra+\sqrt{\frac{1-s}{n_2}}\la G_{n_2}^2 y,y\ra-t\|x\|_p^p-t\|y\|_p^p
\end{align*}  
for any $z=(x,y)\in \mathbb{R}^{n_1}\times \mathbb{R}^{n_2}$, where $G_{n_1}^1=(g_{ij}^1)_{i,j\in[n_1]}$ and $G_{n_2}^2=(g_{ij}^2)_{i,j\in [n_2]}$ are i.i.d.\ standard Gaussian. Let $\mu_{n_1}$  and $\mu_{n_2}$ be probability measures  fully supported on the sphere $S_{n_1}=\{x\in \mathbb{R}^{n_1}:\opnorm{x}_2^2=u\}$ and $S_{n_1}=\{y\in \mathbb{R}^{n_2}:\opnorm{y}_2^2=u\}$, respectively. Here note that the $\opnorm{x}_2$ and $\opnorm{y}_2$ are normalized by $n_1$ and $n_2,$ respectively.  Define
\begin{align}\label{interpolation}
F_{n_1,n_2,\beta}(s)=\frac{1}{\beta n}\e\log \int_{S_{n_1}\times S_{n_2}} e^{\beta H_{n,s}(z)}\mu_{n_1}(dx)\otimes\mu_{n_2}(dy).
\end{align}
Note that 
\begin{align*}
F_{n_1,n_2,\beta}(1)&=\frac{1}{\beta n}\e\log \int_{S_{n_1}\times S_{n_2}} e^{\frac{\beta}{\sqrt{n}}\la G_nz,z\ra}\mu_{n_1}(dx)\otimes\mu_{n_2}(dy),\\
F_{n_1,n_2,\beta}(0)&=\frac{n_1}{\beta n}\e\log \int_{S_{n_1}}e^{\frac{\beta}{\sqrt{n_1}}\la G_{n_1}^1x,x\ra}\mu_{n_1}(dx)+\frac{n_2}{\beta n}\e\log \int_{S_{n_2}}e^{\frac{\beta}{\sqrt{n_2}}\la G_{n_2}^2y,y\ra}\mu_{n_2}(dy).
\end{align*}
Denote by $R_k(a,b)=k^{-1}\sum_{i\in [k]}a_ib_i$ for any $k\geq 1$ and $a,b\in \mathbb{R}^k.$ Note that $R_n(z,z)=R_{n_1}(x,x)=R_{n_2}(y,y)=u$ for $z=(x,y)\in S_{n_1}\times S_{n_2}.$ Computing the derivative of \eqref{interpolation} in $s$ directly by using the Gaussian integration by parts and noting that $\|x\|_2^2=un_1$ and $\|y\|_2^2=un_2$, we obtain that 
\begin{align*}
& nF_{n_1,  n_2,\beta}'(s)=\frac{1}{2}\e \Bigl\la \frac{\la G_nz,z\ra}{\sqrt{s}}-\frac{\la G_n^1 x,x\ra}{\sqrt{1-s}}-\frac{\la G_n^2 y,y\ra}{\sqrt{1-s}}\Bigr\ra\\
&=\frac{\beta^2n}{2}\e \bigl\la R_n(z,z)^2-R_n(z^1,z^2)^2\bigr\ra
-\frac{\beta^2n_1}{2}\e \bigl\la R_{n_1}(x,x)^2-R_{n_1}(x^1,x^2)^2\bigr\ra
-\frac{\beta^2n_2}{2}\e \bigl\la R_{n_2}(y,y)^2-R_{n_2}(y^1,y^2)^2\bigr\ra\\
&=-\frac{\beta^2}{2}\e \bigl\la nR_n(z^1,z^2)^2-n_1R_{n_1}(x^1,x^2)^2-n_2R_{n_2}(y^1,y^2)^2\bigr\ra,
\end{align*}
where the angle bracket outside is the expectation with respect to the Gibbs measure 
\begin{align*}
G_{n_1,n_2,\beta}(dx,dy)=\frac{e^{\beta H_{n,s}(z)}\mu_{n_1}(dx)\otimes\mu_{n_2}(dy)}{\int e^{\beta H_{n,s}(z)}\mu_{n_1}(dx)\otimes\mu_{n_2}(dy)}.
\end{align*}
and $z^1=(x^1,y^1)$ and $z^2=(x^2,y^2)$ are two independent samples from this measure. Now note that from Jensen's inequality,
\begin{align*}
R_n(z^1,z^2)^2&=\Bigl(\frac{n_1}{n}\frac{1}{n_1}R_{n_1}(x^1,x^2)+\frac{n_2}{n}R_{n_2}(y^1,y^2)\Bigr)^2\\
&\leq \frac{n_1}{n}R_{n_1}(x^1,x^2)^2+\frac{n_2}{n}R_{n_2}(y^1,y^2)^2,
\end{align*}
leading to $F_{n_1,n_2,\beta}'(s)\geq 0$ for all $s\in (0,1).$ Hence, $F_{n_1,n_2,\beta}(1)\geq F_{n_1,n_2,\beta}(0)$ and sending $\beta $ to infinity imply that
\begin{align*}
n\e L_{n,p,u}(t)&\geq \e\max_{S_{n_1}\times S_{n_2}}H_{n,p,t}(z)\geq n_1\e L_{n_1,p,u}(t)+n_2\e L_{n_2,p,u}(t),
\end{align*}
establishing our claim \eqref{eq:supper_additive_Lu}. So,  $L_{p,u}(t) : = \lim_{n\to\infty} \e[L_{n, p,u}(t)] $ exists by superadditivity. Hence, from the concentration bound \eqref{extra:eq12}, it follows that $\lim_{n\to\infty}L_{n,p,u}(t) = \lim_{n\to\infty} \e [L_{n,p,u}(t)]  =   L_{p,u}(t)$ exists almost surely for each $u > 0$ and $t>0$. 
It remains to show that  the statement that almost surely this limit exists simultaneously for all $u >  0$ and $t>0$. We need the following lemma.

\begin{lemma}\label{extra:lem3}
	For any $0<K_1<K_2,$ there exist positive constants $C$ and $D$ such that with probability at least $1-Ce^{-n/C},$
	\begin{align}\label{extra:lem3:eq1}
	\bigl|L_{n,p,u}(t)-L_{n,p,u'}(t')\bigr|\leq D(|u-u'|+|t-t'|),\,\,\forall u,u',t,t'\in [K_1,K_2].
	\end{align}
\end{lemma}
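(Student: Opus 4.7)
The plan is to derive Lipschitz-in-$t$ and Lipschitz-in-$u$ estimates for $L_{n,p,u}(t)$ separately and then combine them via the triangle inequality, all on a single high-probability event governed by the operator norm of $G_n$. Let $\Omega_n = \{\|G_n\|_2 \le C_0\sqrt n\}$, which by standard Gaussian matrix concentration satisfies $\p(\Omega_n) \ge 1 - Ce^{-n/C}$ for suitable $C_0,C>0$. On $\Omega_n$, every $x$ with $\opnorm{x}_2^2 = u$ has $\|x\|_2^2 = un$ and hence
$$ \Bigl| \frac{\la G_n x, x\ra}{n^{3/2}} \Bigr| \le \frac{\|G_n\|_2\cdot un}{n^{3/2}} \le C_0 u.$$
Write $A(x) := \la G_n x,x\ra/n^{3/2}$ throughout.

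The crucial preliminary is a uniform upper bound on $\opnorm{x_*}_p^p$ for any maximizer $x_*$ of $L_{n,p,u}(t)$ with $u,t \in [K_1,K_2]$. I would plug the constant test vector $x_0 = \sqrt u\,(1,\ldots,1)$ into \eqref{Lnput}; it satisfies $\opnorm{x_0}_2^2 = u$ and $\opnorm{x_0}_p^p = u^{p/2}$, and on $\Omega_n$ it yields $L_{n,p,u}(t) \ge A(x_0) - tu^{p/2} \ge -C_0 u - tu^{p/2}$. Combining this with $L_{n,p,u}(t) = A(x_*) - t\opnorm{x_*}_p^p$ and $A(x_*) \le C_0 u$ gives
$$ t\,\opnorm{x_*}_p^p = A(x_*) - L_{n,p,u}(t) \le 2C_0 u + tu^{p/2},$$
so $\opnorm{x_*}_p^p \le D_1 := 2C_0 K_2/K_1 + K_2^{p/2}$ uniformly in $(u,t) \in [K_1,K_2]^2$.

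The $t$-Lipschitz bound is then immediate by an envelope argument: testing the maximizer of $L_{n,p,u}(t)$ in the variational problem for $L_{n,p,u}(t')$ and vice versa gives $|L_{n,p,u}(t)-L_{n,p,u}(t')| \le D_1|t-t'|$. For the $u$-Lipschitz bound I would use a rescaling: given the maximizer $x_*$ of $L_{n,p,u}(t)$, the vector $y = \sqrt{u'/u}\,x_*$ satisfies $\opnorm{y}_2^2 = u'$, $A(y) = (u'/u)A(x_*)$, and $\opnorm{y}_p^p = (u'/u)^{p/2}\opnorm{x_*}_p^p$; using $y$ as a test point in the $u'$-problem delivers
$$ L_{n,p,u'}(t) - L_{n,p,u}(t) \ge \Bigl(\frac{u'}{u}-1\Bigr) A(x_*) - t\Bigl(\Bigl(\frac{u'}{u}\Bigr)^{p/2}-1\Bigr)\opnorm{x_*}_p^p.$$
For $u,u' \in [K_1,K_2]$, the mean value theorem gives $|u'/u-1| + |(u'/u)^{p/2}-1| \le C_p |u-u'|/K_1$, so using $|A(x_*)| \le C_0 K_2$ and $\opnorm{x_*}_p^p \le D_1$ the right-hand side is at least $-D_2|u-u'|$. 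Swapping $u$ and $u'$ yields the reverse bound, so $|L_{n,p,u}(t) - L_{n,p,u'}(t)| \le D_2|u-u'|$, and the triangle inequality combines the two estimates into \eqref{extra:lem3:eq1} with $D = D_1 + D_2$.

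The only subtle point (rather than a real obstacle) is uniformity: every constant depends on $(u,t,u',t')$ only through $K_1,K_2,p$ and on $G_n$ only through $\|G_n\|_2$, so the conclusion holds simultaneously for all $u,u',t,t' \in [K_1,K_2]$ on the single event $\Omega_n$. The role of the hypothesis $K_1 > 0$ is visible in both arguments: it keeps $t$ bounded away from $0$ in the bound for $\opnorm{x_*}_p^p$ and keeps the ratios $u'/u$ inside a compact subset of $(0,\infty)$ in the rescaling step.
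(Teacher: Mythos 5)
Your proposal is correct and follows essentially the same route as the paper: both use the constant test vector to get an a priori bound on $\opnorm{x_*}_p^p$ for a maximizer (uniform over $u,t\in[K_1,K_2]$ on the event $\{\|G_n\|_2\le C_0\sqrt n\}$), and both then deduce the Lipschitz estimate by a rescaling/envelope argument. The only cosmetic difference is that the paper first substitutes $x\to x/\sqrt u$ to write $L_{n,p,u}(t)=\max_{\opnorm{x}_2=1}\bigl(u\,A(x)-tu^{p/2}\opnorm{x}_p^p\bigr)$ and reads off the joint Lipschitz bound in one step, whereas you split it into separate $t$- and $u$-Lipschitz estimates (the latter via the rescaling $y=\sqrt{u'/u}\,x_*$, which is the same substitution applied directionally) and combine them by the triangle inequality.
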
 

From the above inequality and  the Borel-Cantelli lemma,  it follows that  that for any $0<K_1<K_2$, there exists some $D = D(K_1, K_2) >0$ such that almost surely 
\begin{align}
\label{extra:eq19}
\limsup_n |L_{n, p,u}(t)-L_{n, p,u'}(t')|\leq D(|u-u'|+|t-t'|),\,\,\forall u,u',t,t'\in [K_1,K_2].
\end{align}
The above inequality also implies that 
\begin{equation} \label{extra:eq192}
 |L_{p,u}(t)-L_{p,u'}(t')|\leq D(|u-u'|+|t-t'|),\,\,\forall u,u',t,t'\in [K_1,K_2].
\end{equation}

For any $u,t\in \mathbb{Q}\cap(0,\infty)$,  let $\Omega(u,t)$ be the event on which $\lim_{n\to\infty}L_{n,p,u}(t)$ exists.  Also, for any $K_1,K_2\in \mathbb{Q}\cap (0,\infty)$ with $K_1<K_2,$ let $\Omega(K_1,K_2)$ be the event on which the inequality \eqref{extra:eq19} holds.  Let $\Omega$ be the intersection of all $\Omega(u,t)$ and $\Omega(K_1,K_2).$ Obviously,  $\Omega$ is of probability one. Moreover, on $\Omega$,  for any $t>0$ and $u>0$ with  $ u, t \in [K^{-1}, K] $ for some rational $K >0$, we have, using \eqref{extra:eq19} and \eqref{extra:eq192},  that 
\begin{align*}
&\limsup_{n\to\infty}\bigl|L_{n,p,u}(t)-L_{p,u}(t)\bigr|\\
&\leq \limsup_{n\to\infty}\bigl|L_{n,p,u}(t)-L_{n,p,u'}(t')\bigr|+\limsup_{n\to\infty}\bigl|L_{n,p,u'}(t')-L_{p,u'}(t')\bigr| + \bigl|L_{p,u'}(t')-L_{p,u}(t)\bigr|  \\
&\leq 2D'(|u-u'|+|t-t'|)
\end{align*}
for any $u',t'\in \mathbb{Q}\cap [K^{-1}, K]$, where $D'$ is a constant depending only on $K$. Sending $u'\to u$ and $t'\to t$ implies that $\lim_{n\to\infty}L_{n,p,u}(t)=L_{p,u}(t)$ on $\Omega$. This completes the proof of \eqref{thm-1:eq1}.

\begin{proof}[\bf Proof of Lemma \ref{extra:lem3}]
	Let $0< K_1< K_2$ be fixed. Note that a change of variable, $x\to x/\sqrt{u}$, yields that 
	\begin{align}\label{extra:eq15}
	L_{n,p,u}(t)&=\max_{\opnorm{x}_2=1}\Bigl(\frac{u}{n^{3/2}}\la G_nx,x\ra-tu^{p/2}\opnorm{x}_p^p\Bigr).
	\end{align}
	Let $\mathbbm{1}=(1,\ldots,1) \in \r^n$. Since $$
	L_{n,p,u}(t)\geq \frac{u}{n^{3/2}}\la G_n\mathbbm{1},\mathbbm{1}\ra-tu^{p/2}\opnorm{\mathbbm{1}}_p^p\geq -\frac{u\|G_n\|_2}{n^{1/2}}-tu^{p/2},
	$$
	it follows that if $x$ is an optimizer of \eqref{extra:eq15}, then
	\begin{align}\label{add:E2}
	\opnorm{x}_p^p&\leq \frac{u^{1-p/2}}{tn^{3/2}}\la G_nx,x\ra+\frac{u^{1-p/2}\|G_n\|_2}{tn^{1/2}}+1\leq  \frac{2u^{1-p/2}\| G_n\|_2}{tn^{1/2}}+1\leq M_n
	\end{align}
	for 
	$
	M_n:=2K_1^{-p/2}n^{-1/2}\|G_n\|_2+1.
	$
	Consequently, $L_{n,p,u}(t)$ can be written as the same supremum as \eqref{extra:eq15}, but with the extra constraint \eqref{add:E2}. This implies that for any $u,u',t,t'\in [K_1,K_2],$
	\begin{align} \label{eq:lip_bound_L_npu}
	|L_{n,p,u}(t)-L_{n,p,u'}(t')|&\leq \frac{\| G_n\|_2}{\sqrt{n}}|u-u'|+M_n|tu^{p/2}-t'{u'}^{p/2}| \nonumber \\
	&\leq \frac{\|G_n\|_2}{\sqrt{n}}|u-u'|+M_n\bigl|(t-t')u^{p/2}+t'(u^{p/2}-{u'}^{p/2})\bigr|  \nonumber \\
	&\leq \frac{\| G_n\|_2}{\sqrt{n}}|u-u'|+M_n\Bigl(K_2^{p/2}|t-t'|+\frac{pK_2^{p/2}}{2}|u-u'|\Bigr).
	\end{align}
	Since $G_n\mapsto \|G_n\|_2$ is $1$-Lipschitz with respect to $\|\cdot\|_F$,  It follows from  the concentration inequality \eqref{eq:Gconc} for $\| G_n\|_2$ and \eqref{extra:eq9} that there exist some $C,C_*>0$ such that
	\begin{align}\label{add:eq2}
	\p\bigl(n^{-1/2}\|G_n\|_2\geq C_*\bigr)\leq  Ce^{-n/C}.
	\end{align} 
	The bound \eqref{extra:lem3:eq1} now is a consequence of     \eqref{eq:lip_bound_L_npu} and \eqref{add:eq2}.
\end{proof}

\subsection{Proof of \eqref{thm-1:eq2}}

\begin{lemma}\label{extra:lem4}
	There exist absolute constants $c_0,C>0$ such that with probability at least $1-Ce^{-n/C}$,  for any $0< K<t<\infty$,
	\begin{align*}
	L_{n,p}(t)=\sup_{u\in [0,c_0/K]}L_{n,p,u}(t).
	\end{align*}
\end{lemma}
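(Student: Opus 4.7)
The plan is to control $L_{n,p,u}(t)$ from above by an explicit deterministic function of $u$ that is driven negative by the $\ell_p$-penalty once $u$ is sufficiently large; comparing with the trivial lower bound $L_{n,p}(t)\geq 0$ (attained at $x=0$) then confines the supremum over $u$ to a bounded interval.

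First, I would set up the high-probability event. The map $G_n\mapsto\|G_n\|_2$ is $1$-Lipschitz with respect to the Frobenius norm, and by Remark~\ref{rmk1} (the case $p=q=2$) its expectation is of order $\sqrt n$. The Gaussian Lipschitz concentration inequality \eqref{eq:Gconc} then yields an absolute constant $C_\ast>0$ and an event $\Omega_n$ with $\mathbb{P}(\Omega_n)\geq 1-Ce^{-n/C}$ on which $\|G_n\|_2\leq C_\ast\sqrt n$.

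On $\Omega_n$, for any $x\in\mathbb{R}^n$ with $\opnorm{x}_2^2=u$, so $\|x\|_2^2=nu$, Cauchy--Schwarz gives
\[
\frac{\langle G_n x,x\rangle}{n^{3/2}}\;\leq\;\frac{\|G_n\|_2\,\|x\|_2^2}{n^{3/2}}\;\leq\;C_\ast\,u.
\]
Moreover, since $p>2$, Jensen's inequality applied to the convex function $y\mapsto y^{p/2}$ yields the power-mean bound
\[
\opnorm{x}_p^p\;=\;\frac1n\sum_{i=1}^n |x_i|^p\;\geq\;\Bigl(\frac1n\sum_{i=1}^n |x_i|^2\Bigr)^{p/2}=u^{p/2}.
\]
Taking the supremum over $x$ with $\opnorm{x}_2^2=u$ therefore gives, on $\Omega_n$,
\[
L_{n,p,u}(t)\;\leq\; C_\ast\,u-t\,u^{p/2}\quad\text{for every }u>0.
\]

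Finally, since $L_{n,p}(t)\geq 0$, any $u$ relevant for $L_{n,p}(t)=\sup_{u\geq0}L_{n,p,u}(t)$ must satisfy $L_{n,p,u}(t)\geq 0$. On $\Omega_n$ this forces $t\,u^{p/2-1}\leq C_\ast$, and combining with $t>K$ gives $u\leq (C_\ast/K)^{2/(p-2)}$. Taking $c_0$ large enough, depending only on $p$ and $C_\ast$, so that this threshold is absorbed into $c_0/K$ on the relevant range of $K$, one obtains $L_{n,p}(t)=\sup_{u\in[0,c_0/K]}L_{n,p,u}(t)$ on $\Omega_n$, which has the required probability. The routine ingredients are the concentration bound for $\|G_n\|_2$ and the power-mean inequality; the only subtlety is the exponent calibration used to absorb the implicit threshold $(C_\ast/K)^{2/(p-2)}$ into $c_0/K$, which crucially uses $p>2$ so that the penalty dominates the energy for large $u$.
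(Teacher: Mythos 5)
Your proof follows essentially the same route as the paper's. The paper's argument (Lemma \ref{extra:lem4}) runs: on the event $\{\|G_n\|_2\leq C_*\sqrt n\}$ (their \eqref{add:eq2}), if $H_{n,p,t}(x)\geq 0$ then $t\opnorm{x}_p^p\leq (\|G_n\|_2/\sqrt n)\opnorm{x}_2^2\leq (\|G_n\|_2/\sqrt n)\opnorm{x}_p^2$ by Cauchy--Schwarz and Jensen, giving $\opnorm{x}_p^{p-2}\leq \|G_n\|_2/(t\sqrt n)$ and hence, via $\opnorm{x}_2\leq\opnorm{x}_p$, the bound $u\leq(\|G_n\|_2/(K\sqrt n))^{2/(p-2)}\leq (C_*/K)^{2/(p-2)}$ (their \eqref{extra:eq20}). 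This is exactly the inequality chain you produce, only packaged slightly differently (you bound $L_{n,p,u}(t)\leq C_*u-tu^{p/2}$ and then force non-negativity, while the paper works directly with the Hamiltonian of an optimizer). The concentration event, the operator-norm bound, and the Jensen/power-mean step are identical.

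The one wrinkle you flag at the end --- "absorbing $(C_*/K)^{2/(p-2)}$ into $c_0/K$" --- is a genuine one, and it is present in the paper's proof as well. The proof actually delivers $u\leq c_0 K^{-2/(p-2)}$ for an absolute $c_0$, which equals $c_0/K$ only when $p=4$. For $2<p<4$ the natural bound outstrips $c_0/K$ as $K\downarrow 0$, and for $p>4$ it does so as $K\uparrow\infty$, so no absolute $c_0$ makes $(C_*/K)^{2/(p-2)}\leq c_0/K$ hold over the full range $0<K<\infty$. This appears to be a small slip in the lemma's statement (likely the authors intended $c_0 K^{-2/(p-2)}$); the downstream use of the lemma in the proof of \eqref{thm-1:eq2}, which sends $K\downarrow 0$ along a fixed sequence and only uses that the interval is bounded for each fixed $K$ and grows to cover $(0,\infty)$, works equally well with the corrected exponent. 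So your hand-wave at that step is no more and no less than what the paper does; the rest of your argument is sound.
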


\begin{proof}
	Note that $L_{n,p}(t)\geq 0.$ Recall the definition of $H_{n,p,t}(x)$ from \eqref{add:eq-3}. 
	Whenever $H_{n,p,t}(x)\geq 0$ for some $x\in \mathbb{R}^n$, we have, by Jensen's inequality, that
	\begin{align*}
	t\opnorm{x}_p^p&\leq \frac{\la G_nx,x\ra}{n^{3/2}}\leq \frac{\|G_n\|_2}{\sqrt{n}}\opnorm{x}_2^2\leq \frac{\|G_n\|_2}{\sqrt{n}}\opnorm{x}_p^2.
	\end{align*}
	The above inequality, together with another application of Jensen's inequality, yields that 
	\begin{align}\label{extra:eq20}
	\opnorm{x}_2\leq \opnorm{x}_p&\leq \Bigl(\frac{\| G_n\|_2}{t\sqrt{n}}\Bigr)^{1/(p-2)}\leq \Bigl(\frac{\|G_n\|_2}{K\sqrt{n}}\Bigr)^{1/(p-2)}.
	\end{align}
	Consequently, we can write $L_{n,p}(t)$ as the supremum of $L_{n,p,u}(t)$ for all $u$ between zero and the above upper bound. 
	Using \eqref{add:eq2} completes our proof.
\end{proof}

The proof of \eqref{thm-1:eq2} is now argued as follows. From Lemma \ref{extra:lem4}, there exist $c_0,C>0$ such that with probability at least $1-Ce^{-n/C}$, $L_{n,p}(t)=\sup_{0\leq u\leq c_0/K}L_{n,p,u}(t)$ for any $0<K\leq t<\infty.$
Now consider an arbitrary $0<K< \min(\sqrt{c_0}, 1).$ From Lemma \ref{extra:lem3}, there exist $C',D>0$ such that with probability at least $1-C'e^{-n/C'},$
\begin{align}\label{extra:eq14}
|L_{n,p,u}(t)-L_{n,p,u'}(t)|&\leq D|u-u'|,\,\,\ \forall u,u',t\in [K,c_0/K].
\end{align}
Consequently, for any $k\geq 1$, if $u_0 <\cdots < u_{k}$ form  an equidistant partition of $[K,c_0/K]$, then   with probability at least
$1 - C'e^{-n/C'},$
\begin{align*}
\Bigl|\sup_{u\in [K,c_0/K]}L_{n,p,u}(t)-\sup_{0 \leq j\leq k}L_{n,p,u_j}(t)\Bigr|\leq \frac{D}{k}(c_0/K-K),\,\,\forall t\in [K,c_0/K],
\end{align*}
This implies that with probability at least
$1-Ce^{-n/C}-C'e^{-n/C'},$
\begin{align}
\begin{split}\notag
\Bigl|L_{n,p}(t)-\sup_{0\leq j\leq k}L_{n,p,u_j}(t)\Bigr|&\leq \sup_{u\in[0,K]}L_{n,p,u}(t)+\frac{D}{k}(c_0/K-K)
\end{split}\\
\begin{split}\label{extra:eq16}
&\leq \frac{\|G_n\|_2K}{\sqrt{n}}+\frac{D}{k}(c_0/K-K),\,\,\forall t\in [K,c_0/K],
\end{split}
\end{align}
where the last inequality used the fact that $L_{n,p,u}(t)\leq \|G_n\|_2u/\sqrt{n}.$
Combining the fact that  $|L_{p,u}(t)-L_{p,u'}(t)|\leq D|u-u'|$ for all $u,u'\in [K,c_0/K]$, which follows  from \eqref{extra:eq14}, and the bound \eqref{add:eq2} on $\|G\|_2$, we can  now pass to the limit in \eqref{extra:eq16} by first letting  $n\to\infty$ and then letting $k\to\infty$  to obtain  that almost surely,  for all $t\in [K,c_0/K]$, 
$$
-C_*K+\sup_{u\in [K,c_0/K]}L_{p,u}(t)\leq \liminf_{n\to\infty}L_{n,p}(t)\leq \limsup_{n\to\infty}L_{n,p}(t)\leq \sup_{u\in [K,c_0/K]}L_{p,u}(t)+C_*K
$$
 Since this is valid for all $0<K<\sqrt{c_0}$, we can further send $K\downarrow 0$ to get that almost surely,
\begin{align}\label{extra:eq17}
\lim_{K\downarrow 0}\sup_{u\in [K,c_0/K]}L_{p,u}(t)\leq \liminf_{n\to\infty}L_{n,p}(t)\leq \limsup_{n\to\infty}L_{n,p}(t)\leq \sup_{u\in [0,\infty)}L_{p,u}(t),\,\,\forall t>0.
\end{align}
Finally, note that for any $u\geq 0$ and $t>0$, $L_{n,p}(t)\geq L_{n,p,u}(t)$. Also, almost surely,  $\lim_{n \to \infty } L_{n,p,u}(t)  = L_{p, u}(t)$ for all $t > 0$ and $u > 0$.
From \eqref{extra:eq16},  
\begin{align*}
L_{p,u}(t)
&\leq \sup_{0 \leq j\leq k}L_{p,u_j}(t)+C_*K+\frac{D}{k}(c_0/K-K) \leq \sup_{[K,c_0/K]}L_{p,u}(t)+C_*K+\frac{D}{k}(c_0/K-K)
\end{align*} 
for all $t\in [K,c_0/K],$ $u\geq 0$, and $k\geq 1.$
Hence,
\begin{align*}
\sup_{u\in [0,\infty)}L_{p,u}(t)\leq \sup_{[K,c_0/K]}L_{p,u}(t)+C_*K, 
\end{align*}
which, together with \eqref{extra:eq17},   completes our proof after sending $K\downarrow 0$.

\section{Proof of Theorem \ref{thm2}}\label{sec8}

We establish the proof of Theorem \ref{thm2} in this section. First, we need the following important lemma that connects $L_{n,p}$ and $\gp_{n,p}.$ 

\begin{lemma}\label{add:lem5}
	Let $2<p<\infty$ and $n \ge 1$. Then the following statements hold for any realization of $G_n$: 
	If $L_{n,p}$ is differentiable at some $t\in (0,\infty)$ with $L_{n,p}'(t)<0$, then
		\begin{align}\label{add:lem5:eq-1}
	L_{n,p}(t)&=-\Bigl(\frac{p}{2}-1\Bigr)tL_{n,p}'(t)
	\end{align}
	and
	\begin{align}\label{add:lem5:eq0}
	\gp_{n,p}=\frac{p}{2}\Bigl(\frac{p}{2}-1\Bigr)^{2/p-1}t^{2/p}L_{n,p}(t)^{1-2/p}.
	\end{align}
\begin{remark}
Though the differential equation \eqref{add:lem5:eq-1} has a simple explicit solution, it does not yield an expression for $L_{n,p}(t)$ in absence of a boundary condition. 
\end{remark}

\end{lemma}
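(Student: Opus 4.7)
The starting point is the scaling identity, obtained by the substitution $x=r^{1/p}y$ with $\opnorm{y}_p=1$ (valid for $r>0$, while $r=0$ contributes $0$),
\begin{align*}
\max_{\opnorm{x}_p^p=r}\frac{\la G_nx,x\ra}{n^{3/2}}=r^{2/p}\gp_{n,p},\quad r\geq 0.
\end{align*}
Splitting the maximization defining $L_{n,p}(t)$ into first fixing $\opnorm{x}_p^p=r$ and then varying $r$, one obtains the key reformulation
\begin{align*}
L_{n,p}(t)=\sup_{r\geq 0}\bigl(r^{2/p}\gp_{n,p}-tr\bigr),
\end{align*}
which realizes $L_{n,p}$ as the Legendre transform of the concave function $r\mapsto r^{2/p}\gp_{n,p}$ (concave since $p>2$ forces $2/p<1$). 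In particular $L_{n,p}$ is convex on $(0,\infty)$.

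The plan is then to exploit this dual representation via an envelope argument. First, if $\gp_{n,p}\leq 0$ then the supremum is nonpositive and attained at $r=0$, forcing $L_{n,p}\equiv 0$ and $L_{n,p}'(t)=0$, contradicting the hypothesis. Hence $\gp_{n,p}>0$, and the strictly concave map $r\mapsto r^{2/p}\gp_{n,p}-tr$ attains its maximum at a unique $r^*>0$ characterized by the first-order condition
\begin{align*}
\frac{2}{p}(r^*)^{2/p-1}\gp_{n,p}=t.
\end{align*}
Since $L_{n,p}$ is a supremum of affine functions of $t$ with slopes $-r$, its subdifferential at $t$ equals $\{-r^*\}$, and differentiability at $t$ forces $L_{n,p}'(t)=-r^*$, i.e.\ $r^*=-L_{n,p}'(t)>0$.

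The two assertions now fall out by substitution. Inserting the first-order condition into $L_{n,p}(t)=(r^*)^{2/p}\gp_{n,p}-tr^*$ collapses the first term to $(p/2)tr^*$, giving
\begin{align*}
L_{n,p}(t)=\Bigl(\frac{p}{2}-1\Bigr)tr^*=-\Bigl(\frac{p}{2}-1\Bigr)tL_{n,p}'(t),
\end{align*}
which is \eqref{add:lem5:eq-1}. Solving this relation for $r^*=L_{n,p}(t)/((p/2-1)t)$ and plugging back into $\gp_{n,p}=(p/2)\,t\,(r^*)^{1-2/p}$ yields \eqref{add:lem5:eq0} after collecting the powers of $t$ and $(p/2-1)$.

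The only delicate point is the envelope/Danskin step identifying $L_{n,p}'(t)=-r^*$, but this is routine given uniqueness of $r^*$ and convexity of $L_{n,p}$. If desired, subdifferential language can be bypassed by a two-sided direct comparison: the dual formula gives $L_{n,p}(t+h)\geq (r^*)^{2/p}\gp_{n,p}-(t+h)r^*=L_{n,p}(t)-hr^*$, and the matching reverse inequality follows by applying the same argument at $t+h$ and letting $h\to 0$.
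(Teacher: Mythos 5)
Your proof is correct and takes a genuinely different route from the paper's. The paper works directly with an optimizer $x(t)\in\mathbb{R}^n$: it first shows, via a convex-duality/perturbation argument on the constraint $\opnorm{x}_p^p$, that any optimizer of $L_{n,p}(t)$ must satisfy $\opnorm{x(t)}_p^p=-L_{n,p}'(t)$; it then uses the Euler--Lagrange equation $\nabla H_{n,p,t}(x)=0$, contracted against $x$, to obtain $\frac{2}{n^{3/2}}\la G_nx,x\ra=pt\opnorm{x}_p^p$ and hence \eqref{add:lem5:eq-1}; and finally it gets \eqref{add:lem5:eq0} by the change of variable $x\to x/(-L_{n,p}'(t))^{1/p}$. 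You instead collapse the $n$-dimensional problem to a scalar one via the homogeneity $\max_{\opnorm{x}_p^p=r}n^{-3/2}\la G_nx,x\ra=r^{2/p}\gp_{n,p}$, recognize $L_{n,p}(t)=\sup_{r\ge0}(r^{2/p}\gp_{n,p}-tr)$ as a one-dimensional Legendre transform, and then both identities become pure calculus on the concave scalar map $r\mapsto r^{2/p}\gp_{n,p}-tr$. This makes fully explicit the ``Legendre duals of each other'' observation the paper only alludes to in its proof sketch, and it avoids the Lagrange-multiplier step entirely. The paper's route buys a structural fact about the optimizer (namely $\opnorm{x(t)}_p^p=-L_{n,p}'(t)$) that is mentioned elsewhere, whereas yours is shorter and more self-contained for this lemma. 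Your envelope step is also sound as written; the cleanest way to phrase it, bypassing continuity of $r^*(\cdot)$, is: the inequality $L_{n,p}(s)\ge (r^*)^{2/p}\gp_{n,p}-sr^*=L_{n,p}(t)-(s-t)r^*$ for all $s>0$ exhibits $-r^*$ as a subgradient of the convex function $L_{n,p}$ at $t$, and differentiability at $t$ forces $L_{n,p}'(t)=-r^*$.
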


\begin{proof}
  Let $t$ be a point of differentiability of $L_{n,p}$ with $L_{n,p}'(t)<0.$ 
	Let $\varepsilon>0$ be fixed. Then 	\begin{align*}
	\max_{\opnorm{x}_p^p\geq -L_{n,p}'(t)+\varepsilon}\frac{H_{n,p,t}(x)}{n}&\leq \max_{\opnorm{x}_p^p\geq -L_{n,p}'(t)+\varepsilon}\Bigl(\frac{H_{n,p,t}(x)}{n}+\lambda\bigl(\opnorm{x}_p^p+L_{n,p}'(t)-\varepsilon\bigr)\Bigr)\\
	&\leq L_{n,p}(t-\lambda)+\lambda L_{n,p}'(t)-\lambda \varepsilon\\
	&=\lambda\Bigl(L_{n,p}'(t)-\frac{L_{n,p}(t)-L_{n,p}(t-\lambda)}{\lambda}\Bigr)-\lambda \varepsilon+L_{n,p}(t)
	\end{align*}
	and
	\begin{align*}
	\max_{\opnorm{x}_p^p\leq  -L_{n,p}'(t)-\varepsilon}\frac{H_{n,p,t}(x)}{n}&\leq \max_{\opnorm{x}_p^p\leq -L_{n,p}'(t)-\varepsilon}\Bigl(\frac{H_{n,p,t}(x)}{n}+\lambda\bigl(-\opnorm{x}_p^p-L_{n,p}'(t)-\varepsilon\bigr)\Bigr)\\
	&\leq L_{n,p}(t+\lambda)-\lambda L_{n,p}'(t)-\lambda \varepsilon\\
	&=\lambda\Bigl(\frac{L_{n,p}(t+\lambda)-L_{n,p}(t)}{\lambda}-L_{n,p}'(t)\Bigr)-\lambda \varepsilon+L_{n,p}(t).
	\end{align*}
	From the above two inequalities, since $L_{n,p}$ is differentiable at $t,$ we can choose $\lambda$ small enough such that for some $c>0$ (depending on $\eps$)
	\begin{align}\label{add:eq--5}
	\max_{|\opnorm{x}_p^p+ L_{n,p}'(t)|\geq \varepsilon}\frac{H_{n,p,t}(x)}{n}&\leq L_{n,p}(t)-c\varepsilon.
	\end{align}
	In other words, if $x$ satisfies $|\opnorm{x}_p^p+ L_{n,p}'(t)|\geq \varepsilon$, then it can not be an optimizer of $L_{n,p}(t).$
	
	Now to show \eqref{add:lem5:eq-1}, note that $\opnorm{x}_p^p$ is differentiable on $\mathbb{R}^n$ since $p>2$. If $x$ is a critical point of $H_{n,p,t}$, then
	\begin{align*}
	\nabla H_{n,p,t}(x)&=\nabla \Bigl(\frac{1}{\sqrt{2}n^{3/2}}\la \bar G_nx,x\ra-t\opnorm{x}_p^p\Bigr)=\frac{2}{\sqrt{2}n^{3/2}}\bar G_nx- \Bigl( \frac{pt}{n}|x_i|^{p-1}\mbox{sgn}(x_i)\Bigr)_i=0.
	\end{align*}
	So, we have $\la x, \nabla H_{n,p,t}(x) \ra = 0$, which yields 
	\begin{align*}
	\frac{2}{n^{3/2}}\la G_nx,x\ra=\frac{\sqrt{2}}{n^{3/2}}\la \bar G_nx,x\ra =pt\opnorm{x}_p^p.
	\end{align*}
	It follows that if $x(t)$ is an optimizer of $H_{n,p,t}$, then
	\begin{align*}
	L_{n,p}(t)&=\Bigl(\frac{p}{2}-1\Bigr)t\opnorm{x(t)}_p^p.
	\end{align*}
	On the other hand, from \eqref{add:eq--5}, we also see that $|\opnorm{x(t)}_p^p+L_{n,p}'(t)|<\varepsilon$ for all $\varepsilon>0,$ which implies that $\opnorm{x(t)}_p^p=-L_{n,p}'(t)$. Plugging this into the above display results in  \eqref{add:lem5:eq-1}.
	
	Next, we establish \eqref{add:lem5:eq0}. First of all, from \eqref{add:eq--5},
	\begin{align*}
   \max_{|\opnorm{x}_p^p+  L_{n,p}'(t)|\leq  \varepsilon}\frac{H_{n,p,t}(x)}{n}&= L_{n,p}(t). 
	\end{align*}
	Since this is valid for all $\varepsilon>0,$ sending it to zero yields that 
		\begin{align*}
	L_{n,p}(t)&=\max_{\opnorm{x}_p^p=-  L_{n,p}'(t)}\frac{H_{n,p,t}(x)}{n} =\max_{\opnorm{x}_p^p=-L_{n,p}'(t)}
	\frac{\la G_nx,x\ra}{n^{3/2}}+tL_{n,p}'(t).
	\end{align*}
	Now, performing a change of variable $x\to x/(-L_{n,p}'(t))^{1/p}$ yields 
		\begin{align*}
	\gp_{n,p}=\frac{L_{n,p}(t)-tL_{n,p}'(t)}{\bigl(-L_{n,p}'(t)\bigr)^{2/p}}.
	\end{align*}
	Finally, by substituting $L_{n,p}'(t)$ using \eqref{add:lem5:eq-1} implies \eqref{add:lem5:eq0}.
	
\end{proof}

\begin{lemma}\label{add:lem6}
	Let $2<p<\infty.$ We have that $L_p(t)$ is differentiable in $t\in (0,\infty)$. In addition, for all $t\in (0,\infty),$
	\begin{align}
	\begin{split}\label{add:lem6:eq1}
	L_p(t)=-\Bigl(\frac{p}{2}-1\Bigr)tL_p'(t)
\end{split}
\end{align}
and
\begin{align}
\begin{split}\label{add:lem6:eq2}
	L_p'(t)<0.
	\end{split}
	\end{align}
\end{lemma}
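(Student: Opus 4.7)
The idea is to pass the identity \eqref{add:lem5:eq-1} to the limit at almost every $t$ via convex analysis, and then upgrade a.e.\ differentiability to differentiability everywhere by integrating the resulting ODE.

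First, I would observe that $L_{n,p}(t)$ is a pointwise supremum of affine functions of $t$ with nonpositive slopes $-\opnorm{x}_p^p$, so it is convex and non-increasing on $(0,\infty)$; these properties pass to $L_p$. Then I would establish $L_p(t)>0$ for every $t>0$ by testing on $x_c=c\sqrt{n}\,w$, where $w$ is the $\ell_2$-unit top eigenvector of $\bar G_n$. Using $\lambda_1(\bar G_n)/\sqrt n\to 2$ gives $\langle G_nx_c,x_c\rangle/n^{3/2}\to\sqrt{2}\,c^2$, and the top-eigenvector delocalization of the GOE already invoked in the excerpt gives $\opnorm{x_c}_p^p\to c^p\xi_p^p$; optimizing $\sqrt{2}c^2-tc^p\xi_p^p$ in $c>0$ then produces an explicit positive lower bound for $L_p(t)$.

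Second, I would pass Lemma \ref{add:lem5} to the limit. For each $n$, the a priori bound \eqref{extra:eq20} forces $L_{n,p}(s)\to 0$ as $s\to\infty$, so at every point of differentiability of $L_{n,p}$ where $L_{n,p}(t)>0$, convexity together with monotonicity rules out $L_{n,p}'(t)=0$, i.e.\ $L_{n,p}'(t)<0$; Lemma \ref{add:lem5} then yields $L_{n,p}(t)=-(p/2-1)tL_{n,p}'(t)$ at each such $t$. At any point $t$ where the convex limit $L_p$ is differentiable -- a set of full Lebesgue measure -- the standard convex-analytic fact that for pointwise-convergent convex functions any choice of subgradients $s_n\in\partial L_{n,p}(t)$ converges to $L_p'(t)$, combined with $L_{n,p}(t)\to L_p(t)>0$, lets me pass to the limit and obtain $L_p(t)=-(p/2-1)tL_p'(t)$ a.e.

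Third, since $L_p$ is convex on $(0,\infty)$ it is locally absolutely continuous, and the a.e.\ identity rewrites as $(\log L_p)'(t)=-2/((p-2)t)$ a.e.\ Integrating between any two positive times yields the closed-form expression $L_p(t)=L_p(t_0)(t/t_0)^{-2/(p-2)}$, which is $C^\infty$ on $(0,\infty)$. Hence $L_p$ is differentiable everywhere on $(0,\infty)$, the identities \eqref{add:lem6:eq1} and \eqref{add:lem6:eq2} hold pointwise, and $L_p'(t)=-L_p(t)/((p/2-1)t)<0$ by positivity. The main technical obstacle is the derivative-convergence step: the set of non-differentiability points of $L_{n,p}$ is random and $n$-dependent, so one must reconcile it with the deterministic set where $L_p$ is differentiable; the standard Rockafellar-type fact on derivative convergence for pointwise-convergent convex functions handles this cleanly, and the ODE argument then bypasses any direct pointwise differentiability analysis of $L_p$.
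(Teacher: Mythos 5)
Your route differs from the paper's in an interesting way: rather than the paper's two-sided difference-quotient squeeze (which establishes the ODE identity at points of differentiability of $L_p$ and then upgrades to everywhere-differentiability by showing left and right derivatives match), you integrate the ODE $(\log L_p)'(t)=-2/((p-2)t)$ using absolute continuity and read off the closed-form $L_p(t)=L_p(t_0)(t/t_0)^{-2/(p-2)}$, from which smoothness and the sign of $L_p'$ are immediate. This last step is clean and arguably more transparent than the paper's squeeze, and it also makes explicit the power-law form that the paper only alludes to in a remark.

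However, there is a genuine gap in your passage to the limit. You want to conclude $L_p(t)=-(p/2-1)t\,L_p'(t)$ at a point $t$ where $L_p$ is differentiable, by taking $s_n\in\partial L_{n,p}(t)$ and invoking Attouch/Rockafellar convergence $s_n\to L_p'(t)$. But this only gives you the limit of $s_n$; to pass the identity you still need $L_{n,p}(t)=-(p/2-1)t\,s_n$ for each large $n$, and Lemma~\ref{add:lem5} supplies that identity only at points where $L_{n,p}$ itself is differentiable, which the given $t$ need not be — the nondifferentiability sets of $L_{n,p}$ are random and $n$-dependent, and the Attouch fact does not by itself place $t$ outside all of them. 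To close the gap you would need one of the following: (a) re-examine the critical-point argument inside the proof of Lemma~\ref{add:lem5} and observe that $L_{n,p}(t)=(p/2-1)t\,\opnorm{x(t)}_p^p$ holds for any optimizer $x(t)$ whether or not $L_{n,p}$ is differentiable at $t$, and that $-\opnorm{x(t)}_p^p$ is an element of $\partial L_{n,p}(t)$, which gives the identity with a valid subgradient $s_n$; (b) observe that the a.e.\ identity together with continuity of $L_{n,p}$ and monotonicity of its one-sided derivatives forces $\partial^-L_{n,p}(t)=\partial^+L_{n,p}(t)$ at every $t$, so $L_{n,p}$ is in fact differentiable everywhere; or (c) adopt the paper's route and sandwich $L_{n,p}(t)$ between $-(p/2-1)t\,D_{n,p}^{\pm}$, which sidesteps the differentiability of $L_{n,p}$ at $t$ entirely. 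As written, the appeal to "the standard Rockafellar-type fact" does not itself reconcile the two nondifferentiability sets, and the proof is incomplete at that step.
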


\begin{proof}
	 To show differentiability of $L_p(t)$ and \eqref{add:lem6:eq1}, we first fix a realization of $(G_n)_{n\geq 1}$ for which $L_{n,p}(t)$  converges to  $L(t)$  for all $t\in (0,\infty)$.   Note that $L_{n,p} $ and $L_p$ are  convex.  Hence, both of them are differentiable almost everywhere on $(0,\infty).$  For any $0<s'<s<s'',$ define 
	\begin{align*}
	D_{n,p}^-(s;s')=\frac{L_{n,p}(s)-L_{n,p}(s')}{s-s'},\,\,D_{n,p}^+(s;s'')=\frac{L_{n,p}(s'')-L_{n,p}(s)}{s''-s}.
	\end{align*}
  Note that for any $0<t_1<s<t_2,$ if $L_{n,p}$ is differentiable at $s,$ then
	\begin{align*}
	D_{n,p}^-(s;t_1)\leq L_{n,p}'(s)\leq D_{n,p}^+(s;t_2),
	\end{align*}
	which combining with \eqref{add:lem5:eq-1} implies that
	\begin{align*}
    -\Bigl(\frac{p}{2}-1\Bigr)sD_{n,p}^+(s;t_2)\leq L_{n,p}(s)\leq -\Bigl(\frac{p}{2}-1\Bigr)s D_{n,p}^-(s;t_1).
	\end{align*}
	Since the set of points of differentiability of $L_{n,p}$ is dense in $(0,\infty)$, this inequality is indeed valid for all $0<t_1<t<t_2<\infty,$
	\begin{align*}
	-\Bigl(\frac{p}{2}-1\Bigr)tD_{n,p}^+(t;t_2)\leq L_{n,p}(t)\leq -\Bigl(\frac{p}{2}-1\Bigr)tD_{n,p}^-(t;t_1).
	\end{align*}
	 By sending $n\to\infty$ and then letting $t_1\uparrow t$ and $t_2\downarrow t$,  it follows from the above inequality that  if $L_p$ is differentiable at $t$, then \eqref{add:lem6:eq1} is valid at this point as well.  To show that \eqref{add:lem6:eq1} holds for all $t>0,$ fix an arbitrary $t>0$ and  pick two monotonic sequences $t_k$ and $s_k$ belonging to the set of differentiable points of $L_p$ such that $t_k\uparrow t$ and $s_k\downarrow t.$ From the continuity and convexity of $L_p$ and the validity of \eqref{add:lem6:eq1} on $(t_k)$ and $(s_k)$, 
	\begin{align*}
	-(p/2-1)t\lim_{k\to\infty}L_p'(t_k)=L_p(t)=-(p/2-1)t\lim_{k\to\infty}L_p'(s_k),
	\end{align*}
	which implies that $\lim_{k\to\infty}L_p'(t_k)=\lim_{k\to\infty}L_p'(s_k)$. From the convexity of $L_p$ again, for any $a,b$ satisfying $t_k<a<t<b<s_k$,
	\begin{align*}
	L_p'(t_k)\leq  \frac{L_p(t)-L_p(a)}{t-a}\leq \frac{L_p(b)-L_p(t)}{b-t}\leq L_p'(s_k).
	\end{align*}
	Consequently, we obtain the differentiability of $L_p$ at $t$ and this completes the proof of \eqref{add:lem6:eq1}. 
	
	To show \eqref{add:lem6:eq2}, note that $L_p'(t)\leq 0$  since $L_p$ is nonincreasing. If $L_p'(t)=0,$ \eqref{add:lem6:eq1} implies that $L_p(t)=0.$ Let $\lambda_1$ be the top eigenvalue of $\bar G_n$ associated with the eigenvector $x$ with $\|x\|_2=\sqrt{n\delta}$ for some $0<\delta<1$. We can express $x$ as $x/\|x\|_2=g/\|g\|_2$ for a $n$-dimensional standard Gaussian vector $g$. Thus using the  law of large number and the fact that $\lambda_1/\sqrt{n}\to 2 $ almost surely that we have almost sure lower bound
	\begin{align*}
	L_{n,p}(t)&\geq \frac{1}{n^{3/2}}\la G_nx,x\ra-t\opnorm{x}_p^p=\frac{1}{\sqrt{2}n^{3/2}}\la \bar G_nx,x\ra-t\opnorm{x}_p^p\to \sqrt{2}\delta-t\delta^{p/2}\e|z|^p,
	\end{align*}
	where $z\thicksim N(0,1)$. The lower bound is strictly positive if $\delta$ is taken to be small enough, contradicting  $L_p(t)=0$. Hence, $L_p'(t)<0.$
\end{proof}

Now we turn to the proof of Theorem \ref{thm2}. For a given realization of $(G_n)_{n\geq 1}$ for which $\lim_{n\to\infty}L_{n,p}(t)$ converges to $L_p(t)$ for all $t\in (0,\infty)$, let $\Omega\subset (0,\infty)$ be the collection of all points $t$, on which $L_{n,p}(t)$ is differentiable for all $n\geq 1.$  Since $L_p$ is differentiable everywhere, by Griffiths’ lemma, we have that  $\lim_{n\to\infty}L_{n,p}'(t)=L_p'(t)$ at an arbitrary $t\in \Omega$, which, together with  \eqref{add:lem6:eq2}, implies that $L_{n,p}'(t) < 0$ as long as $n$ is large enough. Thus, we can pass to the limit in \eqref{add:lem5:eq0} to get that
\begin{align*}
\lim_{n\to\infty}\gp_{n,p}=\frac{p}{2}\Bigl(\frac{p}{2}-1\Bigr)^{2/p-1}t^{2/p}L_{p}(t)^{1-2/p}.
\end{align*} 
Since this is valid for all $t\in \Omega$ and this set is dense in $(0,\infty)$ since $L_{n,p}$ is convex for all $n\geq 1$, this equation must also be valid on $(0,\infty)$ by using the continuity of $L_p$ and this completes the proof of \eqref{thm2:eq2}.
 
Next, we are going to prove \eqref{thm2:eq3}. Assume that $x_*$ is an optimization of $\sup_{\|x\|_p=1}\la G_nx,x\ra$. If we let $x_0=n^{1/p}x_*$, then $x_0$ is an optimizer of  $\sup_{\opnorm{x}_p=1}\la G_nx,x\ra.$ Note that as will be argued later in the proof of Lemma~\ref{add:lem-1}, $x_0$ must satisfy the inequality \eqref{add:eq--3} with $\delta=p/2-1.$ As a result,
\begin{align*}
\frac{1}{n}\|x_0\|_\infty^{p+\delta}\leq \frac{1}{n}\sum_{i\in [n]}|x_{0,i}|^{p+\delta}=\opnorm{x_0}_{p+\delta}^{p+\delta}&\leq \frac{1}{\sqrt{2}\gp_{n,p}}\frac{\|\bar G_n\|_2}{n^{1/2}}.
\end{align*}
From \eqref{extra:eq18} and \eqref{add:eq2}, we see that there exist constants $C,C'>0$ depending only on $p$ such that with probability at least $1-C'e^{-n/C'}$,
$
\|{x_0}\|_{\infty}\leq Cn^{1/(p+\delta)}.
$
This implies that $\|x_*\|_\infty\leq Cn^{1/(p+\delta)-1/p}.$ 

\section{Proof of Theorem \ref{thm3}} \label{sec9}

In this section, we establish the continuity of $\gp_p$ in $p\in [2,\infty].$ 
First we do a change of variable to obtain
\begin{align}\label{add:eq--4}
\gp_{n,p}&=\max_{\opnorm{x}_p=1}\frac{1}{n^{3/2}}\la G_nx,x\ra=\max_{\opnorm{x}_p\leq 1}\frac{1}{n^{3/2}}\la G_nx,x\ra.
\end{align}
The following lemma establishes the monotonicity of $\gp_p.$

\begin{lemma}
	\label{add:lem-0}
	For any $2\leq p\leq  p'\leq \infty$, we have that $\gp_p\geq \gp_{p'}$.
\end{lemma}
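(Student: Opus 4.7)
The plan is to reduce the monotonicity of $\gp_p$ in $p$ to the well-known monotonicity in $p$ of the normalized $\ell_p$-norm $\opnorm{\cdot}_p$, and then pass to the large-$n$ limit using the existence of $\gp_p$ established in the earlier theorems.

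First, I would apply Jensen's inequality to the convex function $t \mapsto t^{p'/p}$ (valid since $p'/p \ge 1$): for any $x \in \mathbb{R}^n$ and $2 \le p \le p' < \infty$,
\[
\opnorm{x}_p = \Bigl(\frac{1}{n}\sum_{i=1}^n |x_i|^p\Bigr)^{1/p} \le \Bigl(\frac{1}{n}\sum_{i=1}^n |x_i|^{p'}\Bigr)^{1/p'} = \opnorm{x}_{p'},
\]
and the bound also holds at $p' = \infty$ since $\opnorm{x}_p \le (\max_i |x_i|^p)^{1/p} = \opnorm{x}_\infty$. Consequently, the normalized unit $\ell_p$-balls are nested,
\[
\{x \in \mathbb{R}^n : \opnorm{x}_{p'} \le 1\} \subseteq \{x \in \mathbb{R}^n : \opnorm{x}_p \le 1\}.
\]

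Next, I would combine this inclusion with the reformulation \eqref{add:eq--4} of $\gp_{n,p}$ as a maximum over the normalized $\ell_p$-ball $\{\opnorm{x}_p \le 1\}$. For every $n$ and every realization of $G_n$ this yields
\[
\gp_{n,p'} = \max_{\opnorm{x}_{p'} \le 1} \frac{\la G_n x, x\ra}{n^{3/2}} \le \max_{\opnorm{x}_p \le 1} \frac{\la G_n x, x\ra}{n^{3/2}} = \gp_{n,p}.
\]

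Finally, I would send $n \to \infty$. The almost sure existence of $\gp_p$ and $\gp_{p'}$ is provided by Theorem~\ref{thm2} for $p, p' \in (2, \infty)$, by the convergence of the top eigenvalue of the GOE for $p = 2$, and by the Parisi formula of \cite{CP18} for $p' = \infty$. The inequality $\gp_p \ge \gp_{p'}$ then follows by taking the almost sure limit on both sides. I do not anticipate any substantive obstacle, as the lemma is essentially a direct consequence of the monotonicity of normalized $\ell_p$-norms together with the pre-established limits.
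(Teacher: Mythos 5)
Your proposal matches the paper's argument exactly: both rely on Jensen's inequality to show that $\opnorm{x}_p$ is nondecreasing in $p$, hence the normalized $\ell_p$-balls are nested, which via the reformulation \eqref{add:eq--4} gives $\gp_{n,p'} \le \gp_{n,p}$ for every $n$, and the conclusion follows by passing to the limit using \eqref{thm2:eq2}. No gaps.
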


\begin{proof}
	In view of \eqref{thm2:eq2} and \eqref{add:eq--4}, the assertion follows immediately from the monotonicity of the averaged norm $\opnorm{x}_p$ in $p$ due to Jensen's inequality.
\end{proof}

We divide our discussion into three cases: $p=2,$ $2<p<\infty$, and $p=\infty.$

\subsection{Case $p=2$}
Note that $\gp_2\geq \gp_p$ for all $p\geq 2.$ It suffices to show that $\gp_2\leq \lim_{p\downarrow 2}\gp_p.$ 
Write
\begin{align*}
\gp_{n,2}&=\max_{\opnorm{x}_2=1}\frac{1}{n^{3/2}}\la G_nx,x\ra=\frac{1}{\sqrt{2}}\max_{\opnorm{x}_2=1}\frac{1}{n^{3/2}}\la \bar G_nx,x\ra.
\end{align*}
Here the maximum on the right-hand side is the largest eigenvalue of the GOE, $\bar G_n/\sqrt{n}$, and the corresponding eigenvector $y$ with $\opnorm{y}_2=1$ has the distribution $g/\opnorm{g}_2$ for $g$ an $n$-dimensional standard Gaussian vector. For all $p\geq 2$, since $\lim_{n\to\infty}\opnorm{y}_p^p= \e |z|^p$ almost surely for $z\thicksim N(0,1)$ by the strong law of large number. Therefore,  for any $\varepsilon>0,$ 
\begin{align*}
\gp_{2}&=\lim_{n\to\infty}\frac{1}{\sqrt{2}}\frac{1}{n^{3/2}}\la \bar G_ny,y\ra\\
&\leq\frac{1}{\sqrt{2}}\limsup_{n\to\infty}\max_{|\opnorm{x}_p^p-\e |z|^p|<\varepsilon }\frac{1}{n^{3/2}}\Bigl\la\bar G_n\Bigl(\frac{x}{\opnorm{x}_p}\Bigr),\Bigl(\frac{x}{\opnorm{x}_p}\Bigr)\Bigr\ra \opnorm{x}_p^2\\
&\leq \frac{1}{\sqrt{2}}\bigl(\e |z|^p+\varepsilon\bigr)^{2/p}\limsup_{n\to\infty}\max_{|\opnorm{x}_p^p-\e |z|^p|<\varepsilon }\frac{1}{n^{3/2}}\Bigl\la\bar G_n\Bigl(\frac{x}{\opnorm{x}_p}\Bigr),\Bigl(\frac{x}{\opnorm{x}_p}\Bigr)\Bigr\ra \\
&\leq \bigl(\e |z|^p+\varepsilon\bigr)^{2/p}\gp_p.
\end{align*} 
Hence, $\gp_2\leq \lim_{p\downarrow  2}\gp_p$ and this completes our proof.

\subsection{Case $2<p<\infty$}

To discuss this case, we need a key lemma, which states that for all $\delta>0$ with $2(1+\delta)<p,$ we can essentially rewrite $\gp_{n,p}$ as a maximization problem restricted to $x\in \mathbb{R}^n$ satisfying $\opnorm{x}_p=1$ and $\opnorm{x}_{p+\delta}^{p+\delta}\leq \gp_2/\gp_\infty$. In other words, this means that the optimizer of $\gp_{n,p}$ has a bounded norm of slightly higher order than $p.$

\begin{lemma}\label{add:lem-1}
	Let $p\in (2,\infty).$ For any $\delta>0$ satisfying $2(1+\delta)\leq p$, we have that
	\begin{align*}
	\gp_p&=\lim_{n\to\infty}\max_{x\in A_{n,p,\delta}}\frac{1}{n^{3/2}}\la G_nx,x\ra,
	\end{align*}
	where $A_{n,p,\delta}$ is the collection of all $x\in \mathbb{R}^n$ satisfying that $\opnorm{x}_p=1$ and $\opnorm{x}_{p+\delta}^{p+\delta}\leq \gp_2/\gp_\infty.$ 
\end{lemma}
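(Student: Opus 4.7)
The plan is to derive the Lagrange-multiplier inequality \eqref{add:eq--3},
\begin{align*}
\opnorm{x_0}_{p+\delta}^{p+\delta}\le \frac{\|\bar G_n\|_2}{\sqrt{2}\,\gp_{n,p}\,\sqrt{n}},
\end{align*}
for any maximizer $x_0$ of $\max_{\opnorm{x}_p=1}n^{-3/2}\la G_nx,x\ra$ (such a maximizer exists by compactness of the $\opnorm{\cdot}_p$-sphere and continuity of the objective), and then combine it with the monotonicity in Lemma~\ref{add:lem-0} and the convergence in Theorem~\ref{thm2} to conclude. Since $A_{n,p,\delta}\subseteq\{\opnorm{x}_p=1\}$, the trivial bound $\max_{x\in A_{n,p,\delta}}n^{-3/2}\la G_nx,x\ra\le \gp_{n,p}$ already gives one direction; the displayed inequality will force the reverse inequality by placing $x_0$ inside $A_{n,p,\delta}$ asymptotically.

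For the derivation, since $p>1$ the map $x\mapsto\opnorm{x}_p^p$ is $C^1$ with $\partial_{x_i}\opnorm{x}_p^p=(p/n)|x_i|^{p-1}\mathrm{sgn}(x_i)$. Using $\la G_nx,x\ra=\la\bar G_nx,x\ra/\sqrt{2}$, the first-order condition at $x_0$ reads
\begin{align*}
\frac{\sqrt{2}}{n^{3/2}}(\bar G_nx_0)_i=\frac{p\lambda}{n}|x_{0,i}|^{p-1}\mathrm{sgn}(x_{0,i}),\qquad i\in[n],
\end{align*}
for some multiplier $\lambda$. Taking the inner product with $x_0$ and using $\opnorm{x_0}_p^p=1$ identifies $p\lambda=2\gp_{n,p}$. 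I then pair the identity component-wise with $y_i:=|x_{0,i}|^{1+\delta}\mathrm{sgn}(x_{0,i})$ and sum: the right-hand side becomes $p\lambda\opnorm{x_0}_{p+\delta}^{p+\delta}=2\gp_{n,p}\opnorm{x_0}_{p+\delta}^{p+\delta}$, while Cauchy--Schwarz controls the left by $\sqrt{2}\|\bar G_n\|_2\|x_0\|_2\|y\|_2/n^{3/2}$. The hypothesis $2(1+\delta)\le p$ enters precisely here: Jensen's inequality gives $\opnorm{x_0}_2,\opnorm{x_0}_{2(1+\delta)}\le\opnorm{x_0}_p=1$, so $\|x_0\|_2\le\sqrt{n}$ and $\|y\|_2=\sqrt{n}\,\opnorm{x_0}_{2(1+\delta)}^{1+\delta}\le\sqrt{n}$, yielding the claimed bound.

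Passing to the limit, $\|\bar G_n\|_2/\sqrt{n}\to 2$ almost surely (spectral norm of the GOE) and $\gp_{n,p}\to\gp_p$ almost surely by Theorem~\ref{thm2}, so the right-hand side converges almost surely to $2/(\sqrt{2}\,\gp_p)=\gp_2/\gp_p$, recalling that $\gp_2=\sqrt 2$. Lemma~\ref{add:lem-0} gives $\gp_p\ge\gp_\infty$, hence $\gp_2/\gp_p\le\gp_2/\gp_\infty$. Whenever this is a strict inequality, $x_0\in A_{n,p,\delta}$ for all large $n$ almost surely, which forces $\max_{x\in A_{n,p,\delta}}n^{-3/2}\la G_nx,x\ra=\gp_{n,p}\to\gp_p$.

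The main obstacle is the borderline case $\gp_p=\gp_\infty$, in which the Lagrange bound only ensures $\opnorm{x_0}_{p+\delta}^{p+\delta}\le\gp_2/\gp_\infty+o(1)$ and $x_0$ may fail to lie in $A_{n,p,\delta}$ at any finite $n$. To handle it, I would approximate: for $\eta\in(0,1)$, the enlarged set $A_{n,p,\delta}^{(\eta)}:=\{\opnorm{x}_p=1,\ \opnorm{x}_{p+\delta}^{p+\delta}\le(1+\eta)\gp_2/\gp_\infty\}$ contains $x_0$ eventually almost surely, giving $\lim_n\max_{A_{n,p,\delta}^{(\eta)}}n^{-3/2}\la G_nx,x\ra=\gp_p$. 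A stability argument (truncating the tallest coordinates of the optimizer and renormalizing in $\opnorm{\cdot}_p$, using \eqref{extra:eq9} to make the quadratic form Lipschitz in the $\opnorm{\cdot}_p$-norm) then transfers a near-optimizer from $A_{n,p,\delta}^{(\eta)}$ into $A_{n,p,\delta}$ at the cost of an objective loss that vanishes as $\eta\downarrow0$, completing the proof.
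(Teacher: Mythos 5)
Your argument is essentially the paper's: derive the first-order (Lagrange) condition at a maximizer $x_0$ of $\max_{\opnorm{x}_p=1}n^{-3/2}\la G_nx,x\ra$, identify the multiplier $p\lambda=2\gp_{n,p}$ by pairing with $x_0$, and then pair with $|x_{0,i}|^{1+\delta}\mathrm{sgn}(x_{0,i})$, apply Cauchy--Schwarz and the power-mean inequality (which is where $2(1+\delta)\le p$ enters), and obtain the bound \eqref{add:eq--3}, $\opnorm{x_0}_{p+\delta}^{p+\delta}\le \|\bar G_n\|_2/(\sqrt 2\,\gp_{n,p}\sqrt n)$. The algebra in your derivation is correct and identical to that in the paper, and the concluding step --- sending $n\to\infty$, invoking $\gp_{n,p}\to\gp_p$, $\|\bar G_n\|_2/\sqrt n\to 2$, and $\gp_p\ge\gp_\infty$ --- is also the paper's.

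Where you add something is the explicit discussion of the borderline case $\gp_p=\gp_\infty$. You are right that the argument as it stands only directly shows that $x_0$ lies, eventually almost surely, in the set with threshold strictly larger than $\gp_2/\gp_p$; when $\gp_p=\gp_\infty$ the random bound $c_n:=\|\bar G_n\|_2/(\sqrt2\,\gp_{n,p}\sqrt n)$ merely converges to the exact threshold $\gp_2/\gp_\infty$ and could exceed it infinitely often, so one cannot immediately place $x_0$ inside $A_{n,p,\delta}$. The paper does not address this; it passes directly from $c_n\to\gp_2/\gp_p$ to a statement about the fixed set with threshold $\gp_2/\gp_p$, which implicitly relies on strictness (i.e., on $\gp_p>\gp_\infty$), an inequality that is plausible but not established in the paper. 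Your proposed stability fix is a reasonable idea, but as written it is under-specified: after truncating the large coordinates and renormalizing in $\opnorm\cdot_p$, it is not clear that the renormalized vector's $(p+\delta)$-mean drops below $\gp_2/\gp_\infty$ in general (the truncation may remove very little, and the $\opnorm{\cdot}_p$-renormalization can push the $(p+\delta)$-mean back up), so this step would need a genuine argument. A cleaner resolution, which avoids the stability step altogether, is to notice that neither the lemma nor its downstream use (the continuity proof in Section~\ref{sec9}) actually needs the constant $\gp_2/\gp_\infty$ --- any fixed constant $c^*>\gp_2/\gp_\infty$ works, and then $c_n\to\gp_2/\gp_p\le\gp_2/\gp_\infty<c^*$ gives $x_0\in A_{n,p,\delta}$ (with the enlarged threshold) eventually almost surely, with no borderline issue. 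In the continuity argument, the constant only appears through a factor $(c^*)^{\theta}$ with $\theta\to0$, so enlarging it changes nothing.
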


\begin{proof}
	Assume that $x_0=(x_{0,1},\ldots,x_{0,n})\in \mathbb{R}^n$ with $\opnorm{x_0}_p=1$ is an optimizer of the first maximization problem in \eqref{add:eq--4}. Notice that $\sum_{i\in [n]}|x_i|^p$ is twice differentiable since $p>2.$
	Using the Lagrange multiplier, there exists some $c\in \mathbb{R}$ such that for all $i \in [n]$, 
	\begin{align}\label{add:eq--2}
	\frac{cp}{n}|x_{0,i}|^{p-1}\mbox{sgn}(x_{0,i})&=\frac{1}{n^{3/2}}\bigl( G_nx_0\bigr)_i+\frac{1}{n^{3/2}}\bigl( G_n^{T}x_0\bigr)_i=\frac{\sqrt{2}}{n^{3/2}}\bigl(\bar G_nx_0\bigr)_i.
	\end{align}
	Multiplying $x_{0,i}$ to both sides and adding them together give
	\begin{align*}
	cp=cp\opnorm{x_0}_p^p=\frac{\sqrt{2}}{n^{3/2}}\la \bar G_nx_0,x_0\ra=2\max_{\opnorm{x}_p=1}\frac{1}{n^{3/2}}\la G_nx,x\ra=2\gp_{n,p}.
	\end{align*}
	Hence, $c=2p^{-1}\gp_{n,p}$. On the other hand, for any $\delta>0$, we can multiply \eqref{add:eq--2} by $|x_{0,i}|^{1+\delta}\mbox{sgn}(x_{0,i})$ and use the Cauchy-Schwarz inequality to get
	\begin{align*}
	\begin{split}
	cp\opnorm{x_0}_{p+\delta}^{p+\delta}&=\frac{\sqrt{2}}{n^{1/2}}\frac{1}{n}\sum_{i\in [n]}\bigl(\bar G_nx_0\bigr)_i|x_{0,i}|^{1+\delta}\mbox{sgn}(x_{0,i})
	\end{split}\\
	\begin{split}\notag
	&\leq \frac{\sqrt{2}}{n^{1/2}}\opnorm{\bar G_nx_0}_2\opnorm{x_0}_{2(1+\delta)}^{1+\delta}
	\leq \sqrt{2}\frac{\|\bar G_n\|_2}{n^{1/2}}\opnorm{x_0}_2\opnorm{x_0}_{2(1+\delta)}^{1+\delta}.
	\end{split}
	\end{align*}
	If now $\delta$ satisfies $2(1+\delta)\leq p,$ then  Jensen's inequality implies
	\begin{align*}
	cp\opnorm{x_0}_{p+\delta}^{p+\delta}&\leq \sqrt{2}\frac{\|\bar G_n\|_2}{n^{1/2}}\opnorm{x_0}_p\opnorm{x_0}_p^{1+\delta}=\sqrt{2}\frac{\|\bar G_n\|_2}{n^{1/2}}.
	\end{align*}
	Hence,
	\begin{align}\label{add:eq--3}
	\opnorm{x_0}_{p+\delta}^{p+\delta}&\leq \frac{1}{\sqrt{2}\gp_{n,p}}\frac{\|\bar G_n\|_2}{n^{1/2}}.
	\end{align}
	Since $\gp_{n,p}$ converges to $\gp_p$ almost surely and $\|\bar G_n\|_2/\sqrt{n}$ converges to $2$ almost surely, it follows that
	\begin{align*}
	\gp_p=\lim_{n\to\infty}\gp_{n,p}&=\lim_{n\to\infty}\max\frac{1}{n^{3/2}}\la G_nx,x\ra,
	\end{align*}
	where the maximum is taken over all $x\in \mathbb{R}^n$ satisfying that $\opnorm{x}_p=1$ and $\opnorm{x}_{p+\delta}^{p+\delta}\leq  \sqrt{2}/\gp_p$. Our proof is then completed by noting that $\gp_2=\sqrt{2}$ and $\gp_p\geq \gp_\infty.$
\end{proof}

We now proceed to verify the continuity of $\gp_p$ on $(2,\infty)$.  
For any $2<p_1<p_2<p_3<\infty$, set $\theta =(p_2-p_1)/(p_3-p_1)\in (0,1),$ which yields $p_2=(1-\theta)p_1+\theta p_3$.
From the H\"older inequality,
$
\opnorm{x}_{p_2}^{p_2}\leq \opnorm{x}_{p_1}^{(1-\theta)p_1}\opnorm{x}_{p_3}^{\theta p_3}.
$
Note that if $x\in \mathbb{R}^n$ satisfies that $\opnorm{x}_{p_1}=1$ and $\opnorm{x}_{p_3}^{p_3}\leq \gp_{2}/\gp_\infty$, then 
$\opnorm{x}_{p_2}^{p_2}\leq (\gp_{2}/\gp_{\infty})^{\theta}.$
Consequently, if $p_1,p_2,p_3$ also satisfy  $2(1+p_3-p_1)<p_1,$ then by applying Lemma~\ref{add:lem-1} with $p=p_1$ and $\delta=p_3-p_1$,
\begin{align*}
\gp_{p_1}&\leq \limsup_{n\to\infty}\max_{\opnorm{x}_{p_2}^{p_2}\leq \bigl(\frac{\gp_{2}}{\gp_{\infty}}\bigr)^{\theta}}\frac{1}{n^{3/2}}\la G_nx,x\ra\\
&=\limsup_{n\to\infty}\max_{\opnorm{x}_{p_2}^{p_2}\leq \bigl(\frac{\gp_{2}}{\gp_{\infty}}\bigr)^{\theta}}\frac{1}{n^{3/2}}\Bigl\la G_n\frac{x}{\opnorm{x}_{p_2}},\frac{x}{\opnorm{x}_{p_2}}\Bigr\ra \opnorm{x}_{p_2}^2\\
&\leq \Bigl(\frac{\gp_{2}}{\gp_{\infty}}\Bigr)^{2\theta /p_2}\gp_{p_2}=\Bigl(\frac{\gp_{2}}{\gp_{\infty}}\Bigr)^{\frac{2(p_2-p_1)}{p_2(p_3-p_1)}}\gp_{p_2}.
\end{align*} 
This and Lemma \ref{add:lem-0} imply
\begin{align*}
0\leq \gp_{p_1}-\gp_{p_2}\leq \Bigl(\Bigl(\frac{\gp_{2}}{\gp_{\infty}}\Bigr)^{\frac{2(p_2-p_1)}{p_2(p_3-p_1)}}-1\Bigr)\gp_{p_2}.
\end{align*}
This clearly validates the continuity of $\gp_p$ on $(2,\infty).$

\subsection{Case $p=\infty.$} Note that $\gp_\infty\leq \gp_p$ for all $p\geq 2.$ From Lemma \ref{add:lem-0}, it suffices to show that $\lim_{p\to\infty}\gp_p\leq \gp_\infty.$ For any $\varepsilon>0$ and $x\in \mathbb{R}^n$ satisfying $\opnorm{x}_p=1,$ denote $x_i^\varepsilon=x_i 1(|x_i|\geq 1+\varepsilon)$ and $\widetilde x_i^{\varepsilon}=x_i 1(|x_i|<1+\varepsilon).$ An application of the H\"older inequality  followed by the Markov inequality yields 
\begin{align*}
\opnorm{x^\varepsilon}_2^2&\leq \opnorm{x}_p^2\Bigl(\frac{1}{n}\sum_{i\in [n]}1(|x_i|\geq 1+\varepsilon)\Bigr)^{1-2/p}=\Bigl(\frac{1}{n}\sum_{i\in [n]}1(|x_i|\geq 1+\varepsilon)\Bigr)^{1-2/p}\\
&\leq \Bigl(\frac{1}{(1+\varepsilon)^{p}}\opnorm{x}_p^p\Bigr)^{1-2/p}=\frac{1}{(1+\varepsilon)^{p-2}}.
\end{align*}
Therefore, 
\begin{align*}
&\Bigl|\frac{1}{n^{3/2}}\la G_nx^\varepsilon,x^\varepsilon\ra\Bigr|\leq \frac{\| G_n\|_2}{\sqrt{n}}\opnorm{x^\varepsilon}_2^2\leq \frac{\|G_n\|_2}{\sqrt{n}(1+\varepsilon)^{p-2}}
\end{align*}
and
\begin{align*}
&\Bigl|\frac{1}{n^{3/2}}\la G_nx^\varepsilon,\widetilde{x}^\varepsilon\ra\Bigr|
\leq   \frac{\|G_n\|_2}{\sqrt{n}}\opnorm{x^\varepsilon}_2\opnorm{\widetilde x^\varepsilon}_2\leq \frac{\|G_n\|_2}{\sqrt{n}(1+\varepsilon)^{p/2-1}}.
\end{align*}
From \eqref{add:eq2}, it follows that 
\begin{align*}
\lim_{p\to\infty}\gp_p&=\lim_{p\to\infty}\lim_{n\to\infty}\max_{\opnorm{x}_p=1}\frac{1}{n^{3/2}}\bigl(\la G_nx^\varepsilon,x^\varepsilon\ra+2\la   G_nx^\varepsilon,\widetilde x^\varepsilon\ra+\la  G_n\widetilde x^\varepsilon,\tilde x^\varepsilon\ra\bigr)\\
&=\lim_{p\to\infty}\lim_{n\to\infty}\max_{\opnorm{x}_p=1}\frac{1}{n^{3/2}}\la G_n\widetilde x^\varepsilon,\widetilde x^\varepsilon\ra\\
&\leq \lim_{n\to\infty}\max_{\|x\|_\infty\leq 1+\varepsilon}\frac{(1+\varepsilon)^2}{n^{3/2}}\la G_nx(1+\varepsilon)^{-1},x(1+\varepsilon)^{-1}\ra\leq (1+\varepsilon)^{2}\gp_\infty.
\end{align*}
Letting  $\varepsilon \to 0+,$ we obtain that $\lim_{p\to\infty}\gp_p\leq \gp_\infty.$ This completes our proof.

\section{Bounding $L_{p,u}(t)$ via free energies}\label{sec10}

For $2<p<\infty$ and $u>0,$ recall $L_{n,p,u}(t)$ from \eqref{Lnput} and that from Theorem \ref{thm-1}, its $n$-limit exists and is denoted by $L_{p,u}(t)$. This section is a preparation for the proof of Theorem \ref{thm0}.  We establish upper and lower bounds for $L_{p,u}(t)$ in terms of two free energies that we define now.
Let $\varepsilon>0.$ For $\beta>0,$ define the free energy associated with $H_{n,p,t}$ with $\ell_2$-band constraint by
\begin{align*}
F_{n,t,u,\varepsilon}(\beta)&=\frac{1}{\beta n}\log \int_{x\in \mathbb{R}^n:\opnorm{x}_2^2\in [u-\varepsilon,u+\varepsilon]}\exp \beta H_{n,p,t}(x)dx,
\end{align*}
where $dx$ is the Lebesgue measure on $\mathbb{R}^n$. Similarly, we shall also consider an analogous version of $F_{n,t,u,\varepsilon}(\beta)$ with an additional box confinement, namely, for $u^{1/2}<M<\infty$ and $\beta>0,$ define
\begin{align}\label{freeenergy2}
F_{n,t,u,M,\varepsilon}(\beta)&=\frac{1}{\beta n}\log \int_{x\in [-M,M]^n:\opnorm{x}_2^2\in [u-\varepsilon,u+\varepsilon]}\exp \beta H_{n,p,t}(x)dx.
\end{align}
The parameter $\beta$ in the above two free energies is usually called the (inverse) temperature in statistical physics.  The following proposition gives upper and lower bounds for $L_{p,u}(t)$.

\begin{proposition}
	\label{extra:prop1}
	Let $2<p<\infty$. For any $u>0$ and $t>0,$
	\begin{align*}
	\lim_{M\to\infty}\liminf_{\beta\to\infty}\lim_{\varepsilon\downarrow 0}\liminf_{n\to\infty}\e F_{n,t,u,M,\varepsilon}(\beta)\leq L_{p,u}(t)\leq  \lim_{\beta \to\infty}\limsup_{n\to\infty}\e F_{n,t,u,\beta^{-2}}(\beta).
	\end{align*}
\end{proposition}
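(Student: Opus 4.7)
The proof consists of two independent inequalities, handled separately.

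\textbf{First inequality: $\liminf_{\beta\to\infty}\lim_{\varepsilon\downarrow 0}\liminf_{n\to\infty}\e F_{n,t,u,M,\varepsilon}(\beta)\leq L_{p,u}(t)$ for each $M$.} The plan is to use the trivial bound $\int_A e^{\beta H}\, dx \leq \mathrm{Vol}(A)\, e^{\beta \max_A H}$ applied to $A = [-M,M]^n \cap \{\opnorm{x}_2^2 \in [u-\varepsilon,u+\varepsilon]\}$, whose volume is at most $(2M)^n$. This yields
\begin{align*}
F_{n,t,u,M,\varepsilon}(\beta) \leq \sup_{u' \in [u-\varepsilon,u+\varepsilon]} L_{n,p,u'}(t) + \frac{\log(2M)}{\beta}.
\end{align*}
Take expectations and send $n\to\infty$. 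The almost-sure equicontinuity coming from Lemma \ref{extra:lem3} (see \eqref{extra:eq19}), combined with the deterministic bound $L_{n,p,u'}(t) \leq u'\|G_n\|_2/\sqrt n$ which is $L^2$-uniformly controlled via \eqref{add:eq2}, allows one to commute the supremum with $\liminf_n$ by uniform integrability, giving $\liminf_n \e F_{n,t,u,M,\varepsilon}(\beta) \leq \sup_{u' \in [u-\varepsilon,u+\varepsilon]} L_{p,u'}(t) + \log(2M)/\beta$. Then $\varepsilon\downarrow 0$ (using continuity from \eqref{extra:eq192}), $\liminf_{\beta\to\infty}$ (killing the $\log(2M)/\beta$ term), and finally $\lim_{M\to\infty}$ give the claim.

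\textbf{Second inequality: $L_{p,u}(t) \leq \lim_{\beta \to \infty}\limsup_{n\to\infty}\e F_{n,t,u,\beta^{-2}}(\beta)$.} The idea is to lower bound the Boltzmann integral by restricting to a small $\ell_\infty$-box around an optimizer $x^*$ of $L_{n,p,u}(t)$. Work on the event $E_n = \{\|G_n\|_2 \leq C_0 \sqrt n\}$, which has exponentially high probability by \eqref{add:eq2}. Since $t\opnorm{x^*}_p^p \leq \la G_n x^*, x^*\ra/n^{3/2} - L_{n,p,u}(t)$ with the first summand bounded by $C_0 u$ on $E_n$, one extracts $\opnorm{x^*}_p \leq C_1(u,t,p)$ on $E_n$. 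Set $\delta_0 = c(u) \beta^{-2}$ for a small constant $c(u)$ and let $B = \{x : \|x - x^*\|_\infty \leq \delta_0\}$. A Cauchy-Schwarz computation using $\opnorm{x^*}_1 \leq \opnorm{x^*}_2 = \sqrt u$ places $B$ inside the band $\{\opnorm{x}_2^2 \in [u-\beta^{-2}, u+\beta^{-2}]\}$. On $B$, gradient/Taylor estimates produce $|\la G_n x, x\ra - \la G_n x^*, x^*\ra|/n^{3/2} = O(\delta_0)$ (using $\|G_n\|_2 = O(\sqrt n)$), and $t|\opnorm{x}_p^p - \opnorm{x^*}_p^p| = O(\delta_0)$ via Jensen's inequality $\opnorm{x^*}_{p-1}^{p-1} \leq \opnorm{x^*}_p^{p-1}$ to control $\|x^*\|_{p-1}^{p-1}/n$ uniformly. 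Consequently,
\begin{align*}
F_{n,t,u,\beta^{-2}}(\beta) \geq L_{n,p,u}(t) - C_2 \delta_0 + \frac{n \log (2\delta_0)}{\beta n} = L_{n,p,u}(t) - O(\beta^{-2}) - O\Bigl(\frac{\log\beta}{\beta}\Bigr).
\end{align*}
The contribution from $E_n^c$ is negligible by Cauchy-Schwarz applied to \eqref{extra:eq12}. Taking $\e$, sending $n\to\infty$ via \eqref{thm-1:eq1}, and finally $\beta \to \infty$ delivers the bound.

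\textbf{Main obstacle.} The delicate step is the Lipschitz estimate on $H_{n,p,t}/n$ in the second inequality. A naive estimate for $p>2$ yields a gradient of $\|x\|_p^p$ as large as $O(n^{(p-1)/p})$ at a localized point, translating after division by $n$ into a Lipschitz constant for $H/n$ of order $n^{1/2-1/p}$ which blows up with $n$. The rescue — and the crucial use of the structural assumption that we are at an optimizer — is that an optimizer must have bounded normalized $\ell_p$-norm on the high-probability event controlling $\|G_n\|_2$, so that Jensen's inequality $\opnorm{x^*}_{p-1} \leq \opnorm{x^*}_p$ for $p\geq 2$ brings the effective Lipschitz constant back to $O(1)$. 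The remaining bookkeeping is routine concentration.
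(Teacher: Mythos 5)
Your proof is correct and follows essentially the same strategy as the paper's (Lemma~\ref{lem2} and the assembly that follows it): a trivial volume bound $\int_A e^{\beta H}\,dx \le \mathrm{Vol}(A)\,e^{\beta\sup_A H}$ for the upper bound on $F_{n,t,u,M,\varepsilon}$, and a lower bound obtained by restricting the partition integral to a small $\ell_\infty$-cube around an optimizer, using the a priori bound on $\opnorm{x^*}_p$ that holds on the high-probability event $\{\|G_n\|_2 = O(\sqrt n)\}$ to control the Lipschitz constant of the $\ell_p$-potential. The only differences are cosmetic: you place the optimizer on the exact shell $\opnorm{x}_2^2 = u$ and set the box side to $c(u)\beta^{-2}$, so the right-hand side of your lower estimate is directly $L_{n,p,u}(t)$ and converges to $L_{p,u}(t)$ by \eqref{thm-1:eq1} with no intermediate band; the paper instead works on a band $[u-\varepsilon/\kappa, u+\varepsilon/\kappa]$, obtains $L_{n,p,u,\varepsilon/\kappa}(t)$, and then invokes Lemma~\ref{add:lem--1} to pass to $L_{p,u}(t)$. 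Your coordinate-wise mean-value/Jensen treatment of $|\opnorm{x}_p^p - \opnorm{x^*}_p^p|$ is interchangeable with the paper's norm-level bound via $(a+b)^p - a^p \le pb(a+b)^{p-1}$. The remarks on the "main obstacle" correctly identify why the a priori bound $\opnorm{x^*}_p = O(1)$ is the load-bearing observation.
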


\begin{remark}\rm
We emphasize that the outer limit in the upper bound is along $\varepsilon=\beta^{-2}.$
 To use these inequalities, the highly nontrivial part is to find the tight limits on both sides. In Section \ref{sec11}, we will show that the lower bound can be handled by a Parisi-type variational formula similar to the one in Theorem~\ref{thm0}. The upper bound will be controlled by the so-called Guerra Replica Symmetry Breaking bound in Section \ref{12.7}. Ultimately, we show that these match each other, leading to the desired formula in \eqref{thm0:eq1}, see Section \ref{sec12}.
\end{remark}

For the rest of this section, we establish the proof of Proposition \ref{extra:prop1}.
Consider a modified version of $L_{n,p,u}(t)$ on a $\ell_2$-band, namely, for $\varepsilon>0$ and $t>0$, define
\begin{align*}
L_{n,p,u,\varepsilon}(t)&:=\max_{x\in \mathbb{R}^n:\opnorm{x}_2^2\in[u-\varepsilon,u+\varepsilon]}\frac{H_{n,p,t}(x)}{n}=\max_{x\in \mathbb{R}^n:\opnorm{x}_2^2\in[u-\varepsilon,u+\varepsilon]}\Bigl(\frac{\la G_nx,x\ra}{n^{3/2}}-t\opnorm{x}_p^p\Bigr).
\end{align*}
When $\varepsilon$ is small, this quantity is essentially the same as $L_{n,p,u}(t)$. The following lemma establishes upper and lower bounds for $L_{n,p,u,\varepsilon}(t)$ in terms of the free energies introduced above.

\begin{lemma}\label{lem2} 
	Let $2<p<\infty$. For any $0<u< \sqrt{M}$ and $t>0,$
	\begin{align}\label{lem2:eq1}
	&\liminf_{\beta\to\infty}\lim_{\varepsilon\downarrow 0}\lim_{n\to\infty}\e F_{n,t,u,M,\varepsilon}(\beta)\leq \lim_{\varepsilon\downarrow 0}\liminf_{n\to\infty}\e L_{n,p,u,\varepsilon}(t).
	\end{align}
	In addition, there exist some constants $C,\kappa>0$ (depending on $t,u$) such that for any $\beta>0$, $0<\varepsilon<1$, and $n\geq 1,$
	\begin{align}\label{lem2:eq2}
	\e F_{n,t,u,\varepsilon}(\beta)&\geq \e L_{n,p,u,\varepsilon/\kappa}(t)-\varepsilon C+\frac{1}{\beta}\log \frac{2\varepsilon}{\kappa}-Ce^{-n/C}.
	\end{align}
\end{lemma}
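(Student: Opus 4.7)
For \eqref{lem2:eq1}, the plan is to apply the trivial inequality
\[
\frac{1}{\beta n}\log\int_A e^{\beta H_{n,p,t}(x)}\,dx\le\max_{x\in A}\frac{H_{n,p,t}(x)}{n}+\frac{\log\mathrm{Vol}(A)}{\beta n}
\]
with $A=[-M,M]^n\cap\{x:\opnorm{x}_2^2\in[u-\varepsilon,u+\varepsilon]\}$, which has volume at most $(2M)^n$. This yields $F_{n,t,u,M,\varepsilon}(\beta)\le L_{n,p,u,\varepsilon}(t)+\log(2M)/\beta$, so taking expectations and passing to the iterated limits $\liminf_\beta\lim_\varepsilon\liminf_n$ on both sides collapses the $\log(2M)/\beta$ correction to zero and proves the claim.

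For \eqref{lem2:eq2}, the dual plan is to lower-bound the partition function by integrating the integrand over a small $\ell_\infty$-cube centered at a maximizer $x_*$ of $L_{n,p,u,\varepsilon/\kappa}(t)$. All quantitative estimates will be performed on the event $A_n=\{\|G_n\|_2\le C_*\sqrt n\}$, which has probability at least $1-Ce^{-n/C}$ by \eqref{add:eq2}. On $A_n$, the H\"older-type computation leading to \eqref{add:E2} yields $\opnorm{x_*}_p\le K$ for some $K$ depending only on $p,t,u$. For the cube $B_\delta(x_*)=\{x:\|x-x_*\|_\infty\le\delta\}$, an elementary expansion of $\|\cdot\|_2^2$ gives
\[
\bigl|\opnorm{x}_2^2-\opnorm{x_*}_2^2\bigr|\le 2\delta\sqrt{u+\varepsilon/\kappa}+\delta^2,
\]
so choosing $\kappa$ a sufficiently large constant (depending only on $u$) and $\delta=\varepsilon/\kappa$ ensures $B_\delta(x_*)\subset\{\opnorm{x}_2^2\in[u-\varepsilon,u+\varepsilon]\}$ whenever $\varepsilon<1$. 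Inside the cube, I will prove $|H_{n,p,t}(x)-H_{n,p,t}(x_*)|/n\le C_1\delta$ by controlling the quadratic part using $\|G_n\|_2\le C_*\sqrt n$ and bounding $|\|x\|_p^p-\|x_*\|_p^p|/n$ via the mean value theorem followed by H\"older's inequality, using $\opnorm{x_*}_p\le K$. Since the integrand is then at least $e^{\beta(H_{n,p,t}(x_*)-nC_1\delta)}$ on a region of volume $(2\delta)^n$, one obtains on $A_n$ the pointwise bound
\[
F_{n,t,u,\varepsilon}(\beta)\ge L_{n,p,u,\varepsilon/\kappa}(t)-C\varepsilon+\frac{1}{\beta}\log\frac{2\varepsilon}{\kappa}.
\]

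The main obstacle is to absorb the contribution of $A_n^c$ in expectation, since the pointwise bound above is only proved on $A_n$. To handle this, I will derive a crude deterministic lower bound of the form $F_{n,t,u,\varepsilon}(\beta)\ge-C'(1+\|G_n\|_2/\sqrt n)-C'\log n/\beta$ valid for every realization, obtained by integrating over a tiny cube centered at the fixed vector $\sqrt u\,\mathbbm{1}$, whose self-overlap equals $u$ and whose $p$-norm is explicit. Together with the matching deterministic upper bound $L_{n,p,u,\varepsilon/\kappa}(t)\le(u+1)\|G_n\|_2/\sqrt n$, Cauchy-Schwarz combined with $\e\|G_n\|_2^2\le\e\|G_n\|_F^2=n^2$ and $\p(A_n^c)\le Ce^{-n/C}$ will show that $|\e[F_{n,t,u,\varepsilon}(\beta)\mathbbm{1}_{A_n^c}]|$ and $|\e[L_{n,p,u,\varepsilon/\kappa}(t)\mathbbm{1}_{A_n^c}]|$ are each at most $Ce^{-n/C}$, which is absorbed into the final error term in \eqref{lem2:eq2}.
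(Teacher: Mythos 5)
Your overall strategy matches the paper's: the upper bound \eqref{lem2:eq1} follows from the trivial volume estimate (identical to the paper), and the lower bound \eqref{lem2:eq2} is obtained by integrating over a cube of side $\varepsilon/\kappa$ around a maximizer $x_*$ of the band-constrained problem, controlling $H_{n,p,t}$ on that cube via $\|G_n\|_2\le C_*\sqrt n$ and $\opnorm{x_*}_p\le K$, both holding with probability $1-Ce^{-n/C}$; this is exactly the paper's proof. You correctly identify the delicate point that the pointwise bound is only valid on a high-probability event $A_n$ and must be reconciled with the claimed bound on expectations.

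There is, however, a concrete error in your treatment of $A_n^c$. You claim a deterministic lower bound $F_{n,t,u,\varepsilon}(\beta)\ge -C'(1+\|G_n\|_2/\sqrt n)-C'\log n/\beta$. Any $\ell_\infty$-cube contained in the band $\{|\opnorm{x}_2^2-u|\le\varepsilon\}$ must have side of order $\varepsilon$, so integrating over the cube around $\sqrt u\,\mathbbm{1}$ actually yields
$F_{n,t,u,\varepsilon}(\beta)\ge -C'\bigl(1+\|G_n\|_2/\sqrt n\bigr)+\tfrac1\beta\log\tfrac{2\varepsilon}{\kappa}$,
with an $\varepsilon$-dependent (and arbitrarily negative) term replacing $-C'\log n/\beta$. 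Consequently your subsequent claim that $|\e[F_{n,t,u,\varepsilon}(\beta)\mathbbm 1_{A_n^c}]|\le Ce^{-n/C}$ with $C=C(t,u)$ is false uniformly over $\varepsilon\in(0,1)$ and $\beta>0$: Cauchy--Schwarz returns something like $\bigl(1+\tfrac{|\log(2\varepsilon/\kappa)|}{\beta}\bigr)e^{-n/C'}$, which cannot be absorbed into $Ce^{-n/C}$ when $\varepsilon$ is very small. The repair is easy and is what the paper effectively does: do not take absolute values of $\e[F\mathbbm 1_{A_n^c}]$. Instead, lower-bound $\e[F\mathbbm 1_{A_n}]$ by $\e[L_{n,p,u,\varepsilon/\kappa}(t)\mathbbm 1_{A_n}]-C\varepsilon+\tfrac1\beta\log\tfrac{2\varepsilon}{\kappa}\,\p(A_n)$ and $\e[F\mathbbm 1_{A_n^c}]$ by the crude bound just stated multiplied by $\mathbbm 1_{A_n^c}$; the two $\tfrac1\beta\log\tfrac{2\varepsilon}{\kappa}$ pieces recombine exactly to $\tfrac1\beta\log\tfrac{2\varepsilon}{\kappa}$ since $\p(A_n)+\p(A_n^c)=1$, leaving only $-C\varepsilon$, $\e[L\mathbbm 1_{A_n^c}]$ (bounded via $\e L_{n,p}(t)^2\le C(t)$ and Cauchy--Schwarz, as in the paper), and $\e[\|G_n\|_2\,\mathbbm 1_{A_n^c}]/\sqrt n$ (bounded via $\e\|G_n\|_2^2\le n^2$). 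With this sign-aware bookkeeping your argument closes, and it is in fact a slightly more transparent treatment of the exceptional event than the paper's, which passes from $\e F$ to $\e[\,\cdot\,;A]$ at the first step and only bounds $\e[L;A^c]$ thereafter.
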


\begin{proof}
  It is  easy to see that
	$
	F_{n,t,u,M,\varepsilon}(\beta)\leq L_{n,p,u,\varepsilon}(t)+\beta^{-1}\log 2M,
	$
	which immediately implies   the upper bound \eqref{lem2:eq1}.
	So, let us focus on the lower bound. Let $0<\varepsilon<1$. Note that we can choose $\kappa>\max(1/u,1)$ sufficiently large such that for any $x^*$ satisfying $|\opnorm{x^*}_2^2-u|<\varepsilon/\kappa,$ we have 
		$$
	 B(x^*):=x^* + [ -\varepsilon/\kappa,  \varepsilon/\kappa]^n \subset \{x\in \mathbb{R}^n:|\opnorm{x}_2^2-u|<\varepsilon\}.
	$$
	Note that $\varepsilon/\kappa<1.$  For any $x\in B(x^*)$,  a direct computation using triangle inequalities shows that 
	\begin{align} \label{add:eq-5}
H_{n,p,t}(x)-H_{n,p,t}(x^*)
	&\geq -n^{-1/2}\| G_n\|_2\|x-x^*\|_2(\|x^*\|_2+\|x\|_2)-t\bigl((\|x-x^*\|_p+\|x^*\|_p)^p-\|x^*\|_p^p\bigr) \nonumber \\
	&\geq -n^{1/2}\| G_n\|_2\opnorm{x-x^*}_2(\opnorm{x^*}_2+\opnorm{x}_2)-tp n \opnorm{ x- x^*}_p  \bigl (\opnorm{ x- x^*}_p + \opnorm{x^*}_p \bigr)^{p-1} \nonumber \\
&\geq -\frac{n\varepsilon}{\kappa}\Bigl(2n^{-1/2}\| G_n\|_2\sqrt{u+1}+tp  (1 + \opnorm{x^*}_p )^{p-1} \Bigr),
\end{align}
	where  we have used the inequality $(a+b)^p - a^p  = \int_a^{a+b} p x^{p-1} \le p b (a+b)^{p-1} $ for any $a, b \ge 0$ in the second step above. 
	From now on, we take $x^*$ as a maximizer of $H_{n,p,t}$ on $\{x\in \mathbb{R}^n:|\opnorm{x}_2^2-u|<\varepsilon/\kappa\}$. By imitating the proof as that of \eqref{add:E2}, it can be verified that there exist constants $C,D>0$ such that 
	\begin{align}\label{con}
	\p\bigl(\opnorm{x^*}_p\leq D\bigr)\geq 1-Ce^{-n/C}.
	\end{align} 
	From \eqref{add:eq2}, \eqref{add:eq-5}, and \eqref{con}, there exists a constant $C(t, u)$ such that with probability at least $1-C'e^{-n/C'}$, we have for any $x\in B(x^*),$
	\begin{align*}
	H_{n,p,t}(x)-H_{n,p,t}(x^*)&\geq -\frac{n\varepsilon }{\kappa}C(t,u).
	\end{align*}
	Let $A$ be the event such that this inequality is valid. From the above inequality and noting that $\mathrm{Vol}(B(x^*))=(2\varepsilon/\kappa)^n,$ we have that
	\begin{align}
	\begin{split}\notag
	\e F_{n,t,u,\varepsilon}(\beta)&\geq  \frac{1}{n\beta}\e\Bigl[\log \int_{B(x^*)}e^{n\beta H_{n,p,t}(x)}dx;A\Bigr]\\
	&\geq \e \bigl[L_{n,p,u,\varepsilon/\kappa}(t);A\bigr]-\frac{\varepsilon }{\kappa}C(t,u)+\frac{1}{\beta}\log \frac{2\varepsilon}{\kappa}
	\end{split}\\
	\begin{split}\label{add:eq-6}
	&=\e L_{n,u,p,\varepsilon/\kappa}(t)-\e \bigl[L_{n,p,u,\varepsilon/\kappa}(t);A^c\bigr]-\frac{\varepsilon }{\kappa}C(t,u)+\frac{1}{\beta}\log \frac{2\varepsilon}{\kappa}.
	\end{split}
	\end{align}
	To control $\e \bigl[L_{n,p,u,\varepsilon/\kappa}(t);A^c\bigr]$, we argue as follows. Recall that if $x$ is an optimizer of $L_{n,p}(t)$, then it must satisfy the second inequality of \eqref{extra:eq20}. Using this and dropping the negative term in $L_{n,p}(t)$ yield that 
	\begin{align*}
	L_{n,p}(t)&\leq \frac{1}{\sqrt{n}}\|G_n\|_2\opnorm{x}_2^2\leq t^{-2/(p-2)}\Bigl(\frac{\|G_n\|_2}{\sqrt{n}}\Bigr)^{1+2/(p-2)}.
	\end{align*}
	Here, applying the concentration inequality \eqref{eq:Gconc} to $\|G_n\|_2$ and using \eqref{extra:eq9} leads to $\e (n^{-1/2}\|G_n\|_2)^k\leq C_k$ for all $k\geq 1$. Thus, there exists $C(t)$ independent of $n$ such that
	\begin{align}\label{add:lem2:proof:eq1}
	\e L_{n,p}(t)^2&\leq C(t),
	\end{align}
	which implies that
	\begin{align*}
	\bigl|\e \bigl[L_{n,p,u,\varepsilon/\kappa}(t);A^c\bigr]\bigr|&\leq \bigl(\e L_{n,p,u,\varepsilon/\kappa}(t)^2\bigr)^{1/2}\p(A^c)^{1/2}\\
	&\leq \bigl(\e L_{n,p}(t)^2\bigr)^{1/2}\bigl(C'e^{-n/C'}\bigr)^{1/2}
	\leq \sqrt{C(t)}\bigl(C'e^{-n/C'}\bigr)^{1/2}.
	\end{align*}
	The above inequality  and \eqref{add:eq-6} yield the second assertion.
\end{proof}

Next we show that $L_{n,p,u}(t)$ and $L_{n,p,u,\varepsilon}(t)$ are asymptotically the same.

\begin{lemma}
	\label{add:lem--1}
	Let $2<p<\infty.$ For any $t,u>0$, we have that almost surely, 
	\begin{align*}
\lim_{\varepsilon\rightarrow 0}\lim_{n\to\infty}L_{n,p,u,\varepsilon}(t)=\lim_{\varepsilon\rightarrow 0}\lim_{n\to\infty}\e L_{n,p,u,\varepsilon}(t)=L_{p,u}(t).
	\end{align*}
\end{lemma}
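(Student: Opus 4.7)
The plan is to sandwich $L_{n,p,u,\varepsilon}(t)$ between $L_{n,p,u}(t)$ and $L_{n,p,u}(t)+D\varepsilon$ for some deterministic constant $D=D(t,u)$, on an event of overwhelming probability. The starting observation is that exchanging the two suprema yields
$$L_{n,p,u,\varepsilon}(t) = \sup_{u' \in [u-\varepsilon, u+\varepsilon]\cap (0,\infty)} L_{n,p,u'}(t),$$
so automatically $L_{n,p,u,\varepsilon}(t) \geq L_{n,p,u}(t)$. All the work then goes into a matching upper bound, after which the result follows from Theorem~\ref{thm-1}.

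For the upper bound I would invoke Lemma~\ref{extra:lem3} applied on an interval $[K_1,K_2]$ containing both $t$ and a neighborhood of $u$ (e.g.\ $K_1 = \tfrac{1}{2}\min(t,u)$ and $K_2 = 2\max(t,u)$): there exist positive constants $C$ and $D$ (depending only on $t, u$) so that for all $\varepsilon < u/2$, with probability at least $1 - Ce^{-n/C}$, the map $u' \mapsto L_{n,p,u'}(t)$ is $D$-Lipschitz on $[u-\varepsilon, u+\varepsilon]$. On this event,
$$0 \leq L_{n,p,u,\varepsilon}(t) - L_{n,p,u}(t) \leq D\varepsilon.$$
A Borel-Cantelli argument then shows that almost surely this sandwich holds for all but finitely many $n$. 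Combined with the already-established convergence $L_{n,p,u}(t) \to L_{p,u}(t)$ from Theorem~\ref{thm-1}, taking $n\to\infty$ and then $\varepsilon\to 0$ delivers the first equality.

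For the expectation statement I would separate the contribution of the exceptional event. Writing $A_n^\varepsilon$ for the event on which the above Lipschitz inequality holds, we have $\e[(L_{n,p,u,\varepsilon}(t) - L_{n,p,u}(t))\mathbf{1}_{A_n^\varepsilon}] \leq D\varepsilon$, while on the complement we bound $L_{n,p,u,\varepsilon}(t) \leq L_{n,p}(t)$ and apply Cauchy-Schwarz together with the uniform $L^2$ moment bound $\e L_{n,p}(t)^2 \leq C(t)$ already recorded in \eqref{add:lem2:proof:eq1}. Since $\p((A_n^\varepsilon)^c) \leq Ce^{-n/C}$ and $\e L_{n,p,u}(t) \to L_{p,u}(t)$, the contribution from $(A_n^\varepsilon)^c$ is $o(1)$ and does not affect the limit, giving $\e L_{n,p,u,\varepsilon}(t) \to L_{p,u}(t)$ after sending $n\to\infty$ and then $\varepsilon\to 0$.

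There is no serious obstacle, as every essential ingredient (the Lipschitz estimate on the family $(L_{n,p,u}(t))$, the almost-sure limit of $L_{n,p,u}(t)$, and the uniform $L^2$ bound) was established in Section~\ref{sec7} and in the proof of Lemma~\ref{lem2}. The only mild subtleties are keeping the Lipschitz constant $D$ independent of $n$ and $\varepsilon$, which is the reason for restricting to $\varepsilon < u/2$ so that the interval $[u-\varepsilon,u+\varepsilon]$ stays inside the compact set on which Lemma~\ref{extra:lem3} is applied, and handling the expectation via a standard uniform-integrability argument that is immediate from the exponential tail in Lemma~\ref{extra:lem3} combined with \eqref{add:lem2:proof:eq1}.
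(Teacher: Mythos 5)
Your proof is correct and rests on the same essential idea as the paper's: Lipschitz continuity of $u'\mapsto L_{n,p,u'}(t)$ near $u$, which yields the sandwich $L_{n,p,u}(t)\le L_{n,p,u,\varepsilon}(t)\le L_{n,p,u}(t)+D\varepsilon$ up to events of exponentially small probability. The organization differs mildly: the paper first identifies the a.s.\ limit with the expectation limit via Gaussian concentration (noting $L_{n,p,u,\varepsilon}(t)$ is $(u+\varepsilon)n^{-1/2}$-Lipschitz in $G_n$) and then proves the Lipschitz-in-$u'$ bound inline by bounding the optimizer's $\ell_p$-norm, whereas you invoke Lemma~\ref{extra:lem3} directly for the Lipschitz estimate and establish the a.s.\ and expectation limits in parallel through Borel--Cantelli and a second-moment truncation. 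One small point to tighten in the expectation step: to bound $\e\bigl[(L_{n,p,u,\varepsilon}(t)-L_{n,p,u}(t))\mathbf{1}_{(A_n^\varepsilon)^c}\bigr]$ via Cauchy--Schwarz you need a uniform $L^2$ bound not only on $L_{n,p}(t)$ but also on $L_{n,p,u}(t)$; its positive part is controlled by $L_{n,p}(t)\le$ quantity with bounded second moment from \eqref{add:lem2:proof:eq1}, and its negative part is controlled by plugging $x=\sqrt u\,\mathbbm{1}$ into the definition, giving $L_{n,p,u}(t)\ge \tfrac{u}{n^{3/2}}\la G_n\mathbbm{1},\mathbbm{1}\ra-tu^{p/2}$, whose negative part clearly has bounded second moment. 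With that clause added, the argument is complete.
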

\begin{proof}
	The first limit follows directly from the Gaussian concentration inequality \eqref{eq:Gconc} as  $L_{n,p,u,\varepsilon}(t)$ is $(u+\varepsilon)n^{-1/2}$-Lipschitz in $G_n$ with respect to the $\|\cdot\|_F$-norm. Thus it remains to prove the second equality.
	Write
	\begin{align*}
	L_{n,p,u,\varepsilon}(t)&=\max_{v:|v-u|\leq \varepsilon}\max_{\opnorm{x}_2^2=1}\Bigl(\frac{v}{n^{3/2}}\la G_nx,x\ra-tv^{p/2}\opnorm{x}_p^p\Bigr),\\
	L_{n,p,u}(t)&=\max_{\opnorm{x}_2^2=1}\Bigl(\frac{u}{n^{3/2}}\la G_nx,x\ra-tu^{p/2}\opnorm{x}_p^p\Bigr).
	\end{align*}
	If $x_v$ is an optimizer to the inner optimization problem of $L_{n,p,u,\varepsilon}(t)$ associated to $v$, then $$
	t\opnorm{x_v}_p^p\leq v^{1 - p/2}  n^{-1/2}\|G_n\|_2\opnorm{x_v}_2^2=v^{1 - p/2} n^{-1/2}\|G_n\|_2$$ since $L_{n,p,u,\varepsilon}(t)\geq 0$. Consequently, for any $0<\varepsilon<\min(1,u),$
	\begin{align*}
	L_{n,p,u,\varepsilon}(t)&\leq L_{n,p,u}(t)+\varepsilon\bigl(n^{-1/2}\|G_n\|_2+2^{-1}tp(u+\varepsilon)^{p/2-1}\max_{v:|v-u|\leq \varepsilon}\opnorm{x_v}_p^p\bigr)\\
	&\leq L_{n,p,u}(t)+\varepsilon\bigl(n^{-1/2}\|G_n\|_2+2^{-1}pn^{-1/2}\|G_n\|_2(u+\varepsilon)^{p/2-1} (u - \varepsilon)^{1 - p/2}\bigr).
	\end{align*}
	By Passing to limit, we obtain that $\lim_{\varepsilon\to 0}\limsup_{n\to\infty}L_{n,p,u,\varepsilon}(t)\leq L_{p,u}(t).$ Since obviously we also have $L_{n,p,u}(t)\leq L_{n,p,u,\varepsilon}(t)$, it follows that $\lim_{\varepsilon\to 0}\lim_{n\to\infty}L_{n,p,u,\varepsilon}(t)=\lim_{n\to\infty}L_{n,p,u}(t)=L_{p,u}(t)$ and this completes our proof.
	
\end{proof}

To complete the proof of Proposition \ref{extra:prop1}, the lower bound follows directly from \eqref{lem2:eq1} and Lemma \ref{add:lem--1}. As for the upper bound, it can be obtained by sending $n\to\infty$ and then $\varepsilon=\beta^{-2}\to 0$ in \eqref{lem2:eq2} and using again Lemma \ref{add:lem--1}.

\section{Temperature limit of the Parisi formula}\label{sec11}

By Proposition \ref{extra:prop1}, the validity of Theorem \ref{thm0} relies on showing that  $\lim_{\beta \to\infty}\limsup_{n\to\infty}\e F_{n,t,u,\beta^{-2}}(\beta)$ and $\lim_{M\to\infty}\lim_{\beta\to\infty}\lim_{\varepsilon\downarrow 0}\liminf_{n\to\infty}\e F_{n,t,u,M,\varepsilon}(\beta)$ are equal, together with establishing the desired formula in \eqref{thm0:eq1}. In this section, we shall focus on handling the limit
\begin{align}\label{sec11:eq1}
\lim_{\beta\to\infty}\lim_{\varepsilon\downarrow 0}\liminf_{n\to\infty}F_{n,t,u,M,\varepsilon}(\beta).
\end{align}
 To this end, we shall need several Parisi-type variational formulas associated to a variety of the free energies and the ground state energies. For the convenience of the reader, we collect all of them together  in Section~\ref{sec11.1}, including those that will be used later on. Our main result, Proposition  \ref{add:prop1}, establishing a Parisi-type formula for \eqref{sec11:eq1}, is stated in Section \ref{sec11.2} followed by its proof in the rest of this section.

\subsection{Parisi PDEs and functionals}\label{sec11.1}

 Fix $u > 0$.  
 Let $\mathcal{N}_u$ be  the collection of all positive measures $\gamma$ on $[0,u]$ with $\int_0^u \gamma(s)ds<\infty$.  Recall that $\mathcal{N}^d_u \subset \mathcal{N}_u$ is the collection of  all positive measures on $[0, u]$ with finitely many atoms.   Moreover, let  $\mathcal{M}_u$ denote the collection of probability measures on $[0, u]$ and let  $\mathcal{M}^d _u \subset \mathcal{M}_u$ be the collection of probability measures on $[0, u]$ with finitely many atoms. For each $\gl \in \r$, let $h_\gl:\r \to \r_+$  be a continuous function.

For $\beta<\infty,$ we take $(\gl,\alpha) \in \r\times \mathcal{M}_u^d$ or $ \r\times \mathcal{M}_u$ and consider the solution $\Theta^\beta_{\lambda, \alpha }$ to the following PDE,
\begin{align}\label{pdeeq2}
\partial_{s} \Theta^\beta_{\lambda, \alpha }(s, x) &= - \bigl(\partial_{xx} \Theta^\beta_{\lambda, \alpha }(s, x) +\beta \alpha(s)\bigl(\partial_x \Theta^\beta_{\lambda, \alpha }(s, x)  \bigr)^2\bigr), \quad (s,x)\in [0,u)\times \mathbb{R},&   \\
\Theta^\beta_{\lambda, \alpha }(u,x) &= h_{\lambda}(x). \nonumber
\end{align}
 Here  $\Theta_{\lambda,\alpha}^\beta$ may exist in the classical or weak sense (see Remarks \ref{rmk3} and \ref{rmk2} below) depending on whether the measure $\alpha$ is atomic and on the regularity of the boundary condition $h_\gl$. 
The associated Parisi functional $\mathcal{P}_\beta$ on $\r \times \mathcal{M}^d_u$ or $\r \times \mathcal{M}_u$ is defined as
\begin{align}\label{parisieq2}
\mathcal{P}_\beta(\lambda,\alpha)=\Theta^\beta_{\lambda, \alpha }(0,0)-\lambda u-\int_0^u s\beta  \alpha(s)ds,
\end{align}
where  $\alpha(s) : = \alpha([0, s])$ is the cumulative distribution function of the measure $\alpha$. 

In a similar manner, the functional \eqref{parisieq2} has a natural analogy in the case of $\beta=\infty.$ 
For $(\gl,\gamma) \in \r\times\mathcal{N}^d_u$ or $\r\times\mathcal{N}_u$, 
consider the (classical or weak )solution $\Theta_{\lambda,\gamma}$ to the following PDE, 
  \begin{align}\label{pdeeq1}
\partial_{s} \Theta_{\lambda,\gamma}(s, x) &= - \bigl(\partial_{xx} \Theta_{\lambda,\gamma}(s, x)  +\gamma(s)\bigl(\partial_x \Theta_{\lambda,\gamma}(s, x)  \bigr)^2\bigr), \quad (s,x)\in [0,u)\times \mathbb{R},&   \\
   \Theta_{\lambda,\gamma}(u,x) &= h_{\lambda}(x),\nonumber
\end{align}
where  $\gamma(s) : = \gamma([0, s])$. Define the associated Parisi functional $\mathcal{P}$  on   $ \r \times \mathcal{N}_u$  or $ \r \times \mathcal{N}^d_u$ as
\begin{align}\label{parisieq1}
\mathcal{P}(\lambda,\gamma)=\Theta_{\lambda,\gamma}(0,0)-\lambda u-\int_0^u s\gamma (s)ds.
\end{align}

%

The different boundary conditions $h_\lambda$ that we will  use throughout this paper are all dependent on a collection of functions $(\omega_{\gl}^t)_{(\gl,t)\in \mathbb{R}\times (0,\infty)}$ defined as $\omega^t_\gl(x, r)  =  rx+\lambda r^2-t|r|^p$ for $(x,r)\in \mathbb{R}\times \mathbb{R}.$ The following table collects the relevant cases of the Parisi PDEs and functionals that will be needed for the rest of the paper.
\begin{table}[ht]
\begin{center}
 \begin{tabular}{|c|c|c|c|c|c|} 
  \hline
 
 $\beta$ & measure space &  boundary condition $h_\gl$ &  PDE solution  &    type & Parisi functional\\ 
 
 \hline

 $< \infty$ &  $ \alpha \in  \mathcal{M}^d _{u} $ & $f_\lambda^{\beta,t}(x)=\frac{1}{\beta} \log \int_{-\infty}^\infty  e^{ \beta  \omega^t_\gl(x, r)}  dr$ & $\Phi_{\lambda,\alpha}^{\beta,t,u}(s,x)$& classical  & $\mathcal{P}_{\beta,t,u}(\lambda,\alpha)$ \\
 \hline
 
$< \infty$ &  $\alpha \in  \mathcal{M} _{u} $ & $f_{\lambda}^{\beta,t,M}(x)=\frac{1}{\beta}\log \int_{-M}^M e^{ \beta\omega^t_\gl(x, r)}  dr$ & $\Phi_{\lambda,\alpha}^{\beta,t,u,M} (s, x)$ & weak& $\mathcal{P}_{\beta,t,u,M}(\lambda,\alpha)$  \\ 
 \hline
 $= \infty$ & $\gamma \in \mathcal{N}_u^d$ & $f_{\lambda}^t(x)=\sup_{r\in \mathbb{R} }\omega^t_\gl(r, x)$  & $ \Psi_{\lambda,\gamma}^{t,u}(s, x)$ & classical& $\mathcal{P}_{t,u}(\lambda,\gamma)$  \\
 \hline
 $= \infty$ &  $\gamma \in \mathcal{N}_u$ & $f_{\lambda}^{t,M}(x)=\sup_{r\in [-M,M]}\omega^t_\gl(r, x)$ & $ \Psi_{\lambda,\gamma}^{t,u, M}(s, x)$& weak & $\mathcal{P}_{t,u, M}(\lambda,\gamma)$   \\
 \hline
\end{tabular}
\caption{\label{tab:1} Various Parisi functionals.}
\end{center}
\end{table}
\vspace{-3mm}
\begin{remark}
	\label{rmk3}\rm Obviously $f_{\lambda}^{\beta,t,M}$ and $f_{\lambda}^{t,M}$ are uniformly bounded.  Lemma \ref{lem3} implies that $f_\lambda^{\beta,t}(x)$ and $f_{\lambda}^t(x)$ are bounded by $C(1+|x|^{a+1})$ for some $0<a<1$ and $C>0$. From these, when $\alpha\in \mathcal{M}_u^d$ and $\gamma\in \mathcal{N}_u^d$, we can apply the Hopf-Cole transformation (see Lemma \ref{add:lem2}) to solve the above four PDEs in the classical sense. 
\end{remark}

\begin{remark}\label{rmk2}\rm The functional $\mathcal{P}_{t,u}$ here is of course the same as \eqref{parisifunctional}, defined in the introduction.  In Table \ref{tab:1}, we define the functionals $\mathcal{P}_{t,u,M}$ and $\mathcal{P}_{t,u}$ on different measure spaces $\mathcal{N}_u$ and $\mathcal{N}_u^d$ respectively, mainly due to technical purposes. Indeed, the fact that $f_{\lambda}^{t,M}$ is $M$-Lipschitz allows us to show that for any $0<s_0<u$ and $k\geq 1,$ $\partial_x^k\Psi_{\lambda,\gamma}^{t,u,M}(s,\cdot)$ is also Lipschitz uniformly over all $s\in [0,s_0]$, $\lambda\in \mathbb{R}$, and $\gamma\in \mathcal{N}_u^d$.  Together with a compactness argument, this enables us to show the existence and uniqueness of the weak solution $\Psi_{\lambda,\gamma}^{t,u,M}$ for any $\gamma\in \mathcal{N}_u.$ We refer the reader to check \cite{CHL18,JS17} for details.   It might seem plausible that one can also construct unique weak solutions to $\Psi_{\lambda,\gamma}^{t,u}$ for arbitrary $\gamma\in \mathcal{N}_u$ by similar lines of arguments in \cite{CHL18,JS17}. However,  as the boundary condition $f_{\lambda}^t$ is no longer Lipschitz (see Lemma \ref{lem3}), this  makes the  compactness argument in \cite{CHL18,JS17} technically more tedious. As this part of the analysis is not quite relevant to this work, we do not pursue this extension in this paper.  For the same reason, $\mathcal{P}_{\beta,t,u,M}$ and $\mathcal{P}_{\beta,t,u}$ in Table~\ref{tab:1} are also defined over different measure spaces $\mathcal{M}_u$ and $\mathcal{M}_u^d.$
\end{remark}

\subsection{Parisi-type formula}\label{sec11.2}

Recall from \cite{Pan05} that $\lim_{\varepsilon\downarrow 0}\lim_{n\to\infty}F_{n,t,u,M,\varepsilon}(\beta)$ exists and it can be expressed as the following Paris-type formula associated with the Parisi functional given in the second row of Table~\ref{tab:1}
\begin{align}\label{Pf:freeenergy}
\lim_{\varepsilon\downarrow 0}\lim_{n\to\infty}F_{n,t,u,M,\varepsilon}(\beta)&=\inf_{(\lambda,\alpha)\in \mathbb{R}\times \mathcal{M}_u}\mathcal{P}_{\beta,t,u,M}(\lambda,\alpha).
\end{align}
The following proposition shows that the variational problem in \eqref{Pf:freeenergy} converges to the Parisi-type formula associated to  the functional $\mathcal{P}_{t,u,M}$ (refer to the fourth row of Table~\ref{tab:1}) and gives the existence of a minimizer along with quantitative controls uniformly in $M.$

\begin{proposition}\label{add:prop1} Let $2<p<\infty$ and $t,u>0.$ We have that
	\begin{align}\label{add:prop1:eq0}
		\lim_{\beta\to\infty}\lim_{\varepsilon\downarrow 0}\lim_{n\to\infty}F_{n,t,u,M,\varepsilon}(\beta)=\lim_{\beta\to\infty}\inf_{(\lambda,\alpha)\in \mathbb{R}\times \mathcal{M}_u}\mathcal{P}_{\beta,t,u,M}(\lambda,\alpha)=\inf_{(\lambda,\gamma)\in \mathbb{R}\times \mathcal{N}_u}\mathcal{P}_{t,u,M}(\lambda,\gamma).
	\end{align}
	In addition, there exists a constant $C>0$ depending only on $t,u$ such that for any $M>2u^{1/2}$, there exists some $(\lambda_M,\gamma_M)\in \mathbb{R}\times\mathcal{N}_u$ satisfying 
	\begin{align}\label{add:prop1:eq2}
	\inf_{(\lambda,\gamma)\in \mathbb{R}\times \mathcal{N}_u}\mathcal{P}_{t,u,M}(\lambda,\gamma)=\mathcal{P}_{t,u,M}(\lambda_M,\gamma_M).
	\end{align}
	such that 
	\begin{align}\label{add:prop1:eq1}
	|\lambda_M|\leq C\,\,\mbox{and}\,\,\int_0^u \gamma_M(s)ds\leq C
	\end{align}
	
\end{proposition}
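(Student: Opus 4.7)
The plan is to establish the first equality of \eqref{add:prop1:eq0} directly from \eqref{Pf:freeenergy} (which is just taking a further $\beta$-limit), reducing the main work to the second equality and the a priori bounds \eqref{add:prop1:eq1}. The unifying device is the change of variable $\gamma=\beta\alpha$: if $\alpha\in\mathcal{M}_u$ is matched with $\beta$, then $\gamma:=\beta\alpha|_{[0,u)}$ is a positive measure on $[0,u]$, and putting the "extra mass" at the endpoint $s=u$ (as an atom $(1-\|\gamma\|/\beta)\delta_u$, which does not affect the PDE on $[0,u)$ nor the integral $\int_0^u s\beta\alpha(s)ds$) makes $\alpha$ a probability measure. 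Under this identification, the Parisi PDE \eqref{pdeeq2} for $\Phi^{\beta,t,u,M}_{\lambda,\alpha}$ is identical to the PDE \eqref{pdeeq1} for $\Psi^{t,u,M}_{\lambda,\gamma}$ except that the boundary condition $f_\lambda^{\beta,t,M}$ replaces $f_\lambda^{t,M}$, and a standard Laplace/Varadhan estimate gives $|f_\lambda^{\beta,t,M}(x)-f_\lambda^{t,M}(x)|\leq C_M\beta^{-1}\log(\beta M)$ uniformly in $x\in\mathbb{R}$.

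For the upper bound direction, I would fix $(\lambda,\gamma)\in\mathbb{R}\times\mathcal{N}_u$, approximate $\gamma$ in a suitable weak sense by atomic $\gamma_k\in\mathcal{N}_u^d$ and form the corresponding $\alpha_{k,\beta}\in\mathcal{M}_u^d$ for $\beta$ large. Using the stochastic optimal control representation (as in Proposition~\ref{property1}) the difference $\Phi^{\beta,t,u,M}_{\lambda,\alpha_{k,\beta}}(0,0)-\Psi^{t,u,M}_{\lambda,\gamma_k}(0,0)$ is controlled by the sup-norm difference of the boundary data, since both PDEs admit the same control representation with identical drift/diffusion (set by $\gamma_k$) and differ only in the terminal payoff. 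Passing $\beta\to\infty$ yields $\mathcal{P}_{\beta,t,u,M}(\lambda,\alpha_{k,\beta})\to\mathcal{P}_{t,u,M}(\lambda,\gamma_k)$, and then $k\to\infty$ uses continuity of $\gamma\mapsto\Psi^{t,u,M}_{\lambda,\gamma}(0,0)$, which holds in the $M$-Lipschitz boundary setting (this is exactly why $\mathcal{P}_{t,u,M}$ is defined on all of $\mathcal{N}_u$, as noted in Remark~\ref{rmk2}).

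For the lower bound together with \eqref{add:prop1:eq1}, pick $\beta_n\to\infty$ and $(\lambda_n,\alpha_n)$ with $\mathcal{P}_{\beta_n,t,u,M}(\lambda_n,\alpha_n)\leq\inf\mathcal{P}_{\beta_n,t,u,M}+1/n$, and set $\gamma_n=\beta_n\alpha_n|_{[0,u)}$. First I would establish the $\lambda$-coercivity: from $f_\lambda^{\beta,t,M}(x)\geq \lambda M^2-M|x|-tM^p-\beta^{-1}\log(2M)$ (taking $r=\mathrm{sgn}(x)M$) combined with an upper bound in the opposite direction obtained by choosing $r=0$, one gets $\Phi^{\beta,t,u,M}_{\lambda,\alpha}(0,0)\geq \max(\lambda M^2,0)-C(M,t,u)$, so for $M>2u^{1/2}$ (hence $M^2>u$) the term $\Phi^{\beta,t,u,M}_{\lambda,\alpha}(0,0)-\lambda u$ dominates as $\lambda\to+\infty$, and a symmetric estimate controls $\lambda\to-\infty$; comparing with the trivial upper bound $\inf\mathcal{P}_{\beta,t,u,M}\leq\mathcal{P}_{\beta,t,u,M}(0,\delta_0)\leq C(M,t,u)$ forces $|\lambda_n|\leq C=C(t,u)$ independent of $M$ (after a short computation that trades $\lambda M^2$ against $\lambda u$). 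For the mass bound on $\gamma_n$, the crucial input is again the stochastic control representation: an admissible control $0$ gives $\Phi^{\beta_n,t,u,M}_{\lambda_n,\alpha_n}(0,0)\leq\e f_{\lambda_n}^{\beta_n,t,M}(\sqrt{2u}Z)+\tfrac{1}{2}\int_0^u\beta_n\alpha_n(s)\,\e[(\partial_x\Phi)^2]ds$, and since $|\partial_x\Phi|$ is a priori bounded by $M$ (by the $M$-Lipschitz structure of the terminal payoff, preserved under the flow), one gets a bound of the form $\Phi^{\beta_n,t,u,M}_{\lambda_n,\alpha_n}(0,0)\leq C+ \tfrac{M^2}{2}\int_0^u\gamma_n(s)ds$; combined with $-\int_0^u s\gamma_n(s)ds\leq -\tfrac{u}{2}\int_{u/2}^u\gamma_n(s)ds$ (or a similar splitting) and near-optimality, one extracts $\int_0^u\gamma_n(s)ds\leq C(t,u)$ independent of $M$. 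With these bounds in hand, Helly's selection theorem (applied in weak-$*$ sense on $[0,u]$) produces a subsequential limit $(\lambda_M,\gamma_M)\in\mathbb{R}\times\mathcal{N}_u$, and lower semicontinuity of $(\lambda,\gamma)\mapsto\mathcal{P}_{t,u,M}(\lambda,\gamma)$ (again using the control representation and monotone/dominated convergence for the PDE solution) gives the matching lower bound and identifies $(\lambda_M,\gamma_M)$ as a minimizer.

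The main obstacle is the uniform-in-$M$ mass bound $\int_0^u\gamma_M(s)ds\leq C$: the PDE flow in general allows the gradient of $\Psi^{t,u,M}_{\lambda,\gamma}$ to be as large as $M$, so the naive control estimate only gives $\|\gamma_M\|\lesssim M^{-2}\cdot(\text{something})$, which is useless as $M\to\infty$. The trick, and the key refinement required, will be to use instead that the \emph{optimal} control in the representation of $\Psi^{t,u,M}_{\lambda,\gamma}$ concentrates (for large $M$) on the bulk where the payoff behaves like $\sup_r(rx+\lambda r^2-t|r|^p)$ with $r=O(1)$ (because $p>2$ forces $|r|\leq(|x|/tp)^{1/(p-1)}$ near the supremum), so the \emph{effective} Lipschitz constant of the terminal data along the trajectories is $O(1)$ rather than $M$. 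Making this rigorous, likely through truncation of the terminal data followed by a square-integrability estimate on the optimal control processes (as hinted at in Section~\ref{sec2.2}), is the crux.
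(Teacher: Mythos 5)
There is a genuine gap, and it lies exactly where you flag ``the crux.'' Your plan to extract the mass bound $\int_0^u\gamma_M\,ds\le C(t,u)$ from the zero-control comparison in the stochastic representation does not work, and the heuristic you suggest for repairing it (effective $O(1)$ Lipschitz constant along the optimal trajectories) is not the argument the paper uses, nor is it clear how to make it rigorous. The paper's Lemma~\ref{add:lem1} sidesteps the control bound entirely: it parameterizes the Parisi formula by a disorder-strength variable $b$ (so that the free energy reads $F_n(b)$ with Hamiltonian $\frac{b}{\sqrt{n}}\la G_nx,x\ra-\beta t\|x\|_p^p$), computes $\frac{d}{db}\mathcal{P}_b$ at the optimizer $(\lambda_b,\alpha_b)$ via the stochastic representation and the first-order stationarity conditions \eqref{derivative}--\eqref{derivative2}, and identifies $\frac{d}{db}\mathcal{P}_b(\lambda_b,\alpha_b)=2b\int_0^u s\alpha_b(s)\,ds$. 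Griffiths' lemma and the trivial bound $F_n'(b)\leq \frac{u+\eps}{\sqrt 2}\,\e\|\bar G_n\|_2/\sqrt n \to \sqrt{2}u$ then give $\int_0^u s\beta\alpha_{M,\beta}(s)\,ds\le u/\sqrt 2$, and monotonicity of $s\mapsto\alpha(s)$ converts this into the unweighted mass bound. This is a thermodynamic identity for the overlap integral, not an estimate on the controls, and there is no analogue of it in your plan.

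A second, smaller issue is in your $\lambda$-coercivity estimate. Lower bounding $f_\lambda^{\beta,t,M}$ by inserting the endpoint $r=\mathrm{sgn}(x)M$ gives $f_\lambda^{\beta,t,M}(x)\ge\lambda M^2-M|x|-tM^p-\beta^{-1}\log(2M)$, but the term $-tM^p$ grows without bound in $M$, so the comparison you sketch can only produce a bound $|\lambda_M|\le C(M,t,u)$. The paper avoids this by comparing against fixed $M$-independent intervals: in Lemma~\ref{add:lem3} one applies Jensen's inequality over $r\in[-D,D]$ with $D=\sqrt u$ or $D=2\sqrt u$ (legal since $M>2\sqrt u$), with $D$ chosen according to $\mathrm{sgn}(\lambda_{M,\beta})$ so that $\frac{1}{2D}\int_{-D}^D r^2\,dr-u$ has the same sign as $\lambda_{M,\beta}$. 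The upper bound used in the comparison is $\inf\mathcal{P}_{\beta,t,u,M}\le C(t)^{1/2}+\beta^{-1}\log(2M)$, and the regime $\beta>M$ keeps the logarithmic term bounded. The rest of your outline --- deriving the first equality from \eqref{Pf:freeenergy}, identifying $\gamma=\beta\alpha$ with the surplus deposited as an atom at $s=u$, the uniform boundary-data estimate $|f_\lambda^{\beta,t,M}-f_\lambda^{t,M}|=O(\beta^{-1}\log M)$ for the upper bound, the Helly/vague-compactness extraction, and the semicontinuity of $\mathcal{P}_{t,u,M}$ via the control representation (Lemma~\ref{add:lem4} in the paper, which also has to handle a possible atom of the limiting measure at $u$) --- is aligned with the paper's argument.
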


\begin{remark}\rm  Though we do not need this here,  it can be checked by the same argument in Lemma \ref{lem2} that  $\lim_{n\to\infty}\max_{x\in[-M,M]^n:\opnorm{x}_2^2=u}H_{n,p,t}(x)/n$ is equal to the left-hand side of \eqref{add:prop1:eq0} and the right-hand side of \eqref{add:prop1:eq0} established a Parisi-type formula for this ground state energy. 
\end{remark}

\begin{remark}
	\rm The assertion \eqref{add:prop1:eq0} is indeed a special case of \cite[Theorem 5]{CP18}, but it does not include the existence of a minimizer and the bound \eqref{add:prop1:eq1}, which are the most crucial results we need in the proof of Theorem \ref{thm0}.
\end{remark}

The rest of this section is devoted to establishing  Proposition \ref{add:prop1}.

\subsection{Properties of the Parisi PDEs}

We state a number of fundamental properties for $\Phi_{\lambda,\alpha}^{\beta,t,u,M}$ and $\Psi_{\lambda,\gamma}^{t,u,M}$ that will be of great use throughout this section.  Their proofs will be omitted as they follow directly from the same arguments from \cite{AC15,JT16} with no essential changes. First of all,  their regularities are summarized in the following proposition.

\begin{proposition}
	[Regularity] \label{property0} Let $\alpha\in \mathcal{M}_u$ and $\gamma\in \mathcal{N}_u$. For any $s\in [0,u)$, $\Phi_{\lambda,\alpha}^{\beta,t,u,M}(s,\cdot)$ and $\Psi_{\lambda,\gamma}^{t,u,M}(s,\cdot)$ are  twice partially differentiable in $x$. Their first partial derivatives in $x$ are uniformly bounded over $[0,u)\times \mathbb{R}$ by $M$. In addition, $\partial_x\Phi_{\lambda,\alpha}^{\beta,t,u,M}(s,\cdot)$ is uniformly Lipschitz over all $s\in [0,u]$, while for any $0<u_0<u$,  $\partial_x\Psi_{\lambda,\gamma}^{t,u,M}(s,\cdot)$ is also uniformly Lipschitz over all $s\in [0,u_0].$ 
\end{proposition}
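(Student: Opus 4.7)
The plan is to adapt the arguments of \cite{AC15,JT16} in two stages: first establish the statement for atomic measures via the iterative Hopf--Cole formula \eqref{hopfcole}, and then extend to $\alpha\in\mathcal{M}_u$ and $\gamma\in\mathcal{N}_u$ by weak approximation together with the quantitative bounds produced in the atomic case. The key input I will exploit throughout is that both terminal data $f_\lambda^{\beta,t,M}$ and $f_\lambda^{t,M}$ are $M$-Lipschitz in $x$: for $f_\lambda^{t,M}$ this follows from the envelope theorem since it is a supremum over $r\in[-M,M]$ of functions affine in $x$ with slope $r$, while for $f_\lambda^{\beta,t,M}$ a direct computation identifies $\partial_x f_\lambda^{\beta,t,M}$ with the Gibbs average of $r\in[-M,M]$, hence bounded in absolute value by $M$.

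First I would prove the bound $|\partial_x \Phi|,|\partial_x \Psi|\le M$ in the atomic setting. On each slab $[q_l,q_{l+1})$ the Hopf--Cole recursion $\widehat{g}(x)=m_l^{-1}\log \mathbb{E} e^{m_l g(x+\sigma z)}$ gives
\[
\partial_x \widehat{g}(x) \;=\; \frac{\mathbb{E}\bigl[\partial_x g(x+\sigma z)\, e^{m_l g(x+\sigma z)}\bigr]}{\mathbb{E}\bigl[ e^{m_l g(x+\sigma z)}\bigr]},
\]
which is a Gibbs average of $\partial_x g$, so $\|\partial_x \widehat{g}\|_\infty\le\|\partial_x g\|_\infty$; iterating from the terminal time downward preserves the bound $M$ all the way to $s=0$, uniformly in the atom configuration and in $(\lambda,M)$ fixed. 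For twice differentiability, a further differentiation yields $\partial_{xx}\widehat{g} = m_l\,\mathrm{Var}(\partial_x g) + \langle \partial_{xx} g\rangle$, where the Gibbs variance is bounded by $m_l\|\partial_x g\|_\infty^2\le m_l M^2$. Hence the propagation of $\|\partial_{xx}\|_\infty$ requires only a finite bound at the terminal time. For $\Phi$ (finite $\beta$), $\partial_{xx} f_\lambda^{\beta,t,M}=\beta\,\mathrm{Var}_{x,\beta,M}(r)\le \beta M^2$ uniformly in $x$, and the recursion delivers a uniform-in-$s$ bound on $\partial_{xx}\Phi$ over the full interval $[0,u]$, giving the uniform Lipschitz bound on $\partial_x\Phi$ claimed.

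For $\Psi$, the terminal $f_\lambda^{t,M}$ is only Lipschitz, so the recursion cannot be initialized with a finite $\|\partial_{xx}\|_\infty$. Instead, Gaussian convolution across any nontrivial interval strictly before $u$ provides a smoothing estimate of the form $\|\partial_{xx}\Psi(s,\cdot)\|_\infty\le C(u-s)^{-1}$ times a constant depending only on $M$ and $\int_0^u \gamma(r)\,dr$; propagating this bound down through the slabs of $[0,u_0]$ gives the uniform Lipschitz estimate on $\partial_x\Psi$ restricted to any $[0,u_0]$ with $u_0<u$, and forces the restriction $u_0<u$ in the statement. The passage from atomic measures to general $\alpha\in\mathcal{M}_u$ and $\gamma\in\mathcal{N}_u$ is then carried out as in \cite{CHL18,JS17}: take weakly converging atomic approximations, apply the uniform estimates just derived, and extract locally uniform limits via Arzel\`a--Ascoli, identifying the limit with the unique weak solution. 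The main obstacle, handled exactly as in \cite{AC15,JT16}, is ensuring that every constant in the Hopf--Cole derivative identities is independent of the number of atoms of the approximating measures, so that all the bounds survive the weak limit; the corresponding degeneration as $u_0\uparrow u$ in the $\Psi$ case is intrinsic to the lack of regularity of $f_\lambda^{t,M}$ at the terminal time.
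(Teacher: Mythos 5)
Your overall plan is the one the paper itself gestures at (follow \cite{AC15,JT16}: Hopf--Cole on atomic measures, then pass to the limit), and the $M$-Lipschitz preservation via Gibbs averaging of $\partial_x$ is correct and clean. However, the step where you claim the second-derivative bound propagates is where the argument has a real gap, and it is precisely the gap that you then deflect onto the references.

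The identity you write, $\partial_{xx}\widehat g = \langle \partial_{xx} g\rangle + m_l\,\mathrm{Var}(\partial_x g)$, gives on iteration the bound $\|\partial_{xx}\Psi(s,\cdot)\|_\infty \le \|\partial_{xx}\Psi(u_0,\cdot)\|_\infty + M^2\sum_{l} m_l$, where the $m_l=\gamma(q_l)$ are the \emph{CDF values} (not the atom masses). This sum scales like the number of atoms between $s$ and $u_0$ and is \emph{not} controlled by $\int_0^u\gamma(r)\,dr$ or by $\gamma(u^-)$; for an approximating sequence $\gamma_n \to \gamma$ with $k_n \to\infty$ atoms it blows up, so your bound does not survive the weak limit. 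Saying the obstacle is ``ensuring constants are independent of the number of atoms'' understates the problem: the slab-by-slab iteration you set up \emph{intrinsically} depends on the atom count, and no bookkeeping will remove the dependence. The mechanism that actually closes this (and is what \cite{JT16,CHL18} implement) bypasses the recursion entirely: use the alternative identity $\partial_{xx}\widehat g(x)=\sigma^{-1}\,\mathrm{Cov}_{\text{Gibbs}}(z, \partial_x g(x+\sigma z))$, which requires only $\|\partial_x g\|_\infty\le M$ (never $\|\partial_{xx} g\|_\infty$) as input; or equivalently, write the PDE in Duhamel form and exploit that the total heat smoothing from $u$ down to $s$ has variance $2(u-s)$, so the second-derivative bound at time $s$ comes \emph{directly} from the $M$-Lipschitz terminal data across the full interval $[s,u]$, without touching the intermediate Gibbs weights. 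This also fixes your rate: the smoothing of an $M$-Lipschitz function gives $\|\partial_{xx}\Psi(s,\cdot)\|_\infty \lesssim M(u-s)^{-1/2}$, not $(u-s)^{-1}$.

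For the $\Phi$ case your assertion that the recursion ``delivers a uniform-in-$s$ bound on $\partial_{xx}\Phi$ over the full interval $[0,u]$'' suffers from the same accumulation. The terminal bound $\partial_{xx} f_\lambda^{\beta,t,M} \le \beta M^2$ is indeed finite, but summing $\beta M^2 + M^2\sum_l \beta\alpha(q_l)$ over the slabs again grows with the atom count and will not pass to the weak limit over $\alpha\in\mathcal{M}_u$. Here too the argument should go through the $\mathrm{Cov}(z,\cdot)$ formula or through the stochastic optimal control representation, where one differentiates the value function and uses the envelope theorem plus the almost-sure bound $|v(s)|\le M$; those routes use only the Lipschitz bound on the terminal data and never multiply errors across slabs.

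In short: the skeleton is right, the Lipschitz-$M$ claim is right, but the propagation of twice-differentiability as you have set it up does not close uniformly in the discretization, and the resolution requires replacing the slab-by-slab iteration with a single-shot smoothing/covariance argument over the whole interval $[s,u]$.
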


Note that
$\Phi_{\lambda,\alpha}^{\beta,t,u,M}$ and $\Psi_{\lambda,\gamma}^{t,u,M}$ are special cases of the Hamilton-Jacobi-Bellman equation, induced by a linear problem of diffusion control. In this case, they can be expressed in terms of stochastic optimal control problems. More precisely, let $0<u<\sqrt{M}$ and $\mathcal{D}_u^M$ be the collection of all progressively measurable processes $v=(v(s))_{0\leq s\leq u}$ with respect to a standard Brownian motion $W=(W(s))_{0\leq s\leq u}$ and satisfying $\sup_{0\leq s\leq u}|v(s)|\leq M$ . 

\begin{proposition}[Stochastic Optimal Control Representation]\label{property1}
	We have
	\begin{align}
	\begin{split}\label{rep}
	\Phi_{\lambda,\alpha}^{\beta,t,u,M}(0,0)&=\sup_{v\in \mathcal{D}_u^M}\e\Bigl[f_{\lambda}^{\beta,t,M}\Bigl(2\int_0^u\beta \alpha(s)v(s)ds+\sqrt{2}W(u)\Bigr)-\int_0^u \beta \alpha(s)v(s)^2ds\Bigr],
	\end{split}\\
	\begin{split}\label{rep2}
	\Psi_{\lambda,\gamma}^{t,u,M}(0,0)&=\sup_{v\in \mathcal{D}_u^M}\e\Bigl[f_{\lambda}^{t,M}\Bigl(2\int_0^u  \gamma(s)v(s)ds+\sqrt{2}W(u)\Bigr)-\int_0^u  \gamma(s)v(s)^2ds\Bigr].
	\end{split}
	\end{align}
	Here, the first supremum is achievable by $v(s)=\partial_x\Phi_{\lambda,\alpha}^{\beta,t,u,M}(s,X(s))$ and the second supremum is attained by $v(s)=\partial_x\Psi_{\lambda,\gamma}^{t,u,M}(s,X'(s))$, where $X=(X(s))_{0\leq s\leq u}$ and $X'=(X'(s))_{0\leq s\leq u}$ are the strong solutions to the following SDEs,
	\begin{align*}
	dX(s)&=2\beta \alpha(s)\partial_x\Phi_{\lambda,\alpha}^{\beta,t,u,M}(s,X(s))ds+\sqrt{2}dW(s),\\
	dX'(s)&=2\gamma(s)\partial_x\Psi_{\lambda,\gamma}^{t,u,M}(s,X'(s))ds+\sqrt{2}dW(s)
	\end{align*}
	for $0\leq s\leq u$ with $X(0)=X'(0)=0.$ 
\end{proposition}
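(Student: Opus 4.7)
The plan is to recognize that both Parisi PDEs \eqref{pdeeq2} and \eqref{pdeeq1} are Hamilton-Jacobi-Bellman equations corresponding to linear-quadratic diffusion control problems, and then to apply standard verification arguments via It\^o's formula. I will describe the argument for $\Phi^{\beta,t,u,M}_{\lambda,\alpha}$ in detail; the proof for $\Psi^{t,u,M}_{\lambda,\gamma}$ will follow the same pattern with a minor regularity caveat addressed at the end.

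The first step is to fix an arbitrary $v\in \mathcal{D}_u^M$ and consider the controlled diffusion
\begin{align*}
dX^v(s) = 2\beta\alpha(s)v(s)\,ds + \sqrt{2}\,dW(s), \quad X^v(0)=0.
\end{align*}
Since $\partial_x\Phi^{\beta,t,u,M}_{\lambda,\alpha}(s,\cdot)$ is uniformly Lipschitz on $[0,u]\times\mathbb{R}$ and uniformly bounded by $M$ (by Proposition \ref{property0}), I can apply It\^o's formula to $\Phi^{\beta,t,u,M}_{\lambda,\alpha}(s,X^v(s))$ and substitute the PDE \eqref{pdeeq2} to obtain
\begin{align*}
d\Phi^{\beta,t,u,M}_{\lambda,\alpha}(s,X^v(s))
&= -\beta\alpha(s)\bigl(\partial_x\Phi^{\beta,t,u,M}_{\lambda,\alpha}(s,X^v(s))-v(s)\bigr)^2\,ds \\
&\quad + \beta\alpha(s)v(s)^2\,ds + \sqrt{2}\,\partial_x\Phi^{\beta,t,u,M}_{\lambda,\alpha}(s,X^v(s))\,dW(s).
\end{align*}
Taking expectation, noting that the stochastic integral is a true martingale because $|\partial_x\Phi^{\beta,t,u,M}_{\lambda,\alpha}|\leq M$, and using the terminal condition $\Phi^{\beta,t,u,M}_{\lambda,\alpha}(u,x)=f_{\lambda}^{\beta,t,M}(x)$, one obtains the key identity
\begin{align*}
\Phi^{\beta,t,u,M}_{\lambda,\alpha}(0,0) = \mathbb{E}\Bigl[f_{\lambda}^{\beta,t,M}(X^v(u)) - \int_0^u\beta\alpha(s)v(s)^2\,ds\Bigr] + \mathbb{E}\int_0^u\beta\alpha(s)\bigl(\partial_x\Phi^{\beta,t,u,M}_{\lambda,\alpha}-v\bigr)^2 ds.
\end{align*}
Since the last term is nonnegative, this gives the inequality $\Phi^{\beta,t,u,M}_{\lambda,\alpha}(0,0)\geq \mathbb{E}[\cdots]$ for every admissible $v$. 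To see that the supremum is attained, I will take the feedback control $v^*(s)=\partial_x\Phi^{\beta,t,u,M}_{\lambda,\alpha}(s,X^{v^*}(s))$; the Lipschitz regularity of $\partial_x\Phi^{\beta,t,u,M}_{\lambda,\alpha}$ guarantees the existence of a unique strong solution to the associated SDE, and the bound $|v^*|\leq M$ ensures $v^*\in \mathcal{D}_u^M$. With this choice, the last term in the identity vanishes, giving equality. A final rewriting using the identity $2\int_0^u \beta\alpha(s)v(s)\,ds + \sqrt{2}W(u) = X^v(u)$ will yield the precise form \eqref{rep}.

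The argument for $\Psi^{t,u,M}_{\lambda,\gamma}$ is identical in spirit but the main obstacle is that Proposition \ref{property0} provides uniform Lipschitz control of $\partial_x\Psi^{t,u,M}_{\lambda,\gamma}(s,\cdot)$ only for $s\in [0,u_0]$ with $u_0<u$, not all the way up to $s=u$. I would handle this by working on $[0,u_0]$, applying It\^o's formula there, and then sending $u_0\uparrow u$, using the continuity of $\Psi^{t,u,M}_{\lambda,\gamma}$ in $s$ (which follows from standard Parisi PDE theory) together with the uniform boundedness $|\partial_x\Psi^{t,u,M}_{\lambda,\gamma}|\leq M$ to dominate the stochastic integral and pass to the limit via dominated convergence. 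An alternative would be to first approximate $\gamma\in \mathcal{N}_u$ by atomic measures $\gamma_n\in\mathcal{N}_u^d$ (for which the Hopf--Cole formula gives classical solutions), prove the representation for each $\gamma_n$ by the classical verification argument, and then pass to the limit $n\to\infty$ invoking the continuity of both sides with respect to weak convergence of the order parameter. Either route gives \eqref{rep2}, and the same feedback-control argument identifies the optimizer.
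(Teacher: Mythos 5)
Your proposal is correct and coincides with the paper's intended approach: the authors omit this proof, deferring to the arguments in \cite{AC15,JT16}, and the It\^o-formula verification argument you give (apply It\^o along the controlled diffusion, substitute the PDE, complete the square, take expectations, then choose the feedback control to attain equality) is exactly the one used there, and it is also the argument the paper carries out explicitly in Appendix~\ref{AppB} for the closely analogous Proposition~\ref{rep3}. Your resolution of the regularity gap near $s=u$ for $\Psi^{t,u,M}_{\lambda,\gamma}$ (integrate to $u_0<u$, use the uniform bound $|\partial_x\Psi^{t,u,M}_{\lambda,\gamma}|\le M$ to control the stochastic integral, then send $u_0\uparrow u$) is the standard one.
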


Next, note that the boundary condition $f_{\lambda}^{\beta,t,M}$ and $f_\lambda^{t,M}$ are Lipschitz and convex in $(\lambda,x)\in \mathbb{R}^2.$ Using this together with Proposition \ref{property1}, it can be shown that these properties propagate throughout the entire solutions. 

\begin{proposition}[Lipschitiz property and convexity]\label{property2}
	$(\lambda,\alpha)\mapsto\Phi_{\lambda,\alpha}^{\beta,t,u,M}(0,0)$ and $(\lambda,\gamma)\mapsto \Psi_{\lambda,\gamma}^{t,u,M}(0,0)$ are Lipschitz and  convex respectively with respect to the norms \begin{align}\label{property2:eq1}
	|\lambda-\lambda'|+\int_0^u|\alpha(s)-\alpha'(s)|ds\,\,\ \ \mbox{and}\,\, \ \ 
	|\lambda-\lambda'|+\int_0^u|\gamma(s)-\gamma'(s)|ds.
	\end{align} 
\end{proposition}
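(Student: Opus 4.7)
The plan is to exploit the stochastic optimal control representation in Proposition~\ref{property1}, since it expresses $\Phi_{\lambda,\alpha}^{\beta,t,u,M}(0,0)$ and $\Psi_{\lambda,\gamma}^{t,u,M}(0,0)$ as suprema of functionals over a set $\mathcal{D}_u^M$ of admissible controls that is \emph{independent of the parameters} $(\lambda,\alpha)$ or $(\lambda,\gamma)$. Convexity and the Lipschitz estimate will then both descend from the corresponding properties of the boundary conditions $f_\lambda^{\beta,t,M}$ and $f_\lambda^{t,M}$.

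First, I would establish the joint convexity and Lipschitz continuity of the boundary conditions in $(\lambda,x)$. For $f_\lambda^{t,M}(x)=\sup_{r\in[-M,M]}(rx+\lambda r^2-t|r|^p)$, the expression inside the supremum is affine in $(\lambda,x)$ for each fixed $r$, so $f_\lambda^{t,M}$ is jointly convex as a supremum of affine functions; an envelope argument using an optimal $r^*\in[-M,M]$ gives $|f_\lambda^{t,M}(x)-f_{\lambda'}^{t,M}(x')|\leq M|x-x'|+M^2|\lambda-\lambda'|$. For $f_\lambda^{\beta,t,M}(x)=\beta^{-1}\log\int_{-M}^M e^{\beta(rx+\lambda r^2-t|r|^p)}dr$, joint convexity follows from H\"older's inequality applied to the splitting $(\lambda,x)=\theta(\lambda_1,x_1)+(1-\theta)(\lambda_2,x_2)$, while differentiation under the integral sign and the bound $|r|\leq M$ give the same Lipschitz estimate.

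Next, for the convexity of the value at $(0,0)$, fix any $v\in\mathcal{D}_u^M$ and set
\begin{align*}
G(v;\lambda,\gamma):=\mathbb{E}\Bigl[f_\lambda^{t,M}\Bigl(2\int_0^u\gamma(s)v(s)ds+\sqrt{2}W(u)\Bigr)\Bigr]-\int_0^u\gamma(s)v(s)^2ds.
\end{align*}
For each realization of $W$, the argument of $f_\lambda^{t,M}$ is affine in $\gamma$, and the subtracted term is linear in $\gamma$. Combined with the joint convexity of $f_\lambda^{t,M}$, this makes $(\lambda,\gamma)\mapsto G(v;\lambda,\gamma)$ convex, and the expectation preserves convexity. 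Since a pointwise supremum of convex functions is convex, Proposition~\ref{property1} gives the convexity of $(\lambda,\gamma)\mapsto \Psi_{\lambda,\gamma}^{t,u,M}(0,0)$. The argument for $\Phi^{\beta,t,u,M}$ is identical with $\gamma$ replaced by $\beta\alpha$.

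Finally, for the Lipschitz property with respect to the norms in \eqref{property2:eq1}, I use the sub-additivity of the supremum to write
\begin{align*}
|\Psi_{\lambda_1,\gamma_1}^{t,u,M}(0,0)-\Psi_{\lambda_2,\gamma_2}^{t,u,M}(0,0)|\leq \sup_{v\in\mathcal{D}_u^M}|G(v;\lambda_1,\gamma_1)-G(v;\lambda_2,\gamma_2)|,
\end{align*}
and estimate each term on the right using the Lipschitz bound on $f_\lambda^{t,M}$ together with $|v(s)|\leq M$: the change in the argument of $f_\lambda^{t,M}$ contributes at most $2M^2\int_0^u|\gamma_1(s)-\gamma_2(s)|ds$, the $\lambda$-derivative at most $M^2|\lambda_1-\lambda_2|$, and the quadratic integral at most $M^2\int_0^u|\gamma_1(s)-\gamma_2(s)|ds$. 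The proof for $\Phi$ is the same with an extra factor of $\beta$ absorbed into the constant. I expect the main hurdle to be bookkeeping rather than conceptual: one has to keep in mind the convention $\gamma(s)=\gamma([0,s])$ and $\alpha(s)=\alpha([0,s])$ so that the Lipschitz estimate ends up in terms of the $L^1$-distance between CDFs, as required by \eqref{property2:eq1}.
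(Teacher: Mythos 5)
Your proof is correct and follows exactly the route the paper indicates: the paper omits the proof, pointing to \cite{AC15,JT16} and remarking that the Lipschitz and convexity properties of the boundary conditions $f_\lambda^{\beta,t,M}$ and $f_\lambda^{t,M}$ ``propagate throughout the entire solutions'' via the stochastic optimal control representation of Proposition~\ref{property1}, which is precisely your argument. The key points you correctly identify are that $\mathcal{D}_u^M$ is independent of $(\lambda,\alpha)$ and $(\lambda,\gamma)$, that the control integrands are affine in $(\lambda,\gamma)$ for each fixed $v$ and each realization of $W$, so that convexity is inherited through composition with the jointly convex boundary condition, expectation, and supremum, and that the uniform bound $|v(s)|\leq M$ together with the Lipschitz constants $M$ and $M^2$ of the boundary condition in $x$ and $\lambda$ respectively yields the Lipschitz estimate with respect to the $L^1$-norm of CDF differences.
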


\subsection{Optimality of the Parisi functional}

We  investigate some properties of the variational formula of $\mathcal{P}_{\beta,t,u,M}.$

\begin{lemma}\label{lem0}
	Let $t,u>0$. For any $M>u^{1/2}$ and $\beta>0,$ the functional $\mathcal{P}_{\beta,t,u,M}$ has a unique minimizer in $\mathbb{R}\times\mathcal{M}_u$.
\end{lemma}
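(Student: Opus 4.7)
The plan is to establish existence of a minimizer by a coercivity-plus-compactness argument, and uniqueness by upgrading the convexity of $\mathcal{P}_{\beta,t,u,M}$ (provided by Proposition~\ref{property2}) to strict convexity using the stochastic optimal control representation.

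For existence, I would first show that $\mathcal{P}_{\beta,t,u,M}(\lambda,\alpha)$ is coercive in $\lambda$ uniformly in $\alpha\in\mathcal{M}_u$. The key inputs come from the boundary condition $f_{\lambda}^{\beta,t,M}$. As $\lambda\to+\infty$, restricting the integral defining $f_\lambda^{\beta,t,M}(x)$ to a small interval of length $\delta$ near $r=M$ yields $f_\lambda^{\beta,t,M}(x)\ge \lambda(M-\delta)^2 - tM^p - M|x| + \beta^{-1}\log\delta$; feeding this into the control representation \eqref{rep} with $v\equiv 0$ gives $\Phi_{\lambda,\alpha}^{\beta,t,u,M}(0,0)\ge \lambda(M-\delta)^2 - C(\beta,t,M,u)$, so $\mathcal{P}_{\beta,t,u,M}(\lambda,\alpha)\ge (M^2-u-O(\delta))\lambda - C\to+\infty$ using $M^2>u$. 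As $\lambda\to-\infty$, restricting the $r$-integration to $[-\delta,\delta]$ gives $f_\lambda^{\beta,t,M}\ge -C$ uniformly, so the $-\lambda u$ term forces coercivity. Thus minimizing sequences have $\lambda$ bounded in a compact interval, and on $\mathcal{M}_u$ equipped with the Wasserstein-$1$ distance $d(\alpha,\alpha')=\int_0^u|\alpha(s)-\alpha'(s)|ds$ (compactness being immediate by Helly's selection theorem applied to CDFs), the Lipschitz continuity of $\mathcal{P}_{\beta,t,u,M}$ in $(\lambda,\alpha)$ from Proposition~\ref{property2} delivers a minimizer.

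For uniqueness, the idea is to exploit strict convexity. Suppose $(\lambda_1,\alpha_1)$ and $(\lambda_2,\alpha_2)$ are two minimizers. By convexity, so is the midpoint $(\bar\lambda,\bar\alpha)$, and one has the equality
\begin{equation*}
\Phi_{\bar\lambda,\bar\alpha}^{\beta,t,u,M}(0,0)=\tfrac{1}{2}\Phi_{\lambda_1,\alpha_1}^{\beta,t,u,M}(0,0)+\tfrac{1}{2}\Phi_{\lambda_2,\alpha_2}^{\beta,t,u,M}(0,0).
\end{equation*}
Writing both sides via the stochastic control representation \eqref{rep} and using the optimizer $v_*$ for $(\bar\lambda,\bar\alpha)$ as a suboptimal candidate on the right, the equality forces the inner integrand identity to hold almost surely. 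The crucial input is that $f_\lambda^{\beta,t,M}(x)$ is \emph{strictly} convex in $(\lambda,x)$ for each $\beta<\infty$: the Hessian is the covariance matrix of $(r,r^2)$ under the Gibbs measure $\propto e^{\beta(rx+\lambda r^2-t|r|^p)}$ on $[-M,M]$, which has full support and hence nondegenerate covariance. Transporting this pointwise strictness along the diffusion driving \eqref{rep} (the noise is genuinely nondegenerate since the diffusion coefficient is $\sqrt 2$) forces equality of the random terminal arguments, which in turn yields $\lambda_1=\lambda_2$ and identifies the cumulative distribution functions of $\alpha_1$ and $\alpha_2$ at every $s\in[0,u]$. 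This is precisely the strategy of Auffinger--Chen \cite{AC15} for the mixed $p$-spin Parisi functional and it adapts directly here because the $(\lambda,\alpha)$-dependence enters our functional only through the boundary condition and the linear term $-\lambda u-\int s\beta\alpha(s)ds$.

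The main obstacle will be the strict-convexity propagation step: convexity of the functional at the level of the terminal condition must be pushed through the nonlinear Parisi PDE without losing strictness. Keeping track of the equality case along the optimally controlled diffusion requires one to argue that the optimal controls for any two minimizers coincide pathwise, which is where the nondegeneracy of Brownian noise and the strict convexity of $f_\lambda^{\beta,t,M}$ in $(\lambda,x)$ are both needed. Note that this argument is genuinely a finite-temperature statement: at $\beta=\infty$ the boundary condition $f_\lambda^{t,M}(x)=\sup_{r\in[-M,M]}(rx+\lambda r^2-t|r|^p)$ is only convex, not strictly convex (it is piecewise affine on rays), so one should not expect the analogous uniqueness to survive the $\beta\to\infty$ limit, which is consistent with the fact that Proposition~\ref{add:prop1} claims existence but not uniqueness of $(\lambda_M,\gamma_M)$.
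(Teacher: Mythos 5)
Your existence argument is essentially the same as the paper's: coercivity of $\lambda \mapsto \inf_\alpha \mathcal{P}_{\beta,t,u,M}(\lambda,\alpha)$, plus the Lipschitz continuity from Proposition~\ref{property2} and compactness of $\mathcal{M}_u$ under the $L^1$ metric on CDFs. The paper gets coercivity via Jensen's inequality with the probability measure $e^{-t|r|^p}\,dr$ restricted to $\{|r|\ge\sqrt u\}$ (resp.\ $\{|r|\le\sqrt u\}$), for which the $\int rx\,d\nu$ term vanishes by symmetry; you restrict to small neighborhoods of $r=M$ and $r=0$, forfeiting the symmetry but still landing the same conclusion. One small imprecision: for $\lambda\to-\infty$ the bound from restricting to $[-\delta,\delta]$ is not ``$f_\lambda^{\beta,t,M}\ge -C$ uniformly''; it is $f_\lambda^{\beta,t,M}(x)\ge\lambda\delta^2-\delta|x|-t\delta^p+\beta^{-1}\log(2\delta)$, and coercivity of $\mathcal{P}_{\beta,t,u,M}$ then needs $\delta^2<u$ so that the net coefficient $\delta^2-u$ of $\lambda$ is negative. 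You implicitly use this but should state it.

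On uniqueness, there is a real gap. First, be aware that the paper's own proof does \emph{not} establish uniqueness: it constructs $(\lambda_0,\alpha_0)$ and stops (and, indeed, the subsequent Lemmas~\ref{add:lem1} and~\ref{add:lem3} use only some minimizer, not its uniqueness). Your proposal to go via strict convexity of $f_\lambda^{\beta,t,M}$ in $(\lambda,x)$, pushed through the representation \eqref{rep}, is the right direction and the Hessian observation (covariance of $(r,r^2)$ under the finite-$\beta$ Gibbs measure on $[-M,M]$ being nondegenerate) is correct. However, the sentence ``transporting this pointwise strictness along the diffusion \ldots forces equality of the random terminal arguments, which in turn yields $\lambda_1=\lambda_2$ and identifies the CDFs'' conceals the actual difficulty. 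Working the convexity-equality chain with the midpoint's optimal control $v^*$ does force $\lambda_1=\lambda_2$ and $\int_0^u\beta\alpha_1(s)v^*(s)\,ds=\int_0^u\beta\alpha_2(s)v^*(s)\,ds$ almost surely. But deducing $\alpha_1=\alpha_2$ from this a.s.\ identity of time integrals against $v^*$ is exactly where Auffinger--Chen must argue carefully (positivity of $\partial_{xx}\Phi$, the variance/self-consistency structure of the optimal control along the support of $\alpha$, etc.); invoking nondegeneracy of the Brownian noise alone does not close it. As written, the proposal restates the conclusion rather than proving it. Your final observation --- that the strict convexity degenerates at $\beta=\infty$ because $f_\lambda^{t,M}$ is only piecewise-affine, consistent with Proposition~\ref{add:prop1} claiming existence but not uniqueness of $(\lambda_M,\gamma_M)$ --- is correct and a nice sanity check.
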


\begin{proof}
	Let $\nu_{t,\beta,M}(dr)=e^{-t|r|^p}dr$ and $\Delta=\int_{[-M,-\sqrt{u}]\cup [\sqrt{u},M]}d\nu_{t,\beta,M}(r).$
	Note that
	\begin{align}
	\begin{split}\notag
	f_{\lambda}^{\beta,t,M}(x)
	&\geq \lambda u + \frac{1}{\beta}\log \frac{1}{\Delta}\int_{[-M,-\sqrt{u}]\cup [\sqrt{u},M]}\exp \beta\bigl(rx+\lambda (r^2-u)\bigr)d\nu_{t,\beta,M}(r)+\frac{1}{\beta}\log \Delta\\
	&\geq   \lambda u + \frac{1}{\Delta}\int_{[-M,-\sqrt{u}]\cup [\sqrt{u},M]}\bigl(rx+\lambda (r^2-u)\bigr)d\nu_{t,\beta,M}(r)+\frac{1}{\beta}\log\Delta
	\end{split}\\
	\begin{split}\notag
	&= \lambda u + \frac{\lambda}{\Delta}\int_{[-M,-\sqrt{u}]\cup [\sqrt{u},M]}(r^2-u)d\nu_{t,\beta,M}(r)+\frac{1}{\beta}\log\Delta,
	\end{split}
	\end{align}
	where the second inequality used Jensen's inequality.
	Since the right-hand side of this inequality is independent of $x$,
	it follows from \eqref{rep} that $\Phi_{\lambda,\alpha}^{\beta,t,u,M}(0,0)$ is bounded by this lower bound and hence,
	\begin{align*}
	\inf_{\alpha\in \mathcal{M}_u}\mathcal{P}_{\beta,t,u,M}(\lambda,\alpha)&\geq \frac{\lambda}{\Delta}\int_{[-M,-\sqrt{u}]\cup [\sqrt{u},M]}(r^2-u)d\nu_{t,\beta,M}(r)+\frac{1}{\beta}\log\Delta-\int_0^us\beta ds\stackrel{\lambda\to\infty}{\to} \infty.
	\end{align*}
	Similarly, let $\Delta'=\int_{[-\sqrt{u},\sqrt{u}]}d\nu_{t,\beta,M}(r).$ We also have
	\begin{align*}
	f_{\lambda}^{\beta,t,M}(x)
	&= \lambda u + \frac{1}{\beta}\log \frac{1}{\Delta'}\int_{[-\sqrt{u},\sqrt{u}]}\exp \beta\bigl(rx+\lambda (r^2-u)\bigr)d\nu_{t,\beta,M}(r)+\frac{1}{\beta}\log \Delta'\\
	&\geq  \lambda u +  \frac{1}{\Delta'}\int_{[-\sqrt{u},\sqrt{u}]}\bigl(rx+\lambda (r^2-u)\bigr)d\nu_{t,\beta,M}(r)+\frac{1}{\beta}\log\Delta'\\
	&=  \lambda u + \frac{\lambda}{\Delta'}\int_{[-\sqrt{u},\sqrt{u}]}(r^2-u)d\nu_{t,\beta,M}(r)+\frac{1}{\beta}\log\Delta'.
	\end{align*}
	Again, since this lower bound is independent of $x$, it follows that by using \eqref{rep},
	\begin{align*}
	\inf_{\alpha\in \mathcal{M}}\mathcal{P}_{\beta,t,u,M}(\lambda,\alpha)&\geq \frac{\lambda}{\Delta'}\int_{[-\sqrt{u},\sqrt{u}]}(r^2-u)d\nu_{t,\beta,M}(r)+\frac{1}{\beta}\log\Delta'-\int_0^us\beta ds\stackrel{\lambda\to-\infty}{\to} \infty.
	\end{align*}
	In addition, note that from Proposition \ref{property2}, $\lambda\mapsto \inf_{\alpha\in \mathcal{M}_u}\mathcal{P}_{\beta,t,u,M}(\lambda,\alpha)$ is continuous.
	From these, we see that there exists some $\lambda_0\in \mathbb{R}$ such that
	\begin{align*}
	\inf_{(\lambda,\alpha)\in \mathbb{R}\times\mathcal{M}_u}\mathcal{P}_{\beta,t,u,M}(\lambda,\alpha)&=\inf_{\lambda\in \mathbb{R}}\inf_{\alpha\in \mathcal{M}_u}\mathcal{P}_{\beta,t,u,M}(\lambda,\alpha)
=\inf_{\alpha\in \mathcal{M}_u}\mathcal{P}_{\beta,t,u,M}(\lambda_0,\alpha).
	\end{align*}
	Finally, by using Proposition \ref{property2} and noting that $\mathcal{M}_u$ is a compact space with respect to the metric $\int_0^u|\alpha(s)-\alpha'(s)|ds$, we see that $\mathcal{P}_{\beta,t,u,M}(\lambda_0,\cdot) $ is minimized by some $\alpha_0\in \mathcal{M}_u.$ Hence, $(\lambda_0,\alpha_0)$ is a minimizer of $\mathcal{P}_{\beta,t,u,M}$. This completes our proof.
\end{proof}

For the rest of this section, we denote the minimizer of $\mathcal{P}_{\beta,t,u,M}$ by $(\lambda_{M,\beta},\alpha_{M,\beta})$. In next two lemmas, we show that $\beta\alpha_{M,\beta}$ is uniformly integrable and $\lambda_{M,\beta}$ is uniformly bounded in $(M,\beta).$

\begin{lemma}\label{add:lem1}
	Let $t,u>0.$ For any $M>u^{1/2}$ and $\beta>0,$	we have
	\begin{align}\label{add:lem1:eq1}
	\int_0^u s\beta \alpha_{M,\beta}(s)ds\leq \frac{u}{\sqrt{2}}.
	\end{align}
\end{lemma}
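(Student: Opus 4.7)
The plan is to exploit optimality of the minimizer $(\lambda_{M,\beta}, \alpha_{M,\beta})$ via variational arguments, using the stochastic optimal control representation of Proposition~\ref{property1} as the central computational tool. First, I would rewrite the target quantity via integration by parts as
\begin{align*}
\int_0^u s\beta\alpha_{M,\beta}(s)ds = \frac{\beta}{2}\Bigl(u^2 - \int_0^u s^2\,d\alpha_{M,\beta}(s)\Bigr),
\end{align*}
so the claim becomes a second-moment concentration statement $\int_0^u s^2\,d\alpha_{M,\beta}(s) \geq u^2 - \sqrt 2 u/\beta$.

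Next, I would extract two Euler–Lagrange-type conditions at the minimizer by perturbing the arguments separately. Perturbing $\lambda$ and using the envelope theorem on $\Phi$ together with $\partial_\lambda f_\lambda^{\beta,t,M}(x) = \e_{\text{Gibbs}_x}[r^2]$ yields $\e[\e_{\text{Gibbs}}[r^2\mid X(u)]]=u$, where $X$ is the optimal SDE of Proposition~\ref{property1}. Since $v^*(u) = \partial_x f_{\lambda_{M,\beta}}^{\beta,t,M}(X(u)) = \e_{\text{Gibbs}}[r\mid X(u)]$, Jensen's inequality then gives $\e[v^*(u)^2] \leq u$. Perturbing $\alpha$ by signed measures $\mu$ with $\mu([0,u])=0$, differentiating the representation formula, and integrating by parts yields the support condition
\begin{align*}
2\e[v^*(u)v^*(s)] - \e[v^*(s)^2] = s \quad \text{on } \supp \alpha_{M,\beta},
\end{align*}
with a matching inequality off-support.

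With these pointwise identities in hand, I would combine the Cauchy–Schwarz bound $2\e[v^*(u)v^*(s)] \leq \e[v^*(u)^2] + \e[v^*(s)^2]$ with the pointwise equation on the support and integrate against $\beta\,d\alpha_{M,\beta}(s)$. The Cauchy–Schwarz step produces the factor of $2$ in a crucial place, and combined with $\e[v^*(u)^2] \leq u$ and the $\sqrt{2}$ arising from the $\sqrt{2}W(u)$ term in the representation, this should convert the support condition into the second-moment estimate $\int_0^u (u-s)\,d\alpha_{M,\beta}(s) \leq \sqrt{2}/\beta$ (or an equivalent quantity), which is precisely what is needed after translating back via integration by parts.

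The main obstacle is making sure the Euler–Lagrange/perturbation argument on $\alpha$ is handled correctly at the boundary point $s=u$, since the perturbation $\mu = \delta_{s_0}-\delta_u$ requires $\alpha_{M,\beta}$ to possess an atom at $u$, which need not hold a priori. I would address this either by approximating $\alpha_{M,\beta}$ by nearby atomic measures (using the Lipschitz/convexity structure from Proposition~\ref{property2}) or by replacing $\delta_u$ with a measure supported in $[u-\eta,u]$ and sending $\eta\downarrow 0$, being careful to track the uniformity of constants in $M$ and $\beta$ so that the $M$- and $\beta$-independent bound $u/\sqrt{2}$ emerges cleanly at the end.
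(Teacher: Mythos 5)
Your algebraic reduction via integration by parts and the two stationarity conditions (the $\lambda$-perturbation giving $\e[v^*(u)^2]\leq u$, and the $\alpha$-perturbation giving the support condition) are correct, and the latter is even equivalent to the paper's \eqref{derivative} since $s\mapsto v^*(s)$ is a martingale so $\e[v^*(u)v^*(s)]=\e[v^*(s)^2]$. However, the final synthesis step does not work, and this is where the real content of the lemma lives.

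Feeding Cauchy--Schwarz into the support condition gives only $s = 2\e[v^*(u)v^*(s)] - \e[v^*(s)^2] \le \e[v^*(u)^2]\le u$, i.e.\ the triviality that the support of $\alpha_{M,\beta}$ lies in $[0,u]$. The stationarity conditions by themselves carry no quantitative scale information: e.g.\ a trial measure near $\delta_0$ satisfies them on its support and has $\int_0^u s^2\,d\alpha = 0$, so no amount of Jensen/Cauchy--Schwarz massaging of those conditions can produce a lower bound on $\int_0^u s^2\,d\alpha_{M,\beta}(s)$. Equally, the $\sqrt{2}$ in the target is \emph{not} the $\sqrt{2}$ in $\sqrt{2}W(u)$ (that is just the normalization of the Parisi PDE so that $\partial_{xx}$ has coefficient $1$); it comes from the top eigenvalue of the GOE $\bar G_n/\sqrt n$ being $2$ together with $\la G_n x, x\ra = 2^{-1/2}\la \bar G_n x, x\ra$. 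The proposal never brings this spectral input into play, so the constant cannot emerge.

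The missing idea is a one-parameter deformation of the Parisi functional that exposes $\int_0^u s\alpha(s)\,ds$ as a derivative. The paper introduces the auxiliary free energy $F_n(b)$ in which only the Gaussian coupling is scaled by $b$ (with $\beta t$ fixed in the $\ell_p$ penalty), so that $\mathcal{P}_b$ is a free energy in disguise and $F_{n,t,u,M,\varepsilon}(\beta) = \beta^{-1}F_n(\beta)$. Differentiating $\Phi^b_{\lambda,\alpha}(0,0)$ in $b$ at the minimizer $(\lambda_b,\alpha_b)$ and using \emph{both} stationarity conditions \eqref{derivative}--\eqref{derivative2} gives the clean identity $\frac{d}{db}\mathcal{P}_b(\lambda,\alpha)\big|_{(\lambda_b,\alpha_b)} = 2b\int_0^u s\,\alpha_b(s)\,ds$. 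Convexity of $F_n$ in $b$ then passes this through the $\inf$ in the Parisi formula so that $2b\int_0^u s\,\alpha_b(s)\,ds \le \lim_\varepsilon\liminf_n F_n'(b)$, and the right-hand side is bounded by the mean energy $\e\la\la G_n x, x\ra/n^{3/2}\ra_b \le 2^{-1/2}(u+\varepsilon)\e\|\bar G_n\|_2/\sqrt n \to \sqrt 2\, u$. This is the step that injects the numerical scale; without an analogue of it, the argument cannot close.
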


\begin{proof}
	Our proof consists of three major steps.
	First of all, we introduce an auxiliary free energy. For $b>0,$ define 
	\begin{align*}
	F_{n}(b)&=\frac{1}{n}\log \int_{x\in[-M,M]^n:\opnorm{x}_2^2\in[u-\varepsilon,u+\varepsilon]}\exp\Bigl(\frac{b}{n^{1/2}}\la G_nx,x\ra-\beta t \|x\|_p^p\Bigr)dx.
	\end{align*}
	Here, to keep the notation light, we ignore the dependence on $\beta,t,u,M,$ and $\varepsilon.$ 
	In a similar manner as \eqref{Pf:freeenergy}, the general result in \cite{Pan05} ensures that this free energy can also be expressed as a similar Parisi-type formula,
	\begin{align}\label{add:eq4}
	\lim_{\varepsilon\downarrow 0}\lim_{n\to\infty}F_{n}(b)&=\inf_{(\lambda,\alpha)\in \mathbb{R}\times \mathcal{M}_u}\mathcal{P}_b (\lambda,\alpha)
	\end{align}
	for
	\begin{align*}
	\mathcal{P}_b (\lambda,\alpha)&:=\Phi_{\lambda,\alpha}^b(0,0)-\lambda u-b^2\int_0^u s\alpha(s)ds,
	\end{align*}
	where $\Phi_{\lambda,\alpha}^b(0,0)$ is defined through the weak solution (see \cite{JT16}) to the following PDE,
	\begin{align}\label{pde2}
	\partial_s\Phi_{\lambda,\alpha}^b(s,x)&=-b^2\bigl(\partial_{xx}\Phi_{\lambda,\alpha}^b(s,x)+\alpha(s)\bigl(\partial_x\Phi_{\lambda,\alpha}^b(s,x)\bigr)^2\bigr)
	\end{align}
	for $(s,x)\in [0,u]\times \mathbb{R}$ with boundary condition
	\begin{align*}
	\Phi_{\lambda,\alpha}^{b}(u,x)&=f_{\lambda}(x):=\log \int_{-M}^M\exp \bigl(sx+\lambda s^2-\beta t|s|^p\bigr)ds.
	\end{align*}
	Note that  $F_{n,t,u,M,\varepsilon}(b)=b^{-1}F_n(b)$ and consequently, \eqref{add:eq4} is equal to \eqref{Pf:freeenergy} modulo a multiplicative factor $b^{-1}$. Here, it can be checked that the two solutions, $\Phi_{\lambda,\alpha}^{b}$ and $\Phi_{\lambda,\alpha}^{\beta,t,u,M}$ for $b=\beta$, are essentially the same up to a transformation,
	\begin{align*}
	\Phi_{b^{-1}\lambda ,\alpha}^{b,t,u,M}(s,\beta^{-1}x)=b^{-1}\Phi_{\lambda,\alpha}^b(s,x),\,\,(s,x)\in[0,u]\times\mathbb{R}
	\end{align*}
and therefore,
	\begin{align}\label{relation}
   \mathcal{P}_{b,t,u,M}(\lambda,\alpha)=b^{-1}\mathcal{P}_b (b\lambda,\alpha),\,\,\forall (\lambda,\alpha)\in \mathbb{R}\times\mathcal{M}_u.
	\end{align}
	From these, Proposition \ref{property2}, and Lemma \ref{lem0}, we see that for ever $b>0,$ $\mathcal{P}_b$ is convex with respect to the metric \eqref{property2:eq1} and it has a minimizer, $(\lambda_b,\alpha_b)$, satisfying $(\lambda_b,\alpha_b)=(b^{-1}\lambda_{M,b},\alpha_{M,b})$.

	Next we continue to bound $b\int_0^us\alpha_b(s)ds$ through the  $b$-derivative of the Parisi formula in \eqref{add:eq4}.
	First, it can be shown that $\Phi_{\lambda,\alpha}^b(0,0)$ also admits an analogous stochastic optimal control representation as that for $\Phi_{\lambda,\alpha}^{\beta,t,u,M}$ in Proposition \ref{property1}. More precisely, let $W$ and $\mathcal{D}_u^M$ be defined as in Proposition \ref{property1}. We can write
	\begin{align*}
	\Phi_{\lambda,\alpha}^b(0,0)&=\sup_{v\in \mathcal{D}_u^M}\e\Bigl[f_{\lambda}\Bigl(2\int_0^u b^2\alpha(s)v(s)ds+\sqrt{2}bW(u)\Bigr)-\int_0^u b^2\alpha(s)v(s)^2ds\Bigr].
	\end{align*}
	Here, the optimal process attaining the maximum is given by 
	$
	v_{\lambda,\alpha}(s)=\partial_x\Phi_{\lambda,\alpha}^b(s,X_{\lambda,\alpha}(s)),
	$
	where $X_{\lambda,\alpha}$ is the strong solution to the following SDE,
	\begin{align*}
	dX_{\lambda,\alpha}(s)&=2b^2\alpha(s)\partial_x\Phi_{\lambda,\alpha}^b(s,X_{\lambda,\alpha}(s))ds+b\sqrt{2}dW(s),\,\,0\leq s\leq u, \ \ \text{ and }  \ \ X_{\lambda,\alpha}(0)=0.
	\end{align*}
	Using this representation, the same computation as  \cite[Proposition 4]{AC2016} yields that for any $(\lambda,\alpha),$
	\begin{align}\label{add:eq7}
	\frac{d}{db}\Phi_{\lambda,\alpha}^b(0,0)&=2b\Bigl(u\e v_{\lambda,\alpha}(u)^2+u\e \partial_{xx}\Phi_{\lambda,\alpha}^b(u,X_{\lambda,\alpha}(u))-\int_0^u s\e v_{\lambda,\alpha}(s)^2\alpha(ds)\Bigr).
	\end{align}
	To handle this derivative, note that $\mathcal{P}_b$ is also a convex functional. It can be argued (see, e.g., \cite{Chen17}) that the directional derivative of $\mathcal{P}_b$ at $(\lambda_b,\alpha_b)$ can be explicitly computed. In particular, the optimality of $(\lambda_b,\alpha_b)$ ensures that  	
	\begin{align}\label{derivative}
	\e \bigl(\partial_x\Phi_{\lambda_b,\alpha_b}^b(s,X_{\lambda_b,\alpha_b}(s))\bigr)^2=\e v_{\lambda_b,\alpha_b}(s)^2=s
	\end{align}
	for any point $s$ in the support of $\alpha_b$ and
 \begin{align}\label{derivative2}
	\e \partial_\lambda\Phi_{\lambda_b,\alpha_b}^b(u,X_{\lambda_b,\alpha_b}(u))=u.
\end{align}
	From \eqref{derivative2} and noting that
	\begin{align*}
	\partial_{xx}\Phi_{\lambda,\alpha}^b(u,x)=\partial_{xx}f_{\lambda}(x)&=\partial_\lambda f_{\lambda}(x)-\bigl(\partial_xf_{\lambda}(x)\bigr)^2=\partial_{\lambda}\Phi_{\lambda,\alpha}^b(u,x)-\bigl(\partial_{x}\Phi_{\lambda,\alpha}^b(u,x)\bigr)^2,
	\end{align*}
	it follows that
	\begin{align*}
	\e \partial_{xx}\Phi_{\lambda_b,\alpha_b}^b(u,X_{\lambda_b,\alpha_b}(u))&=\e \partial_\lambda\Phi_{\lambda_b,\alpha_b}^b(u,X_{\lambda_b,\alpha_b}(u))-\e v_{\lambda_b,\alpha_b}(u)^2\\
	&=u-\e v_{\lambda_b,\alpha_b}(u)^2.
	\end{align*}
	Plugging this equation and \eqref{derivative} into \eqref{add:eq7} leads to
	\begin{align*}
	\frac{d}{db}\Phi_{\lambda,\alpha}^b(0,0)\Big|_{(\lambda,\alpha)={(\lambda_b,\alpha_b)}}&=2b\Bigl(u^2-\int_0^u s^2\alpha_b(ds)\Bigr)=4b\int_0^u s\alpha_b(s)ds,
	\end{align*}
	where the last equality used the integration by parts.
	As a result, we arrive at
	\begin{align}
	\begin{split}\label{add:eq-2}
	\frac{d}{db}\mathcal{P}_b(\lambda,\alpha)\Big|_{(\lambda,\alpha)=(\lambda_b,\alpha_b)}&=4b\int_0^u s\alpha_b(s)ds-2b\int_0^u s\alpha_b(s)ds=2b\int_0^u s\alpha_b(s)ds,
	\end{split}
	\end{align}
	Now from \eqref{add:eq4}, we have that for any $0<b'<b<\infty,$
	\begin{align*}
	\lim_{\varepsilon\downarrow 0}\lim_{n\to\infty}\frac{F_n(b)-F_n(b')}{b-b'}&=\frac{1}{b-b'}\Bigl(\inf_{(\lambda,\alpha)\in \mathbb{R}\times \mathcal{M}_u}\mathcal{P}_b (\lambda,\alpha)-\inf_{(\lambda,\alpha)\in \mathbb{R}\times \mathcal{M}_u}\mathcal{P}_{b'}(\lambda,\alpha)\Bigr)\\
	&\geq \frac{1}{b-b'}\bigl(\mathcal{P}_b (\lambda_b ,\alpha_b )-\mathcal{P}_{b'}(\lambda_b ,\alpha_b )\bigr),
	\end{align*}
	which combining with  the convexity of $F_n(b)$ and \eqref{add:eq-2}, after sending $b'\uparrow b,$
	\begin{align}\label{add:eq---2}
\lim_{\varepsilon\downarrow 0}\liminf_{n\to\infty}F_n'(b)\geq 2b\int_0^u s\alpha_b(s)ds.
	\end{align}
	In the last step,  note that a direct computation gives
	\begin{align}
	F_{n}'(b)=\e\Bigl\la \frac{\la G_nx,x\ra}{n^{3/2}}\Bigr\ra_b
	&\leq \e\sup_{x\in [-M,M]^n: \opnorm{x}_2^2 \in [u-\varepsilon,u+\varepsilon]}\frac{\la G_nx,x\ra}{n^{3/2}}
	\leq \frac{(u+\varepsilon)}{\sqrt{2}}\frac{\e\|\bar G_n\|_2}{n^{1/2}},\label{add:eq8}
	\end{align}
	where $\la \cdot\ra_b$ is the Gibbs expectation associated to the partition function,
	$$
	\int_{x\in [-M,M]:\opnorm{x}_2^2\in[u-\varepsilon,u+\varepsilon]}\exp\Bigl(\frac{b}{n^{1/2}}\la G_nx,x\ra-\beta t\|x\|_p^p\Bigr)dx.
	$$
	It follows from \eqref{add:eq---2}, \eqref{add:eq8}, and the convexity of $F_n$  that
	\begin{align*}
	2b\int_0^u s\alpha_b(s)ds\leq \lim_{\varepsilon\downarrow 0}\liminf_{n\to\infty}F_n'(b)\leq\sqrt{2}u.
	\end{align*}
To translate this inequality back to that for $\alpha_{M,\beta}$, we recall that $\alpha_b=\alpha_{M,\beta}$ for $b=\beta$. This completes our proof of \eqref{add:lem1:eq1}.
\end{proof}

\begin{lemma}\label{add:lem3}
	Let $t,u>0$. There exists a constant $K>0$ depending only on $t,u$ such that 
	\[ 
	|\lambda_{M,\beta}|\leq K \quad \text{for any }  \ \ \beta >M>2u^{1/2}. \]
\end{lemma}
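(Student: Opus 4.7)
The plan is to sandwich the minimal value $\mathcal{P}_{\beta,t,u,M}(\lambda_{M,\beta},\alpha_{M,\beta})$ between a $(t,u)$-dependent constant from above and a quantity growing linearly in $|\lambda_{M,\beta}|$ from below. This forces $|\lambda_{M,\beta}|\leq K(t,u)$ uniformly for $\beta>M>2u^{1/2}$.

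The upper bound is obtained by evaluating $\mathcal{P}_{\beta,t,u,M}$ at the trivial test pair $(\lambda,\alpha)=(0,0)$. With $\alpha\equiv 0$, the stochastic control representation \eqref{rep} collapses (all $v$-dependent terms vanish) to $\Phi_{0,0}^{\beta,t,u,M}(0,0)=\e f_0^{\beta,t,M}(\sqrt{2u}Z)$, where $Z$ is standard Gaussian. Using the pointwise bound
\[
f_0^{\beta,t,M}(x)\leq \frac{\log(2M)}{\beta}+\sup_{r\in\mathbb{R}}\bigl(rx-t|r|^p\bigr)=\frac{\log(2M)}{\beta}+c_{t,p}|x|^{p^*},
\]
where $c_{t,p}|x|^{p^*}$ is the Legendre transform of $t|r|^p$, and noting that $\beta>M\geq 2\sqrt{u}$ makes $\beta^{-1}\log(2M)\leq M^{-1}\log(2M)$ uniformly bounded for $M\geq 2\sqrt{u}$, we conclude $\mathcal{P}_{\beta,t,u,M}(\lambda_{M,\beta},\alpha_{M,\beta})\leq\mathcal{P}_{\beta,t,u,M}(0,0)\leq C_1(t,u)$.

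For the lower bound, I would take $v\equiv 0$ in \eqref{rep} and combine this with the convexity of $f_\lambda^{\beta,t,M}$ in $x$ (which follows from its log-sum-exp structure) via Jensen's inequality to obtain $\Phi_{\lambda,\alpha}^{\beta,t,u,M}(0,0)\geq \e f_\lambda^{\beta,t,M}(\sqrt{2}W(u))\geq f_\lambda^{\beta,t,M}(0)$. Together with \eqref{add:lem1:eq1} of Lemma \ref{add:lem1}, which provides $\int_0^u s\beta\alpha_{M,\beta}(s)ds\leq u/\sqrt{2}$, this yields
\[
\mathcal{P}_{\beta,t,u,M}(\lambda_{M,\beta},\alpha_{M,\beta})\geq f_{\lambda_{M,\beta}}^{\beta,t,M}(0)-\lambda_{M,\beta}u-\frac{u}{\sqrt{2}}.
\]
Thus it suffices to show that $f_\lambda^{\beta,t,M}(0)-\lambda u$ is coercive in $\lambda$, uniformly over $\beta>M>2\sqrt{u}$.

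To verify coercivity, I would restrict the integral defining $f_\lambda^{\beta,t,M}(0)$ to a suitable interval $[r_0-\delta,r_0+\delta]\subset[-M,M]$ and apply Jensen once more to obtain
\[
f_\lambda^{\beta,t,M}(0)\geq \frac{\log(2\delta)}{\beta}+\lambda\Bigl(r_0^2+\frac{\delta^2}{3}\Bigr)-\frac{1}{2\delta}\int_{r_0-\delta}^{r_0+\delta}t|r|^p\,dr.
\]
For $\lambda\geq 0$, choosing $r_0=\sqrt{2u}$ and $\delta=\sqrt{u}/10$ (valid since $r_0+\delta<2\sqrt{u}\leq M$) yields $f_\lambda^{\beta,t,M}(0)-\lambda u\geq \lambda u-C_2(t,u)$. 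For $\lambda<0$, choosing $r_0=0$ and $\delta=\sqrt{u}/2$ yields $f_\lambda^{\beta,t,M}(0)-\lambda u\geq \frac{11u}{12}|\lambda|-C_3(t,u)$. In both regimes the error term $\beta^{-1}\log(2\delta)$ remains bounded because $\beta>2\sqrt{u}$. Combining these lower bounds with the upper bound $\leq C_1(t,u)$ then delivers the desired estimate $|\lambda_{M,\beta}|\leq K(t,u)$. The delicate point is ensuring that every error constant is genuinely uniform in both $\beta$ and $M$; this is precisely where the hypothesis $\beta>M>2u^{1/2}$ is used, since it simultaneously keeps $\beta$ bounded away from zero and keeps $M^{-1}\log(2M)$ finite for $M\geq 2\sqrt{u}$.
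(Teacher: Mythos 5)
Your proof is correct, and the overall strategy (sandwiching $\inf\mathcal{P}_{\beta,t,u,M}=\mathcal{P}_{\beta,t,u,M}(\lambda_{M,\beta},\alpha_{M,\beta})$ between a $(t,u)$-constant from above and a coercive function of $\lambda_{M,\beta}$ from below) is the same as the paper's. The coercive lower bound is obtained identically: take $v\equiv 0$ in the stochastic control representation, invoke convexity/Jensen to reduce to $f_\lambda^{\beta,t,M}(0)$, restrict the integral to a small interval and apply Jensen once more, and then use $\int_0^u s\beta\alpha_{M,\beta}(s)\,ds\le u/\sqrt{2}$ from Lemma~\ref{add:lem1}. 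The only cosmetic difference is that you use intervals $[r_0-\delta,r_0+\delta]$ shifted off the origin for $\lambda\ge 0$, while the paper always uses symmetric intervals $[-D,D]$ and chooses $D$ so that $D^2/3-u$ has the correct sign; both devices accomplish the same thing.

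Where you genuinely diverge is in the \emph{upper} bound on $\inf\mathcal{P}_{\beta,t,u,M}$. The paper goes through the free-energy identity \eqref{Pf:freeenergy}, the elementary bound $F_{n,t,u,M,\varepsilon}(\beta)\le L_{n,p}(t)+\beta^{-1}\log 2M$, and the moment estimate \eqref{add:lem2:proof:eq1} on $\e L_{n,p}(t)^2$ to obtain a $(t,M,\beta)$-uniform constant. You instead evaluate $\mathcal{P}_{\beta,t,u,M}$ directly at a test measure. This is a cleaner, more self-contained argument that avoids any appeal to the random free energy, and it buys you a bound that is manifestly deterministic and analytic. One small imprecision to fix: $\alpha\equiv 0$ is not an element of $\mathcal{M}_u$ (these are probability measures on $[0,u]$); what you are really taking is $\alpha=\delta_u$, whose CDF $s\mapsto\alpha([0,s])$ vanishes on $[0,u)$. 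With that choice all drift and cost terms in \eqref{rep} vanish Lebesgue-a.e., so $\Phi_{0,\delta_u}^{\beta,t,u,M}(0,0)=\e f_0^{\beta,t,M}(\sqrt{2u}\,Z)$ and $\int_0^u s\beta\alpha(s)\,ds=0$, exactly as you use. With that clarification, the proof is complete and correct.
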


\begin{proof}
	Since $f_{\lambda}^{\beta,t,M}$ is convex,  we can take $v\equiv 0$ in  \eqref{rep} and then apply Jensen's inequality to get
	\begin{align}\label{add:eq---3}
	\Phi_{\lambda,\alpha}^{\beta,t,u,M}(0,0)&\geq \e f_{\lambda}^{\beta,t,M}(\sqrt{2}W(u))\geq f_{\lambda}^{\beta,t,M}(0).
	\end{align}
	Fix any $M  > 2 u^{1/2}$. For $0<D\leq M,$ we have 
	\begin{align*}
	f_{\lambda}^{\beta,t,M}(0)
	&\geq \frac{1}{\beta}\log\int_{-D}^D e^{\beta(\lambda r^2-t|r|^p)}dr \geq \frac{1}{\beta}\log\frac{1}{2D}\int_{-D}^D e^{\beta(\lambda r^2-t|r|^p)}dr+\frac{1}{\beta}\log 2D\\
	&\geq \frac{1}{2D}\int_{-D}^D (\lambda r^2-t|r|^p)dr+\frac{1}{\beta}\log 2D,
	\end{align*}
	where the last inequality used Jensen's inequality.
	Now from \eqref{add:eq---3},
	\begin{align*}
	\mathcal{P}_{\beta,t,u,M}(\lambda,\alpha)&\geq \lambda\Bigl(\frac{1}{2D}\int_{-D}^Dr^2dr-u\Bigr)- \frac{t}{2D}\int_{-D}^D|r|^pdr+\frac{\log 2D}{\beta}-\int_0^u s\beta \alpha(s)ds
	\end{align*}
	and therefore, from Lemma \ref{add:lem1},
	\begin{align*}
	\lambda\Bigl(\frac{1}{2D}\int_{-D}^Dr^2dr-u\Bigr)&\leq \inf_{\lambda,\alpha} \mathcal{P}_{\beta,t,u,M}(\lambda,\alpha)+\frac{t}{2D}\int_{-D}^D|r|^pdr-\frac{\log 2D}{\beta}+\frac{u}{\sqrt{2}}.
	\end{align*}
	Now take $\lambda=\lambda_{M,\beta}.$ If $\lambda_{M,\beta}>0,$ we set $D=\sqrt{u} $ and if $\lambda_{M,\beta}<0,$ set $D= 2 \sqrt{u}$ so that $(2D)^{-1}\int_{-D}^Dr^2dr-u$ shares the same sign with $\lambda_{M,\beta}$. Therefore, 
	\begin{align*}
	|\lambda_{M,\beta}|&\leq  \frac{1}{\bigl|\frac{1}{2D}\int_{-D}^Dr^2dr-u\bigr|}\Bigl(\inf_{\lambda,\alpha} \mathcal{P}_{\beta,t,u,M}(\lambda,\alpha)+\frac{t}{2D}\int_{-D}^D|r|^pdr-\frac{\log 2D}{\beta}+\frac{u}{\sqrt{2}}\Bigr).
	\end{align*}
	Finally, from \eqref{add:lem2:proof:eq1}, 
	\begin{align*}
	\inf_{\lambda,\alpha} \mathcal{P}_{\beta,t,u,M}(\lambda,\alpha)&= \lim_{\varepsilon\downarrow 0}\lim_{n\to\infty}\e F_{n,t,u,M,\varepsilon}(\beta)
	\leq \limsup_{n\to\infty}\e L_{n,p}(t)+\frac{\log 2M}{\beta}
	\leq C(t)^{1/2}+\frac{\log 2M}{\beta}.
	\end{align*}
	These yield the desired bound as long as $\beta>M>2u^{1/2}.$
\end{proof}

\subsection{Proof of Proposition \ref{add:prop1}}

Recall that $(\lambda_{M,\beta},\alpha_{M,\beta})$ is the unique minimizer of $\mathcal{P}_{\beta,t,u,M}$. From Lemma \ref{add:lem1} and the monotinicity of $\beta \alpha_{M,\beta}(s)$ in $s$,
\begin{align}
\int_0^u\beta\alpha_{M,\beta}(s)ds&= \int_0^{u/2}\beta\alpha_{M,\beta}(s)ds+\int_{u/2}^u\beta\alpha_{M,\beta}(s)ds\nonumber\\
& \le  2 \int_{u/2}^{u}\beta\alpha_{M,\beta}(s)d  \leq 2 \cdot \frac{2}{u}\int_{u/2}^{u}s\beta\alpha_{M,\beta}(s)ds \leq 2\sqrt{2}. \label{add:eq11}
\end{align} 
Define a positive measure $\nu_{M,\beta}$ on $[0,u]$ by $\nu_{M,\beta}(A)=\int_A\beta \alpha_{M,\beta}(s)ds.$  From the inequality \eqref{add:eq11} and noting that $\alpha_{M,\beta}(s)$ is nondecreasing in $s$, we see that for any $0<r<u$, $\sup_{\beta>0,s\in [0,r]}\beta \alpha_{M,\beta}(s)<\infty$ and hence, we can pass to a subsequence $(\beta_n)_{n\geq 1}$ such that $(\beta_n\alpha_{M,\beta_n})_{n\geq 1}$ converges to some $\gamma_{M,\infty}\in \mathcal{N}_{u}$ almost everywhere on $[0,u).$ In addition, we can assume that along the same subsequence, $L_M:=\lim_{n\to\infty}\int_0^u \beta \alpha_{M,\beta_n}(s)ds$ exists. Consequently, we conclude that $\nu_{M,\beta_n}$ converges vaguely on $[0,u]$ to some $\nu_{M,\infty}$, defined as $\nu_{M,\infty}(A)=\int_A\gamma_{M,\infty}(s)ds+\Gamma_{M,\infty}\delta_u$, where $0\leq \Gamma_{M,\infty}:=L_M-\int_0^u\gamma_{M,\infty}(s)ds<\infty.$
On the other hand, Lemma \ref{add:lem3} also allows us to pass to a subsequence of $\lambda_{M,\beta}$, along which it is convergent. From now on, without loss of generality, we assume that these are convergent without passing to a subsequence. To proceed,  from the above discussion, combined with  Lemma \ref{add:lem3} and the inequality~\eqref{add:eq11}, we note that  there exists a constant $K$ depending only on $t,u$ such that for any $M>2u^{1/2}$, \begin{align}\label{add:eq9}
\max\Bigl(|\lambda_{M,\infty}|,\Gamma_{M,\infty},\int_0^u\gamma_{M,\infty}(s)ds,\sup_{s\in[0,u)}(u-s)\gamma_{M,\infty}(s)\Bigr)&\leq K.
\end{align}

\begin{lemma}\label{add:lem4}
	For any $0<u<M^2,$ we have that
	\begin{align*}
	\liminf_{\beta\to\infty}\Phi_{\lambda_{M,\beta},\alpha_{M,\beta}}^{\beta,t,u,M}(0,0)&\geq \Psi_{\lambda_{M},\gamma_{M,\infty}}^{t,u,M}(0,0),
	\end{align*}
	where $
	\lambda_M:=\lambda_{M,\infty}+\Gamma_{M,\infty}.$
\end{lemma}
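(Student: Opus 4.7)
The plan is to apply the stochastic optimal control representations of Proposition~\ref{property1} on both sides, test the variational formula for $\Phi^{\beta_n,t,u,M}$ with a suitable perturbation of a near-optimizer of the $\Psi^{t,u,M}$-side, and then pass to the limit. The atom $\Gamma_{M,\infty}\delta_u$ of the limit measure $\nu_{M,\infty}$ is absorbed into the shift $\lambda_M=\lambda_{M,\infty}+\Gamma_{M,\infty}$ via the algebraic identity
\[
f_{\lambda_M}^{t,M}(y) \;=\; \sup_{r\in[-M,M]}\Bigl(f_{\lambda_{M,\infty}}^{t,M}(y+2\Gamma_{M,\infty}\,r)-\Gamma_{M,\infty}\,r^2\Bigr),\qquad y\in\mathbb{R},
\]
which follows from the elementary identity $\sup_{r\in[-M,M]}(2\Gamma_{M,\infty}rr'-\Gamma_{M,\infty}r^2)=\Gamma_{M,\infty}(r')^2$ (attained at $r=r'\in[-M,M]$) and the definition of $f_\lambda^{t,M}$. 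Informally, a boundary atom in $\nu$ contributes an extra quadratic-in-$\lambda$ term to the effective boundary condition of the PDE.

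Given $\epsilon>0$, I would first pick an $\epsilon$-near optimizer $v^*\in\mathcal{D}_u^M$ of $\Psi_{\lambda_M,\gamma_{M,\infty}}^{t,u,M}(0,0)$ with continuous paths (standard density argument), set $Y^*:=2\int_0^u\gamma_{M,\infty}(s)v^*(s)\,ds+\sqrt{2}\,W(u)$, and choose a Borel-measurable $r^*\in[-M,M]$ attaining the supremum in the identity above at $y=Y^*$. Since $r^*$ is only $\mathcal{F}_u$-measurable, for $\delta\in(0,u)$ I would construct an $\mathcal{F}_{u-\delta}$-measurable proxy $r^\delta\in[-M,M]$ by replacing $W(u)$ with $W(u-\delta)$ in the definition of $Y^*$, and then define a progressively measurable test control $v^\delta\in\mathcal{D}_u^M$ that equals $v^*$ on $[0,u-\delta]$, interpolates linearly to $r^\delta$ on $[u-\delta,u-\delta/2]$, and equals $r^\delta$ on $[u-\delta/2,u]$.

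Feeding $v^\delta$ into the lower bound from \eqref{rep} for $\Phi_{\lambda_{M,\beta_n},\alpha_{M,\beta_n}}^{\beta_n,t,u,M}(0,0)$ and sending $n\to\infty$, the uniform bounds \eqref{add:eq9} and the pointwise convergence of CDFs on $[0,u)$ yield the vague convergence $\beta_n\alpha_{M,\beta_n}(s)\,ds\to\gamma_{M,\infty}(s)\,ds+\Gamma_{M,\infty}\delta_u$ on $[0,u]$, so continuity of $v^\delta$ in $s$ gives
\[
\int_0^u\beta_n\alpha_{M,\beta_n}(s)v^\delta(s)\,ds\;\longrightarrow\;\int_0^u\gamma_{M,\infty}(s)v^\delta(s)\,ds+\Gamma_{M,\infty}\,r^\delta,
\]
and similarly for the quadratic integral. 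Combined with the uniform-on-compacts convergence $f_{\lambda_{M,\beta_n}}^{\beta_n,t,M}\to f_{\lambda_{M,\infty}}^{t,M}$ (via Laplace's method, the uniform bound $|\partial_y f|\leq M$, and $\lambda_{M,\beta_n}\to\lambda_{M,\infty}$) and dominated convergence against the Gaussian tail of $W(u)$, the $n$-liminf of $\Phi$ is bounded below by $\mathbb{E}[f_{\lambda_{M,\infty}}^{t,M}(2\int_0^u\gamma_{M,\infty}v^\delta ds+2\Gamma_{M,\infty}r^\delta+\sqrt{2}W(u))-\int_0^u\gamma_{M,\infty}(v^\delta)^2ds-\Gamma_{M,\infty}(r^\delta)^2]$. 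Sending $\delta\downarrow 0$ by dominated convergence ($|v^\delta|,|r^\delta|\leq M$, $v^\delta\to v^*$ a.e.\ on $[0,u)$, and continuity of the argmax giving $r^\delta\to r^*$) and then invoking the algebraic identity together with the definition of $r^*$ collapses the expression to $\mathbb{E}[f_{\lambda_M}^{t,M}(Y^*)-\int_0^u\gamma_{M,\infty}(v^*)^2\,ds]\geq\Psi_{\lambda_M,\gamma_{M,\infty}}^{t,u,M}(0,0)-\epsilon$; letting $\epsilon\downarrow 0$ finishes the argument.

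The hard part is precisely the presence of the atom $\Gamma_{M,\infty}\delta_u$ at the terminal time: one cannot transplant the $\Psi$-optimizer directly into the $\Phi$-problem, because a non-negligible portion of the limit mass concentrates exactly at $s=u$ and must be ``spent'' to realize the shift $\lambda_{M,\infty}\to\lambda_M$. The two-step construction of $v^\delta$, which introduces an adapted boundary control $r^\delta$ on $[u-\delta,u]$, mirrors this atom at the level of controls, and the convex-duality identity above then performs the required shift internally inside the stochastic optimization. Secondary technical points—progressive measurability of $v^\delta$ (handled by the $\mathcal{F}_{u-\delta}$-proxy $r^\delta$), the uniform rate in Laplace's method, and continuity of the argmax selections—are standard.
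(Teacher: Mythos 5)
Your plan is correct and is essentially the same approach as the paper's proof: both rely on (i) passing to the limit in the stochastic-control lower bound from \eqref{rep} using the vague convergence $\nu_{M,\beta}\to\nu_{M,\infty}=\gamma_{M,\infty}\,ds+\Gamma_{M,\infty}\delta_u$, (ii) introducing a terminal control variable to account for the atom at $s=u$, and (iii) absorbing that atom into the shifted Lagrange multiplier $\lambda_M=\lambda_{M,\infty}+\Gamma_{M,\infty}$ via the same convex-duality computation. The algebraic identity $f_{\lambda_M}^{t,M}(y)=\sup_{r\in[-M,M]}\bigl(f_{\lambda_{M,\infty}}^{t,M}(y+2\Gamma_{M,\infty}r)-\Gamma_{M,\infty}r^2\bigr)$ that you isolate is exactly the computation the paper performs inline when it plugs $r^*(u)$ into the variational definition of $f_{\lambda_{M,\infty}}^{t,M}$ and then recognizes $f_{\lambda_M}^{t,M}$; making it explicit up front is a nice presentational choice.

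The organizational difference is minor: the paper takes a generic $v\in\mathcal{D}_u^M$, builds $v_k=v\,1_{[0,s_k)}+r^*(s_k)\,1_{[s_k,u]}$ with $r^*(s_k)$ the largest maximizer of $f_{\lambda_M}^{t,M}$ evaluated at the partial trajectory (automatically $\mathcal{F}_{s_k}$-measurable), and takes the supremum over $v$ at the very end, whereas you start from a near-optimizer $v^*$ of $\Psi_{\lambda_M,\gamma_{M,\infty}}^{t,u,M}$. Both routes reach the same inequality; the paper's ordering avoids any density/approximation argument for $v^*$, since it only ever needs the constructed $v_k$ to be left-continuous at $u$, which is automatic. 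Two small slips in your write-up that should be fixed but do not change the route: your proxy $r^\delta$ as stated depends on $\int_0^u\gamma_{M,\infty}v^*\,ds$, which is only $\mathcal{F}_u$-measurable, so you must also truncate the integral to $\int_0^{u-\delta}$ before declaring $\mathcal{F}_{u-\delta}$-measurability (the discarded tail is $O\bigl(\int_{u-\delta}^u\gamma_{M,\infty}\,ds\bigr)\to 0$); and the argmax is in general a set-valued, upper-hemicontinuous correspondence rather than a continuous function, so $r^\delta\to r^*$ should be obtained, as in the paper, by selecting a (largest or measurable) maximizer and extracting a pointwise-convergent subsequence whose limit is then verified to be a maximizer — not by invoking continuity of an argmax selection.
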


\begin{proof}
	Recall the representations in Proposition \ref{property1}. Consider any  progressively measurable process $v\in \mathcal{D}_u^M$ on $[0,u]$, which is  left-continuous  at $u$, i.e., $\lim_{s\to u-}v(s)=v(u)$. Note that $\max_{s\in[0,u]}|v(s)|\leq M.$ From the vague convergence of $\nu_{M,\beta}$ to $\nu_{M,\infty}$ on $[0,u]$, we have that
	\begin{align}
	\liminf_{\beta\to\infty}\Phi_{\lambda_{M,\beta},\alpha_{M,\beta}}^{\beta,t,u,M}(0,0)
	&\geq \liminf_{\beta\to\infty}\e\Bigl[f_{\lambda_{M,\beta}}^{\beta,t,M}\Bigl(2\int_0^uv(s)\beta \alpha_{M,\beta} (s)ds+\sqrt{2}W(u)\Bigr)-\int_0^u v(s)^2\beta \alpha_{M,\beta}(s) ds\Bigr]\nonumber\\
	&=\e \Bigl[f_{\lambda_{M,\beta}}^{t,M}\Bigl(2\int_0^uv(s)\nu_{M,\infty}(ds)+\sqrt{2}W(u)\Bigr)\Bigr]-\int_0^uv(s)^2\nu_{M,\infty}(ds)\Bigr].\label{add:eq---4}
	\end{align}	
	From the definition of $\nu_{M,\infty},$ the above lower bound can be written as
	\begin{align}
	\begin{split}
	&\e \Bigl[f_{\lambda_{M,\infty}}^{t,M}\Bigl(2\int_0^uv(s)\nu_{M,\infty}(ds)+\sqrt{2}W(u)\Bigr)-\int_0^uv(s)^2\nu_{M,\infty}(ds)\Bigr]\\
	&=\e \Bigl[f_{\lambda_{M,\infty}}^{t,M}\Bigl(2\int_0^uv(s)\gamma_{M,\infty}(s)ds+2v(u)\Gamma_{M,\infty}+\sqrt{2}W(u)\Bigr)-\int_0^uv(s)^2\gamma_{M,\infty}(s)ds-\Gamma_{M,\infty} v(u)^2\Bigr].
	\end{split}\label{add:lem4:proof:eq1}
	\end{align}
	On the other hand, recall from \eqref{rep2},
	\begin{align}\label{add:lem4:proof:eq2}
	\Psi_{\lambda_M,\gamma_{M,\infty}}^{t,u,M}(0,0)=\sup_{v\in \mathcal{D}_u^M}\e \Bigl[f_{\lambda_M}^{t,M}\Bigl(2\int_0^uv(s)\gamma_{M,\infty}(s)ds+\sqrt{2}W(u)\Bigr)-\int_0^u v(s)^2\gamma_{M,\infty}(s)ds\Bigr].
	\end{align}
	For $0\leq s<u,$ denote by $r^*(s)\in [-M,M]$ the largest (random) maximizer in the definition of 
	\begin{align*}
	f_{\lambda_M}^{t,M}\Bigl(2\int_0^sv(l)\gamma_{M,\infty}(l)dl+\sqrt{2}W(s)\Bigr).
	\end{align*}
	Using the compactness of $[-M,M]$, we can pick a sequence $(s_k)\subset (0,u)$ with $\lim_{k\to\infty}s_k=u$ such that $r^*(u):=\lim_{k\to\infty}r^*(s_k)$ is a maximizer to 
	\begin{align*}
	f_{\lambda_M}^{t,M}\Bigl(2\int_0^uv(l)\gamma_{M,\infty}(l)dl+\sqrt{2}W(u)\Bigr).
	\end{align*}
	
	Next,  fix an arbitrary $v\in \mathcal{D}_u^M$. Define a progressively measurable process $v_k\in \mathcal{D}_u^M$ by
	\begin{align*}
	v_k(s)&=v(s)1_{[0,s_k)}(s)+r^*(s_k)1_{[s_k,u]}(s).
	\end{align*}
	Note that $v_k$ is left-continuous at $u.$ Consequently, we can plug this $v_k$ into \eqref{add:eq---4} and \eqref{add:lem4:proof:eq1} to obtain that
	\begin{align*}
	\liminf_{\beta\to\infty}\Phi_{\lambda_{M,\beta},\alpha_{M,\beta}}^{\beta,t,u,M}(0,0)
	&\geq \e \Bigl[f_{\lambda_{M,\infty}}^{t,M}\Bigl(2\int_0^{s_k}v(s)\gamma_{M,\infty}(s)ds+2r^*(s_k)\int_{s_k}^u \gamma_{M,\infty}(s)ds+2r^*(s_k)\Gamma_{M,\infty}+\sqrt{2}W(u)\Bigr)\\
	&\qquad\qquad -\int_0^{s_k}v(s)^2\gamma_{M,\infty}(s)ds-r^*(s_k)^2\int_{s_k}^u\gamma_{M,\infty}(s)ds-r^*(s_k)^2\Gamma_{M,\infty}\Bigr].
	\end{align*}
	Sending $k\to\infty$ and noting that $\int_0^u\gamma_{M,\infty}(s)ds<\infty$ imply that
	\begin{align}
	\liminf_{\beta\to\infty}\Phi_{\lambda_{M,\beta},\alpha_{M,\beta}}^{\beta,t,u,M}(0,0) &\geq\e \Bigl[f_{\lambda_{M,\infty}}^{t,M}\Bigl(2\int_0^{u}v(s)\gamma_{M,\infty}(s)ds+2r^*(u)\Gamma_{M,\infty}+\sqrt{2}W(u)\Bigr) \nonumber\\
	&\qquad\qquad-\int_0^uv(s)^2\gamma_{M,\infty}(s)ds-r^*(u)^2\Gamma_{M,\infty}\Bigr].\label{add:eq---5}
	\end{align}
	Here, from the definition of $f_{\lambda_{M,\infty}}^{t,M},$
	\begin{align*}
	&f_{\lambda_{M,\infty}}^{t,M}\Bigl(2\int_0^{u}v(s)\gamma_{M,\infty}(s)ds+2r^*(u)\Gamma_{M,\infty}+\sqrt{2}W(u)\Bigr)\\
	&\geq \Bigl(2\int_0^{u}v(s)\gamma_{M,\infty}(s)ds+2r^*(u)\Gamma_{M,\infty}+\sqrt{2}W(u)\Bigr)r^*(u)+\lambda_{M,\infty}r^*(u)^2-t|r^*(u)|^p\\
	&=\Bigl(2\int_0^{u}v(s)\gamma_{M,\infty}(s)ds+\sqrt{2}W(u)\Bigr)r^*(u)+\bigl(\lambda_{M,\infty}+2\Gamma_{M,\infty}\bigr)r^*(u)^2-t|r^*(u)|^p.
	\end{align*}
	Plugging this lower bound into the right-hand side of \eqref{add:eq---5} yields that 
	\begin{align*}
	&\liminf_{\beta\to\infty}\Phi_{\lambda_{M,\beta},\alpha_{M,\beta}}^{\beta,t,u,M}(0,0)\\
	&\geq \e\Bigl[\Bigl(2\int_0^{u}v(s)\gamma_{M,\infty}(ds)+\sqrt{2}W(u)\Bigr)r^*(u)+\bigl(\lambda_{M,\infty}+\Gamma_{M,\infty}\bigr)r^*(u)^2-t|r^*(u)|^p -\int_0^uv^2\gamma_{M,\infty}(s)ds\Bigr]\\
	&=\e\Bigl[f_{\lambda_M}^{t,M}\Bigl(2\int_0^{u}v(s)\gamma_{M,\infty}(s)ds+\sqrt{2}W(u)\Bigr)-\int_0^uv(s)^2\gamma_{M,\infty}(s)ds\Bigr].
	\end{align*}
	Since this is valid for all $v\in \mathcal{D}_u^M$, taking supremum over this set completes our proof by using \eqref{add:lem4:proof:eq2}.
	
\end{proof}

We now proceed to establish the proof of Proposition \ref{add:prop1}.
Assume that $\gamma\in \mathcal{N}_u$ satisfies $\gamma(u-)<\infty$ and $\lambda\in \mathbb{R}.$ Define $
\alpha_\beta(s)=\beta^{-1}\gamma(s)1_{[0,u)}(s)+1_{\{u\}}(s).
$
Here $\alpha_\beta\in \mathcal{M}_u$ as long as $\beta$ is large enough. Since $$f_\lambda^{\beta,u,M}(x)\leq f_\lambda^{t,M}(x)+\beta^{-1}\log 2M,$$
we have
\begin{align*}
\lim_{\beta\to\infty}\Phi_{\lambda,\alpha_\beta}^{\beta,t,u,M}(0,0)&=\lim_{\beta \to\infty}\sup_{v\in \mathcal{D}_u^M}\e\Bigl[f_\lambda^{\beta,t,M}\Bigl(2\int_0^u v(s)\gamma(s) ds+\sqrt{2}W(u)\Bigr)-\int_0^u v(s)^2\gamma(s) ds\Bigr]\\
&\leq\sup_{v\in \mathcal{D}_u^M}\e\Bigl[f_\lambda^{t,M}\Bigl(2\int_0^u v(s)\gamma(s) ds+\sqrt{2}W(u)\Bigr)-\int_0^u v(s)^2\gamma(s) ds\Bigr].
\end{align*}
Also note
$
\int_0^u s\beta\alpha_\beta (s)ds=\int_0^u s\gamma (s)ds.
$
From these,
\begin{align*}
\limsup_{\beta\to\infty}\inf_{(\lambda,\alpha)\in \mathbb{R}\times\mathcal{M}_u}\mathcal{P}_{\beta,t,u,M}(\lambda,\alpha)&\leq \mathcal{P}_{t,u,M}(\lambda,\gamma).
\end{align*}
The continuity of $\mathcal{P}_{t,u,M}$ in  Proposition \ref{property2} then implies that 
\begin{align*}
\limsup_{\beta\to\infty}\inf_{(\lambda,\alpha)\in \mathbb{R}\times\mathcal{M}_u}\mathcal{P}_{\beta,t,u,M}(\lambda,\alpha)\leq \inf_{(\lambda,\gamma)\in \mathbb{R}\times \mathcal{N}_u}\mathcal{P}_{t,u,M}(\lambda,\gamma).
\end{align*}
To prove the reverse inequality, we use Lemma \ref{add:lem4} to obtain 
\begin{align*}
\liminf_{\beta\to\infty}\inf_{(\lambda,\alpha)\in \mathbb{R}\times\mathcal{M}_u}\mathcal{P}_{\beta,t,u,M}(\lambda,\alpha)&=\liminf_{\beta\to\infty}\mathcal{P}_{\beta,t,u,M}(\lambda_{M,\beta},\alpha_{M,\beta})\\
&\geq \Psi_{\lambda_{M},\gamma_{M,\infty}}^{t,u,M}(0,0)-\lambda_{M,\infty}u-\int_0^us\nu_{M,\infty}(ds)\\
&=\Psi_{\lambda_M,\gamma_{M,\infty}}^{t,u,M}(0,0)-\lambda_Mu-\int_0^us\gamma_{M,\infty}(s)ds=\mathcal{P}_{t,u,M}(\lambda_M,\gamma_{M,\infty}).
\end{align*}
Let $\gamma_M:=\gamma_{M,\infty}.$ Combining this lower bound with the upper bound obtained above, we deduce 
\begin{align*}
\liminf_{\beta\to\infty}\inf_{(\lambda,\alpha)\in \mathbb{R}\times\mathcal{M}_u}\mathcal{P}_{\beta,t,u,M}(\lambda,\alpha)=\inf_{(\lambda,\gamma)\in \mathbb{R}\times \mathcal{N}_u}\mathcal{P}_{t,u,M}(\lambda,\gamma)=\mathcal{P}_{t,u,M}(\lambda_M,\gamma_{M}),
\end{align*}
which gives \eqref{add:prop1:eq2}. Consequently, we  have \eqref{add:prop1:eq0} by using \eqref{Pf:freeenergy}. Finally, \eqref{add:eq9} and the fact $\lambda_M=\lambda_{M,\infty}+\Gamma_{M,\infty}$ validate \eqref{add:prop1:eq1}. This completes our proof.

\section{Proof of Theorem \ref{thm0}}\label{sec12}

We present the proof of Theorem \ref{thm0} in this section. With the help of \eqref{thm-1:eq2}, the assertion \eqref{thm0:eq2} follows immediately from \eqref{thm0:eq1}, so we only need to establish \eqref{thm0:eq1}. Our argument consists of matching the upper and lower bounds.

\subsection{Lower bound}

We verify that
\begin{align*}
L_{p,u}(t)\geq \inf_{(\lambda,\gamma)\in \mathbb{R}\times\mathcal{N}_u^d}\mathcal{P}_{t,u}(\lambda,\gamma).
\end{align*}
From Propositions \ref{extra:prop1} and \ref{add:prop1}, 
it remains to show that
\begin{align}\label{add:eq---6}
 \limsup_{M\to\infty}\inf_{(\lambda,\gamma)\in \mathbb{R}\times\mathcal{N}_u}\mathcal{P}_{t,u,M}(\lambda,\gamma)\geq \inf_{(\lambda,\gamma)\in \mathbb{R}\times\mathcal{N}_u^d}\mathcal{P}_{t,u}(\lambda,\gamma).
\end{align}
Our argument relies on a stochastic optimal control representation for $\Psi_{\lambda,\gamma}^{t,u}(0,0)$ in the same spirit as that of Proposition \ref{property1}, whose proof is deferred to the appendix.


\begin{proposition}\label{rep3}
	Let $(\lambda,\gamma)\in \mathbb{R}\times\mathcal{N}_u^d$. There exist a probability space $(\Omega,\mathbb{P},\mathcal{F})$, a filtration $(\mathcal{F}_s)_{0\leq s\leq u}$, a continuous adapted process $X=(X(s))_{0\leq s\leq u}$, and a standard Brownian motion $W=(W(s))_{0\leq s\leq u}$ such that together they form a weak solution to the following SDE,
	\begin{align}
	\begin{split}\label{weak}
	dX(s)&=2\gamma(s)\partial_x\Psi_{\lambda,\gamma}^{t,u}(s,X(s))ds+\sqrt{2}dW(s),\,\,0\leq s\leq u,\,\,X(0)=0.
	\end{split}
	\end{align} 
	Furthermore, we have
	\begin{align}\label{rep4}
	\Psi_{\lambda,\gamma}^{t,u}(0,0)=\sup_{v\in \mathcal{D}_u}\e\Bigl[f_\lambda^{t,u}\Bigl(2\int_0^uv(s)\gamma(s) ds+\sqrt{2}W(u)\Bigr)-\int_0^u v(s)^2\gamma(s) ds\Bigr],
	\end{align}
	where  $\mathcal{D}_u$ is  the collection of all progressively measurable processes $v=(v(s))_{0\leq s\leq u}$ with respect to the filtration $(\mathcal{F}_s)_{0\leq s\leq u}$ with $\e \int_0^u |v(s)|^2ds<\infty.$ Here the supremum is attained by the process $v\in \mathcal{D}_u$ defined by $v(s)=\partial_x\Psi_{\lambda,\gamma}^{t,u}(s,X(s))$ for $0\leq s\leq u.$
\end{proposition}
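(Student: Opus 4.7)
The atomic structure of $\gamma$ makes this a finite iteration of linear problems: on each layer $[q_l,q_{l+1})$ where $\gamma\equiv m_l$, the Hopf--Cole transformation $\Psi_{\lambda,\gamma}^{t,u}=m_l^{-1}\log U$ converts the Parisi PDE \eqref{pde} into the backward heat equation with terminal data $\exp(m_l\Psi_{\lambda,\gamma}^{t,u}(q_{l+1},\cdot))$. On each layer, $\Psi_{\lambda,\gamma}^{t,u}$ is therefore $C^\infty$ in $x$ (it is a Gaussian convolution of a continuous function of polynomial growth, cf.\ Lemma \ref{add:lem2}) and $C^1$ in $s$ on $(q_l,q_{l+1})$, with $\partial_x\Psi_{\lambda,\gamma}^{t,u}$ inheriting the polynomial growth $|x|^a$ of the boundary data. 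The plan is to construct the weak solution to \eqref{weak} layer by layer and then run the standard verification argument for the control representation.

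First I would construct $X$ and $W$. On the bottom layer $[q_k,u]$, $\Psi_{\lambda,\gamma}^{t,u}$ is smooth and $\partial_x\Psi_{\lambda,\gamma}^{t,u}$ has at most polynomial growth. Standard weak existence for SDEs with drifts of polynomial growth and constant diffusion gives a weak solution $(X,W)$ on this interval (e.g., via Girsanov applied to an $L^2$--truncation followed by a compactness / martingale problem argument, noting that moment estimates on $X$ follow from the $|x|^a$-growth of $\partial_x\Psi_{\lambda,\gamma}^{t,u}$ via Grönwall and $a<1$). Iterating backward over the finitely many layers, using the layer-by-layer initial data and pasting Brownian motions along a common filtration, yields a weak solution on $[0,u]$.

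Next I would derive \eqref{rep4} by the usual completion-of-squares trick. For any $v\in\mathcal{D}_u$, set $X^v(s)=2\int_0^s v(r)\gamma(r)\,dr+\sqrt{2}W(s)$ and apply It\^o to $s\mapsto \Psi_{\lambda,\gamma}^{t,u}(s,X^v(s))$ on each layer, using \eqref{pde}. The drift terms combine as
\begin{align*}
d\Psi_{\lambda,\gamma}^{t,u}(s,X^v(s))&=\gamma(s)\bigl(v(s)^2-(\partial_x\Psi_{\lambda,\gamma}^{t,u}(s,X^v(s))-v(s))^2\bigr)\,ds+\sqrt{2}\,\partial_x\Psi_{\lambda,\gamma}^{t,u}(s,X^v(s))\,dW(s).
\end{align*}
Integrating from $0$ to $u$, taking expectations (after showing the stochastic integral is a true martingale, via the $L^2$ growth control on $\partial_x\Psi_{\lambda,\gamma}^{t,u}$ and on $X^v$), and rearranging gives
\begin{align*}
\Psi_{\lambda,\gamma}^{t,u}(0,0)=\e\Bigl[f_\lambda^t(X^v(u))-\int_0^u v(s)^2\gamma(s)\,ds\Bigr]+\e\int_0^u \gamma(s)\bigl(\partial_x\Psi_{\lambda,\gamma}^{t,u}(s,X^v(s))-v(s)\bigr)^2\,ds,
\end{align*}
which proves the inequality $\Psi_{\lambda,\gamma}^{t,u}(0,0)\ge\sup_v(\cdots)$ and identifies the maximizer as $v(s)=\partial_x\Psi_{\lambda,\gamma}^{t,u}(s,X(s))$; with this choice $X^v=X$ and the penalty term vanishes, giving equality.

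\textbf{Main obstacle.} The delicate step is justifying the It\^o calculus and martingale property at the top layer $[q_k,u]$, because $f_\lambda^t$ is only of polynomial (not Lipschitz) growth and $\partial_x\Psi_{\lambda,\gamma}^{t,u}(s,\cdot)$ need not be bounded. The essential input is the growth bound $f_\lambda^t(x)\le C(1+|x|^{1+a})$ with $a\in(0,1)$ supplied by Lemma \ref{add:lem2}, which propagates through Hopf--Cole to give polynomial growth of $\partial_x\Psi_{\lambda,\gamma}^{t,u}$, hence uniform-in-$s$ $L^q$ bounds on $\partial_x\Psi_{\lambda,\gamma}^{t,u}(s,X(s))$ via Gaussian moment estimates on the SDE. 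This is exactly where the restriction $p>2$ enters, matching the analogous treatments in \cite{AC15,JT16,JS17}; the finite-atom structure of $\gamma$ then removes all the remaining technical issues that arise in the generic case.
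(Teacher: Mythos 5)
Your proposal is correct and follows essentially the same route as the paper: show that $\partial_x\Psi_{\lambda,\gamma}^{t,u}$ has polynomial (sub-linear) growth via the Hopf--Cole/appendix lemmas, invoke weak existence for the SDE with a drift of at most linear growth, and then run the completion-of-squares It\^o verification to obtain \eqref{rep4} with equality at $v(s)=\partial_x\Psi_{\lambda,\gamma}^{t,u}(s,X(s))$. The one cosmetic difference is that you construct the weak solution layer by layer via Girsanov and truncation, whereas the paper appeals directly to Karatzas--Shreve (Proposition~5.3.6) using the global linear-growth bound from Remark~\ref{rmk4}; both work, and your identification of the non-Lipschitz terminal data at $s=u$ as the delicate point, controlled by $a=1/(p-1)<1$, matches the paper's treatment.
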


\begin{remark}\label{rmk4}
	\rm Recall from Proposition \ref{property0} that $\partial_x\Psi_{\lambda,\gamma}^{t,u,M}(s,\cdot)$ and $\partial_x\Phi_{\lambda,\alpha}^{\beta,t,u,M}(s,\cdot)$ are uniformly Lipschitz over all $x\in [0,u)$. This ensures the existence of the strong solutions of $X$ and $X'$ in Proposition \ref{property1}. In \eqref{weak}, $\partial_x\Psi_{\lambda,\gamma}^{t,u}(s,\cdot)$ is not Lipschitz, but it can be shown from the Hopf-Cole transformation \eqref{hopfcole} that it is of moderate growth, namely, there exists some $0<a<1$ such that $|\partial_x\Psi_{\lambda,\gamma}^{t,u}(s,x)|\leq C(1+|x|^a)$ for  any $(s,x)\in \bigl([0,u]\times \mathbb{R}\bigr)\setminus \{(u,0)\}$ (see Lemma \ref{appendix:lem1}), where $C>0$ depends on $\gamma$. Hence,
	\begin{align*}
	|\partial_x\Psi_{\lambda,\gamma}^{t,u}(s,x)|\leq 2C(1+|x|),\,\,\forall (s,x)\in \bigl([0,u]\times \mathbb{R}\bigr)\setminus \{(u,0)\}.
	\end{align*}This ensures the existence of the weak solution of \eqref{weak}, see \cite[Propositio 5.3.6]{KS91}. Additionally, from this upper bound, it is a standard application of the Gronwall inequality (see, e.g., \cite[Problem 5.3.15]{KS91}) that 
	\begin{align*}
    \e \max_{0\leq s\leq u} |X(s)|^2&\leq 4u(C_0^2+1)e^{4u^2C_0^2},\,\,C_0:=2\gamma(u)C.
	\end{align*}
	However, this upper bound is not good enough for us since in the Parisi formula we take infimum over all $\gamma$ and this bound might diverge.
\end{remark}

The following technical lemma  controls $\e|X(u)|^2$ uniformly over bounded  $\lambda$ and $\int_0^u \gamma(s)ds$.

\begin{lemma}\label{lem7}
	Assume that $t,u>0$. There exists a nonnegative continuous function $K$ on $(2,\infty)\times (0,\infty)^2\times \mathbb{R}\times [0,\infty)\to [0,\infty)$ such that for any $(\lambda,\gamma)\in \mathbb{R}\times \mathcal{N}_u^d$,
	we have that
	\begin{align*}
	\e \bigl|X(u)\bigr|^2&\leq K\Bigl(p,t,u,\lambda,\int_0^u\gamma(s)ds\Bigr).
	\end{align*}
\end{lemma}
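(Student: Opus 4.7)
The plan is to apply Itô's formula to $|X(s)|^2$ and close the resulting identity with Gronwall's inequality, using the sublinear-in-$x$ bound on $\partial_x\Psi_{\lambda,\gamma}^{t,u}$ alluded to in Remark~\ref{rmk4}. Since $\gamma\in\mathcal{N}_u^d$, the iterated Hopf-Cole representation \eqref{hopfcole} shows $\Psi_{\lambda,\gamma}^{t,u}$ is smooth on $[0,u)\times\mathbb{R}$, so I first work on $[0,u-\epsilon]$ and then let $\epsilon\downarrow 0$. Itô's formula gives
\begin{align*}
\e|X(s)|^2 = 2s + 4\int_0^s \gamma(r)\,\e\bigl[X(r)\,\partial_x\Psi_{\lambda,\gamma}^{t,u}(r,X(r))\bigr]\,dr,\quad s\in[0,u-\epsilon].
\end{align*}

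The key input is the growth bound
\[
\bigl|\partial_x\Psi_{\lambda,\gamma}^{t,u}(s,x)\bigr| \le C_1\bigl(1+|x|^{a}\bigr),\quad (s,x)\in [0,u)\times\mathbb{R},
\]
with $a=1/(p-1)\in(0,1)$. The exponent $a$ arises because the boundary condition $f_\lambda^t$ grows like $|x|^{p^*}$ with $p^*=p/(p-1)<2$ (using $p>2$), so $|\partial_x f_\lambda^t|$ grows like $|x|^{a}$, and this sublinear rate is preserved through the Gaussian convolutions in \eqref{hopfcole}. Crucially, I will show (via Lemma~\ref{appendix:lem1}) that $C_1$ can be taken to depend only on $p,t,u,\lambda$ and $\int_0^u\gamma(r)\,dr$, and continuously so. Combined with the elementary inequalities $|x|\le\tfrac12(1+|x|^2)$ and $|x|^{1+a}\le 1+|x|^2$ (valid since $1+a<2$), the integrand in the cross term is bounded by $\tfrac{3C_1}{2}\bigl(1+|X(r)|^2\bigr)$.

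Substituting this into the Itô identity and applying Gronwall's inequality to $\phi(s):=1+\e|X(s)|^2$ yields
\[
1+\e|X(s)|^2 \le (1+2u)\exp\!\Bigl(6C_1\int_0^u\gamma(r)\,dr\Bigr),\quad s\in[0,u-\epsilon].
\]
The right-hand side is independent of $\epsilon$ and is a continuous function of $\bigl(p,t,u,\lambda,\int_0^u\gamma(r)\,dr\bigr)$. Passing $\epsilon\downarrow 0$ via Fatou's lemma, together with the sample-path continuity of $X$ on $[0,u]$ established in Proposition~\ref{rep3}, gives the required estimate with $K$ the above expression evaluated at $s=u$.

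The principal technical obstacle is to obtain the sublinear growth constant $C_1$ with the stated dependence: it must depend only on $p,t,u,\lambda$ and the aggregate quantity $\int_0^u\gamma(r)\,dr$, and \emph{not} on finer data such as the atom locations $(q_l)$ or individual masses $(m_l)$ of $\gamma$. This is not a priori obvious from \eqref{hopfcole}, because the Hopf-Cole step between levels $q_l$ and $q_{l+1}$ involves an $L^{m_l}$-type norm whose constant depends on $m_l$ alone; one must induct along the atoms and aggregate the resulting constants into a bound controlled only by $\int_0^u\gamma$. Once this refined bound is packaged as Lemma~\ref{appendix:lem1}, the Gronwall argument above closes without further difficulty.
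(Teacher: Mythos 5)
Your proposed route (Itô on $|X(s)|^2$ plus Gronwall with the sublinear growth bound on $\partial_x\Psi_{\lambda,\gamma}^{t,u}$) is exactly the approach that the paper considers and rejects in Remark~\ref{rmk4}. The Gronwall estimate quoted there, $\e\max_{0\le s\le u}|X(s)|^2\le 4u(C_0^2+1)e^{4u^2C_0^2}$ with $C_0=2\gamma(u)C$, is acknowledged to be ``not good enough for us since in the Parisi formula we take infimum over all $\gamma$ and this bound might diverge.'' Your refinement, which uses the sub-linear exponent to replace $\gamma(u)$ by $\int_0^u\gamma(s)\,ds$ inside the Gronwall exponential, is correct as far as the Itô/Gronwall bookkeeping goes, but the whole argument still hinges on the claim that the constant $C_1$ in $|\partial_x\Psi_{\lambda,\gamma}^{t,u}(s,x)|\le C_1(1+|x|^{1/(p-1)})$ can be taken to depend only on $(p,t,u,\lambda,\int_0^u\gamma)$. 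You flag this as the ``principal technical obstacle,'' but it is in fact false, not merely unproven.

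To see why, look at how the constant is produced in the Hopf--Cole iteration. Each application of Lemma~\ref{add:lem2}(v) with parameter $m=m_l$ produces a new constant $C'$ that contains the exponential moment $\e\bigl[(1+(1+\sqrt{b})^a|z|^a)\,e^{m C(1+(1+\sqrt{b})^{a+1}|z|^{a+1})}\bigr]$, where $C$ is the constant from the previous level. This diverges as $m\to\infty$ for any fixed $C>0$ and $b>0$. Now take $\gamma\in\mathcal{N}_u^d$ with a single atom of mass $M$ placed at $q_1=u-\varepsilon$; then $\int_0^u\gamma(s)\,ds=M\varepsilon$ stays bounded while $m_k=\gamma(u)=M\to\infty$ if $\varepsilon\downarrow 0$. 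The Hopf--Cole step between $q_1$ and $u$ uses $m=M$, so the resulting growth constant blows up along this family even though the aggregate $\int_0^u\gamma$ is fixed. Hence $C_1$ cannot be aggregated into a function of $\int_0^u\gamma$ alone, and your Gronwall bound does not yield the required uniformity in $\gamma$.

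The paper circumvents this in a structurally different way that never invokes a pointwise growth bound on the optimal control. It uses the variational identity from Proposition~\ref{rep3} with $v(s)=\partial_x\Psi_{\lambda,\gamma}^{t,u}(s,X(s))$: since $f_\lambda^t\ge 0$ implies $\Psi_{\lambda,\gamma}^{t,u}(0,0)\ge 0$, the identity $\Psi_{\lambda,\gamma}^{t,u}(0,0)=\e\Psi_{\lambda,\gamma}^{t,u}(u,X(u))-\int_0^u\gamma(s)\e v(s)^2\,ds$ forces $\int_0^u\gamma(s)\e v(s)^2\,ds\le \e\Psi_{\lambda,\gamma}^{t,u}(u,X(u))$. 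Cauchy--Schwarz then bounds $\e L^2$ (with $L=\int_0^u\gamma(s)v(s)\,ds$) by $\bigl(\int_0^u\gamma\bigr)\,\e\Psi_{\lambda,\gamma}^{t,u}(u,X(u))$, and the sub-quadratic growth of $f_\lambda^t$ (exponent $1+1/(p-1)<2$, using $p>2$) combined with $\e|X(u)|^2\le 16(\e L^2+u)$ gives a self-bounding inequality for $\e L^2$ with exponents strictly below one, which closes without ever needing a pointwise bound on $v$. This is where the assumption $p>2$ enters in an essential way, and it is precisely the mechanism that makes the bound depend only on $\int_0^u\gamma$.
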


\begin{proof} 
	From \eqref{lem3:eq2},
	\begin{align}
	\begin{split}\label{thm0:proof:eq3}
	\Psi_{\lambda ,\gamma }^{t,u}(u,X(u))&\leq 
	\Bigl(\frac{2}{t}\Bigr)^{1/(p-1)}|X(u)|^{1+1/(p-1)}+\Bigl(\frac{2|\lambda |}{t}\Bigr)^{1/(p-2)}|X(u)|\\
	&+ \Bigl(\frac{2}{t}\Bigr)^{2/(p-1)}|\lambda ||X(u)|^{2/(p-1)}+\Bigl(\frac{2|\lambda |}{t}\Bigr)^{2/(p-2)}|\lambda |.
	\end{split}
	\end{align}
	Note that the assumption $p>2$ ensures that
	\begin{align}
	\label{add:eq---8}
	\max\Bigl(1+\frac{1}{p-1},\frac{2}{p-1}\Bigr)<2.
	\end{align}
	Using Jensen's inequality yields
	\begin{align}
	\begin{split}\label{add:eq5}
	\e \Psi_{\lambda ,\gamma }^{t,u}(u,X(u))
	&\leq K_1\bigl(\bigl(\e|X(u)|^2\bigr)^{1/2+1/2(p-1)}+\bigl(\e|X (u)|^2\bigr)^{1/2}+\bigl(\e|X(u)|^2\bigr)^{1/(p-1)}+1\bigr)
	\end{split}
	\end{align}
	for $$
	K_1:= \Bigl(\frac{2}{t}\Bigr)^{1/(p-1)}+\Bigl(\frac{2|\lambda |}{t}\Bigr)^{1/(p-2)}+\Bigl(\frac{2}{t}\Bigr)^{2/(p-1)}|\lambda |+\Bigl(\frac{2|\lambda |}{t}\Bigr)^{2/(p-2)}|\lambda|.
	$$
	Let
$
	v(s):=\partial_x\Psi_{\lambda,\gamma}^{t,u}(s,X(s))
$
	and
	$
	L:=\int_0^u\gamma (s)v(s)ds.
	$
	Since
	\begin{align*}
	\e|X(u)|^2&\leq 16\bigl(\e L^2+\e|W(u)|^2\bigr)=16\bigl(\e L^2+u\bigr),
	\end{align*}
	it follows that by using the trivial bound $(x+y)^a\leq 2^a(x^a+y^a)$ for all $a,x,y\geq 0,$
$
	\bigl(\e|X(u)|^2\bigr)^a\leq 32^a(\e L^2)^a+32^au^a.
$
	From this and \eqref{add:eq5}, 
	\begin{align}\label{add:eq6}
	\e \Psi_{\lambda ,\gamma }^{t,u}(u,X(u))&\leq K_2\bigl(\bigl(\e L^2\bigr)^{1/2+1/2(p-1)}+\bigl(\e L^2\bigr)^{1/2}+\bigl(\e L^2\bigr)^{1/(p-1)}+1\bigr),
	\end{align}
	where $K_2$ depends on $t,u,p,\lambda$.
	By comparing the representation \eqref{rep4} of $\Psi_{\lambda ,\gamma }^{t,u}(0,0)$ with the optimal control process $v(s)=\partial_x\Psi_{\lambda,\gamma}^{t,u}(s,X(s))$  against the zero control process, we obtain that 
	\begin{align*}
	\e\Psi_{\lambda ,\gamma }^{t,u}(u,X(u))-\int_0^u \gamma (s)\e v(s)^2ds=\Psi_{\lambda ,\gamma }^{t,u}(0,0)\geq \e f_\lambda^{t,u}(\sqrt{2}W(u))\geq 0,
	\end{align*}
	which implies that, by the Cauchy-Schwarz inequality, 
	\begin{align*}
	\e\Psi_{\lambda ,\gamma }^{t,u}(u,X(u))\geq \int_0^u \gamma (s)\e v(s)^2ds\geq \frac{\e L^2}{\int_0^u\gamma(s)ds},
	\end{align*}
	From this  and \eqref{add:eq6}, we arrive at
	\begin{align*}
	\e L^2&\leq K_2\Bigl(\int_0^u\gamma(s)ds \Bigr)\bigl(\bigl(\e L^2\bigr)^{1/2+1/2(p-1)}+\bigl(\e L^2\bigr)^{1/2}+\bigl(\e L^2\bigr)^{1/(p-1)}+1\bigr).
	\end{align*}
	From \eqref{add:eq---8}, $1/2+1/2(p-1)$ and $1/(p-1)$ are strictly less than $1$. Now, if $\e L^2\geq 1,$ dividing $(\e L^2)^\delta$ on the both sides of the above inequality deduces that
	$$
	\bigl(\e L^2\bigr)^{1-\delta}\leq 4K_2\int_0^u\gamma(s)ds 
$$
	for $
	\delta:=\max\bigl(1/2+1/2(p-1),1/(p-1)\bigr).
	$
	Hence, we arrive at the bound,
	$$
	\e L^2\leq K_3:=\max\Bigl(1,\Bigl(4K_2\int_0^u\gamma(s)ds \Bigr)^{1/(1-\delta)}\Bigr).
	$$ 
	Our proof then follows by using
	$
	\e|X(u)|^2\leq 16 (\e L^2+u).
$
\end{proof}

We now establish \eqref{add:eq---6}. Recall the optimizer $(\lambda_M,\gamma_M)$ of $\mathcal{P}_{t,u,M}$ from Proposition \ref{add:prop1}. Note that from the continuity of $\mathcal{P}_{t,u,M}$ in Proposition \ref{property2}, we can approximate $\mathcal{P}_{t,u,M}(\lambda_M,\gamma_M)$ by considering $\mathcal{P}_{t,u,M}(\lambda_M,\gamma)$ for $\gamma\in \mathcal{N}_u^d$ and by letting $\gamma$ to be as close as we want to $\gamma_M$ under the norm $\int_0^u|\gamma(s)-\gamma_M(s)|ds.$ Hence, without loss of generality, we may assume that $\gamma_M\in \mathcal{N}_u^d.$ Recall from \eqref{add:prop1:eq1} that $\lambda_M$ and $\int_0^u\gamma_Mds$ are bounded by a constant $C$ independent of $M.$

As $p>2$, in the definition of $f_\lambda^t$, the term $t|r|^p$ dominates the linear term $xr$ and the quadratic term $\lambda r^2$. So,  for any $L>0,$ there exists some $M_L>0$ such that 
\begin{align*}
f_{\lambda}^t(x)=f_{\lambda}^{t,M}(x),\,\,\ \text{ for all }  |x|\leq L,|\lambda|\leq C,M\geq M_L.	
\end{align*}
Hence, as long as $M\geq M_L,$  we can write that for any $x\in \mathbb{R},$
\begin{align*}
f_{\lambda}^t(x)&=f_{\lambda}^{t,M}(x)1_{\{|x|\leq L\}}+f_{\lambda}^t(x)1_{\{|x|>L\}}\leq f_{\lambda}^{t,M}(x)+f_\lambda^t(x)1_{\{|x|>L\}}.
\end{align*}
Let $X_M$ be the stochastic process $X$ defined in Proposition \ref{rep3} associated to $\gamma=\gamma_M.$ From this inequality, letting $v_M(s):=\partial_x\Psi_{\lambda_M,\gamma_M}^{t,u}(s,X_M(s)),$ and using Propositions \ref{property1} and \ref{rep3}, we have
\begin{align}
\begin{split}\label{add:thm0:proof:eq2}
\Psi_{\lambda_M ,\gamma_M}^{t,u}(0,0)
&=\e\Psi_{\lambda_M ,\gamma_M }^{t,u}(u,X_{M}(u))-\int_0^u \gamma_M (s)\e v_M(s)^2ds\\
&\leq \Psi_{\lambda_M ,\gamma_M }^{t,u,M}(0,0)+\e\Psi_{\lambda_M ,\gamma_M }^{t,u}(u,X_M(u))1_{\{|X_M(u)|>L\}}.
\end{split}
\end{align}
To handle the second term, we need some moment controls.
Note that $p>2$ implies 
$$
\zeta:=\frac{2}{\max\bigl(1+\frac{1}{p-1},\frac{2}{p-1}\bigr)}\in (1,2).
$$ 
From \eqref{thm0:proof:eq3} and Lemma \ref{lem7}, we can apply Jensen's inequality along with the bounds $|\lambda_M|\leq C$ and $\int_0^u\gamma_Mds\leq C$ to get
\begin{align*}
\e\bigl|\Psi_{\lambda_M ,\gamma_M }^{t,u}(u,X_{M}(u))\bigr|^{\zeta}
&\leq 
K\bigl(\bigl(\e |X_M(u)|^2\bigr)^{\frac{\zeta}{2}\bigl(1+\frac{1}{p-1}\bigr)}+\bigl(\e |X_M(u)|^2\bigr)^{\frac{\zeta}{2}}+\bigl(\e |X_M(u)|^2\bigr)^{\frac{\zeta}{p-1}}+1\bigr)\leq K',
\end{align*}
where $K$ and $K'$ are constants depending only on $p,t,u$.
On the other hand, from the Markov inequality and Lemma \ref{lem7},
\begin{align*}
\p\bigl(|X_M(u)|>L\bigr)&\leq \frac{\e |X_M(u)|^2}{L^2}\leq \frac{K''}{L^2},
\end{align*}
where again $K''$ depends only on $p,t,u.$ Applying  the H\"older inequality to \eqref{add:thm0:proof:eq2} and using the above two inequalities yield that
\begin{align*}
\Psi_{\lambda_M ,\gamma_M }^{t,u}(0,0)&\leq \Psi_{\lambda_M ,\gamma_M }^{t,u,M}(0,0)+\bigl(\e\bigl|\Psi_{\lambda_M ,\gamma_M }^{t,u}(u,X_M(u))\bigr|^{\zeta}\bigr)^{1/\zeta}\p\bigl(|X_M(u)|>L\bigr)^{1/\zeta'}\\
&\leq \Psi_{\lambda_M ,\gamma_M }^{t,u,M}(0,0)+\frac{{K'}^{1/\zeta}{K''}^{1/\zeta'}}{L^{2/\zeta'}},
\end{align*}
where $\zeta'$ is the H\"older conjugate exponent of $\zeta.$ As a result, for any $M\geq M_L,$
\begin{align*}
\inf_{(\lambda,\gamma)\in \mathbb{R}\times\mathcal{N}_u^d}\mathcal{P}_{t,u}(\lambda,\gamma)&\leq \mathcal{P}_{t,u}(\lambda_M,\gamma_M)\\
&\leq \mathcal{P}_{t,u,M}(\lambda_M,\gamma_M)+\frac{{K'}^{1/\zeta}{K''}^{1/\zeta'}}{L^{2/\zeta'}}=\inf_{(\lambda,\gamma)\in \mathbb{R}\times\mathcal{N}_u}\mathcal{P}_{t,u,M}(\lambda,\gamma)+\frac{{K'}^{1/\zeta}{K''}^{1/\zeta'}}{L^{2/\zeta'}}.
\end{align*}
Since this is valid for any $L>0$ and $K',K''$ are independent of $L,$ sending $M\to \infty$ and then $L\to \infty$ completes the proof of \eqref{add:eq---6}.

\subsection{Upper bound}\label{12.7}
Next, we verify that 
\begin{align}\label{add:eq---7}
L_{p,u}(t)\leq \inf_{\lambda\in \mathbb{R},\gamma\in \mathcal{N}_u^d}\mathcal{P}_{t,u}(\lambda,\gamma).
\end{align}
From Proposition \ref{extra:prop1}, we have seen that
\begin{align}\label{add:eq--1}
L_{p,u}(t)\leq  \lim_{\beta \to\infty}\limsup_{n\to\infty}\e F_{n,t,u,\beta^{-2}}(\beta).
\end{align}
To control the right-hand side, recall $\mathcal{P}_{\beta,t,u}(\lambda,\alpha)$ for $(\lambda,\alpha)\in \mathbb{R}\times\mathcal{M}_u^d$ from Table \ref{tab:1}. The Replica Symmetry Breaking bound, due to Guerra (see \cite{guerra2003broken} and \cite{Pan05}), states that for any $\beta>0,$ $\varepsilon>0$, and $(\lambda,\alpha)\in \mathbb{R}\times\mathcal{M}_u^d$, we have that
\begin{align}\label{guerra}
\e F_{n,t,u,\varepsilon}(\beta)\leq \mathcal{P}_{\beta,t,u}(\lambda,\alpha)+C_0\beta \varepsilon,
\end{align}
where $C_0>0$ is a universal constant independent of $n,\beta,$ and $\varepsilon$. Recall $f_{\lambda}^{\beta,t}(x)$ and $f_{\lambda}^{t}(x)$ from Table \ref{tab:1}.
To relate $\mathcal{P}_{\beta,t,u}(\lambda,\alpha)$ to $\mathcal{P}_{t,u}(\lambda,\gamma),$ fix $(\lambda,\gamma)\in \mathbb{R}\times \mathcal{N}_u^d$ and let $\alpha(s)=\beta^{-1}\gamma(s)1_{[0,u)}(s)+1_{\{u\}}(s).$ Note that $\alpha\in \mathcal{M}_u^d$ as long as $\beta$ is large enough.  
For any $0<\delta<t,$ since
\begin{align*}
f_\lambda^{\beta,t}(x)&=\frac{1}{\beta}\log \frac{\int_{-\infty}^\infty e^ {\beta(rx+\lambda r^2-(t-\delta)|r|^p)}e^{-\beta\delta|r|^p}dr}{\int_{-\infty}^\infty e^{-\beta\delta|r|^p}dr}+\frac{1}{\beta}\log \int_{-\infty}^\infty e^{-\beta\delta|r|^p}dr\\
&\leq f_\lambda^{t-\delta}(x)+\frac{1}{\beta}\log \int_{-\infty}^\infty e^{-\beta\delta|r|^p}dr
\end{align*}
and $\int_{-\infty}^\infty e^{-\beta\delta|r|^p}dr=(\beta \delta)^{-1/p}\int_{-\infty}^\infty e^{-|s|^p}ds$, it can be argued by using the Hopf-Cole transformation in Lemma \ref{add:lem2}  iteratively  that
\begin{align*}
\Phi_{\lambda,\alpha}^{\beta,t,u}(0,0)\leq \Psi_{\lambda,\gamma}^{t-\delta,u}(0,0)
-\frac{1}{p\beta}\log \beta \delta+\frac{1}{\beta}\log \int_{-\infty}^\infty e^{-|s|^p}ds.
\end{align*}
Note $\int_0^u s\beta \alpha(s)ds=\int_0^u s\gamma(s)ds$. From \eqref{add:eq--1} and \eqref{guerra}, after sending $\beta$ to infinity,
$
L_{p,u}(t)\leq \mathcal{P}_{t-\delta,u}(\lambda,\gamma).
$
Finally, using \eqref{add:eq---11}, \eqref{appendix:lem1:eq2}, and the dominated convergence theorem yields $\lim_{\delta\downarrow 0}\mathcal{P}_{t-\delta,u}(\lambda,\gamma)=\mathcal{P}_{t,u}(\lambda,\gamma)$ and consequently,
$
L_{p,u}(t)\leq \mathcal{P}_{t,u}(\lambda,\gamma).
$
Taking infimum in this inequality validates \eqref{add:eq---7}.

\section{Open Questions}\label{sec13}

In this section, we mention a few open problems and further directions.

\begin{enumerate}
\item Compute the order and the limiting distribution  of the fluctuation of the $\ell_p$-Grothendieck problem. The asymptotic fluctuation for $p=2$ (the largest eigenvalue of the GOE) is known to follow the Tracy-Widom distribution with the usual $n^{-1/6}$ scaling \cite{TW1, TW2}. The proof of Theorem \ref{prop:p=1} suggests that the limiting fluctuation in
the case $p=1$ could possibly be Gumbel. For the case $2<p<\infty$, since $L_{n,p}(t)$ is essentially a SK model with $\ell_p$ external field, it seems reasonable to believe that the limiting fluctuation of $L_{n,p}$ is Gaussian (see \cite{CDP17}) and in view of \eqref{add:lem5:eq0}, $G_{n,p}$ should also be Gaussian up to a transformation. When $p=\infty$, the $\ell_p$-Grothendieck problem becomes  the ground state energy of the SK model and the numerical studies, see, e.g., \cite{PM08}, indicate that the limiting fluctuation should follow the Gumbel distribution, even though a rigorous proof remains elusive. We do not have a prediction in the case $1<p<2$.
To analyze this case, one would naturally use the fact that the global optimizer lies close to one of the approximate optimizers in $\mathcal{O}$, described in \eqref{add:eq---1}. However,   it may be possible that determining the limiting distribution of fluctuation requires finer approximation for the global optimizer (with better error bound).


    \item For the $\ell_p$-Grothendieck problem for $2<p<\infty$, show that with high probability, there exist exponentially many near optimizers that are nearly mutually orthogonal to each other. 
    

\item  For the $\ell_p$-Grothendieck problem for $2<p<\infty$,  devise a polynomial-time algorithm that outputs an approximate optimizer with high probability. For $p=\infty$ (SK model), \cite{alaoui2020optimization,montanari2019optimization} used the Approximate Message Passing  (AMP) algorithm to find a near ground state in polynomial time with high probability (see also \cite{subag18}). 

\item Improve the delocalization bound  \eqref{thm2:eq3} for $2 < p < \infty$. In this case, we expect that an optimizer $x_*$ in the unit $\ell_p$-ball should satisfy
 $\| x_*\|_\infty = O( (\log n)^C n^{-1/p})$ for some constant $C>0$ with high probability. 

\item Study the $\ell_p$-Grothendieck problem for random i.i.d.\ matrices with more general entry distribution (for example, with subgaussian distribution). Do we have universality if $2 < p< \infty$?

	\item 
	 Find the limit of the operator norm $\| G_n\|_{p \to q}$ for all $1 \leq  p,q \leq \infty$. In view of Proposition~\ref{prop:operator_norm}, this problem is yet to be resolved  for pairs $(p, q)$ satisfying  $(p,q)\in (1,\infty]\times [1,\infty)$ and $\mathfrak{p}(q,p^*)= \mathfrak{p}(p^*,q)$, which is the set of pairs satisfying that
	 $$
	 \mbox{$(i)$ $(p,q)\in  (1,2]\times[2,\infty)$ with $q=p^*$ \ \ or  \ \ $(ii)$ $(p,q)\in [2,\infty]\times [1,2]$.}
	 $$
	 A special case of (i) and (ii) is when $1 < p \leq \infty$ and $q = p^*$, which corresponds to the $p$-to-$p^*$ norm  and it can be expressed as a bipartite $\ell_p$-Grothendieck problem via
	\[ \| G_n\|_{p \to p^*} = \max_{x, y \in \mathbb{R}^n:\|x\|_p \le 1, \|y\|_p \le 1 }\la G_nx,y\ra. \]
For $1 < p <2$, an argument similar to the one presented in Theorem~\ref{eq:thm_p<2} might be useful to find the limit of $\| G_n\|_{p \to p^*}$.  For $p =\infty$, the limit of $\| G_n\|_{p \to p^*} = \| G_n\|_{\infty \to 1} $ is known as the ground state energy of the bipartite SK model and is conjectured to admit a minmax-type  Parisi formula (see \cite{BGG11,mourrat20}). For $2<p<\infty,$ we expect that the same formulation should still be valid.

	\begin{remark}\rm In a recent work, Dhara-Mukherjee-Ramanan \cite{DMR20} studied the $p$-to-$q$-norm for $n \times n$ symmetric random matrix $A_n$ with i.i.d.\ non-negative entries, including the adjacency matrices of Erd\"os-R\'enyi random graphs. The $p$-to-$q$-norm is related to the $\ell_p$-Grothendieck problem in the following way (see \cite[Proposition 2.14]{DMR20}):
	for $p \ge 2$ and for any $n \times n$ matrix $A_n$ such that $A_n^TA_n$ is an irreducible matrix with nonnegative entries, it holds that 
	\[ \|A_n\|_{p \to p^*}= \max_{x\in \mathbb{R}^n:\|x\|_p \le1}\la A_n x,x\ra . \]
	In \cite{DMR20},   Gaussian fluctuation  for $\| A_n\|_{p \to q}$ if   $1 \le q \le p < \infty$ was established, which includes, as a special case, the $\ell_p$-Grothendieck problem for $A_n$ for $p \ge 2$. However, the proof uses a non-linear power iteration method, originally introduced by Boyd \cite{boyd74},  to approximate the $p$-to-$q$-norm. It  relies heavily  on the non-negativity of the entries of $A_n$ and cannot be directly applied to the Gaussian case. Also, for $1 \le q \le p < \infty$, the optimizer for $\| A_n\|_{p \to q}$ is close to the scaled constant vector. Obviously, this does not  hold in the Gaussian case. 
\end{remark}

\end{enumerate}

\section*{Acknowledgement} WKC and AS would like to thank Souvik Dhara for explaining the results of \cite{DMR20} and for helpful discussion. 

\appendix

\section{Stability of H\"older's inequality}\label{appA}

In this appendix, we gather some quantitative results on the inequalities used in the proof of Theorem \ref{extra:thm1}. The first lemma provides a stability bound for H\"older's inequality.

\begin{lemma} \label{lem:holder_stability}
	Let $1 < p \le 2$ and let $w$  be a non-zero vector in $\r^n$. Let $v$ be the vector with unit $\ell_p$-norm such that $\la v, w\ra=\| w\|_{p^*}$, i.e., $v$ satisfies $$\mathrm{sgn}(v_i) |v_i|^p = \frac{\mathrm{sgn}(w_i)  |w_i|^{p^*}}{ \| w\|_{p^*}^{p^*}},\,\,\forall i.$$   Then for any $u \in \r^n$ with $\| u \|_p=1$, 
	\[ \la u, w\ra \le \| w\|_{p^*} \Big (1  -  \frac{p^2}{16p^*}\| u - v \|_p^2\Big).\]
\end{lemma}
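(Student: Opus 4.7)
The plan is to reduce the inequality to a standard quantitative form of the strict convexity of the $\ell_p$-norm for $1 < p \le 2$. By the homogeneity of both sides in $w$, I may assume without loss of generality that $\|w\|_{p^*}=1$, so that H\"older's inequality is saturated at $v$: $\langle v, w\rangle = 1$. The target inequality then becomes $\langle u, w\rangle \le 1 - \frac{p^2}{16p^*}\|u-v\|_p^2$.

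The first step is a simple averaging trick. Apply H\"older's inequality to the midpoint $(u+v)/2$:
\begin{align*}
\frac{\langle u, w\rangle + 1}{2} = \left\langle \frac{u+v}{2}, w\right\rangle \le \left\|\frac{u+v}{2}\right\|_p,
\end{align*}
which rearranges to
\begin{align*}
\langle u, w\rangle \le 2\left\|\frac{u+v}{2}\right\|_p - 1.
\end{align*}

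The second step is to invoke the $2$-uniform convexity of $\ell_p$ for $1 < p \le 2$, a well-known consequence of Hanner's inequality: any two unit vectors $u, v \in \ell_p^n$ satisfy
\begin{align*}
\left\|\frac{u+v}{2}\right\|_p \le 1 - \frac{p-1}{8}\|u-v\|_p^2.
\end{align*}
Plugging this into the previous display gives
\begin{align*}
\langle u, w\rangle \le 1 - \frac{p-1}{4}\|u-v\|_p^2.
\end{align*}
To conclude, observe that $p-1 = p/p^*$, so $\frac{p-1}{4} = \frac{p}{4p^*} \ge \frac{p^2}{16p^*}$ since $p \le 2 < 4$. Restoring the factor $\|w\|_{p^*}$ yields the stated bound.

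There is no real obstacle here: the argument has only one non-trivial input (the quadratic modulus-of-convexity estimate for $\ell_p$), and the constant $p^2/(16p^*)$ in the statement is comfortably weaker than the $(p-1)/4 = p/(4p^*)$ that the above chain of inequalities produces, leaving plenty of slack. If one prefers a self-contained derivation not appealing to Hanner/Ball-Carlen-Lieb directly, the needed estimate can instead be obtained by a coordinate-wise second-order Taylor expansion of $s \mapsto \|v + s(u-v)\|_p^p$ on $s\in[0,1]$, exploiting that $\|u\|_p = \|v\|_p = 1$ (so the function equals $1$ at both endpoints) and that its second derivative is bounded below by a multiple of $\|u-v\|_p^2$ on account of the convexity of $x \mapsto |x|^p$ for $p>1$.
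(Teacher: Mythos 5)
Your proof is correct, and it takes a genuinely different route from the paper's. After the same normalization ($\|w\|_{p^*}=1$), you pass through the midpoint $\tfrac{u+v}{2}$, apply H\"older, and then invoke the modulus-of-convexity estimate $\|\tfrac{x+y}{2}\|_p\le 1-\tfrac{p-1}{8}\|x-y\|_p^2$ for unit vectors in $\ell_p$, $1<p\le 2$ (a consequence of Hanner's inequality / Ball--Carlen--Lieb 2-uniform convexity). The arithmetic $p-1=p/p^*$ and $\tfrac{p}{4p^*}\ge\tfrac{p^2}{16p^*}$ for $p\le 2$ is right, so you in fact prove the stronger bound with constant $\tfrac{p-1}{4}$ in place of $\tfrac{p^2}{16p^*}$. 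The paper instead works entirely at the scalar level: it establishes a quantitative Young's inequality of the form $\tfrac{|a|^p}{p}+\tfrac{|b|^{p^*}}{p^*}\ge ab+\tfrac{1}{2p^*}\bigl(|a|^{p/2}\mathrm{sgn}(a)-|b|^{p^*/2}\mathrm{sgn}(b)\bigr)^2$ (citing \cite{Aldaz} for the same-sign case), sums it coordinate-wise, and then converts the resulting $\ell_2$-type defect into an $\ell_p$-distance via elementary pointwise bounds and Jensen. The paper's argument is more elementary and self-contained but produces a weaker constant; yours is shorter and structurally cleaner, at the cost of relying on the convexity modulus of $\ell_p$ as a black box. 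Both are valid; note only that your argument never needs the explicit formula for $v$ — it uses only that $\|v\|_p=1$ and $\langle v,w\rangle=\|w\|_{p^*}$ — which is fine since the lemma's formula is precisely the (unique) extremal vector with these two properties.
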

\begin{proof} First of all, we claim that for $1 < p \le 2$ and $a, b \in \r$, we have the following stability bound for Young's inequality,
	\[ \frac{|a|^p}{p} +  \frac{|b|^{p^*}}{p^*}  \ge ab + \frac{1}{2p^*} \big(|a|^{p/2} \mathrm{sgn}(a) - |b|^{p^*/2}\mathrm{sgn}(b) \big)^2.   \]
		The case when both $a$ and $b$ have the same sign follows from  \cite[Lemma 2.1]{Aldaz}. If $a$ and $b$ have opposite signs, then we need to show, after ignoring the negative term $ab$, that
	\[ \frac{|a|^p}{p} +  \frac{|b|^{p^*}}{p^*}  \ge  \frac{1}{2p^*} \big(|a|^{p/2}  + |b|^{p^*/2}\big)^2,   \]
	which follows from the inequality $ (x+y)^2 \le 2 (x^2 +y^2)$ and the fact that $p \le p^*$. This completes the proof of our claim.
	
	  To prove our main result, without loss of generality, assume that $\|w\|_{p^*}=1$. Applying our claim to $u_i$ and $w_i$ and then summing over $i$, we obtain
	\begin{align*}
	\la u, w\ra &\le1  - \frac{1}{2p^*} \sum_i \big(|u_i|^{p/2} \mathrm{sgn}(u_i) - |w_i|^{p^*/2} \mathrm{sgn}(w_i)\big)^2 \\
	&=  1  - \frac{1}{2p^*} \sum_i \big(|u_i|^{p/2} \mathrm{sgn}(u_i) - |v_i|^{p/2}\mathrm{sgn}(v_i) \big)^2.  
	\end{align*}
	For $0 \le \theta \le 1$,  we have
	\begin{equation}\label{eq:calc1}
	1 - \theta^{p/2}  = \int_\theta^1 \tfrac{p}{2}  x^{p/2-1} dx \ge \tfrac{p}{2}(1 - \theta) \ \  \text{ and }  \ \ 1 + \theta^{p/2}  \ge 1 + \theta \ge \tfrac{p}{2}(1 + \theta). 
	\end{equation}
	Set $a_i = \min (|u_i|, |v_i|)$ and $b_i = \max(|u_i|, |v_i|)$. Also, let $K = \sum_i b_i^p \in [1, 2]$. We now write
	\begin{align*}
	&\ \ \sum_i \big(|u_i|^{p/2} \mathrm{sgn}(u_i) - |v_i|^{p/2}\mathrm{sgn}(v_i) \big)^2 \\
    &= K \sum_{i:b_i \ne 0} \frac{b_i^p}{K}\Bigl( 1  - \mathrm{sgn}(u_i v_i) \Bigl(  \frac{a_i}{b_i} \Bigr)^{p/2} \Bigr)^2 \ge \frac{Kp^2}{4}  \sum_{i : b_i \ne 0} \frac{b_i^p}{K}\Bigl( 1  - \mathrm{sgn}(u_i v_i) \frac{a_i}{b_i} \Bigr)^2    \\
	&\ge \frac{Kp^2}{4}  \Bigl(  \sum_{i: b_i \ne 0} \frac{b_i^p}{K} \Big | 1  - \mathrm{sgn}(u_i v_i) \frac{a_i}{b_i} \Big|^p  \Bigr)^{2/p}   = \frac{p^2}{4K^{2/p-1}} \Bigl(  \sum_{i: b_i \ne 0}  \Big | b_i  - \mathrm{sgn}(u_i v_i) a_i \Big|^p  \Bigr)^{2/p} \\
	&= \frac{p^2}{4K^{2/p-1}} \Bigl(  \sum_{i: b_i \ne 0}  \Bigl| u_i  - v_i \Bigr |^p  \Bigr)^{2/p} \ge \frac{p^2}{8} \| u - v\|_p^2,
	\end{align*}  
	where the first  inequality follows from \eqref{eq:calc1} and the second one uses Jensen's inequality. This finishes our proof.
\end{proof}

Next we show that if a $\ell_p$-unit vector ($1< p< 2$) stays away from the coordinate vectors, then its $\ell^2$-norm must be strictly less than one. 

\begin{lemma}\label{lem:deficient_l_2_norm}
	Fix $1 < p < 2$. There exists constant  $c>0$ such that if $x \in \r^n$ satisfies  $\|x \|_p=1$ and $\| x  \pm e_i\|_p \ge \delta$ for all $i$ and for some  $\delta>0$, then 
	$ \|x \|_2 \le 1 - c\delta^p.$
\end{lemma}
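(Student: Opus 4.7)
\textbf{Proof plan for Lemma~\ref{lem:deficient_l_2_norm}.}

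The plan is to reduce everything to a one-parameter problem involving the largest coordinate of $x$. Let $i_0 \in [n]$ achieve $M := \|x\|_\infty = |x_{i_0}|$ and set $\sigma = \mathrm{sgn}(x_{i_0}) \in \{\pm 1\}$. Since $\|x\|_p = 1$, the coordinates other than $i_0$ contribute $\sum_{i \ne i_0} |x_i|^p = 1 - M^p$, so a direct computation gives
\[
\|x - \sigma e_{i_0}\|_p^p = |x_{i_0} - \sigma|^p + \sum_{i \ne i_0} |x_i|^p = (1-M)^p + (1 - M^p).
\]
The hypothesis applied with this choice of $i_0$ and sign yields $(1-M)^p + (1 - M^p) \ge \delta^p$. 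Note also that the hypothesis forces $\delta \le 2$ (from the triangle inequality), so we may assume $\delta \in (0, 2]$.

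First I would establish the deterministic inequality
\[
M^p \le 1 - c_1 \delta^p
\]
for some $c_1 > 0$ depending only on $p$. Split into two cases depending on which term on the left-hand side of $(1-M)^p + (1-M^p) \ge \delta^p$ is at least $\delta^p/2$. In the first case the bound $M^p \le 1 - \delta^p/2$ is immediate. In the second case, $1 - M \ge 2^{-1/p}\delta$, and I would invoke the elementary fact that $(1-t)^p + t^p \le 1$ for $t \in [0,1]$ and $p \ge 1$ (which follows from $a^p + b^p \le a + b$ whenever $a,b \in [0,1]$ with $a+b = 1$) to deduce $M^p \le 1 - (2^{-1/p}\delta)^p = 1 - \delta^p/2$. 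In either case, $c_1 = 1/2$ works.

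Next I would bound $\|x\|_2$ using the largest coordinate. Since $|x_i| \le M$ for every $i$, one has
\[
\|x\|_2^2 = \sum_i |x_i|^2 = \sum_i |x_i|^{2-p} |x_i|^p \le M^{2-p} \sum_i |x_i|^p = M^{2-p},
\]
so $\|x\|_2 \le M^{(2-p)/2} = (M^p)^{(2-p)/(2p)} \le \bigl(1 - \tfrac{1}{2}\delta^p\bigr)^{(2-p)/(2p)}$. Since $(2-p)/(2p) \in (0,1)$ for $p \in (1,2)$, Bernoulli's inequality $(1-t)^\alpha \le 1 - \alpha t$ (valid for $t \in [0,1]$ and $\alpha \in [0,1]$) gives
\[
\|x\|_2 \le 1 - \frac{2-p}{4p}\delta^p,
\]
so one may take $c = (2-p)/(4p)$. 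For $\delta$ close to the upper end (where $\tfrac{1}{2}\delta^p$ might exceed $1$), the same conclusion holds after trivially shrinking $c$, since $\|x\|_2 \le \|x\|_p = 1$ and $\delta$ is bounded by $2$.

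There is no genuine obstacle here — every inequality is elementary — but the only place requiring a bit of care is the two-case analysis that produces $M^p \le 1 - c_1\delta^p$; in particular one must avoid the naive bound $M \le 1 - c\delta$, which does not translate into a clean bound on $M^p$ when $p > 1$, and instead use $(1-t)^p + t^p \le 1$ to convert the $\ell_p$-distance $\delta$ on the coordinate $i_0$ into a deficit in $M^p$ of the correct order $\delta^p$.
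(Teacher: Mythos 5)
Your proof is correct, and it takes a somewhat cleaner and more direct route than the paper's. Both proofs share the same first move — bound the largest coordinate $M := \|x\|_\infty$ away from $1$ by applying the hypothesis at the index where $M$ is attained — but they differ in what they bound and in the second step. The paper shows $M \le 1 - \delta^p/3$ via the comparison $(1-M)^p + 1 - M^p < (1-M) + 1 - M^2 = (1-M)(2+M)$ (valid for $x_i \ge 0$ and $1 < p < 2$), and then bounds $\|x\|_2^2$ by an extremal argument: under the constraints $\|x\|_p=1$, $\|x\|_\infty \le K$ with $K$ close to $1$, the $\ell_2$-norm is maximized by a two-coordinate configuration with entries $K$ and $(1-K^p)^{1/p}$, and this is estimated by hand. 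This forces a reduction to $\delta \le \delta_0$ for small $\delta_0$ at the start. You instead bound $M^p$ (not $M$), using the elementary fact $(1-M)^p + M^p \le 1$ in one branch of a two-case split, and then convert this to an $\ell_2$-bound via the cruder but cleaner interpolation $\|x\|_2^2 = \sum_i |x_i|^{2-p}|x_i|^p \le M^{2-p}$ followed by Bernoulli. This avoids the smallness reduction entirely (and indeed your worry about $\delta^p/2 > 1$ never materializes, since $(1-M)^p + 1 - M^p \le 2$ already forces $\delta^p \le 2$, so the fallback sentence at the end is superfluous). A minor remark: your case split can be collapsed, since $(1-M)^p \le 1-M^p$ follows immediately from $(1-M)^p + M^p \le 1$, giving $\delta^p \le 2(1-M^p)$ directly; but this is cosmetic. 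Your argument produces an explicit constant $c=(2-p)/(4p)$, whereas the paper's sharper extremal step would give a slightly larger constant at the price of more bookkeeping. Both are valid; yours is arguably the simpler write-up.
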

\begin{proof}
	It is enough to prove the bound for $0 < \delta  \le \delta_0$ for sufficiently small constant $\delta_0$. 
	We first claim that $\|x\|_\infty \le 1 -\delta^p/3$. Suppose, if possible, that $|x_i| \ge 1-\delta^p/3$ for some $i$. For definiteness, assume that $x_i \ge 1-\delta^p/3$ as the case  $x_i \le -(1-\delta^p/3)$ can be dealt similarly. Using the identity $\sum_{j \ne i} |x_j|^p = 1 - |x_i|^p$, we obtain that
	\begin{align*}
	\| x - e_i\|_p^p   = |1- x_i|^p + 1 - |x_i|^p < |1- x_i| + 1 - |x_i|^2 =(1-x_i)(2+x_i)\leq \frac{\delta^p}{3}\cdot 3=\delta^p,
	\end{align*}
	contradicting the assumption that $\|x -e_i\|_p \ge \delta$. Under the constraints $\|x\|_\infty \le 1 -\delta^p/3$ and $\|x\|_p=1$, the $\ell_2$-norm of $x$ is maximized if $x$ is supported only on two coordinates, one of them being equal to $1 -\delta^p/3$ in magnitude and the other non-zero component of $x$ must have absolute value $(1 - (1 -\delta^p/3)^p )^{1/p} $. Hence, 
	\begin{align*}
	\|x\|_2^2 &\le (1 -\delta^p/3)^2 + (1 - (1 -\delta^p/3)^p )^{2/p} \le (1 -\delta^p/3)^2 + (1 - (1 -\delta^p/3)^2 )^{2/p} \\
	&\le 1 -2 \delta^p/3  + (\delta^p/3)^2 + (2 \delta^p/3 )^{2/p} \le 1 - \delta^p/3,
	\end{align*}
	by choosing $\delta_0$ sufficiently small. This implies that $\|x\|_2 \le (1 - \delta^p/3)^{1/2} \le  1 - \delta^p/6$.
\end{proof}

\section{Analytic results of the Parisi PDE}\label{AppB}
For $(\lambda,\gamma)\in \mathbb{R}\times\mathcal{N}_u^d,$ recall the PDE solution $\Psi_{\lambda,\gamma}^{t,u}$ defined in \eqref{hopfcole}. In this appendix, we will gather some regularity properties of this PDE solution and provide the proof of Proposition \ref{rep3}. First of all, the boundary condition $f_\lambda^t$ has the following regularity.

\begin{lemma}\label{lem3}
	If $\lambda =0,$ then for all $x\neq 0,$
	\begin{align}
	\begin{split}\label{add:eq-1}
	f_{\lambda}^t(x)&=\frac{(p-1)}{p}\frac{1}{p^{\frac{1}{p-1}}t^{\frac{1}{p-1}}}|x|^{1+\frac{1}{p-1}},\\
	\Bigl|\frac{d}{dx}f_\lambda^t(x)\Bigr|&=\frac{1}{p^{\frac{1}{p-1}}t^{\frac{1}{p-1}}}|x|^{\frac{1}{p-1}}.
	\end{split}
	\end{align}
	If $\lambda\neq 0,$ then $f_{\lambda}^t$ is twice differentiable on $\mathbb{R}\setminus\{0\}$ and for $x\neq 0,$ 
	\begin{align}
	\begin{split}\label{lem3:eq2}
	0<f_\lambda^t(x)&\leq 
	\Bigl(\frac{2}{t}\Bigr)^{1/(p-1)}|x|^{1+1/(p-1)}+\Bigl(\frac{2|\lambda|}{t}\Bigr)^{1/(p-2)}|x|\\
	&+ \Bigl(\frac{2}{t}\Bigr)^{2/(p-1)}|\lambda||x|^{2/(p-1)}+\Bigl(\frac{2|\lambda|}{t}\Bigr)^{2/(p-2)}|\lambda|
	\end{split}
	\end{align}
	and
	\begin{align}\label{lem3:eq3}
	\Bigl|\frac{d}{dx}f_\lambda^t(x)\Bigr|&\leq \max\Bigl(\Bigl(\frac{2|x|}{t}\Bigr)^{1/(p-1)},\Bigl(\frac{2|\lambda|}{t}\Bigr)^{1/(p-2)}\Bigr).
	\end{align}

\end{lemma}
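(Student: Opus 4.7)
The plan is a direct analysis of the inner optimization problem defining $f_\lambda^t$, combined with the envelope identity $\tfrac{d}{dx} f_\lambda^t(x)=r^*(x)$, where $r^*(x)$ is the (generically unique) maximizer of $g(r):= rx+\lambda r^2-t|r|^p$.

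\textbf{Step 1 (Case $\lambda=0$).} The supremum of $rx-t|r|^p$ is attained because $p>2$ forces $g(r)\to-\infty$ as $|r|\to\infty$. By symmetry, assume $x>0$; the first-order condition $x=pt\,r^{p-1}$ gives $r^*=(x/(pt))^{1/(p-1)}$, and substitution produces
$$
f_0^t(x)=r^*x-t(r^*)^p=\Bigl(1-\tfrac1p\Bigr)r^*x=\frac{p-1}{p}\,\frac{x^{1+1/(p-1)}}{(pt)^{1/(p-1)}},
$$
which is \eqref{add:eq-1} after inserting $|x|$. The derivative formula then follows from the envelope relation $\tfrac{d}{dx}f_0^t(x)=r^*(x)$.

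\textbf{Step 2 (Case $\lambda\neq 0$: existence, nonvanishing, and the key \emph{a priori} bound).} Again the supremum is attained because $p>2$. For $x\neq 0$, the maximizer $r^*(x)$ is nonzero: for $\lambda\le 0$ take $r=\varepsilon\,\mathrm{sgn}(x)$ with $\varepsilon>0$ small to see $g(r)>0=g(0)$; for $\lambda>0$ the two interior critical points also beat $g(0)=0$. This simultaneously gives $f_\lambda^t(x)>0$. At any interior maximizer,
$$
pt|r^*|^{p-1}=|x+2\lambda r^*|\le |x|+2|\lambda|\,|r^*|.
$$
Splitting into the alternatives $|x|\ge 2|\lambda||r^*|$ and $|x|<2|\lambda||r^*|$, and using $p\ge 2$ to absorb the prefactors, yields the unified bound
$$
|r^*(x)|\le \max\Bigl((2|x|/t)^{1/(p-1)},\,(2|\lambda|/t)^{1/(p-2)}\Bigr).
$$

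\textbf{Step 3 (Extracting \eqref{lem3:eq2} and \eqref{lem3:eq3}).} The envelope identity $\tfrac{d}{dx}f_\lambda^t(x)=r^*(x)$ immediately gives \eqref{lem3:eq3}. For \eqref{lem3:eq2}, drop the non-positive term $-t|r^*|^p$ and use $\max(a,b)\le a+b$:
$$
f_\lambda^t(x)\le |r^*||x|+|\lambda|(r^*)^2\le \Bigl((2|x|/t)^{\tfrac1{p-1}}+(2|\lambda|/t)^{\tfrac1{p-2}}\Bigr)|x|+|\lambda|\Bigl((2|x|/t)^{\tfrac{2}{p-1}}+(2|\lambda|/t)^{\tfrac{2}{p-2}}\Bigr),
$$
which matches the stated four-term bound after collecting factors.

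\textbf{Step 4 (Twice differentiability on $\mathbb R\setminus\{0\}$ for $\lambda\neq0$).} For $x\neq 0$ we have $r^*(x)\neq 0$, so $g$ is smooth in a neighborhood of $r^*$. At the global maximizer, $g''(r^*)=2\lambda-p(p-1)t|r^*|^{p-2}\le 0$; strict negativity holds except on the (at most countable) critical set where $g''(r^*)=0$, which we rule out by a case analysis of the zeros of $g'(r)=x+2\lambda r-pt|r|^{p-1}\mathrm{sgn}(r)$: since $g'$ is strictly convex on $(-\infty,0)$ and $(0,\infty)$ when $\lambda<0$, and has controlled sign changes when $\lambda>0$, the equation $g'(r)=0$ has at most three solutions in total, and for each $x\neq 0$ the global maximum is attained at a non-degenerate one. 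The implicit function theorem then gives $r^*\in C^1(\mathbb R\setminus\{0\})$, so $f_\lambda^t=g(r^*(x);x)\in C^2(\mathbb R\setminus\{0\})$ by the chain rule.

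The main obstacle is the bookkeeping in Step 4 for $\lambda>0$: the function $g$ is not concave, and one must verify that the global maximum is unique and non-degenerate for every $x\neq 0$. This is handled by comparing the values at the two competing interior critical points (which coincide only when $x=0$ by the symmetry $g(r;-x)=g(-r;x)$), thereby isolating $x=0$ as the only exceptional point—exactly matching the statement of the lemma.
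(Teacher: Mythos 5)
Your Steps 1--3 are correct, and your derivation of the a priori bound $|r^*| \le \max\bigl((2|x|/t)^{1/(p-1)}, (2|\lambda|/t)^{1/(p-2)}\bigr)$ directly from the stationarity relation $pt|r^*|^{p-1}=|x+2\lambda r^*|$ (splitting on whether $|x|\ge 2|\lambda||r^*|$) is actually cleaner than the paper's route, which instead supposes $r(x)\ge\max(\cdots)$, deduces $xr(x)-tr(x)^p/2\le 0$ and $\lambda r(x)^2-tr(x)^p/2\le 0$, and contradicts $f_\lambda^t(x)>0$. Both lead to the same bound and the same envelope computation for \eqref{lem3:eq3}.

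Step 4 has a real gap. To apply the implicit function theorem you need $g''(r^*)<0$ strictly, and the mechanism you propose --- count the zeros of $g'$, then compare values of $g$ at competing critical points --- cannot deliver it. Counting zeros bounds how many critical points exist; comparing critical values picks the global maximizer among them; neither rules out $g''(r^*)=0$ at the winner. (Your parenthetical claim that $g'$ is strictly convex on both half-lines when $\lambda<0$ is also off: $g'''(r)=-p(p-1)(p-2)t|r|^{p-3}\mathrm{sgn}(r)$ is negative on $(0,\infty)$, so $g'$ is concave there independently of $\lambda$; but that case is anyway immediate since $g''(r)=2\lambda-p(p-1)t|r|^{p-2}<2\lambda<0$.) The case that actually needs work is $\lambda>0$. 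The paper first reduces to $r>0$ via $g(r)>g(-r)$ for $r>0$ when $x>0$, so the maximizer $r(x)$ is the unique positive critical point, and then substitutes the stationarity relation $x+2\lambda r(x)=pt\,r(x)^{p-1}$ into $g''(r(x))$ to get
\[
g''(r(x))=-2(p-2)\lambda-\frac{(p-1)x}{r(x)}<0,
\]
which is manifestly strict. That explicit verification of non-degeneracy (or an equivalent one, e.g., observing that for $\lambda>0$ the unique zero of $g'$ on $(0,\infty)$ lies beyond the point where $g''$ turns negative) is what is missing from your Step 4 and cannot be replaced by the comparison-of-critical-values argument.
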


\begin{proof} When $\lambda=0$, the assertion \eqref{add:eq-1} can be checked by a straightforward computation. For the remainder of the proof, we will only focus on the case $\lambda\neq 0.$ Note that $f_\lambda^t$ is an even function. 
	To prove our assertion, it suffices to assume that $x>0.$ Evidently, $f_\lambda^t(x)>0.$ Note that for any $r>0,$
	\begin{align*}
	rx+\lambda r^2-t|r|^p&>(-r)x+\lambda(-r)^2-t|-r|^p,
	\end{align*}
	which implies that
	$
	f_\lambda^t(x)=\sup_{r>0}\bigl(xr+\lambda r^2-tr^p\bigr).
$
	Here, for $r> 0,$
	\begin{align}\label{zero}
	\frac{d}{dr}(xr+\lambda r^2-tr^p)&=x+2\lambda r-pt r^{p-1}.
	\end{align}
	Since $x>0,$ $0<r\mapsto x+2\lambda r$ is a linear function, and $0<r\mapsto ptr^{p-1}$ is strictly convex, we see that \eqref{zero} has a unique zero, $r(x)>0$. In addition, since \eqref{zero} equals $x>0$ at $r\to 0^+$ and it is negative when $r$ is sufficient large, these  imply that $r(x)$ is an unique optimizer of $f_\lambda^t(x).$ This also implies that $0<x\mapsto r(x)$ must be continuous.
	Now, if
	\begin{align*}
	r(x)\geq \max\Bigl(\Bigl(\frac{2x}{t}\Bigr)^{1/(p-1)},\Bigl(\frac{2|\lambda|}{t}\Bigr)^{1/(p-2)}\Bigr),
	\end{align*}
	then
	\begin{align*}
	xr(x)-t\frac{r(x)^p}{2}<0 \,\,\mbox{and}\,\,	\lambda r(x)^2-t\frac{r(x)^p}{2}<0.
	\end{align*}
	Together they imply that 
$
	f_\lambda^t(x)=xr(x)+\lambda r(x)^2-tr(x)^p<0,
$
	a contradiction. Hence, we must have that
	\begin{align}\label{add:eq3}
	r(x)< \max\Bigl(\Bigl(\frac{2x}{t}\Bigr)^{1/(p-1)},\Bigl(\frac{2|\lambda|}{t}\Bigr)^{1/(p-2)}\Bigr).
	\end{align}
	Consequently, dropping $t|r|^p$ and using this inequality, we arrive at \eqref{lem3:eq2} since
	\begin{align*}
	f_\lambda^t(x)&\leq 
	x\max\Bigl(\Bigl(\frac{2x}{t}\Bigr)^{1/(p-1)},\Bigl(\frac{2|\lambda|}{t}\Bigr)^{1/(p-2)}\Bigr)+|\lambda |\max\Bigl(\Bigl(\frac{2x}{t}\Bigr)^{2/(p-1)},\Bigl(\frac{2|\lambda|}{t}\Bigr)^{2/(p-2)}\Bigr)\\
	&\leq \Bigl(\frac{2}{t}\Bigr)^{1/(p-1)}x^{1+1/(p-1)}+\Bigl(\frac{2|\lambda|}{t}\Bigr)^{1/(p-2)}x+ \Bigl(\frac{2}{t}\Bigr)^{2/(p-1)}|\lambda|x^{2/(p-1)}+\Bigl(\frac{2|\lambda|}{t}\Bigr)^{2/(p-2)}|\lambda|.
	\end{align*}
	
	To show \eqref{lem3:eq3}, note that $$
	\frac{d}{dr}(x+2\lambda r-ptr^{p-1})\Bigl|_{r=r(x)}=2\lambda-pt(p-1)r(x)^{p-2}.
	$$
	If $\lambda<0$, then this derivative is strictly less than $2\lambda<0;$ if $\lambda>0$, plugging 
		\begin{align*}
	x+2\lambda r(x)=ptr(x)^{p-1}
	\end{align*}
	into the above equation gives that
	$$
	\frac{d}{dr}(x+2\lambda r-ptr^{p-1})\Bigl|_{r=r(x)}=-2(p-2)\lambda-\frac{x}{r(x)}(p-1)<0.
	$$
	From these and the implicit function theorem, $r(x)$ is differentiable on $(0,\infty)$ and consequently, 
	\begin{align*}
	\frac{d}{dx}f_\lambda^t(x)&=\frac{d}{dx}\bigl(xr(x)+\lambda r(x)^2-tr(x)^p\bigr)
	=r(x)+r'(x)\bigl(x+2\lambda r(x)-pt r(x)^{p-1}\bigr)=r(x).
	\end{align*} 
	The inequality \eqref{lem3:eq3} then follows from \eqref{add:eq3}. 
\end{proof}


\begin{lemma}\label{add:lem2}
	Let $a\in (0,1)$ and $b\in (0,\infty).$ Let $f$ be a nonnegative, even, and convex function on $\mathbb{R}$.  Assume that there exists a constant $C>0$ such that
	\begin{align}
	\begin{split}\label{lem2:proof:eq1}
	f(x)&\leq C(1+|x|^{a+1}),\,\,x\in \mathbb{R}.
	\end{split}
	\end{align}
	Let $m>0.$ For any $(s,x)\in [0,b]\times \mathbb{R},$ set
	\begin{align}\label{add:eq---10}
	F(s,x)&=\frac{1}{m}\log \e \exp m f(x+\sqrt{s}z),
	\end{align}
	where $z$ is standard normal. The following statements hold:
	\begin{itemize}
		\item[$(i)$] For any $s\in [0,b],$ $F(s,\cdot)$ is nonnegative, even, and convex on $\mathbb{R}$.
		\item[$(ii)$] $F\in C^{1,2}((0,b]\times \mathbb{R}).$
		\item[$(iii)$] $F$ satisfies
		\begin{align*}
		\partial_sF(s,x)&=\frac{1}{2}\bigl(\partial_{xx}F(s,x)+m\bigl(\partial_xF(s,x)\bigr)^2\bigr),\,\,(s,x)\in (0,b]\times \mathbb{R},\\
		\lim_{s\downarrow 0,x\to x_0}F(s,x)&=f(x),\,\,\forall x_0\in \mathbb{R}.
		\end{align*}
			\item[$(iv)$] There exists a constant $C'>0$ depending on $a,b,m,C$ such that for any $(s,x)\in (0,b]\times \mathbb{R},$
		\begin{align}
		\begin{split}\label{lem2:proof:eq3}
		|F(s,x)|&\leq C'(1+|x|^{a+1}).
		\end{split}
		\end{align}
		\item[$(v)$] If additionally, $f$ is differentiable on $\mathbb{R}\setminus\{0\}$ and there exists a constant $C$ such that
		\begin{align}
		\begin{split}\label{lem2:proof:eq2}
		|f'(x)|&\leq C(1+|x|^a),\,\quad x\in \mathbb{R}\setminus\{0\},
		\end{split}
		\end{align}
		then there exists a constant $C'$ depending on $a,b,m,C$ such that for any $(s,x)\in (0,b]\times \mathbb{R},$
		\begin{align}
		\begin{split}\label{lem2:proof:eq4}
		|\partial_xF(s,x)|&\leq C'(1+|x|^a).
		\end{split}
		\end{align}
	\end{itemize}

\end{lemma}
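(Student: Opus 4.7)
The plan is to reduce everything to the Hopf--Cole transform $U(s,x):=\exp(mF(s,x))=\mathbb{E}\exp(mf(x+\sqrt{s}z))$, which, after the change of variable $y=x+\sqrt{s}z$, becomes the Gaussian convolution
\[
U(s,x)=\frac{1}{\sqrt{2\pi s}}\int_{\mathbb{R}}e^{mf(y)}\exp\Bigl(-\frac{(y-x)^{2}}{2s}\Bigr)\,dy,\qquad (s,x)\in(0,b]\times\mathbb{R}.
\]
For parts (i)--(iii), I would verify that $U$ inherits the standard heat-kernel smoothing: the growth assumption \eqref{lem2:proof:eq1} together with $a+1<2$ guarantees that this integral is finite for every $(s,x)\in(0,b]\times\mathbb{R}$ and that differentiation under the integral sign is justified to any order. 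Hence $U\in C^{\infty}((0,b]\times\mathbb{R})$ and $\partial_{s}U=\tfrac{1}{2}\partial_{xx}U$, so $F=m^{-1}\log U$ lies in $C^{1,2}$ and a direct computation of $\partial_{s}F$, $\partial_{x}F$, $\partial_{xx}F$ in terms of $U$ yields $\partial_{s}F=\tfrac12(\partial_{xx}F+m(\partial_{x}F)^{2})$. The boundary behavior $F(s,x)\to f(x_{0})$ as $(s,x)\to(0,x_{0})$ follows from continuity of $f$ and dominated convergence. In (i), nonnegativity and evenness of $F(s,\cdot)$ are immediate from $f\geq 0$ and $z\stackrel{d}{=}-z$; convexity in $x$ follows from applying H\"older's inequality to $\mathbb{E}e^{mf(\cdot+\sqrt{s}z)}$ and using the convexity of $f$.

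For (iv), the key input is the elementary inequality $(|x|+\sqrt{s}|z|)^{a+1}\leq 2^{a}(|x|^{a+1}+s^{(a+1)/2}|z|^{a+1})$, which holds since $a+1\geq 1$. Inserting it into $\exp(mf(x+\sqrt{s}z))\leq\exp(mC(1+|x+\sqrt{s}z|^{a+1}))$ and using $s\leq b$ separates the $x$- and $z$-dependence:
\[
\mathbb{E}\exp(mf(x+\sqrt{s}z))\leq e^{mC}e^{mC2^{a}|x|^{a+1}}\cdot\mathbb{E}\exp\bigl(mC2^{a}b^{(a+1)/2}|z|^{a+1}\bigr).
\]
The final Gaussian expectation is finite precisely because $a+1<2$, which makes $|z|^{a+1}$ have all finite exponential moments against standard Gaussian measure; this is the one place where the hypothesis $a<1$ is essential. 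Taking $m^{-1}\log$ then gives the upper bound $F(s,x)\leq C'(1+|x|^{a+1})$, uniformly in $s\in(0,b]$, and the lower bound $F\geq 0$ is part (i).

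For (v), rather than grapple with the tilted-measure representation $\partial_{x}F(s,x)=\mathbb{E}^{G_{s,x}}[f'(x+\sqrt{s}z)]$ and the need to control $\mathbb{E}^{G_{s,x}}[|x+\sqrt{s}z|^{a}]$ uniformly in $s$ and $x$, I would exploit the convexity and evenness of $F(s,\cdot)$ from (i) together with the growth bound just obtained in (iv). Since $F(s,\cdot)$ is convex and even, $\partial_{x}F(s,\cdot)$ is odd and nondecreasing, so it suffices to estimate $\partial_{x}F(s,x)$ for $x>0$. For $x\geq 1$, the convexity inequality $\partial_{x}F(s,x)\leq x^{-1}(F(s,2x)-F(s,x))\leq x^{-1}F(s,2x)$ combined with (iv) yields $\partial_{x}F(s,x)\leq C'(1+(2x)^{a+1})/x\leq C''|x|^{a}$. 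For $0<x\leq 1$, monotonicity and convexity give $\partial_{x}F(s,x)\leq\partial_{x}F(s,1)\leq F(s,2)-F(s,1)\leq C'(1+2^{a+1})$, a bounded constant that is in turn $\leq C'''(1+|x|^{a})$. Merging the two regimes and invoking oddness for $x<0$ delivers $|\partial_{x}F(s,x)|\leq C'(1+|x|^{a})$ uniformly in $(s,x)\in(0,b]\times\mathbb{R}$; the explicit hypothesis on $|f'|$ enters only to guarantee the form of the constant, since integrating it recovers the bound \eqref{lem2:proof:eq1} on $f$ itself.

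No genuinely hard obstacle arises; the only subtle point is uniformity in $s\in(0,b]$ in (iv), which forces the careful use of $s\leq b$ in the separation step and the condition $a+1<2$ to secure the Gaussian exponential moment.
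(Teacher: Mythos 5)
Your proposal is correct. Parts $(i)$--$(iv)$ follow essentially the same path as the paper: the Gaussian-convolution representation of $U=e^{mF}$, differentiation under the integral sign justified by the sub-quadratic growth $a+1<2$, the H\"older argument for convexity of $F(s,\cdot)$, and the elementary separation inequality for $(iv)$. The paper states $(iv)$ with less elaboration, but your argument is exactly what fills it in.

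Part $(v)$ is where you take a genuinely different route, and it is an improvement worth noting. The paper proves $(v)$ by writing
\[
\partial_xF(s,x)=\frac{\mathbb{E}\bigl[f'(x+\sqrt{s}z)\,e^{mf(x+\sqrt{s}z)}\bigr]}{\mathbb{E}\,e^{mf(x+\sqrt{s}z)}},
\]
then splitting the numerator over $\{|z|\ge x\}$ and $\{|z|<x\}$, using Jensen to bound the denominator below by $1$, and controlling each piece via \eqref{lem2:proof:eq1} and \eqref{lem2:proof:eq2}. You instead bypass the tilted measure entirely: since $F(s,\cdot)$ is $C^{2}$, even, and convex for each $s\in(0,b]$, $\partial_xF(s,\cdot)$ is odd and nondecreasing with $\partial_xF(s,0)=0$, so the one-sided difference-quotient bound $\partial_xF(s,x)\le x^{-1}F(s,2x)$ for $x\ge1$ and the monotonicity bound $\partial_xF(s,x)\le\partial_xF(s,1)\le F(s,2)$ for $0<x\le1$ combine with $(iv)$ to give $|\partial_xF(s,x)|\le C'(1+|x|^{a})$. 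This is cleaner, avoids any differentiation inside the expectation, and it shows that the extra hypotheses in $(v)$ (differentiability of $f$ off the origin and the bound \eqref{lem2:proof:eq2} on $f'$) are actually superfluous: they are already consequences of $f$ being convex and satisfying \eqref{lem2:proof:eq1}, and in any case your argument never uses $f'$ at all, only the smoothness of $F$ from $(ii)$. The paper's more computational approach buys nothing here beyond explicit constants; your argument is strictly more general.
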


\begin{proof} First of all, note that $F$ is well-defined due to \eqref{lem2:proof:eq1} and $a<1.$ To prove $(i),$ the nonnegativity and the evenness of $F$ follow directly from those of $f$; using the H\"older inequality and the convexity of $f$ yield that  for all $0\leq s\leq b$, $0\leq \ell\leq 1$, and $x,y\in \mathbb{R},$
	\begin{align*}
	F(s,\ell x+(1-\ell)y)&= \frac{1}{m}\log \e \exp m f(\ell x+(1-\ell)y+\sqrt{s} z)\\
	&\leq \frac{1}{m}\log \e \exp m \bigl(\ell f( x+\sqrt{s}z)+(1-\ell)f(y+\sqrt{s}z)\bigr)\leq  \ell F(s,x)+(1-\ell)F(s,y),
	\end{align*}
	establishing the convexity of $F(s,\cdot).$
	To show $(ii)$ and $(iv)$, write 
	\begin{align*}
	F(s,x)&=\frac{1}{m}\log \frac{1}{\sqrt{2\pi s}}\int e^{mf(y)-\frac{(y-x)^2}{2s}}dy.
	\end{align*}
	From this expression, \eqref{lem2:proof:eq1}, and the dominated convergence theorem, we see that $F$ lies in the class $C^{1,2}((0,b]\times \mathbb{R})$. Furthermore, for any $(s,x)\in (0,b]\times \mathbb{R}$,
	\begin{align*}
	\partial_xF(s,x)&=\frac{1}{sm}\frac{\int_{\mathbb{R}}(y-x)\varphi(y)dy}{\int_{\mathbb{R}}\varphi(y)dy},\\
		\partial_{xx}F(s,x)&=-\frac{1}{sm}+\frac{1}{s^2m}\frac{\int_{\mathbb{R}}(y-x)^2\varphi(y)dy}{\int_{\mathbb{R}}\varphi(y)dy}-\frac{1}{s^2m}\Bigl(\frac{\int_{\mathbb{R}}(y-x)\varphi(y)dy}{\int_{\mathbb{R}}\varphi(y)dy}\Bigr)^2,\\
		\partial_sF(s,x)&=-\frac{1}{2sm}+\frac{1}{2s^2m}\frac{\int_{\mathbb{R}}(y-x)^2\varphi(y)dy}{\int_{\mathbb{R}}\varphi(y)dy}
	\end{align*}
	for $\varphi(y):=e^{mf(y)-(y-x)^2/(2s)}.$
	This validates $(ii)$ as well as the PDE in $(iii).$ The continuity of $F$ at the boundary in $(iii)$ is also guaranteed by using \eqref{lem2:proof:eq1}, \eqref{add:eq---10}, and the dominated convergence theorem. The assertion $(iv)$ can follows directly from \eqref{lem2:proof:eq1} and noting $a<1$. 
	
	Finally, we prove $(v).$ Without loss of generality, we assume that $x\geq 0.$ Note that we can also use \eqref{lem2:proof:eq1}, \eqref{lem2:proof:eq2}, and the dominated convergence theorem to get that
	\begin{align*}
\partial_xF(s,x)=\frac{\e f'(x+\sqrt{s}z)e^{ m f(x+\sqrt{s} z)} }{\e e^{ m f(x+\sqrt{s} z)} }.
	\end{align*}
	From this, write
	\begin{align}
	\bigl|\partial_xF(s,x)\bigr|
	&\leq \Bigl|\frac{\e \bigl[f'(x+\sqrt{s} z)e^{ m f(x+\sqrt{s} z)};|z|\geq x\bigr]}{\e e^{ m f(x+\sqrt{s} z)} }\Bigr|+\Bigl|\frac{\e \bigl[f'(x+\sqrt{s}  z)e^{ m f(x+\sqrt{s}  z)};|z|<x\bigr]}{\e e^{ m f(x+\sqrt{s} z)} }\Bigr|.\label{add:eq---9}
	\end{align}
	To bound the first term, note that since $e^{mf(\cdot)}$ is convex, Jensen's inequality and the assumption $f\geq 0$ imply that
	\begin{align*}
	1\leq e^{mf(x)}=e^{mf(x+\sqrt{s}\e z)}\leq \e e^{ m f(x+\sqrt{s}z)}.
	\end{align*}
	On the other hand, from \eqref{lem2:proof:eq1} and \eqref{lem2:proof:eq2},
	\begin{align*}
	&\e \bigl[\bigl|f'(x+\sqrt{s}  z)\bigr|e^{ m f(x+\sqrt{s}  z)};|z|\geq x\bigr]\leq C\e \bigl[\bigl(1+(1+\sqrt{b})^a|z|^a\bigr)e^{ mC\bigl(1+(1+\sqrt{b})^{a+1}|z|^{a+1}\bigr)}\bigr]=:C',
	\end{align*}
		where $C'$ is finite due to the assumption that $a<1.$
    Putting these two inequalities together implies that the first term of \eqref{add:eq---9} is bounded above by $C'.$ For the second term in \eqref{add:eq---9}, note that
	\begin{align*}
	\e \bigl[\bigl|f'(x+\sqrt{s}  z)\bigr|e^{ m f(x+\sqrt{s}  z)};|z|<x\bigr]&\leq C(1+(1+\sqrt{s})^a|x|^a)\e \bigl[e^{ m f(x+\sqrt{s}  z)};|z|<x\bigr]\\
	&\leq C(1+(1+\sqrt{b})^a|x|^a)\e \bigl[e^{ m f(x+\sqrt{s}  z)}\bigr].
	\end{align*}
	From this, the second term in \eqref{add:eq---9} is bounded above by $C''|x|^a$ for some constant $C''>0$. These validate \eqref{lem2:proof:eq4}.
\end{proof}

\begin{lemma}\label{appendix:lem1}
	Let $p>2,t>0,u>0.$ Assume that $\lambda\in \mathbb{R}$ and $\gamma\in \mathcal{N}_u^d$ satisfies \eqref{gamma}. Define $\Psi_{\lambda,\gamma}^{t,u}$ from $[0,u]\times \mathbb{R}$ to $\mathbb{R}$ by letting
$
	\Psi_{\lambda,\gamma}^{t,u}(u,x)=f_\lambda^t(x)
$
	and iteratively for $l = k, k-1, \ldots, 0$
	\begin{align}
	\label{add:eq---11}
	\Psi_{\lambda,\gamma}^{t,u}(s,x)&=\frac{1}{m_l}\log \e\exp m_l \Psi_{\lambda,\gamma}^{t,u}(q_{l+1},x+\sqrt{2(q_{l+1}-s)}z),\,\,s\in [q_l,q_{l+1})\times \mathbb{R},
	\end{align}
	where $z\thicksim N(0,1).$ 
	Then for any $0\leq l\leq k,$ $\Psi_{\lambda,\gamma}^{t,u}\in C^{1,2}([q_l,q_{l+1})\times \mathbb{R})$ and it satisfies the following PDE,
	\begin{align}
	\begin{split}\label{appendix:lem1:eq0}
	\partial_s\Psi_{\lambda,\gamma}^{t,u}(s,x)&=-\bigl(\partial_{xx}\Psi_{\lambda,\gamma}^{t,u}(s,x)+\gamma(s)\bigl(\partial_x\Psi_{\lambda,\gamma}^{t,u}(s,x)\bigr)^2\bigr),\,\,(s,x)\in (q_l,q_{l+1})\times\mathbb{R}
	\end{split}
	\end{align}
   and
	\begin{align*}
		\lim_{s\uparrow q_{l+1},x\to x_0}\Psi_{\lambda,\gamma}^{t,u}(s,x)&=\Psi_{\lambda,\gamma}^{t,u}(q_{l+1},x_0),\,\,\forall x_0\in \mathbb{R}.
	\end{align*}
	Furthermore,
	there exists a constant $C>0$ such that
		\begin{align}\label{appendix:lem1:eq2}
	\bigl|\Psi_{\lambda,\gamma}^{t,u}(s,x)\bigr|&\leq C(1+|x|^{1+1/(p-1)}),\,\,\forall (s,x)\in [0,u]\times \mathbb{R},\\
	\label{appendix:lem1:eq1}
	\bigl|\partial_x\Psi_{\lambda,\gamma}^{t,u}(s,x)\bigr|&\leq C(1+|x|^{1/(p-1)}),\,\,\quad\forall (s,x)\in [0,u)\times \mathbb{R}.
	\end{align}
\end{lemma}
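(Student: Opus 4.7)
The plan is to prove all four claims (regularity, PDE, boundary continuity, and the two growth bounds) simultaneously by backward induction on $l = k, k-1, \ldots, 0$, applying Lemma~\ref{add:lem2} at each step after an affine time-change. Specifically, on the interval $[q_l,q_{l+1})$, let $b_l := 2(q_{l+1}-q_l)$ and define $F_l(\tau,x) := \Psi_{\lambda,\gamma}^{t,u}(q_{l+1}-\tau/2,x)$ for $\tau \in [0,b_l]$. Then \eqref{add:eq---11} can be rewritten exactly as $F_l(\tau,x) = m_l^{-1}\log \mathbb{E}\exp m_l F_l(0,x+\sqrt{\tau}z)$, which is the form \eqref{add:eq---10} in Lemma~\ref{add:lem2} with $m = m_l$, $b = b_l$, and input $f = F_l(0,\cdot) = \Psi_{\lambda,\gamma}^{t,u}(q_{l+1},\cdot)$.

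For the induction to go through, I need to check that the input $\Psi_{\lambda,\gamma}^{t,u}(q_{l+1},\cdot)$ is at every stage nonnegative, even, convex, satisfies the polynomial growth $|f(x)| \le C(1+|x|^{a+1})$ with $a = 1/(p-1)$, and is differentiable off $\{0\}$ with $|f'(x)| \le C(1+|x|^a)$. At the base $l=k$, the input is $f_\lambda^t$: it is nonnegative and even (it equals $\sup_r(rx+\lambda r^2-t|r|^p)$, invariant under $(r,x)\mapsto(-r,-x)$) and convex as a supremum of affine functions; the growth bounds \eqref{add:eq-1}, \eqref{lem3:eq2}, \eqref{lem3:eq3} from Lemma~\ref{lem3}, combined with $p>2$ (so $2/(p-1) < 1 + 1/(p-1)$), give exactly the required estimates with $a = 1/(p-1) \in (0,1)$. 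For the inductive step, items $(i)$, $(iv)$, $(v)$ of Lemma~\ref{add:lem2} propagate these properties from $\Psi_{\lambda,\gamma}^{t,u}(q_{l+1},\cdot)$ to $\Psi_{\lambda,\gamma}^{t,u}(q_l,\cdot) = F_l(b_l,\cdot)$, and item $(ii)$ ensures $F_l(b_l,\cdot)$ is $C^2$, so differentiability now holds on all of $\mathbb{R}$ (no exceptional point at $0$) for the next step.

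Items $(ii)$, $(iii)$, $(iv)$, $(v)$ of Lemma~\ref{add:lem2} applied to $F_l$ then yield everything claimed on $[q_l,q_{l+1})\times\mathbb{R}$: the $C^{1,2}$ regularity, the boundary limit $\lim_{s\uparrow q_{l+1},x\to x_0}\Psi_{\lambda,\gamma}^{t,u}(s,x) = \Psi_{\lambda,\gamma}^{t,u}(q_{l+1},x_0)$, and the bounds \eqref{appendix:lem1:eq2} and \eqref{appendix:lem1:eq1}. The PDE from Lemma~\ref{add:lem2}$(iii)$ reads $\partial_\tau F_l = \tfrac12(\partial_{xx}F_l + m_l(\partial_x F_l)^2)$; translating back via $s = q_{l+1}-\tau/2$, so that $\partial_s = -2\partial_\tau$, and using $\gamma(s) = m_l$ on $[q_l,q_{l+1})$, gives precisely \eqref{appendix:lem1:eq0}. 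Taking the maximum of the finitely many constants produced over $l=0,\ldots,k$ yields a single $C$ valid on all of $[0,u]\times\mathbb{R}$.

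The only delicate point, and the place where the assumption $p>2$ is essential, is that the exponent $a = 1/(p-1)$ must lie strictly in $(0,1)$ so that Lemma~\ref{add:lem2} applies (its hypothesis \eqref{lem2:proof:eq1} requires $a<1$, which is precisely what makes $\mathbb{E}\exp(mC|x+\sqrt{\tau}z|^{a+1})$ finite and permits dominated-convergence differentiation under the integral). Everything else is a mechanical iteration; the proof is essentially an unwinding of $k+1$ applications of Lemma~\ref{add:lem2} together with the input regularity recorded in Lemma~\ref{lem3}.
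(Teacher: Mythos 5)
Your proposal is correct and follows essentially the same route as the paper: verify that $f_\lambda^t$ satisfies the hypotheses \eqref{lem2:proof:eq1}, \eqref{lem2:proof:eq2} with $a=1/(p-1)<1$ (via Lemma~\ref{lem3}), then iterate Lemma~\ref{add:lem2} across the finitely many intervals $[q_l,q_{l+1})$ after the affine time-change, noting that items $(i)$, $(iv)$, $(v)$ propagate the required structural and growth properties. Your write-up is more explicit about the time-change bookkeeping and the convexity/evenness propagation, but there is no genuine difference in approach from the paper's one-line proof.
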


\begin{proof} Note that $\Psi_{\lambda,\gamma}^{t,u}(u,x)=f_\lambda^t(x)$ satisfies \eqref{lem2:proof:eq1} and \eqref{lem2:proof:eq2} with $a=(p-1)^{-1}<1$. Our proof follows by applying Lemma  \ref{add:lem2} iteratively. 
\end{proof}

\begin{proof}[\bf Proof of Proposition \ref{rep3}] Fix $(\lambda,\gamma)\in \mathbb{R}\times\mathcal{N}_u^d.$
	Since $\gamma(u)<\infty,$ from \eqref{appendix:lem1:eq1}, there exists some constant $C>0$ such that
	\begin{align*}
	\bigl|\gamma(s)\partial_x\Psi_{\lambda,\gamma}^{t,u}(s,x)\bigr|&\leq C(1+|x|^{1/(p-1)}),\,\,\forall (s,x)\in \bigl([0,u]\times\mathbb{R}\bigr)\setminus\{(u,0)\}.
	\end{align*}
	Consequently, there exists a probability space $(\Omega,\p,\mathcal{F})$, a filtration $(\mathcal{F}_s)_{0\leq s\leq u}$, a continuous process $X$, and a standard Brownian motion $W$ such that they together form a weak solution of \eqref{weak}, see, e.g., Proposition~5.3.6 in \cite{KS91}. Next, for any $v\in \mathcal{D}_u,$ set
$
	Y(s)=2\int_0^s\gamma(r)v(r)dr+\sqrt{2}W(r).
$
	Using It\^o's formula and the PDE \eqref{appendix:lem1:eq0} leads to
	\begin{align*}
	d\Psi_{\lambda,\gamma}^{t,u}(s,Y(s))=-\gamma(s) \bigl[\bigl(\partial_x\Psi_{\lambda,\gamma}^{t,u}(s,Y(s))\bigr)^2-2 v(s)\partial_x\Psi_{\lambda,\gamma}^{t,u}(s,Y(s))\bigr]ds+\sqrt{2}\partial_x\Psi_{\lambda,\gamma}^{t,u}(s,Y(s))dW(s).
	\end{align*}
	Here, we can further rewrite this equation as
	\begin{align*}
	d\Psi_{\lambda,\gamma}^{t,u}(s,Y(s))&=-\gamma(s) \bigl(\partial_x\Psi_{\lambda,\gamma}^{t,u}(s,Y(s))-v(s)\bigr)^2ds+\sqrt{2}\partial_x\Psi_{\lambda,\gamma}^{t,u}(s,Y(s))dW(s)+\gamma(s) v(s)^2ds.
	\end{align*}
	Consequently, 
	\begin{align*}
	\e\Psi_{\lambda,\gamma}^{t,u}(u,Y(u))&=\Psi_{\lambda,\gamma}^{t,u}(0,0)-\int_0^u\gamma(s)\e\bigl(\partial_x\Psi_{\lambda,\gamma}^{t,u}(s,Y(s))-v(s)\bigr)^2ds+\int_0^u \gamma(s)\e v(s)^2ds,
	\end{align*}
	which implies that
	\begin{align*}
	\Psi_{\lambda,\gamma}^{t,u}(0,0)&=\e\Psi_{\lambda,\gamma}^{t,u}(u,Y(u))-\int_0^u \gamma(s)\e v(s)^2ds+\int_0^u\gamma(s)\e\bigl(\partial_x\Psi_{\lambda,\gamma}^{t,u}(s,Y(s))-v(s)\bigr)^2ds.
	\end{align*}
	Clearly, this implies that 
	\begin{align*}
	\Psi_{\lambda,\gamma}^{t,u}(0,0)&\geq \max_{v\in \mathcal{D}_u}\Bigl[f_\lambda^t\Bigl(2\int_0^u \gamma(s) v(s)ds+\sqrt{2}W(u)\Bigr)-\int_0^u \gamma(s)v(s)^2ds\Bigr]
	\end{align*}
	and the equality is achieved if $v(s)\equiv\partial_x\Psi_{\lambda,\gamma}^{t,u}(s,X(s)),$ where $X$ is defined through \eqref{weak}.  This completes our proof.
	
\end{proof}

\bibliographystyle{plain}
{\footnotesize\bibliography{ref}}

\end{document}